\documentclass{article}

\usepackage[preprint]{neurips_2026}

\usepackage[utf8]{inputenc} % allow utf-8 input
\usepackage[T1]{fontenc}    % use 8-bit T1 fonts
\usepackage{hyperref}       % hyperlinks
\usepackage{url}            % simple URL typesetting
\usepackage{booktabs}       % professional-quality tables
\usepackage{amsfonts}       % blackboard math symbols
\usepackage{nicefrac}       % compact symbols for 1/2, etc.
\usepackage{microtype}      % microtypography
\usepackage{xcolor}         % colors
\usepackage{import}
\usepackage{enumitem}

\usepackage{tikz}
\usetikzlibrary{arrows.meta,positioning,calc}

\usepackage{multicol}
\usepackage{microtype}
\usepackage{graphicx}
\usepackage{booktabs} % for professional tables

\usepackage{wrapfig}

\RequirePackage{fancyhdr}
\RequirePackage{xcolor} % changed from color to xcolor (2021/11/24)
\RequirePackage{algorithm}
\RequirePackage{algorithmic}
\RequirePackage{natbib}
\RequirePackage{eso-pic} % used by \AddToShipoutPicture
\RequirePackage{forloop}
\RequirePackage{url}

\usepackage{amsmath}
\usepackage{amssymb}
\usepackage{mathtools}
\usepackage{amsthm}
\usepackage{bbm}

\usepackage[capitalize,noabbrev]{cleveref}

\usepackage{amsmath,amssymb}
\usepackage{booktabs}
\usepackage{multirow}
\usepackage{graphicx}
\graphicspath{{figures/}}
\usepackage{subcaption}
\usepackage{hyperref}
\usepackage{xcolor}
\usepackage{microtype}

\theoremstyle{plain}
\newtheorem{theorem}{Theorem}[section]
\newtheorem{proposition}[theorem]{Proposition}
\newtheorem{lemma}[theorem]{Lemma}
\newtheorem{corollary}[theorem]{Corollary}
\theoremstyle{definition}
\newtheorem{definition}[theorem]{Definition}
\newtheorem{assumption}[theorem]{Assumption}

\newtheoremstyle{boldremark}
  {3pt}{3pt}        % space above/below
  {}                % body font
  {}                % indent
  {\bfseries}       % head font (bold)
  {.}               % punctuation after head
  { }               % space after head
  {}                % head spec

\theoremstyle{boldremark}
\newtheorem{remark}{Remark}

\newcommand{\R}{\mathbb{R}}

\newcommand{\Pcal}{\mathcal{P}}
\newcommand{\cP}{\mathcal{P}}
\newcommand{\E}{\mathbb{E}}
\newcommand{\Prob}{\mathbb{P}}

\newcommand{\norm}[1]{\left\lVert #1 \right\rVert}

\newcommand{\dist}{\operatorname{dist}}
\newcommand{\dd}{\,\mathrm{d}}
\newcommand{\Id}{I_d}

\newcommand{\Law}{\operatorname{Law}}
\newcommand{\Tr}{\operatorname{Tr}}

\newcommand{\thetae}{\widetilde\theta}

\newcommand{\softJ}{J_{\beta,\tau}^L}
\newcommand{\qsoft}{q_{\beta,\tau}^L}
\newcommand{\Gtilde}{\widetilde G}

\newcommand{\abs}[1]{\left\lvert #1\right\rvert}
\newcommand{\br}[1]{\left(#1\right)}
\newcommand{\sqbr}[1]{\left[#1\right]}

\usepackage[textsize=tiny]{todonotes}

\title{
Convergence of Consensus-Based Particle Methods for Nonconvex Bi-Level Optimization
}

\author{%
  Yutong Chao \\
  Department of Computer Science\\
Technical University of Munich\\
\texttt{yutong.chao@tum.de}\\
\And
Xudong Sun \\
Munich Institute of Robotics and Machine Intelligence \\
Technical University of Munich \\
\texttt{xudong.sun@tum.de} \\
\And
Konstantin Riedl \\
Mathematical Institute\\
University of Oxford\\
\texttt{konstantin.riedl@maths.ox.ac.uk} \\
\And
Majid Khadiv \\
Munich Institute of Robotics and Machine Intelligence \\
Technical University of Munich \\
\texttt{majid.khadiv@tum.de} \\
\And
Jalal Etesami \\
Department of Computer Science\\
Technical University of Munich\\
\texttt{j.etesami@tum.de} \\
}

\begin{document}

\maketitle

\begin{abstract}
%In this paper, we study a consensus-based particle method for nonconvex bi-level optimization, where the goal is to minimize an upper-level objective over the set of global minimizers of a lower-level objective. The method is derivative-free and computes its consensus point through smooth quantile selection and a Gibbs-type Laplace approximation. We provide theoretical convergence guarantee for the corresponding mean-field dynamics and for its finite-particle numerical approximation. The proof shows that, under suitable smooth quantile localization, error-bound, and stability assumptions, the mean-field law converges exponentially fast in Wasserstein distance to the global solution of the bi-level problem. We further validate our result on several numerical experiments.
In this paper, we study a consensus-based optimization method for nonconvex bi-level optimization, where the objective is to minimize an upper-level function over the set of global minimizers of a lower-level problem. The proposed approach is derivative-free,  and constructs its consensus point via smooth quantile selection combined with a Gibbs-type Laplace approximation. We establish convergence guarantees for both the associated \textit{mean-field} dynamics and its \textit{finite-particle} approximation. 
In particular, under suitable assumptions on smooth quantile localization, error bounds, and stability, we show that the mean-field law reaches any arbitrary prescribed Wasserstein neighborhood of the target bi-level solution with an explicit exponential rate up to the hitting time. Numerical experiments on a two-dimensional constrained problem and neural network training further support the theoretical results.

\end{abstract}

\section{Introduction}\label{sec:intro}

Bi-level optimization provides a natural framework for hierarchical decision-making problems in which an upper-level objective is optimized subject to the solution of a lower-level optimization problem. It has become a central tool in modern
machine learning, with applications in 
meta-learning~\cite{franceschi2018bilevel,kim2025stochastic}, hyperparameter optimization \cite{mackay2019self}, neural architecture search~\cite{xue2021rethinking}, reinforcement
learning~\cite{gammelli2023graph,10.5555/3618408.3618835}, adversarial robustness \cite{zhang2022revisiting}, model pruning~\cite{zhang2022revisiting}, etc. 
In this paper, we study the bi-level optimization problem
\begin{align}\label{eq:bi-problem}
    \theta^\star \in \arg\min_{\theta\in\Theta} G(\theta),
    \qquad
    \Theta := \arg\min_{\theta\in\mathbb R^d} L(\theta),   
\end{align}
where \(L,G:\mathbb R^d\to\mathbb R\) are the nonconvex lower- and upper-level objectives, respectively. 
This formulation captures situations in which many candidates may solve the lower-level task equally well, while the upper-level objective selects among them according to an
additional criterion, such as robustness, sparsity or
generalization~\cite{petrulionyte2024functional,zhang2024introduction}.

Despite its simple form, this problem is challenging when the objectives are
nonconvex. The feasible set of the upper-level problem is the solution set
\(\Theta\) of the lower-level problem, which is typically defined only implicitly.
Consequently, it is generally difficult to project onto it, parametrize
it, or optimize over it directly~\cite{trillos2024cb}. This difficulty becomes
more pronounced when the lower-level objective \(L\) is nonconvex, since
\(\Theta\) may be nonconvex, disconnected, or have a complicated geometry. At the
same time, the upper-level objective \(G\) may also be nonconvex, as is common in
modern machine learning and engineering applications~\cite{fornasier2021consensus,bae2022constrained}. Therefore, the
problem considered in this paper is not merely a constrained optimization problem
with a nonconvex feasible set; rather, both the lower-level landscape defining
the feasible set and the upper-level landscape used for selection may be
nonconvex.

Most existing convergence analyses for bi-level optimization that are gradient-based impose structural conditions that simplify this geometry. A standard assumption is that the lower-level objective is strongly convex, which guarantees the uniqueness of the lower-level solution and enables implicit differentiation  \cite{shen2023online,liang2023lower,cao2024accelerated}. 
%{\color{blue} When the lower-level is convex, then $\Theta$ is a point instead of a set. Then the upper-level optimization is meaningless. Unless you change the setting into something like the ones for multi-level optimization paper with gradient-based solver. I think you mention the gradient-based methods here because next paragraph you start talking about derivative-free . }
Other approaches replace strong convexity with convexity, error-bound conditions, or Polyak--Lojasiewicz (PL)-type conditions~\cite{xiao2023alternating,huang2024optimal}. 
While these assumptions lead to tractable algorithms with convergence guarantees, they exclude many problems in which \(\Theta\) is a nonconvex set. 
In particular, such assumptions impose a regular global geometry that is not appropriate for many overparameterized or nonsmooth optimization landscapes \cite{du2019gradient,cooper2021global,islamov2024loss}. This motivates the development of methods that can handle nonconvex \(L\) and \(G\).
%, and nonconvex lower-level solution sets without relying on convexity or PL-type geometry.

Consensus-based optimization (CBO) offers a derivative-free alternative for global optimization in nonconvex landscapes~\cite{totzeck2021trends}. Instead of following
gradients, CBO evolves a system of interacting particles that explore the space and communicate through a consensus mechanism. Such methods have been studied at the level of mean-field dynamics~\cite{fornasier2022convergence} and finite-particle
approximations~\cite{beddrich2024constrained}. A key theoretical breakthrough was provided by \cite{fornasier2024consensus}, which shows that CBO can converge globally for a broad class of nonconvex and nonsmooth objective functions in the mean-field and discrete particle regime. 
%Their analysi reveals that the consensus mechanism, together with a Laplace-principle-based weighting, induces an effective global selection mechanism rather than merely a local descent dynamics. 
Their derivative-free nature makes them particularly appealing for bi-level problems in which both objectives may be nonconvex.
%and where the set \(\Theta\) is not explicitly available and has complex geometry.
% A recent line of work extends the CBO philosophy to nonconvex bi-level
% optimization~\cite{}. The key idea is to construct the consensus point in two
% stages: first, particles with favorable lower-level objective values are selected
% through a quantile rule; second, the selected particles are reweighted by a
% Gibbs-type measure associated with the upper-level objective. This mechanism is
% well aligned with the structure of the bi-level problem. The lower-level
% quantile selection localizes particles near the solution set \(\Theta\), while
% the upper-level Gibbs weighting favors candidates with smaller values of \(G\)
% among those lower-level candidates~\cite{}. However, the existing method uses a
% hard quantile selection rule. Although this rule is intrinsic and invariant under
% monotone transformations of the lower-level objective, it makes the consensus
% point non-smooth with respect to perturbations of the particle distribution.
% This non-smoothness is a major obstacle to establishing stability estimates for
% the interacting particle system. As a result, existing theory proves convergence
% of the associated mean-field law, but does not provide a high-probability
% finite-particle convergence guarantee for the algorithm implemented with
% finitely many particles~\cite{}.
Recently, CBO has been adopted to solve bi-level problems of the form~\eqref{eq:bi-problem} \cite{trillos2024cb}. However, the method proposed therein, called CB$^2$O, relies on a hard quantile-based consensus mapping that is discontinuous (Section \ref{sec:cb2o}). As a consequence, convergence guarantees beyond the mean-field approximation  are generally not available.

In this work, we instead introduce a soft-quantile consensus-based particle method (Section \ref{sec:cb2o}) that maintains strong performance (Section \ref{sec:experiment}). Given a particle distribution $\rho$, we define a soft lower-level quantile via a smooth selection function (Equation \eqref{eq:soft_quantile_def}). The consensus point is then obtained by applying a Gibbs-type (Laplace) weighting to this measure with respect to the upper-level objective. This construction preserves the two-stage structure of the bi-level problem: the soft-quantile selection identifies particles close to the lower-level solution set, while the Gibbs weighting promotes favorable upper-level candidates. At the same time, the smooth selection mechanism provides sufficient regularity to analyze both the mean-field dynamics and the finite-particle interacting system.
Our main contributions are as follows.

\begin{itemize}[leftmargin=*, align=left]
    \item We propose a consensus-based particle method, called SCB$^2$O, for nonconvex bi-level optimization problems. Unlike many existing convergence analyses for bi-level optimization, our framework does not require the lower-level solution set \(\Theta\) to be convex, nor does it require either the lower-level  or the upper-level objective to be convex or to satisfy a PL condition. Instead, the method relies on a soft-quantile mechanism to identify particles near the lower-level solution set and a weighting to select favorable upper-level candidates.

    \item We provide a quantitative convergence analysis for the finite-particle dynamics (Section \ref{sec:results}). In particular, we establish a localized high-probability finite-particle approximation bound for the particle estimator, obtained by combining moment-dependent consensus bounds, a cutoff-event propagation-of-chaos estimate, and finite-sample concentration. This extends existing mean-field convergence results for CBO-based bi-level optimization.

    %While prior work such as \emph{CB2O: Consensus-Based Bi-Level Optimization} establishes convergence of the mean-field law, its use of a hard-quantile mechanism prevents the derivation of finite-particle convergence guarantees for the interacting particle system.
\end{itemize}

\vspace{-.3cm}
\subsection{Related Work}
\vspace{-.1cm}

\textbf{Bi-level optimization.}
%It aims to minimize an upper-level objective over the solution set of a lower-level optimization problem, and has found broad applications in machine learning, including hyperparameter optimization \cite{mackay2019self}. 
A large body of work has studied convergence guarantees for bi-level optimization
under various curvature and smoothness assumptions. Most existing
analyses impose strong structural conditions on the lower-level problem to simplify the geometry of the lower-level solution set \cite{shen2023online,liang2023lower,cao2024accelerated}. 
%These assumptions substantially simplify the geometry of the lower-level solution set: strong convexity implies uniqueness of the lower-level minimizer, while PL-type conditions often enforce a highly regular solution landscape. 
Consequently, such settings do not fully capture bi-level problems whose lower-level solution set
$
\Theta 
$
is nonconvex, disconnected, or geometrically complex.

Several recent works have attempted to relax these requirements. One line of work weakens the global PL condition to a local PL condition near the lower-level solution~\cite{masiha2025superquantile}. Under suitable regularity assumptions, this allows \(\Theta\) to be characterized as a \(\mathcal C^2\) embedded submanifold of the ambient space \(\mathbb R^d\) without boundary. 
% Such results provide a more flexible geometric description of the lower-level solution set and motivate
% Langevin-type algorithms for bi-level optimization. 
However, the resulting convergence guarantees typically characterize convergence toward an invariant distribution and do not directly quantify the distance between the algorithmic output and the true bi-level solution.
Another line of work reformulates the bi-level problem as a single-level unconstrained problem by combining the lower-level and upper-level objectives through an adaptive penalty or weighting parameter \cite{cabot2005proximal, dutta2020algorithms}. This approach is algorithmically attractive because it avoids explicitly solving the lower-level problem at each iteration. Nevertheless, the theoretical guarantees for such reformulations usually rely on convexity or strong regularity assumptions to ensure that the penalized objective faithfully represents the original bi-level structure.

A related class of methods approximates the lower-level solution set through sublevel-set relaxations and then applies classical gradient-based methods to the resulting constrained or regularized problem \cite{beck2014first,jiang2023conditional}. These approaches can provide stability and convergence guarantees when the lower-level objective is convex or when the sublevel sets have favorable geometry. However, when \(\Theta\) is a general nonconvex set, sublevel-set approximations may be loose or geometrically misleading.
In contrast, we impose no convexity assumptions on the lower-level solution set $\Theta$ or on the objectives, nor do we require a PL condition, and we establish high-probability convergence guarantees for the finite-particle system.
%Instead, we develop a consensus-based particle dynamics that combines soft quantile selection for lower-level localization with a Gibbs-type upper-level selection mechanism, and we establish high-probability convergence guarantees for the resulting finite-particle system.

\textbf{Consensus-based optimization (CBO).} It is a multi-agent, derivative-free optimization framework originally introduced in \cite{pinnau2017consensus}. 
It has demonstrated strong empirical performance on challenging nonconvex and nonsmooth optimization problems. Due to its success in practice, it has attracted significant theoretical interest, and a growing body of work has established convergence guarantees for CBO both at the level of the mean-field regime \cite{borghi2023constrained,huang2024consensus} and in the finite-particle regime~\cite{fornasier2021consensus,fornasier2024consensus}.
These methods have also been applied to various practical problems, including phase retrieval \cite{carrillo2021consensus}, robust subspace detection \cite{carrillo2023consensus}, robotics \cite{sun2026consensus}, federated learning \cite{carrillo2024fedcbo}, and adversarial attacks \cite{roith2025consensus,garcia2025defending}.

The CBO framework has been extended in several directions beyond standard single-objective optimization. For instance, CBO-type methods have been developed for multi-objective optimization \cite{herty2025multiscale}, random mini-batch sampling-based
optimization \cite{carrillo2022consensus}, zero-sum game \cite{huang2026well} and multiplayer game \cite{chenchene2025consensus}. These extensions
illustrate the flexibility of consensus-based particle dynamics and their ability to incorporate different structural features of modern optimization problems. Among these developments, the work \cite{trillos2024cb} proposes a CBO-type algorithm for nonconvex bi-level optimization. 
Their method employs a consensus mapping based on a hard selection rule defined via a quantile of the lower-level objective values. The authors establish convergence guarantees for the corresponding mean-field dynamics. However, they also show that, due to discontinuity of their consensus mapping, standard Lipschitz-based propagation-of-chaos and numerical stability arguments do not directly apply to the hard-threshold consensus map.
Here, we address this limitation by introducing a novel soft consensus mapping and establishing high-probability convergence guarantees for the finite-particle system.

%In contrast, our method replaces the hard lower-level selection rule with a soft selection mechanism based on the lower-level objective value. This modification preserves a smoother dependence of the consensus point on the underlying particle distribution and enables a quantitative analysis of both the mean-field dynamics and the finite-particle system. In particular, we provide high-probability finite-particle convergence guarantees, which are not established in the existing CBO-based bi-level optimization framework.

\section{Problem Setting}\label{sec:problem}
\vspace{-.1cm}

We consider the bi-level optimization problem in \eqref{eq:bi-problem}, for which, we denote the optimum value of the lower-level by $L_{\min}:=\inf_{\theta\in\R^d}L(\theta)$ and introduce the following notations for a given $r>0$, $N_r(\Theta):=\big\{\theta\in\R^d:\dist(\theta,\Theta)\le r\big\}$ and $B_r(\theta^\star):=\big\{\theta\in\R^d:\norm{\theta-\theta^\star}_2\leq r\big\}$. 
We also denote by \(\mathcal{P}(\mathbb{R}^d)\) the set of probability measures on \(\mathbb{R}^d\), and by \(\mathcal{P}_k(\mathbb{R}^d)\) the subset consisting of those with finite \(k\)-th moments. We use $[N]:=\{1,\cdots,N\}$.

\subsection{Particle Systems and Diffusion Structure}
\vspace{-.1cm}
Suppose we are interested in minimizing the upper-level objective \(G(\theta)\) without considering any lower-level problem, i.e., 
$
\arg\min_{\theta \in \mathbb{R}^d} G(\theta).
$
In this case, the central idea of CBO is to introduce a population of agents (particles) that explore the space \(\mathbb{R}^d\) according to a prescribed stochastic dynamics. These agents are designed to interact in a way that gradually concentrates their distribution, ultimately leading them to reach consensus at the global minimizer  of \(G\).
More precisely, let $\{\Theta^1, \ldots, \Theta^N\}$ denote a finite collection of agents that explore the domain and collectively form a global consensus on the solution. 
Each agent $i \in [N]$ is modeled as a stochastic process, i.e., $\Theta^i = (\Theta^i_t)_{t \geq 0}$ with $\Theta^i_t \in \mathbb{R}^d$, whose time evolution is governed by the following SDE:
\begin{equation}\label{eq:dynamic_cbo}
    \Theta_0^i\sim \rho_0, \quad  \dd\Theta^{i}_{t}
    =-\lambda\cdot\big(\Theta^{i}_{t}-m(\rho^{N}_{t})\big)\dd t
    +\sigma D\big(\Theta^{i}_{t}-m(\rho^{N}_{t})\big)\dd W^i_t,
\end{equation}
where $\lambda, \sigma>0$ are user-specific parameters and $\{(W_t^i)_{t\geq0}\}_{i=1}^N$ are independent standard Brownian motions in $\mathbb R^d$.
The agents are initially distributed according to a common law $\rho_0$, and their empirical distribution at time $t$ is given by $\rho^{N}_{t}:= \tfrac{1}{N}\sum_i \delta_{\Theta_t^i}\in\mathcal{P}(\R^d)$. The mapping $m(\cdot)$ is a functional that computes a consensus point among the agents in $\mathbb R^d$ from their distribution.  
In this SDE, the first term is the \textit{drift} towards the instantaneous consensus $m(\rho^{N}_{t})$, which is computed as a weighted average of all agents and serves as an instantaneous proxy for the global minimizer $\theta^\star$,
%\begin{small}
\begin{equation}
    m(\rho^{N}_{t}):=\frac{\int_{\R^d}x\exp({-\alpha G(x)})\,\dd \rho^{N}_{t}(x)}{\int_{\R^d}\exp({-\alpha G(x)})\,\dd \rho_t^N(x)}=\frac{\sum\limits_{i=1}^{N}
\Theta_{t}^{i}\,\exp({-\alpha G(\Theta_{t}^{i}}))}{
\sum\limits_{j=1}^{N}
\exp({-\alpha G(\Theta_t^{j})})}.
\end{equation}    
%\end{small}
%where $\mathcal Z(\rho):=\int_{\R^d}\exp({-\alpha G(x)})\,\dd \rho(x)$, for a given distribution $\rho$.
\noindent Such a consensus is motivated by the well-known Laplace principle \cite{miller2006applied} and can be interpreted as an approximation of the discrete minimization problem
$
\arg\min_{i \in [N]} G(\Theta_t^i),
$
which becomes increasingly accurate as $\alpha \to \infty$, provided that a minimizer exists. 
% which states that, for any absolutely continuous probability distribution $\rho$ on $\mathbb R^d$, we have
% \begin{align*}
%     \lim_{\alpha\to \infty}\frac{1}{\alpha}\log\Big(\mathbb E_{x\sim\rho}\big[\exp\big(-\alpha G(x)\big)\big]\Big)=\lim_{\alpha\to \infty}\frac{1}{\alpha}\log\int \exp\big(-\alpha G(x)\big) d\rho(x) =-\inf_{x\in supp(\rho)}G(x).
% \end{align*}
The second term randomly diffuses agents according to a scaled Brownian motion and offers exploration over the domain. For a given $z \in \mathbb{R}^d$, two commonly studied diffusion types are the isotropic diffusion $D(z)=\|z\|_2 I_d$ \cite{pinnau2017consensus,carrillo2018analytical} and the anisotropic diffusion $D(z)=\mathrm{diag}(z)$ \cite{carrillo2021consensus}. In this paper, we adopt the isotropic diffusion, thus,
\begin{equation}
    D(z)D(z)^\top=\norm{z}_2^2\Id,
    \quad
    \Tr\big(D(z)D(z)^\top\big)=d\norm{z}_2^2,
    \quad
    \norm{D(z)-D(y)}_F\le \sqrt d\norm{z-y}_2.
    \label{eq:D-basic}
\end{equation}
Ideally, as a result of the above mechanism, the agents should eventually reach a near-optimal global consensus, i.e., $\rho_t^N$ is expected to converge over time to a Dirac mass concentrated at $\arg\min_{\theta} G(\theta)$.

\subsection{Soft Consensus-Based Bi-Level Optimization (SCB$^2$O)}\label{sec:cb2o}
\vspace{-.1cm}
In this section, we introduce a formulation of CBO for solving the bi-level optimization problem in \eqref{eq:bi-problem}. 
We build on the framework developed by \cite{trillos2024cb}, referred to as CB$^2$O, and introduce a new method, called SCB$^2$O, for which, in contrast to CB$^2$O, we are able to provide theoretical guarantees for the finite-particle system. 
The central idea is to consider an interacting system of particles, analogous to the original CBO framework \cite{carrillo2023consensus,pinnau2017consensus}, which explores the domain $\mathbb{R}^d$. However, unlike standard CBO, both CB$^2$O and our SCB$^2$O compute the consensus among the particles at time $t$ as follows. 

CB$^2$O computes the consensus point $m(\rho_t^N)$ in \eqref{eq:dynamic_cbo} at time $t$, first by evaluating the lower-level objective function $L$ for each of the $N$ particles, i.e., $\{L(\Theta_t^i)\}_{i=1}^N$, and then sorting the resulting values in non-decreasing order, i.e.,
\begin{small}
$
    L(\Theta_t^{\pi(1)})\leq L(\Theta_t^{\pi(2)})\leq \cdots\leq L(\Theta_t^{\pi(N)}),
$
\end{small}
where $\Theta_t^{\pi(i)}$ denotes the position of the particle with the $i$-th smallest lower-level objective value at time $t$.
Next, CB$^2$O selects the subset of particles from the sorted list with smallest values and computes the consensus using this subset. More precisely, for a parameter $\beta \in (0,1]$, let $\lceil \beta N \rceil$ denote the ceiling of $\beta N$, and define the $\beta$-quantile of the set $\{L(\Theta_t^i)\}_{i=1}^N$ as
$
q^{L}_\beta[\rho_t^N] = L\big(\Theta_t^{\pi(\lceil \beta N \rceil)}\big).
$
In words, $q^{L}_\beta[\rho_t^N]$ corresponds to the value of the lower-level function $L$ evaluated at the particle whose rank is such that exactly $\lceil \beta N \rceil$ particles have smaller  lower-level objective values. The consensus point is then computed by
\begin{small}
\begin{equation}
m_{\alpha,\beta}^{G,L}(\rho_t^N):=\frac{\sum\limits_{i=1}^{\lceil \beta N \rceil}
\Theta_{t}^{\pi(i)}\,\exp({-\alpha G(\Theta_{t}^{\pi(i)})})}{
\sum\limits_{j=1}^{\lceil \beta N \rceil}
\exp({-\alpha G(\Theta_t^{\pi(j)})})}=
\frac{\sum\limits_{i:\,L(\Theta_{t}^i)\le q_\beta^L[\rho_t^N]}
\Theta_{t}^i\,\exp({-\alpha G(\Theta_{t}^i)})}{
\sum\limits_{j:\,L(\Theta_t^j)\le q_\beta^L[\rho_t^N]}
\exp({-\alpha G(\Theta_t^j)})}.
\end{equation}    
\end{small}
\noindent It is noteworthy that the above definition of consensus is based on hard thresholding, i.e., it ignores $N - \lceil \beta N \rceil$ particles. This introduces a discontinuity in the consensus mapping and, consequently, in the dynamics. As a result, standard Lipschitz-based propagation-of-chaos and numerical stability arguments do not directly apply to the finite-particle system. To highlight this issue, as also discussed in Section 3.2 of \cite{trillos2024cb}, there exist settings in which two measures $\rho$ and $\tilde{\rho}$ are close in the Wasserstein sense, i.e., $W_2(\rho,\tilde{\rho}) = c$, but
$
\| m_{\alpha,\beta}^{G,L}(\tilde{\rho}) - m_{\alpha,\beta}^{G,L}(\rho) \|_2 = c_\alpha,
$
where $c_\alpha$ is a constant that depends on the hyperparameter $\alpha$ but is independent of $c$.

In this work, we introduce a soft consensus mapping to overcome this limitation, allowing us to derive convergence guarantees in both the mean-field and more importantly, the finite-particle settings to find the solution to \eqref{eq:bi-problem}.
Formally, let $\psi(\cdot)$ denote a selection function that satisfies Assumption \ref{ass:selector}. 
We define the soft $\beta$-quantile of  $\rho\in\mathcal{P}(\R^d)$, $q^L_{\beta,\tau}[\rho]$, as the unique solution (see Lemma \ref{lem:soft-quantile-existence}) of
\begin{equation}\label{eq:soft_quantile_def}
    \int_{\R^d} \psi\left(\frac{q-L(\theta)}{\tau}\right)\dd\rho(\theta)=\beta.
\end{equation}

\begin{assumption}\label{ass:selector}
The selector $\psi:\mathbb{R}\to(0,1)$ is of class $C^1$, strictly increasing with \(\psi'(z)>0\) for every \(z\in\mathbb R\), and Lipschitz continuous; that is, there exists a constant $L_\psi>0$ such that
$|\psi(u)-\psi(v)| \leq L_\psi |u-v|,$ for all $u,v \in \mathbb{R}.$
Moreover, $\lim_{u\to\infty}\psi(u)=1$, $\lim_{u\to -\infty}\psi(u)=0$, and there exist constants $c_-,C_- > 0$ such that, for all $z \geq 0$,
    \begin{equation}
        c_-e^{-z}\le \psi(-z)\le C_-e^{-z}.
        \label{eq:s_left_tail}
    \end{equation}
\end{assumption}

\begin{remark}
An example of $\psi(\cdot)$ is the sigmoid (softmax) function which continuously approximates a step function. 
Note that, when $\tau\to 0$, $\psi(\cdot/\tau)$ behaves like a step function, and the above equation becomes finding the $q$ such that $ \int_{L(\theta)\le q} 1 \dd\rho(\theta) =\beta$. In this case, $q^L_{\beta,\tau}[\rho_t^N]$ is equivalent to the CB$^2$O's $\beta$-quantile, $q^{L}_\beta[\rho_t^N]$.
%In the general case of $\tau > 0$, the integration can be written as $\int_{L(\theta) \ge q} \psi^{(-)} \rho(d\theta) + \int_{L(\theta)\le q} \psi^{(+)}  \rho(d\theta) =\beta$
% {\color{red} To match this with the CB2O, dont you need the measure $\rho_t^N$, i.e., 
% $$
% \int_{L(\theta)\le q} 1 \dd\rho_t^N(\theta)=\frac{1}{N}\sum_{i:L(\theta^i_t)\le q}1 =\beta
% $$
% }
% {\color{blue} No. We rather have $
%     \int_{\mathbb R^d}J^L_{\beta,\tau}[\rho](\dd \theta)=\beta.
% $}
\end{remark}

% \begin{remark}
% $\frac{1}{\tau}$ can be understood as a temperature like hyperparameter controlling extent of this approximation. The relationship between $\alpha_L=\frac{1}{\tau}$ and $\alpha$ (see below) decides which softmax term dominates the other. The multiplication of two softmax can be visually thought of as amplitude modulation.
% \end{remark}

\begin{lemma}[Existence and uniqueness of the soft quantile]\label{lem:soft-quantile-existence}
Under Assumption~\ref{ass:selector}, for every $\rho\in\Pcal(\R^d)$, Equation \eqref{eq:soft_quantile_def} has a unique solution.
\end{lemma}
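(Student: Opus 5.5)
Define $F_\rho(q):=\int_{\R^d}\psi\big(\frac{q-L(\theta)}{\tau}\big)\dd\rho(\theta)$ for $q\in\R$. We will show that $F_\rho$ is a continuous, strictly increasing bijection from $\R$ onto $(0,1)$. Then, for every $\beta\in(0,1)$, the equation $F_\rho(q)=\beta$ has exactly one solution.

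\emph{Well-definedness and continuity.} The integrand lies in $(0,1)$ and is measurable in $\theta$, with $L$ taken measurable as throughout. Since $\rho$ is a probability measure, $F_\rho(q)\in[0,1]$ for every $q$. Continuity in $q$ follows from dominated convergence with dominating function $1$: for fixed $\theta$, the map $q\mapsto\psi((q-L(\theta))/\tau)$ is continuous. One could also use the Lipschitz bound $|F_\rho(q)-F_\rho(q')|\le L_\psi|q-q'|/\tau$.

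\emph{Strict monotonicity.} For $q<q'$, strict monotonicity of $\psi$ gives $\psi((q'-L(\theta))/\tau)-\psi((q-L(\theta))/\tau)>0$ for every $\theta$. The integral of a strictly positive function against a probability measure is strictly positive, because the sets where the integrand exceeds $1/n$ increase to all of $\R^d$, so one of them has positive mass. Hence $F_\rho(q')>F_\rho(q)$. The same argument shows $F_\rho(q)\in(0,1)$ strictly, since $0<\psi<1$ pointwise. This gives uniqueness.

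\emph{Limits at $\pm\infty$.} As $q\to+\infty$, $\psi((q-L(\theta))/\tau)\to1$ pointwise because $L(\theta)$ is finite. Dominated convergence then gives $F_\rho(q)\to1$. Similarly, as $q\to-\infty$, $F_\rho(q)\to0$ using $\lim_{u\to-\infty}\psi(u)=0$. This is the only point that needs care: no integrability of $L$ is assumed, and none is needed, since pointwise convergence and the bound $1$ suffice. Combining these limits with continuity, the intermediate value theorem gives existence of $q$ with $F_\rho(q)=\beta$ for every $\beta\in(0,1)$, and uniqueness follows from strict monotonicity.

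\emph{The endpoint $\beta=1$.} The paper allows $\beta\in(0,1]$, but $F_\rho<1$ everywhere, so $F_\rho(q)=1$ has no solution. The lemma must therefore be read with $\beta\in(0,1)$ in the soft setting, and I would state that restriction explicitly. This is the main subtlety; the analytic content above is routine.
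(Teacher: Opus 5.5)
Your proof is correct and follows the same route as the paper: continuity and the limits $0$ and $1$ via dominated convergence, existence by the intermediate value theorem for $\beta\in(0,1)$, and uniqueness from the strict monotonicity of $\psi$ integrated against $\rho$. Your extra justification that a pointwise strictly positive integrand has strictly positive integral is sound. So is your remark that $\beta=1$ must be excluded; the paper does this implicitly, since it takes $\beta\in(0,1)$ for the soft consensus and its proof invokes $\beta\in(0,1)$.
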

All detailed proofs are presented in Appendix \ref{app:proofs}.
When no confusion arises, we simplify the notation by writing \( q_\rho := q^L_{\beta,\tau}[\rho] \). Let
\begin{equation}
    \eta_{\rho,\tau}(\theta):=\psi\Big(\frac{q_\rho-L(\theta)}{\tau}\Big),
    \qquad
    J^L_{\beta,\tau}[\rho](\!\dd \theta):=\eta_{\rho,\tau}(\theta)\dd\rho( \theta).
\end{equation}
Note that by the definition of the soft quantile, we have
$
    \int_{\mathbb R^d}J^L_{\beta,\tau}[\rho](\!\dd\theta)=\beta.
$
Given the above definitions, we define our \textit{soft consensus mapping} for a measure $\rho$ as 
\begin{equation}\label{eq:soft-consensus}
   m^{G,L}_{\alpha,\beta,\tau}(\rho)=\frac{A(\rho)}{B(\rho)}:=\frac{\int_{\R^d}\theta w_\alpha(\theta)\,\dd J^L_{\beta,\tau}[\rho](\theta)}{\int_{\R^d}w_\alpha(\theta)\,\dd J^L_{\beta,\tau}[\rho](\theta)},
\end{equation}
where $\alpha>0$, $\beta\in(0,1)$, and $\tau>0$.
% , and 
% \begin{equation*}
%     A(\rho):=\int_{\R^d}\theta w_\alpha(\theta)\,\dd J^L_{\beta,\tau}[\rho](\theta),
%     \qquad
%     B(\rho):=\int_{\R^d}w_\alpha(\theta)\,\dd J^L_{\beta,\tau}[\rho](\theta).
% \end{equation*}
The interacting particle system with the above soft consensus mapping is given by
\begin{equation} \label{eq:interacting_sde}
    \dd\Theta^{i,N}_{t,\tau}
    =-\lambda\cdot \big(\Theta^{i,N}_{t,\tau}-m^{G,L}_{\alpha,\beta,\tau}(\rho^{N}_{t,\tau})\big)\dd t
    +\sigma D\big(\Theta^{i,N}_{t,\tau}-m^{G,L}_{\alpha,\beta,\tau}(\rho^{N}_{t,\tau})\big)\dd W^i_t, \quad i\in[N],
\end{equation}
where
$
    \rho^{N}_{t,\tau}:=\frac1N\sum_{i=1}^N\delta_{\Theta^{i,N}_{t,\tau}}.
$
Here, $\{(\Theta^{i,N}_{t,\tau})_{t\geq0}\}_{i=1}^N$ denote the $N$ agents (particles) at different times for a fixed parameter $\tau$. 
Next, we study the setting where the number of agents tends to infinity, called the mean-field regime.

\textbf{Mean-field Approximation.}
A theoretical analysis of the SCB$^2$O dynamics can be performed either at the microscopic level of the interacting particle system~\eqref{eq:interacting_sde}, e.g., \cite{bellavia2025discrete}, or, following recent works~\cite{fornasier2022convergence,grassi2023mean,carrillo2021consensus}, at the macroscopic level via the mean-field limit describing the evolution of the agent density, or through a combination of both approaches. The mean-field system is given by
\begin{equation}   \label{eq:mf-sde}
    \dd\Theta_{t,\tau}
    \!=\!-\!\lambda\cdot\big(\Theta_{t,\tau}\!-\!m^{G,L}_{\alpha,\beta,\tau}(\rho_{t,\tau})\big)\dd t
    +\sigma D\big(\Theta_{t,\tau}\!-\!m^{G,L}_{\alpha,\beta,\tau}(\rho_{t,\tau})\big)\dd W_t,
    \quad
    \rho_{t,\tau}:=\Law(\Theta_{t,\tau}).
\end{equation}
The mean-field approach enables the use of more flexible analytical tools to characterize the average behavior of the agents through the evolution of the measure $(\rho_{t,\tau})_{t \ge 0}$ which is described by the Fokker--Planck equation, 
\begin{align}\notag
\partial_t\rho_{t,\tau}
&=
\lambda\,\mathrm{div}\!\left(
\big(\theta\!-\!m^{G,L}_{\alpha,\beta,\tau}(\rho_{t,\tau})\big)\rho_{t,\tau}
\right)\\ \label{eq:fokker}
&+
\frac{\sigma^2}{2}
\!\sum_{k=1}^d
\partial_{kk}^2
\left(
[D(\theta-m^{G,L}_{\alpha,\beta,\tau}(\rho_{t,\tau}))
D(\theta-m^{G,L}_{\alpha,\beta,\tau}(\rho_{t,\tau}))^\top]_{kk}
\rho_{t,\tau}
\right).
\end{align}
For every \(N\in\mathbb N\), we assume that one can realize i.i.d. realizations $\bar\Theta^1_{t,\tau},\ldots,\bar\Theta^N_{t,\tau}$ 
on the same probability $\Law(\bar\Theta^i_{t,\tau})=\rho_{t,\tau}$, for $t\in[0,T]$, driven by the same
initial data and Brownian motions, i.e.,
$
\bar\Theta^i_{0,\tau}\!=\!\Theta^{i,N}_{0,\tau},
$
for $i\!\in\! [N]$.
% Let $\bar\Theta^1_{t,\tau}, \dots, \bar\Theta^N_{t,\tau}$ denote $N$ i.i.d. copies of the mean-field process with common law $\rho_{t,\tau}$, coupled synchronously with the above interacting particle system through the same initial conditions and Brownian motions. 
We can then define the corresponding empirical measure using the realizations by
$
\bar\rho^N_{t,\tau}\!:=\!\frac1N\!\sum_{i=1}^N\delta_{\bar\Theta^i_{t,\tau}}.
$
However, a purely mean-field analysis may be less representative in finite-agent settings. In this work, we provide both the mean-field and finite-particle analysis. 
%where the corresponding theoretical guarantees may no longer apply.

\textbf{Discrete Dynamic.}
The discrete dynamic of the above SDE is described by the Euler--Maruyama \cite{higham2001algorithmic} scheme with $K\Delta t=T$ and it is given by
\begin{equation}
    \theta^i_{k+1,\tau}
    =\theta^i_{k,\tau}
    -\lambda\Delta t\big(\theta^i_{k,\tau}-m^{G,L}_{\alpha,\beta,\tau}(\rho^N_{k,\tau})\big)
    +\sigma\sqrt{\Delta t}\,D\big(\theta^i_{k,\tau}-m^{G,L}_{\alpha,\beta,\tau}(\rho^N_{k,\tau})\big)B^i_k,\quad i\in[N],
    \label{eq:em_scheme}
\end{equation}
where $(B_k^i)_{i=1}^N$ are i.i.d. standard normal random vectors, each distributed as $\mathcal{N}(0, \mathrm{Id})$. The corresponding empirical measure is defined by
$
    \rho^N_{k,\tau}:=\frac1N\sum_{i=1}^N\delta_{\theta^i_{k,\tau}}.
$

Although this dynamic is the one used in practice to solve the problem in \eqref{eq:bi-problem}, no theoretical convergence guarantees had been established prior to this work. In this work, under a set of mild assumptions (discussed below) that are frequently satisfied in real-world settings, we prove its convergence.

%\paragraph{Assumptions.}
%Here, we present the conditions required to establish theoretical convergence guarantees and discuss their significance.

\begin{assumption}\label{ass:primitive}We assume that the bi-level solution $
\theta^\star \in \arg\min_{\theta\in\Theta}G(\theta)$
is unique and the following conditions hold.
\begin{enumerate}[leftmargin=*, align=left]%[leftmargin=18pt]
\renewcommand{\labelenumi}{\Roman{enumi}.}
\item \textbf{Lower-level objective.}
Function $L$ is globally Lipschitz with constant $L_L$. The global minimum value is
$L_{\min}:=\inf_{x\in\R^d}L(x)$ and $\Theta$ is nonempty. 
% Fix once and for all a target point $\theta^\star\in\Theta$. 
We assume that $L$ is bounded from above: there exists $L_{\max}<\infty$ such that 
$
L_{\min}\le L(\theta)\le L_{\max}$ for all $\theta\in\R^d.
$
Moreover, there exist $\eta_L>0$, $\nu_L>0$, and $L_\infty>0$ such that
\begin{equation}
    \dist(\theta,\Theta)\le \eta_L^{-1}\big(L(\theta)-L_{\min}\big)^{\nu_L}
    \qquad\text{whenever }L(\theta)-L_{\min}\le L_\infty.
    \label{eq:L-growth}
\end{equation}

\item \textbf{Upper-level objective.}
Function $G$ is globally Lipschitz with constant $L_G$ and bounded: there exist constants $G_{\min},G_{\max}\in\R$ such that
$
G_{\min}\le G(\theta)\le G_{\max}$ for all $\theta\in\R^d.
$

\item \textbf{Upper-level local growth near lower-level minimizers.}
There exists $R_G>0$ such that, for every $r_G\in(0,R_G]$, there is
$\thetae_{r_G}\in  B_{r_G}(\theta^\star)$ satisfying
$
    G(\thetae_{r_G})=\min_{\theta\in N_{r_G}(\Theta)}G(\theta).
$
Moreover, there exist $\eta_G>0$, $\nu_G>0$, and $G_\infty>0$, independent of $r_G$, such that
\begin{equation}
    \norm{\theta-\thetae_{r_G}}_2
    \le \eta_G^{-1}\big(G(\theta)-G(\thetae_{r_G})\big)^{\nu_G}
    \label{eq:G-growth}
\end{equation}
whenever $\theta\in N_{r_G}(\Theta)$ and $G(\theta)-G(\thetae_{r_G})\le G_\infty$.

\item \textbf{Initialization.}
The initial particles
$\Theta_{0,\tau}^{1,N},\ldots,\Theta_{0,\tau}^{N,N}$ are i.i.d. with common law $\rho_0\in\mathcal{P}_4(\R^d)$. 
%The mean-field copies are synchronously coupled with the interacting particles by setting $\bar\Theta_{0,\tau}^i=\Theta_{0,\tau}^{i,N},$ for $ i\in[N]$ and by driving $\Theta_{t,\tau}^{i,N}$ and $\bar\Theta_{t,\tau}^i$ with the same Brownian motion $W^i$. 

% \item \textbf{Initialization and synchronous coupling.}
% For every $N\in\N$, the initial particles
% $\Theta_{0,\tau}^{1,N},\ldots,\Theta_{0,\tau}^{N,N}$ are i.i.d. with common law $\rho_0\in\mathcal{P}_4(\R^d)$. The mean-field copies are synchronously coupled with the interacting particles by setting
% $\bar\Theta_{0,\tau}^i=\Theta_{0,\tau}^{i,N},$ 
% for 
% $ i\in[N]$ and by driving $\Theta_{t,\tau}^{i,N}$ and $\bar\Theta_{t,\tau}^i$ with the same Brownian motion $W^i$. 

\item \textbf{Mean-field well-posedness.}
For every finite horizon \(T>0\) and every fixed parameter tuple
\((\alpha,\beta,\tau,\lambda,\sigma)\), the mean-field SDE
\eqref{eq:mf-sde} admits a weak solution \((\Theta_{t,\tau})_{t\in[0,T]}\)
with continuous paths and law
$
\rho_{t,\tau}:=\Law(\Theta_{t,\tau})
\in C([0,T],\mathcal P_4(\mathbb R^d)).
$
Moreover, \((\rho_{t,\tau})_{t\in[0,T]}\) satisfies the associated weak
Fokker--Planck formulation. 
\end{enumerate}
\end{assumption}

\begin{remark}
     Assumption \ref{ass:primitive} I. is called \textbf{Hölderian error bound} and it imposes that the objective is locally $h_L$-Hölder continuous around $\Theta$. It ensures that the lower-level values on suitable sublevel sets provide quantitative information about the distance of the corresponding points to the set of global minimizers. This condition appears in \cite{jiang2023conditional,cao2024accelerated,chen2024penalty}
\end{remark}

\begin{remark}[Only the derived estimates are essential]
The assumptions $G \le G_{\max}$ and $L \le L_{\max}$ serve as sufficient, but not necessary, conditions for the convergence proof. In the analysis, the condition $G \le G_{\max}$ is utilized exclusively through the consensus-moment estimate (see \eqref{eq:consensus-moment}), and $L \le L_{\max}$ only via the inverse-stability estimate (see \eqref{eq:q-inverse-stability}). Consequently, the results extend to any set of assumptions that yield these two estimates. 
%For instance, in our experiment, we consider such an objective function that is not bounded.
\end{remark}

\begin{remark}[Boundedness by objective-value clipping]
The boundedness assumptions on the lower- and upper-level objectives are
sufficient conditions for the proof. In practice, one often has a priori admissible objective levels obtained from a feasible or reference solution.
One may therefore replace the original objectives by their capped versions, $G_{\rm cap}(x):=\min\{G(x),G_{\max}\},
L_{\rm cap}(x):=\min\{L(x),L_{\max}\}$.
Equivalently, whenever an evaluated objective value exceeds the prescribed upper level, it can be clipped to that level. 
\end{remark}

\begin{remark}
Assumption \ref{ass:primitive} III. is called inverse continuity condition. It states that, inside \(N_{r_G}(\Theta)\), points with
upper-level objective value close to the local minimum value
\(G(\tilde\theta_{r_G})\) must also be close in distance to
\(\tilde\theta_{r_G}\). Equivalently, the upper-level objective gap
\(G(\theta)-G(\tilde\theta_{r_G})\) quantitatively controls the distance from \(\theta\) to the selected local minimizer \(\tilde\theta_{r_G}\). This prevents the upper-level objective from having flat low-value regions, far from
\(\tilde\theta_{r_G}\), inside the relevant lower-level neighborhood. This type of condition appears in \cite{fornasier2021consensus,fornasier2024consensus,riedl2024leveraging}.
\end{remark}

% Regarding the well-posedness of the mean-field assumption, it is noteworthy that \cite{trillos2024cb} proves the existence of a stochastic process whose law evolves according to the mean-field dynamics and satisfies the associated Fokker--Planck equation in the limit $\tau \to 0$. 

Assumption \ref{ass:primitive} V is used as a standing well-posedness condition for the mean-field dynamics. We do not prove mean-field well-posedness in full generality. Instead, this assumption allows us to focus on the convergence mechanism induced by the soft-quantile consensus map. Related well-posedness results have been established for standard CBO and CB\(^2\)O under comparable regularity assumptions \cite{fornasier2024consensus,trillos2024cb}.

\section{Main Theoretical Results}\label{sec:results}
%\vspace{-.1cm}

We now state the global mean-field convergence result for SCB$^2$O dynamics. 

\begin{theorem}[Convergence of the direct soft-quantile mean-field dynamics]\label{thm:direct-mf-convergence}
Under Assumptions~\ref{ass:primitive} and \ref{ass:selector} suppose that \(\theta^\star\in\operatorname{supp}(\rho_0)\). Fix any 
$\varepsilon\in\big(0,\frac12 W_2^2(\rho_0,\delta_{\theta^\star})\big),$ 
and $\vartheta\in(0,1)$, 
choose \(\lambda,\sigma>0\) with 
$
2\lambda>d\sigma^2,
$
and define the time horizon
\begin{equation*}
    T^\star
    :=
    \frac{1}{(1-\vartheta)(2\lambda-d\sigma^2)}
    \log\left(
        \frac{W_2^2(\rho_0,\delta_{\theta^\star})/2}
        {\varepsilon}
    \right).
\end{equation*}
Then there exist \(\tau_0>0\) and mappings
$
\tau\mapsto \beta_\tau\in(0,1),
$
and 
$
\tau\mapsto \alpha_\tau>0,
$
defined for \(\tau\in(0,\tau_0]\), such that the following holds: 
For every \(\tau\in(0,\tau_0]\), let
$
    \rho=(\rho_{t,\tau})_{t\in[0,T^\star]}
    \in C([0,T^\star],\mathcal P_4(\mathbb R^d)),
$
be a weak solution (see Definition \ref{def:weak-solution}) of the mean-field Fokker--Planck equation in~\eqref{eq:fokker}, with initial law \(\rho_0\), and with consensus point computed using the parameter tuple
$
    (\alpha_\tau,\beta_\tau,\tau,\lambda,\sigma).
$
Assume also that the mapping  $t\mapsto m^{G,L}_{\alpha_\tau,\beta_\tau,\tau}(\rho_{t,\tau})$ is continuous on $[0,T^\star]$.
Then there exists a time $T\in\left[\frac{1-\vartheta}{1+\vartheta/2}T^\star,T^\star\right]$ such that
\begin{equation*}
     \frac12
    W_2^2(\rho_{T,\tau},\delta_{\theta^\star})
    =
    \varepsilon.
\end{equation*}
Furthermore, for  \(t\in[0,T]\), $W_2^2(\rho_{t,\tau},\delta_{\theta^\star})$ decays at least exponentially fast, i.e.,
\begin{equation*}
    W_2^2(\rho_{t,\tau},\delta_{\theta^\star})
    \le
    W_2^2(\rho_0,\delta_{\theta^\star})
    \exp\left(
        -(1-\vartheta)(2\lambda-d\sigma^2)t
    \right).
\end{equation*}
\end{theorem}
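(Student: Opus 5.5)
We follow the standard CBO mean-field strategy of \cite{fornasier2024consensus,trillos2024cb}: work with the energy functional $\mathcal V(\rho_{t,\tau}):=\frac12 W_2^2(\rho_{t,\tau},\delta_{\theta^\star})=\frac12\int\norm{\theta-\theta^\star}_2^2\dd\rho_{t,\tau}(\theta)$ and derive a differential inequality from the weak Fokker--Planck formulation, tested with $\phi(\theta)=\frac12\norm{\theta-\theta^\star}_2^2$. The test function has quadratic growth, so we justify it by truncation, using $\rho\in C([0,T^\star],\mathcal P_4)$. Writing $m_t:=m^{G,L}_{\alpha_\tau,\beta_\tau,\tau}(\rho_{t,\tau})$ and using \eqref{eq:D-basic}, we obtain
\begin{equation*}
\frac{\dd}{\dd t}\mathcal V(\rho_{t,\tau})
\le -(2\lambda-d\sigma^2)\mathcal V(\rho_{t,\tau})
+\sqrt2\big(\lambda+d\sigma^2\big)\sqrt{\mathcal V(\rho_{t,\tau})}\,\norm{m_t-\theta^\star}_2
+\frac{d\sigma^2}{2}\norm{m_t-\theta^\star}_2^2 .
\end{equation*}
This estimate comes from expanding $\norm{\theta-m_t}^2=\norm{\theta-\theta^\star}^2+2\langle\theta-\theta^\star,\theta^\star-m_t\rangle+\norm{\theta^\star-m_t}^2$ and applying Cauchy--Schwarz. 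Hence, whenever
\begin{equation*}
\norm{m_t-\theta^\star}_2\le c(\vartheta,\lambda,\sigma,d)\sqrt{\varepsilon}
\end{equation*}
holds with a suitably small constant $c$, and as long as $\mathcal V(\rho_{t,\tau})\ge\varepsilon$, we get $\frac{\dd}{\dd t}\mathcal V\le-(1-\vartheta)(2\lambda-d\sigma^2)\mathcal V$. Gr\"onwall's inequality then gives the exponential decay claimed in the theorem.

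The core of the proof is a quantitative Laplace principle for the soft consensus point. We need to show: for every $r>0$ there are $\tau_0$ and choices $\beta_\tau,\alpha_\tau$ such that $\norm{m^{G,L}_{\alpha,\beta,\tau}(\rho)-\theta^\star}_2\le r$ for every $\rho$ in the relevant class. That class is $\{\rho:\ \mathcal V(\rho)\le\mathcal V(\rho_0)\}$ together with a uniform-in-time lower bound on $\rho(B_{s}(\theta^\star))$. We would prove this in three steps.

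\emph{Step 1: the soft quantile is close to $L_{\min}$.} Using \eqref{eq:soft_quantile_def} and the tail bound \eqref{eq:s_left_tail}, a lower bound $\rho(B_s(\theta^\star))\ge p$, combined with $L\le L_{\min}+L_L s$ on that ball, forces $q_\rho\le L_{\min}+L_Ls+\tau\,\psi^{-1}$-type corrections when $\beta_\tau\le p\,\psi(\cdot)$. So we choose $\beta_\tau$ small relative to $p$.

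\emph{Step 2: the mass of $J^L_{\beta,\tau}[\rho]$ outside a sublevel set is exponentially small.} Outside $\{L\le q_\rho+\delta\}$ the weight satisfies $\eta_{\rho,\tau}\le C_-e^{-\delta/\tau}$, so that part of $J$ carries mass at most $C_-e^{-\delta/\tau}$. Inside the set, the error bound \eqref{eq:L-growth} puts points in $N_{r_G}(\Theta)$ with $r_G=\eta_L^{-1}(L_Ls+\delta+\dots)^{\nu_L}$.

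\emph{Step 3: the standard CBO Laplace estimate for $A/B$.} We run the argument of \cite{fornasier2024consensus} with respect to $J$ and with $\thetae_{r_G}$ from Assumption~\ref{ass:primitive} III, using \eqref{eq:G-growth}. The bound splits $\norm{m-\thetae_{r_G}}$ into a local part, controlled by $\eta_G^{-1}(\cdot)^{\nu_G}$, and a far part. The far part is bounded by $e^{-\alpha(\cdot)}/J(B_{s'}(\thetae))$ times a second moment, plus $e^{-\alpha(G_{\max}-G_{\min})}C_-e^{-\delta/\tau}/J(B_{s'})$ times a moment bound. That last term is where $G\le G_{\max}$ enters. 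We then add $\norm{\thetae_{r_G}-\theta^\star}\le r_G$.

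Choosing $\delta=\sqrt\tau$ and $\alpha_\tau\to\infty$ slowly enough that $e^{\alpha_\tau(G_{\max}-G_{\min})}e^{-\delta/\tau}\to0$ makes all error terms vanish as $\tau\to0$.

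The main obstacle is the uniform-in-time lower bound on $\rho_{t,\tau}(B_s(\theta^\star))$ over $[0,T^\star]$, which Steps 1 and 3 both need. We would follow the mollifier argument of \cite{fornasier2024consensus}. Test the weak formulation with a smooth bump $\phi_s$ supported on $B_s(\theta^\star)$ and show $\frac{\dd}{\dd t}\int\phi_s\dd\rho_{t,\tau}\ge-a\int\phi_s\dd\rho_{t,\tau}$, with $a$ depending only on $\lambda,\sigma,d,s$ and an a priori bound $\sup_t\norm{m_t}_2\le B$. This yields $\rho_{t,\tau}(B_s)\ge e^{-aT^\star}\int\phi_s\dd\rho_0>0$, where positivity comes from $\theta^\star\in\operatorname{supp}(\rho_0)$.

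The bound on $\norm{m_t}$ comes from the moment estimate $\norm{m(\rho)}_2\le \beta^{-1}e^{\alpha(G_{\max}-G_{\min})}\int\norm{\theta}\dd\rho$ together with $\mathcal V(\rho_{t,\tau})\le\mathcal V(\rho_0)$ before the hitting time. There is a circularity here, since the parameters depend on $p$ and $p$ depends on $B$, which depends on $\alpha,\beta$. We would break it the standard way: first derive a crude bound on $\norm{m_t}$ from the consensus-moment estimate with $\alpha,\beta$ only, since $a$ grows with $B$ but $T^\star$ is fixed. Failing that, we would use the rough bound $\norm{m_t-\theta^\star}\le\sqrt{2\mathcal V(\rho_0)}+\dots$, valid for the normalized measure $J/\beta$, which does not depend on $\alpha$.

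Finally, define $T:=\inf\{t:\mathcal V(\rho_{t,\tau})\le\varepsilon\}$ and argue by continuity, using the assumed continuity of $t\mapsto m_t$ and $\rho\in C([0,T^\star],\mathcal P_4)$. On $[0,T]$ the decay inequality holds, so $T\le T^\star$ and $\mathcal V(\rho_{T,\tau})=\varepsilon$.

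For the lower bound on $T$, the same differential inequality gives $\frac{\dd}{\dd t}\mathcal V\ge-(1+\vartheta/2)(2\lambda-d\sigma^2)\mathcal V$ when the consensus error is small. Hence $\mathcal V$ cannot reach $\varepsilon$ before $\frac{1-\vartheta}{1+\vartheta/2}T^\star$.
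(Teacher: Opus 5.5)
Your skeleton (Lyapunov inequality, soft Laplace principle, bump-function mass bound, hitting-time argument) matches the paper's. However, there is a genuine gap exactly at the circularity you flag, and neither of your proposed fixes closes it.

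\emph{First fix.} The crude consensus-moment bound gives $B:=\sup_t\norm{m_t-\theta^\star}_2\sim\beta^{-1}e^{\alpha(G_{\max}-G_{\min})}$ times a moment of $\rho_{t,\tau}$. You can only control that moment through the decay of $\mathcal V$ you are trying to prove, or through a Gronwall bound with $C_m^4$ in the exponent. The bump rate grows like $\sigma^2B^2/s^2$ (the $K_1$ term in Proposition~\ref{prop:mass-lower-bound}). So the mass bound is of order $\exp(-C\beta^{-2}e^{2\alpha(G_{\max}-G_{\min})}T^\star)$. The resulting bound on the Laplace error, $e^{-\alpha u}/(c_{\rm in}\,p)$, then blows up as $\alpha\to\infty$. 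A fixed $T^\star$ does not help, and your Step~1 requirement $\beta_\tau\lesssim p$ becomes self-referential because $p$ itself shrinks like $\exp(-C\beta^{-2}\cdots)$.

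\emph{Second fix.} This one rests on a false claim. $m$ is a Gibbs average over $J$, not the mean of $J/\beta$. It can sit on a set of arbitrarily small $\rho$-mass where $G<G(\theta^\star)$. Hence no bound of the form $\norm{m(\rho)-\theta^\star}_2\le\sqrt{2\mathcal V(\rho)}+\dots$ holds uniformly in $\alpha$. For the same reason, your final step (``on $[0,T]$ the decay inequality holds'') is circular as written.

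\emph{The missing idea} is to bootstrap the consensus condition itself. The paper defines $T_{\alpha,\beta,\tau}:=\sup\{t\le T^\star:\ \mathcal V(\rho_{s,\tau})>\varepsilon\text{ and }\norm{m_s-\theta^\star}_2<c_\vartheta\sqrt{\mathcal V(\rho_{s,\tau})}\ \forall s\le t\}$.
\begin{itemize}
\item On this interval $\mathcal V$ decreases, so $\norm{m_s-\theta^\star}_2\le c_\vartheta\sqrt{\mathcal V(\rho_0)}=:\bar B$, which is independent of $\alpha,\beta,\tau$.
\item The bump argument on a fixed ball $B_{\bar r}(\theta^\star)$ then gives a parameter-free mass bound $\bar m$, and only then are $\beta_\tau,\alpha_\tau$ chosen in terms of $\bar m$.
\item The soft Laplace estimate returns $\norm{m_s-\theta^\star}_2\le\frac{c_\vartheta}{2}\sqrt\varepsilon+\frac{c_\vartheta}{2}\sqrt{\mathcal V(\rho_{s,\tau})}$, a strict improvement.
\item By continuity of $t\mapsto m_t$, the bootstrap can therefore end only when $\mathcal V$ hits $\varepsilon$ or at $T^\star$.
\end{itemize}
Your absolute condition $\norm{m_t-\theta^\star}_2\le c\sqrt\varepsilon$ would serve equally well as the bootstrap hypothesis.

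\emph{Parameter coupling.} Even with the bootstrap in place, your coupling $\delta=\sqrt\tau$ with slowly growing $\alpha_\tau$ does not work. Since $q_\rho$ can be as low as $L_{\min}+\tau\psi^{-1}(\beta)$, the selection weight on $B_{\bar r}(\theta^\star)$ is only bounded below by $\psi(\psi^{-1}(\beta)-L_L\bar r/\tau)\approx e^{-L_L\bar r/\tau}$. So $J(B_{\bar r}(\theta^\star))$ is exponentially small in $1/\tau$. Two consequences follow:
\begin{itemize}
\item Making $e^{-\alpha u}/J(B_{\bar r}(\theta^\star))$ small forces $\alpha_\tau\gtrsim L_L\bar r/(u\tau)$, not a slowly growing $\alpha_\tau$.
\item With that $\alpha_\tau$, the leakage term $e^{\alpha(G_{\max}-G_{\min})}\psi(-\delta/\tau)/J(B_{\bar r}(\theta^\star))$ vanishes only for a fixed level gap $\delta>(1+(G_{\max}-G_{\min})/u)L_L\bar r$. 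With $\delta=\sqrt\tau$ it diverges.
\end{itemize}
This is exactly what the paper's choice $\alpha_\tau=u^{-1}\log(K/\Gamma_\tau)$ with $\Gamma_\tau\gtrsim e^{-L_L\bar r/\tau}$, together with the condition $\delta_{\rm lev}-\gamma L_L\bar r>0$, encodes.
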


Proof is in Appendix \ref{pp:thm:direct-mf-convergence}. For a discussion of the choice of the central hyperparameters $\alpha,\beta,$ etc, admissible choices with arbitrarily small \(\beta\) and arbitrarily large \(\alpha\) are allowed. We refer to the detailed discussions and selection rule in the \cref{rem:auxiliary-mass-alpha-beta} and \cref{rem:selection-rule-alpha-beta}. 
The assumption $\theta^\star \in \operatorname{supp}(\rho_0)$ on the initial configuration $\rho_0$ is not a real restriction, since it holds for $\rho_t$ for any $t>0$ due to the diffusive nature of the dynamics; see also \cite{fornasier2024consensus,fornasier2025regularity}.
The dimensional dependence in the condition $2\lambda > d\sigma^2$ on the drift and noise parameters can be removed by introducing anisotropic noise \cite{carrillo2021consensus}, which leads to the weaker requirement $2\lambda > \sigma^2$. Such conditions may be further relaxed by truncating the noise; see \cite{fornasier2025consensus}.
The convergence rate is at least $(1-\vartheta)(2\lambda - d\sigma^2)$. Hence, by taking $\alpha \to \infty$ (which allows $\vartheta \to 0$), the rate can be made arbitrarily close to $2\lambda - d\sigma^2$.

% This result requires that $\rho \in C([0,T^\ast], \mathcal{P}_4(\mathbb{R}^d))$ is a weak solution to \eqref{eq:fokker} with initial condition $\rho_0$, and that the mapping $t \mapsto m^{G,L}_{\alpha,\beta,\tau}(\rho_t)$ is continuous. 
% This is also not restrictive, for example
% Theorem~2.4 in \cite{trillos2024cb} shows that solutions to the mean-field Fokker--Planck equation with the required regularity, for which the map $t \mapsto m^{G,L}_{\alpha,\beta,\tau}(\rho_t)$ is continuous, do indeed exist under for CB$^2$O which is the limiting case of our SCB$^2$O. Note that these assumptions are therefore sufficient, but not necessarily necessary. 

In order to establish the convergence result for the finite-particle dynamics, we first show that the particles generated by the mean-field dynamics and those produced by \eqref{eq:interacting_sde} are close on average.

\begin{proposition}[Sharp mean-field approximation on the cutoff event]\label{prop:mfa-cutoff}
Under Assumption~\ref{ass:primitive}, if
$
\Prob(\Omega_M)\ge\frac12,
$ 
then there exists a constant $C_{\mathrm{MFA}}
=C_{\mathrm{MFA}}
\bigl(M,T,d,\lambda,\sigma,\alpha,\beta,\tau,\rho_0\bigr)>0$ such that
\begin{equation*}
    \max_{1\le i\le N}\sup_{t\in[0,T]}
    \E\sqbr{\norm{\Theta_{t,\tau}^{i,N}-\bar\Theta_{t,\tau}^i}_2^2\mid\Omega_M}
    \le\frac{C_{\mathrm{MFA}}}{N}.
\end{equation*}
\end{proposition}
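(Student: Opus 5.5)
Throughout I assume that $\Omega_M$ is the usual moment cutoff event: the event that the empirical fourth moments of both the interacting particles $\{\Theta^{i,N}_{t,\tau}\}_{i\in[N]}$ and the mean-field copies $\{\bar\Theta^i_{t,\tau}\}_{i\in[N]}$ stay below $M$ for all $t\in[0,T]$. The plan is a synchronous-coupling argument followed by Gronwall. The coupling is that $\Theta^{i,N}_{t,\tau}$ and $\bar\Theta^i_{t,\tau}$ share the same initial datum and the same Brownian motion $W^i$.

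\textbf{Step 1 (reducing the conditional expectation).} Since $\Omega_M$ is not adapted, I would introduce the stopping time $\tau_M$, the first time at which either empirical fourth moment exceeds $M$. On $\Omega_M$ we have $\tau_M\ge T$. Because $\Prob(\Omega_M)\ge\frac12$,
\[
\E\sqbr{\norm{\Theta^{i,N}_{t,\tau}-\bar\Theta^i_{t,\tau}}_2^2\mid\Omega_M}\le 2\,\E\sqbr{\norm{\Theta^{i,N}_{t\wedge\tau_M,\tau}-\bar\Theta^i_{t\wedge\tau_M,\tau}}_2^2}.
\]
So it suffices to bound the stopped, unconditional quantity by $C/N$. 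By exchangeability this quantity is the same for every $i$, which I call $e(t)$.

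\textbf{Step 2 (Itô and splitting the consensus error).} Apply Itô's formula to $\norm{\Theta^{i,N}-\bar\Theta^i}_2^2$ up to $t\wedge\tau_M$ and use \eqref{eq:D-basic} for the diffusion term. This gives
\[
\frac{d}{dt}e(t)\le C\,e(t)+C\,\E\norm{m(\rho^N_{s,\tau})-m(\rho_{s,\tau})}_2^2 ,
\]
with the consensus difference evaluated on $\{s\le\tau_M\}$ and $m:=m^{G,L}_{\alpha,\beta,\tau}$. I then split
\[
m(\rho^N)-m(\rho)=[m(\rho^N)-m(\bar\rho^N)]+[m(\bar\rho^N)-m(\rho)].
\]
The first bracket is controlled by a \emph{local Lipschitz stability estimate}: for measures with fourth moments at most $M$,
\[
\norm{m(\mu)-m(\nu)}_2\le C_M W_2(\mu,\nu).
\]
Before $\tau_M$ both empirical measures satisfy this moment bound. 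Moreover $W_2^2(\rho^N,\bar\rho^N)\le\frac1N\sum_i\norm{\Theta^{i,N}-\bar\Theta^i}_2^2$, whose expectation is $e$. The second bracket is a pure sampling error for i.i.d.\ draws from $\rho_{s,\tau}$. I would show $\E\norm{m(\bar\rho^N_s)-m(\rho_s)}_2^2\le C/N$ uniformly in $s\in[0,T]$, using the uniform fourth-moment bound from Assumption~\ref{ass:primitive}\,IV--V. Gronwall with $e(0)=0$ then yields the claimed bound.

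\textbf{Step 3 (the stability estimates; the main obstacle).} The core difficulty is the regularity of the soft quantile map $\rho\mapsto q_\rho$.

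\emph{Soft quantile.} I would differentiate \eqref{eq:soft_quantile_def} implicitly. The derivative in $q$ is $\tau^{-1}\int\psi'((q-L)/\tau)\,\dd\rho$. Since $L$ is bounded and $q_\rho$ is confined to a bounded interval (by the tail bounds \eqref{eq:s_left_tail} and $\beta\in(0,1)$), this derivative is bounded below by a constant $c(\tau,\beta,L_{\max},L_{\min})>0$; $\psi'>0$ is continuous on that compact range. This is the inverse-stability estimate. Next, $\theta\mapsto\psi((q-L(\theta))/\tau)$ is Lipschitz with constant $L_\psi L_L/\tau$. Together these give $|q_\mu-q_\nu|\le C\,W_1(\mu,\nu)\le C\,W_2(\mu,\nu)$.

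\emph{Consensus point.} The weights $w_\alpha\eta_{\rho,\tau}$ are bounded above and below, because $G$ is bounded and $\eta\ge\psi(-(L_{\max}-L_{\min})/\tau-C)>0$. Hence $B(\rho)$ is bounded below. The numerator $A(\rho)$ contains the unbounded factor $\theta$, so the moment cutoff $M$ is needed to control $\theta w_\alpha\eta$ under a coupling, via Cauchy--Schwarz against second moments. This yields $C_M$ for the Lipschitz estimate.

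\emph{Sampling error.} For $q_{\bar\rho^N}-q_\rho$ I would use the inverse stability together with the variance of $\frac1N\sum_i\psi((q_\rho-L(\bar\Theta^i))/\tau)$, which is $O(1/N)$. The numerator and denominator are empirical averages with bounded (respectively second-moment) variance, and linearizing the ratio using the lower bound on $B$ gives the $O(1/N)$ mean-square error.

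I expect tracking these constants through the implicit quantile equation, and handling the unbounded $\theta$ factor in $A$, to be the delicate step. The Gronwall argument itself should be routine.
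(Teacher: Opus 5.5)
Your proposal is correct and follows essentially the same route as the paper: a synchronous coupling with a stopped It\^o estimate, and a split of the consensus error into $m(\rho^N_{t,\tau})-m(\bar\rho^N_{t,\tau})$ and $m(\bar\rho^N_{t,\tau})-m(\rho_{t,\tau})$. The first piece is controlled by inverse quantile stability plus the cutoff moment bound; your $W_2$-Lipschitz estimate is exactly the paper's Steps 1--2 applied to the empirical synchronous coupling. The second piece is an $O(1/N)$ variance bound, after which Gronwall and the factor $2$ from $\mathbb P(\Omega_M)\ge\frac12$ finish the proof.

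One detail should be made explicit: the cross term $|\bar q^N_t-q_t|\cdot\frac1N\sum_j\|\bar\Theta^j_{t,\tau}\|_2$ in the sampling error. The paper keeps the cutoff indicator and uses $\frac1N\sum_j\|\bar\Theta^j_{t,\tau}\|_2\le M^{1/4}$. Your unconditional version, which relies only on mean-field moments, would additionally need $\mathbb E|\bar q^N_t-q_t|^4=O(N^{-2})$ together with Cauchy--Schwarz against the fourth moment of $\bar\Theta^1_{t,\tau}$.
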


Proof is in Appendix \ref{pp:prop:mfa-cutoff}. Our analysis for the finite-particle regime relies on a potential function that measures how far the particle-induced distribution and the consensus point are from the global optimum, i.e., $V_\tau(t):=\frac12\int_{\R^d}\norm{\theta-\theta^\star}_2^2\rho_{t,\tau}(\!\dd\theta)$. For details, see Appendix \ref{pp:thm:numerical-convergence}.

%and $M_\tau(t):=\|m^{G,L}_{\alpha,\beta,\tau}(\rho_{t,\tau})-\theta^\star\|_2$.

% \begin{equation}
%     V_\tau(t):=\frac12\int_{\R^d}\norm{\theta-\theta^\star}_2^2\rho_{t,\tau}(\dd\theta),
%     \qquad
%     M_\tau(t):=\norm{m^{G,L}_{\alpha,\beta,\tau}(\rho_{t,\tau})-\theta^\star}_2.
%     \label{eq:V-M-def}
% \end{equation}

\begin{theorem}[Numerical convergence in probability]
\label{thm:numerical-convergence}
Under Assumptions~\ref{ass:selector} and \ref{ass:primitive}  let \(T=T_\varepsilon\) be the
hitting time supplied by Theorem~\ref{thm:direct-mf-convergence}, and let
$
    K\Delta t=T,
$
for 
$
    0<\Delta t\le1.
$
Define \(\Omega_M^\theta\) and \(\Omega_M^{\mathrm{num}}\) as in
\eqref{eq:Omega-theta-def} and~\eqref{eq:Omega-num-def}, respectively, and
$
C_{\mathrm{bd}}^{\mathrm{num}}(T)
:=
C_{\mathrm{bd}}(T)+C_{\mathrm{bd}}^\theta(T),
$
and let \(\delta_M\in(0,1/2]\). 
Set
$
M:=\tfrac{C_{\mathrm{bd}}^{\mathrm{num}}(T)}{\delta_M}.
$
If
$
V_\tau(T)=\varepsilon,
$
then there exists a finite constant
$
C_{\mathrm{NA}}\!:=\!C_{\mathrm{NA}}
\br{M,N,d,\alpha,\beta,\tau,\lambda,\sigma,T,\rho_0}\!>\!0,
$
 such that, for every
\(\varepsilon_{\mathrm{tot}}>0\),
\begin{equation*}
\mathbb P\Big(
\|\widehat\theta_{K,\tau}^{N}-\theta^\star\|_2^2
\le \varepsilon_{\mathrm{tot}}
\Big)\ge 1-\delta_M -\frac{3C_{\mathrm{NA}}\Delta t+6C_{\mathrm{MFA}}N^{-1}
+12\varepsilon}{\varepsilon_{\mathrm{tot}}}.
\end{equation*}
\end{theorem}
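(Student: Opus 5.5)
The estimator \(\widehat\theta_{K,\tau}^{N}\) is presumably the empirical mean \(\frac1N\sum_i\theta^i_{K,\tau}\) (or the consensus point of \(\rho^N_{K,\tau}\)). The plan is a three-term decomposition combined with Markov's inequality on a cutoff event. We write
\begin{equation*}
\widehat\theta_{K,\tau}^{N}-\theta^\star
=\Big(\widehat\theta_{K,\tau}^{N}-\tfrac1N\textstyle\sum_i\Theta^{i,N}_{T,\tau}\Big)
+\tfrac1N\textstyle\sum_i\big(\Theta^{i,N}_{T,\tau}-\bar\Theta^i_{T,\tau}\big)
+\Big(\tfrac1N\textstyle\sum_i\bar\Theta^i_{T,\tau}-\theta^\star\Big).
\end{equation*}
By \((a+b+c)^2\le3(a^2+b^2+c^2)\) and Jensen's inequality for the averages,
\begin{equation*}
\|\widehat\theta_{K,\tau}^{N}-\theta^\star\|_2^2\le \tfrac3N\textstyle\sum_i\|\theta^i_{K,\tau}-\Theta^{i,N}_{T,\tau}\|_2^2+\tfrac3N\sum_i\|\Theta^{i,N}_{T,\tau}-\bar\Theta^i_{T,\tau}\|_2^2+3\,\big\|\tfrac1N\sum_i\bar\Theta^i_{T,\tau}-\theta^\star\big\|_2^2 .
\end{equation*}

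First, we control the bad event. By definition \(\Omega_M^{\mathrm{num}}\) (and \(\Omega_M^\theta\)) bound the fourth moments, or the sup-in-time second moments, of the continuous and discrete particle systems by \(M\). Markov's inequality together with the moment bounds \(C_{\mathrm{bd}}(T)\) and \(C^\theta_{\mathrm{bd}}(T)\) gives \(\Prob((\Omega_M^{\mathrm{num}})^c)\le C^{\mathrm{num}}_{\mathrm{bd}}(T)/M=\delta_M\). Since \(\delta_M\le1/2\), this also ensures \(\Prob(\Omega_M)\ge\frac12\), so Proposition~\ref{prop:mfa-cutoff} applies. We then write
\begin{equation*}
\Prob(\|\cdot\|^2>\varepsilon_{\mathrm{tot}})\le\delta_M+\Prob(\|\cdot\|^2>\varepsilon_{\mathrm{tot}},\Omega_M^{\mathrm{num}})\le\delta_M+\E[\|\cdot\|^2\mid\Omega_M^{\mathrm{num}}]/\varepsilon_{\mathrm{tot}}.
\end{equation*}
It remains to bound the conditional expectation by \(3C_{\mathrm{NA}}\Delta t+6C_{\mathrm{MFA}}/N+12\varepsilon\).

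Second, we handle the three terms under conditioning. The middle term is at most \(3C_{\mathrm{MFA}}/N\) by Proposition~\ref{prop:mfa-cutoff}. Because the events differ, the conditioning is transferred using \(\E[X\mid A]\le \E[X\mid B]\,\Prob(B)/\Prob(A)\) with \(A\subset B\), \(\Prob(A)\ge1/2\); this accounts for the factor \(6\). The last term is handled by bounding it by \(\frac1N\sum_i\|\bar\Theta^i_T-\theta^\star\|^2\). Its unconditional expectation is \(2V_\tau(T)=2\varepsilon\), since the copies have law \(\rho_{T,\tau}\). Conditioning costs at most a factor \(1/\Prob(\Omega)\le2\), so this term is at most \(3\cdot2\cdot2\varepsilon=12\varepsilon\).

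Third, and hardest, is the time-discretization term \(\E[\|\theta^i_{K,\tau}-\Theta^{i,N}_{T,\tau}\|^2\mid\Omega_M^{\mathrm{num}}]\le C_{\mathrm{NA}}\Delta t\). We would couple the Euler--Maruyama increments \(B^i_k=(W^i_{(k+1)\Delta t}-W^i_{k\Delta t})/\sqrt{\Delta t}\) and run the standard strong-order-\(1/2\) argument using Itô's isometry, the isotropic-diffusion bound \eqref{eq:D-basic}, and Gronwall's inequality. The obstacle is that \(m^{G,L}_{\alpha,\beta,\tau}\) is only locally Lipschitz in \(W_2\), with constants depending on moments through \(B(\rho)^{-1}\) and on the soft-quantile stability. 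We would establish a Lipschitz bound of the form
\begin{equation*}
\|m(\rho^N)-m(\tilde\rho^N)\|\le C(M)\Big(\tfrac1N\textstyle\sum_i\|x_i-y_i\|^2\Big)^{1/2}.
\end{equation*}
This would follow from differentiating the defining equation \eqref{eq:soft_quantile_def} implicitly (with \(\psi'>0\) and \(L\) Lipschitz), lower-bounding \(B\) via \(e^{-\alpha G_{\max}}\beta\), and using the consensus-moment estimate. We would then run Gronwall up to a stopping time at which the moments exceed \(M\), which is where \(\Omega_M^{\mathrm{num}}\) is used. The local truncation error comes from the Hölder continuity in time of \(m(\rho^N_{t})\) on the cutoff event. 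We expect this localized stability and stopping-time bookkeeping to be the main technical work; the resulting constant \(C_{\mathrm{NA}}\) depends on \(M,N\) and the parameters, consistent with the statement.
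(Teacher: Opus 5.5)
Your proposal is correct and follows essentially the same route as the paper: the same three-term Young/Jensen decomposition, the Markov-plus-union bound giving $\mathbb P((\Omega_M^{\mathrm{num}})^c)\le\delta_M$, the transfer of conditioning from $\Omega_M$ to $\Omega_M^{\mathrm{num}}\subseteq\Omega_M$ at a cost of a factor $2$ (producing $6C_{\mathrm{MFA}}N^{-1}$ and $12\varepsilon$), and a final conditional Markov inequality. The time-discretization term is exactly the paper's Lemma~\ref{lem:localized-EM-approximation}, proved by the localized Lipschitz, stopping-time and Gronwall argument you sketch, with one difference. There the quantile's Lipschitz dependence on the particles comes from the strong-monotonicity estimate of Lemma~\ref{lem:quantile-inverse-stability}, which uses boundedness of $L$ so that $\psi'\ge c_\psi>0$ uniformly, rather than from implicit differentiation, which would need differentiability of $L$.
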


\section{Experiments}\label{sec:experiment}
%\vspace{-.1cm}
In this section, we evaluate the performance of the SCB$^2$O and compare it with relevant benchmarks. 
%{\color{red}The assumptions in Assumption \ref{ass:primitive} are sufficient for the theory; the experiments below illustrate the algorithmic behavior on the original objectives. }
In particular, we design two settings, a 2D constrained problem and a MNIST neural-network problem.
% to answer the following questions:
% \begin{itemize}[leftmargin=*, align=left]
%     \item Does SCB$^2$O approximate the behavior of CB$^2$O as the lower-to-upper
%     selection sharpness parameter $\xi:=\frac{1}{\tau \alpha}$ increases?
%     \item How does the soft lower-to-upper coupling in SCB$^2$O affect
%     feasibility, consensus formation, and optimisation quality on bi-level optimization problems?
%     \item Does the same soft-selection mechanism remain stable when the
%     optimisation variable is a neural-network parameter vector rather than a
%     two-dimensional decision variable?
% \end{itemize}
Here, we compare CB$^2$O vs SCB$^2$O and defer comparisons with other benchmarks such as VPBGDD of \cite{shen2023penalty} to Appendix \ref{app:comparison}. 
Recall that CB$^2$O uses a hard-selection rule, while SCB$^2$O uses a soft  rule parametrized by $\tau$. Note that small $\tau$, equivalently, large value of  $\xi:=\frac{1}{\tau \alpha}$ make the soft selection increasingly sharp and therefore closer to the hard decision rule of CB$^2$O, while smaller values allow a broader set of particles to influence the consensus. We use the same particle count, time horizon, and shared hyperparameters for both methods, so that differences are attributable to the selection mechanism and to the value of $\xi$. 
All reported curves aggregate five independent seeds. 

\textbf{2D Constrained Problem:}\label{sec:exp-2d}
The first setting isolates constrained optimization behavior in two dimensions, where feasibility, consensus distance, and particle collapse can be measured directly.  We optimize the shifted \emph{Ackley} upper objective given by
\begin{equation}
  G(\theta) = -A\exp\!\Bigl(-a\sqrt{\tfrac{b^2}{d}}\|\theta-\hat\theta\|_2\Bigr)
         -\exp\!\Bigl(\tfrac{1}{d}\textstyle\sum_j \cos(2\pi b(\theta_j-\hat\theta_j))\Bigr)
         + A + \exp(1),
\end{equation}

with $A=20$, $a=0.2$, $b=3,d=2$, $\hat\theta=(0.5,\tfrac{1}{3})$ (the shifts),
subject to two lower-level functions, where
$r(\theta)=\theta_1^2+\theta_2^2$ and $\phi(\theta)=tan^{-1}(\theta_2/\theta_1)$.
%$\phi(\theta)=\operatorname{atan2}(\theta_2,\theta_1)$, where $\operatorname{atan2}$ is an efficient implementation of $tan^{-1}(y/x)$.
\begin{table}[h]
\vspace{-.2cm}
\centering
%\caption{2D problems.}
%\label{tab:2d-problems}
\begin{tabular}{lllcc}
\toprule
Problem & Constraint $L(x)= 0$ & Shape & $\theta^\star$ & $G(\theta^\star)$ \\
\midrule
Circular & $\theta_1^2+\theta_2^2-1$ & Unit circle & $(0.782,\,0.624)$ & $4.003$ \\
Star & $r(\theta)-\bigl(1+0.5\sin(5\phi(\theta))\bigr)^2$ & Star-shaped & $(0.473,\,0.464)$ & $2.777$ \\
\bottomrule
\end{tabular}
\vspace{-.2cm}
\end{table}

For each problem and seed, both algorithms are run for 600 steps with
$N=100$ particles.  The shared parameters are
$\alpha=30$, $\beta=0.05$, $\lambda=1.0$, and $\sigma=1.0$.  For SCB$^2$O, we sweep $\xi\in\{10,100,1000,10000\}$; 
Table~\ref{tab:2d-settings} in Appendix \ref{app:experiment} summaries the setting.

\textit{Evaluation.}
Let $c_\star$ denotes the final consensus point produced by the algorithms. The following quantities are recorded at each step as performance metrics: $L(c_\star)$: constraint violation at consensus point (lower objective). $G(c_\star)$: upper objective value at consensus. 
%$G(c_\star)-G^*$: upper gap (floored at $10^{-12}$). 
$\|c_\star-\theta^\star\|_2$: consensus distance to the global optimum, and $\sigma(x):=\frac{1}{d}\sum_j\mathrm{std}_i(x_{ij})$: mean particle spread.
% \begin{itemize}[leftmargin=*, align=left]
%   \item $L(c_\star)$: constraint violation at consensus point (lower objective).
%   \item $G(c_\star)$: upper objective value at consensus.
%   \item $G(c_\star)-G^*$: upper gap (floored at $10^{-12}$).
%   \item $\|c_\star-x^*\|_2$: consensus distance to known optimum.
%   \item $\sigma(x)=\frac{1}{d}\sum_j\mathrm{std}_i(x_{ij})$: mean particle spread.
% \end{itemize}

%% ============================================================
%\subsection{Results: 2D Benchmark}
%% ============================================================
\begin{figure}[h]
  \centering
  \includegraphics[width=0.24\textwidth, trim=0 0 0 40, clip]{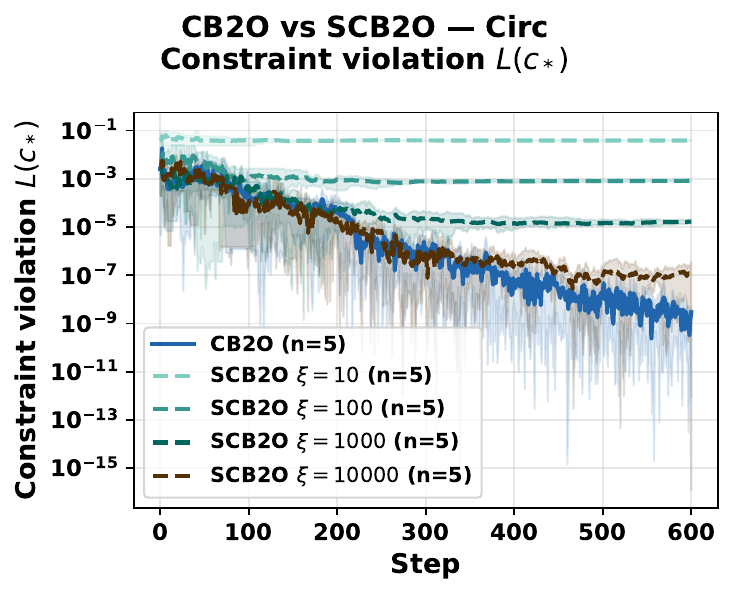}%
  \
  \includegraphics[width=0.24\linewidth, trim=0 0 0 40, clip]{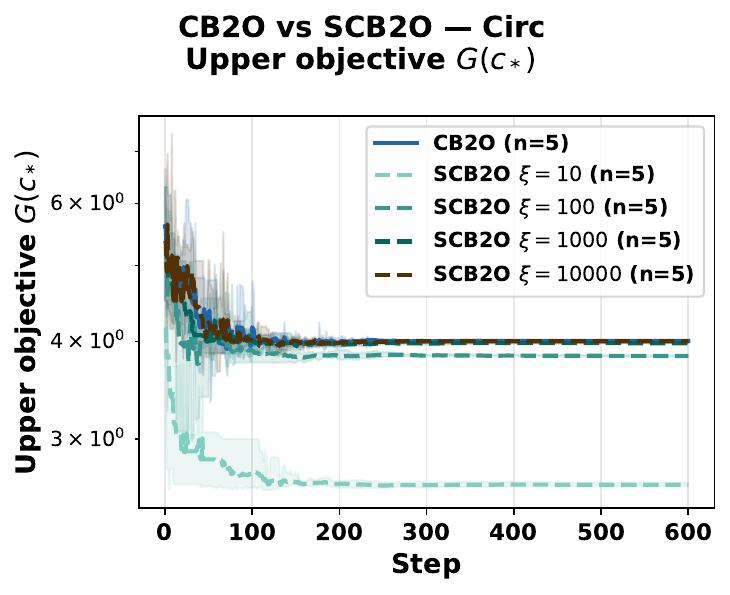}\
  \includegraphics[width=0.24\linewidth, trim=0 0 0 40, clip]{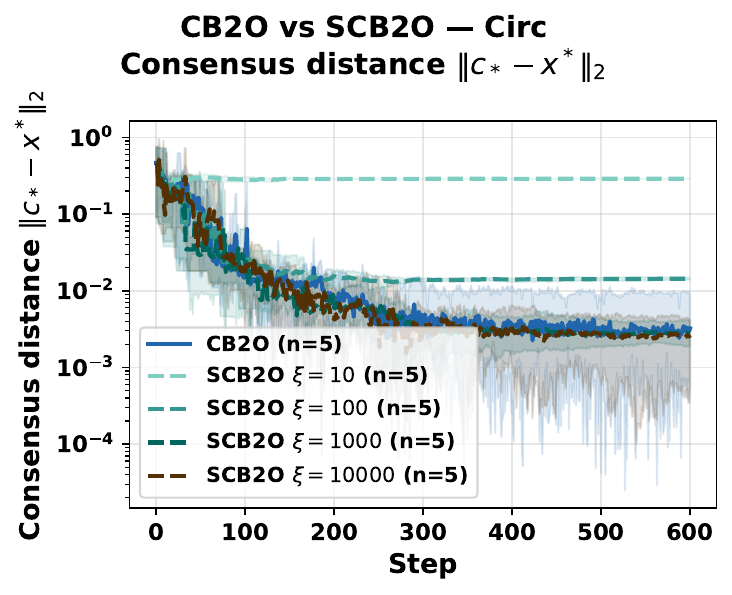}\
  \includegraphics[width=0.24\linewidth, trim=0 0 0 40, clip]{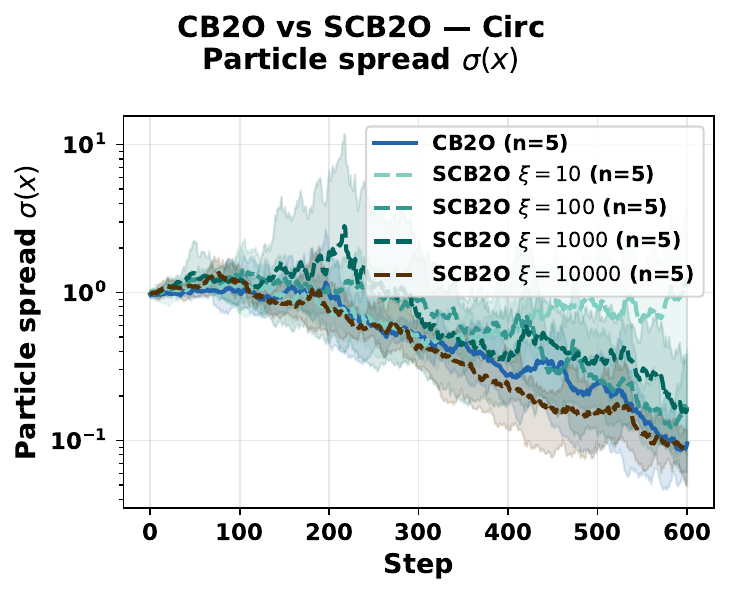}
%  \centering
  \\
  \includegraphics[width=0.24\linewidth, trim=0 0 0 40, clip]{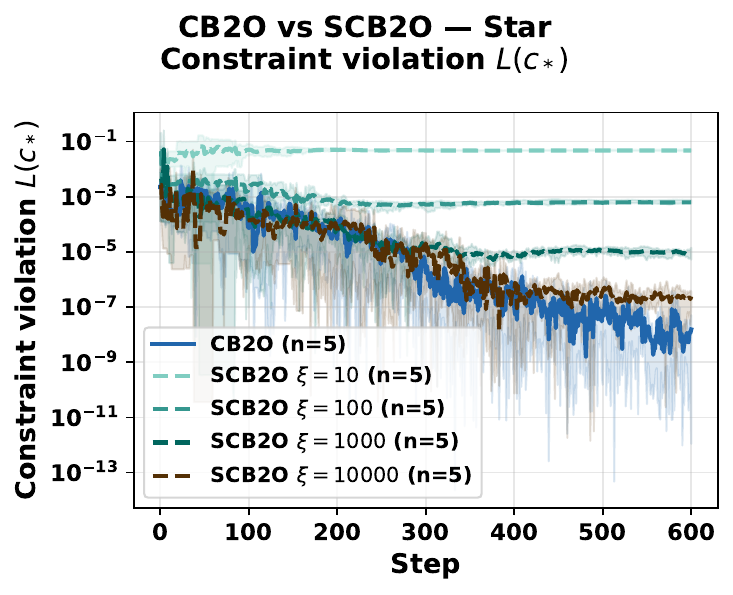}%
  \includegraphics[width=0.24\linewidth, trim=0 0 0 40, clip]{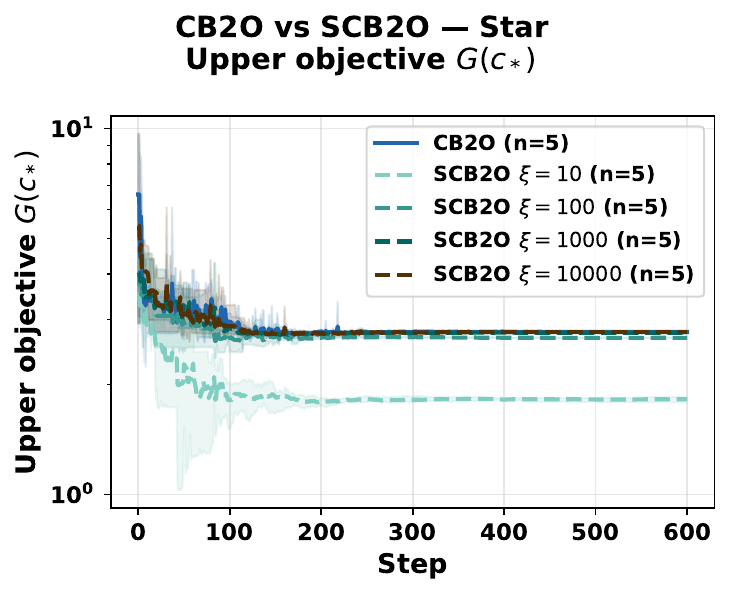}\
  %\[4pt]
  %\includegraphics[width=0.48\linewidth]{figures/benchmark2d_Star_upper_gap.pdf}%
  \includegraphics[width=0.24\linewidth, trim=0 0 0 40, clip]{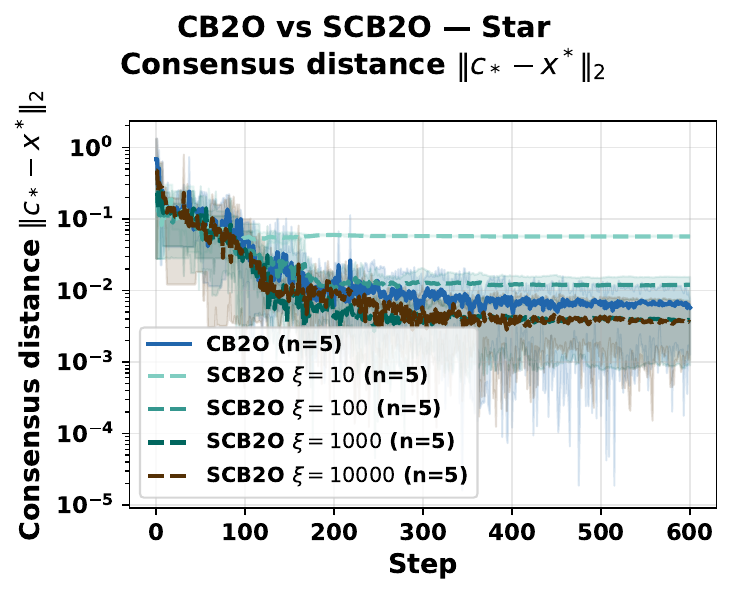}\
  \includegraphics[width=0.24\linewidth, trim=0 0 0 40, clip]{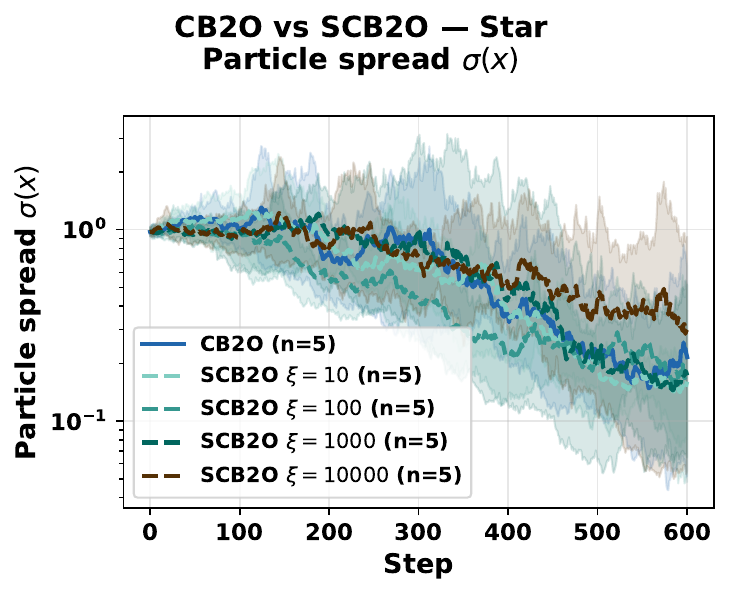}
  \caption{CB$^2$O vs SCB$^2$O on the circle constraint (first row) and on the star-shaped constraint (second row). All metrics use log scale. Solid line depicts the mean over 5 seeds; shaded band is min--max range.
From left to right: $L(c_\star)$, $G(c_\star)$, $\|c_\star-\theta^\star\|_2$, and $\sigma(x)$.           
}\label{fig:2d-star}
\vspace{-.4cm}
\end{figure}

% \caption{CB$^2$O vs SCB$^2$O on the circle constraint. All metrics use log scale. Solid line = mean over 5 seeds; shaded band = min--max range.}
%   \label{fig:2d-circ}

\textit{Results.}
Figure \ref{fig:2d-star} depicts the resulting plots on the circle and star constraints. See also Appendix \ref{app:experiment}. Moreover, Table~\ref{tab:2d-results} reports the metrics after 600 iterations for a quantitative comparison. These results suggest: 
\textit{Constraint satisfaction improves with larger $\xi$}, that is the lower-level objective $L(c_\star)$ decreases monotonically as $\xi$ increases.  %At $\xi=10$, the consensus violates the constraint by $\sim4\times10^{-2}$, roughly three orders of magnitude worse than CB$^2$O.  
%At $\xi=10000$ the violation ($\sim10^{-7}$) approaches CB$^2$O ($\sim10^{-9}$).
% \textit{SCB$^2$O with $\xi=10$--$100$ achieves lower apparent upper-level objective.}
% The upper gap is clamped at $10^{-12}$ for $\xi\le1000$, meaning the consensus point reaches an upper objective value \emph{below} the analytically computed constrained optimum $G^*$.  This is not a contradiction: when
% $L(c_\star)>0$ the consensus lies \emph{outside} the feasible set, where $G$ can take values lower than the constrained minimum.  The result is therefore misleading---the particle swarm has not truly converged.
Particle spread $\sigma(x)$ indicates consensus quality and as $\xi$ increases the particles collapse quicker. 
%For $\xi=10$, $\sigma(x)$ is an order of magnitude larger than CB$^2$O, confirming that particles have not collapsed to a common feasible point. For $\xi=10000$, $\sigma(x)$ is comparable to CB$^2$O, indicating genuine consensus.
Overall, as $\xi$ increases, SCB$^2$O matches CB$^2$O in constraint satisfaction and consensus distance while using a differentiable soft selection. For smaller $\xi$, it relaxes constraint adherence in exchange for faster initial descent of the upper-level objective, a useful property when the feasible set is difficult to reach.

We emphasize that the goal of introducing SCB$^2$O is not to design a superior particle-based optimizer, but to provide a proxy that provably converges to the global optimum while maintaining performance comparable to the state-of-the-art CB$^2$O. Notably, although CB$^2$O performs well in practice, its discontinuity prevents the direct use of standard Lipschitz-based propagation-of-chaos and numerical stability arguments.

\textbf{MNIST Neural-Network Problem:}\label{sec:exp-mnist}
The second setting tests whether the performance remains stable in a high-dimensional learning problem.  
We train a CNN with bias parameters on MNIST using particle-based bi-level optimization.  The training set is restricted to 10,000 samples, mini-batches contain 60 examples, and each particle update uses $M=100$ mini-batch evaluations.  Each run lasts $T=100$ epochs.  
%The model and dataset settings are identical for CB$^2$O and SCB$^2$O.
The shared parameters for CB$^2$O and SCB$^2$O are $\alpha=50$, $\lambda_1=1.0$, $\sigma_1=0.632$, $\gamma=0.1$, anisotropic noise, and $p=0.5$ for the latent $\ell_p$-norm.  
%The minimum $\beta$ is $0.02$, and the decay factor is $1.0$, so the configured $\beta$ value remains constant over the run.  
We sweep $(\beta,N)\in$ $(0.04,50)$, $(0.06,50)$, $(0.08,50)$, and $(0.10,50)$.  Thus all MNIST experiments use $N=50$ particles and differ only in $\beta$, algorithm, seed, and, $\xi$ for SCB$^2$O. Moreover,  SCB$^2$O is evaluated at $\xi\in\{1,5,10,100,1000\}$. Table \ref{tab:mnist-settings} summarizes the setting.

%\textbf{Evaluation.}
%For each epoch, we record the training loss, test accuracy, and latent $\ell_p$-norm.  The per-$\beta$ figures show the full training trajectory over 100 epochs, while the varying-$\beta$ figures compare final-epoch statistics for each value of $\xi$.  Curves are averaged over five trials.
%, and shaded regions show the seed-wise range.

%% ============================================================
\textit{Results.}
%% ============================================================
%Training and test curves for the CNN on MNIST are shown in
Figure~\ref{fig:mnist-b004} shows the results for $\beta=0.04$, with all curves averaged over five trials. The plots report training loss, test accuracy, and the latent $\ell_p$-norm across different random seeds. Additional results are provided in Appendix~\ref{app:experiment}. 
Overall, SCB$^2$O remains competitive with CB$^2$O across all values of $\beta$ when $\xi \ge 100$. Larger values of $\beta$ correspond to including a larger fraction of particles in the computation of the consensus, or stronger noise injection, which can facilitate escaping local optima in early stages but may slow convergence in later epochs. Both methods exhibit similar sensitivity to $\beta$ at a fixed particle size $N=50$. 
For a comparison of final-epoch performance across different $\beta$ values, see Figure~\ref{fig:mnist-varying-beta} in Appendix~\ref{app:experiment}.

\begin{figure}[h]
  \centering
  \includegraphics[width=\linewidth, trim=0 0 0 30, clip]{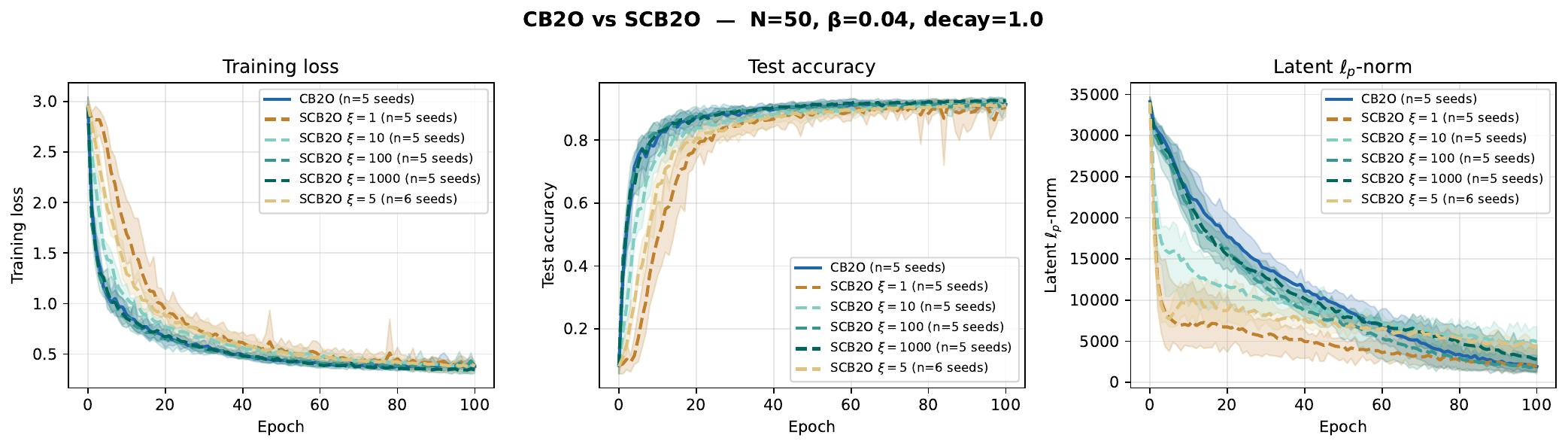}
  \caption{MNIST training curves: $N=50$, $\beta=0.04$.}
  \label{fig:mnist-b004}
\end{figure}
\vspace{-.4cm}

%\textit{Effect of $\xi$ on MNIST.}
Small values of $\xi$ (e.g., $1$ and $5$) allow poorly performing particles to influence the consensus, which degrades performance. In contrast, $\xi \ge 10$ stabilizes training and yields results close to CB$^2$O, thereby confirming the theoretical connection between the two algorithms.

%After the numerical instability fix (log-sum-exp stabilisation of the soft-selection weights), SCB$^2$O achieves test accuracy and training loss comparable to CB$^2$O for $\xi\ge100$.
%% ============================================================
%\textit{Summary.}
%% ============================================================
%SCB$^2$O provides a smooth, differentiable alternative to CB$^2$O's hard quantile selection.  The key findings are:

%\textbf{Equivalence at large $\xi$}: SCB$^2$O with $\xi\ge1000$       matches CB$^2$O on both 2D and MNIST benchmarks.

%  \textbf{Soft selection trades constraint adherence for exploration}:       smaller $\xi$ allows constraint-violating particles to contribute,         which can accelerate upper-objective descent but at the cost of         feasibility.

  %\textbf{Particle spread as a convergence diagnostic}: $\sigma(x)$ is a reliable indicator of whether the swarm has genuinely converged to a consensus, complementing the consensus-distance metric.

  %\textbf{Numerical stability}: the log-sum-exp stabilisation of soft-selection weights is critical; without it, SCB$^2$O degenerates to picking a single particle.

%\subimport{./}{exp_gd}

\vspace{-.2cm}
\section{Conclusion} 
\vspace{-.1cm}
We studied a bi-level optimization problem with nonconvex lower- and upper-level objectives and proposed a derivative-free particle-based method, SCB$^2$O, based on a soft consensus mapping rule.
The prior method CB$^2$O employs a hard consensus rule and demonstrates strong empirical performance in finite-particle regimes. However, due to the discontinuity of its mapping, standard Lipschitz-based finite-particle and numerical stability arguments do not directly apply.
Our approach addresses this gap by introducing a smooth consensus rule parameterized by $\tau$, which recovers the hard mapping as $\tau$ tends to zero. We established a localized high-probability finite-particle approximation bound for SCB$^2$O and showed empirically that it performs comparably to CB$^2$O for sufficiently small, but non-vanishing, values of $\tau$.

% As such a soft mapping incorporates larger portion of particles (depending on $\tau$) to form a consensus and approximate a solution, its performance degrades compare to the hard mapping one which only focuses on a portion of good behaving particles. 

\clearpage

\bibliographystyle{plainnat}
\bibliography{ref}

% \clearpage
% \subimport{./}{appendix1}

\clearpage
\appendix

\newcommand{\set}[1]{\left\{#1\right\}}
\newcommand{\one}{\mathbf 1}

\newcommand{\inner}[2]{\left\langle #1,#2\right\rangle}

\begin{center}
    {\Large\textbf{Appendix}}
\end{center}

The appendix is organized as follows:

\begin{itemize}[leftmargin=*, align=left]
    \item \textbf{Additional Terminologies: \ref{app:termin}}
    \item \textbf{Supporting Results: \ref{app:proofs}} 
    \item \textbf{Proof of Theorem \ref{thm:direct-mf-convergence}: \ref{pp:thm:direct-mf-convergence}} 
    \item \textbf{Proof of Proposition \ref{prop:mfa-cutoff}: \ref{pp:prop:mfa-cutoff}} 
    \item \textbf{Proof of Theorem~\ref{thm:numerical-convergence}: \ref{pp:thm:numerical-convergence}}
    \item \textbf{Extension to parameterized lower-level solution sets: \ref{app:extension}}
    \item \textbf{Limitations: \ref{app:limits}}
    \item \textbf{Extended Experimental Details: \ref{app:experiment}}
\end{itemize}

\section{Additional Terminologies}\label{app:termin}
\begin{definition}\label{def:mean-field}
    Let $F,G : \mathcal{P}(\mathbb{R}^d)\times \mathbb{R}^d \to \mathbb{R}^d$ be two functions, and consider for \(i=1,\dots,N\) the system of SDEs, expressed in Itô form, as
$$
dV_t^i=F\bigl(\hat{\rho}_t^N,V_t^i\bigr)\,dt+G\bigl(\hat{\rho}_t^N,V_t^i\bigr)\,dB_t^i,
\quad
\hat{\rho}_t^N=\frac{1}{N}\sum_{i=1}^N \delta_{V_t^i},
\quad V_0^i \sim \rho_0.
$$
We say that this SDE system converges in mean-field law to
$\bar v \in \mathbb{R}^d$
if all solutions of
$$
dV_t=F(\rho_t,V_t)\,dt + G(\rho_t,V_t)\,dB_t,
\quad
\rho_t = \operatorname{Law}(V_t),
\quad
V_0 \sim \rho_0,
$$
satisfy
\(\lim_{t\to\infty} W_p\bigl(\rho_t,\delta_{\bar v}\bigr)=0\), where  \(W_p\) denotes the $p$-Wasserstein distance, for some \(p\geq 1\).
\end{definition}

\begin{definition}\label{def:weak-solution}
Let $\rho_0 \in \mathcal{P}(\mathbb{R}^d)$ and $T>0$. We say that
$\rho \in C([0,T], \mathcal{P}(\mathbb{R}^d))$ satisfies the Fokker--Planck equation \eqref{eq:fokker} with initial condition $\rho_0$ in the weak sense on the time interval $[0,T]$ if, for all $\phi \in C_c^\infty(\mathbb{R}^d)$ and all $t \in (0,T)$,
\begin{align*}
\frac{d}{dt}\int_{\mathbb{R}^d} \phi(\theta)\, d\rho_t(\theta)
&=
\lambda \sum_{k=1}^d \int_{\mathbb{R}^d} \big(\theta_k - m^{G,L}_{\alpha,\beta,\tau}(\rho_t)_k\big)\, \partial_k \phi(\theta)\, d\rho_t(\theta) \\
&\quad + \frac{\sigma^2}{2} \sum_{k=1}^d \int_{\mathbb{R}^d}
\left(D\big(\theta - m^{G,L}_{\alpha,\beta,\tau}(\rho_t)\big)^2\right)_{kk}\,
\partial_{kk}^2 \phi(\theta)\, d\rho_t(\theta),
\end{align*}
and
$
\lim_{t \to 0} \rho_t = \rho_0
$
in the weak sense of probability measures.
\end{definition}

% \begin{definition}[Admissible algorithmic parameters]
% \label{def:admissible-algorithmic-parameters}
% A parameter tuple
% $
% (\alpha,\beta,\tau,\lambda,\sigma)
% $
% is called algorithmically admissible if
% \begin{equation}
%     \alpha>0,
%     \qquad
%     \beta\in(0,1),
%     \qquad
%     \tau>0,
%     \qquad
%     \lambda>0,
%     \qquad
%     \sigma>0.
%     \label{eq:algorithmic-parameter-domain}
% \end{equation}
% {\color{red} I do not understand this :
% Whenever a result below involves
% \[
%     q_{\beta,\tau}^L[\rho],
%     \qquad
%     J_{\beta,\tau}^L[\rho],
%     \qquad
%     A(\rho),
%     \qquad
%     B(\rho),
%     \qquad
%     m(\rho),
% \]
% and does not explicitly choose the parameters, the statement is understood for
% an arbitrary but fixed algorithmically admissible tuple
% $
%     (\alpha,\beta,\tau,\lambda,\sigma).
% $
% }
% \end{definition}

\begin{figure}[h]
%\centering
\hspace{-.5cm}
\resizebox{1.1\textwidth}{!}{%
\begin{tikzpicture}[
    >=Latex,
    every node/.style={font=\normalsize},
    dep/.style={->, line width=0.5pt},
    weakdep/.style={->, dashed, line width=0.47pt},
    box/.style={
        draw,
        rounded corners=3pt,
        align=center,
        inner xsep=10pt,
        inner ysep=8pt,
        minimum height=9mm,
        fill=white
    },
    assbox/.style={box, fill=gray!8},
    lembox/.style={box, fill=blue!4},
    propbox/.style={box, fill=green!5},
    thmbox/.style={box, fill=orange!8, thick},
    auxbox/.style={box, fill=purple!5}
]

% --------------------------------------------------------------------
% Left anchor
% --------------------------------------------------------------------
\node[assbox] (ass) at (0,0)
{Assumption~\ref{ass:primitive}\\basic notation};

% --------------------------------------------------------------------
% Left column: deterministic ingredients
% --------------------------------------------------------------------
\node[lembox] (soft) at (3.5,2.5)
{Soft-quantile estimates\\
Lemma~\ref{lem:soft-quantile-existence}\\
Lemma~\ref{lem:quantile-localization}\\
Corollary~\ref{cor:quantile-strong-monotonicity}\\
Lemma~\ref{lem:quantile-inverse-stability}};

\node[lembox] (ab) at (3.5,-2.2)
{Consensus and deterministic bounds\\
Corollary~\ref{cor:consensus-moment-stability}\\
Lemma~\ref{lem:deterministic-AB-bounds}};

\draw[weakdep] (ass) -- (soft);
\draw[weakdep] (ass) -- (ab);

% --------------------------------------------------------------------
% Mean-field line
% --------------------------------------------------------------------
\node[propbox] (laplace) at (8.0,2.5)
{Proposition~\ref{prop:soft-laplace}\\
soft-quantile\\
Laplace principle};

\node[lembox] (lyap) at (12.5,4.5)
{Lemma~\ref{lem:Lyapunov-DI}\\
Lyapunov differential\\
inequalities};

\node[thmbox] (mfconv) at (17.5,2.5)
{Theorem~\ref{thm:direct-mf-convergence}\\
direct mean-field\\
convergence};

\draw[dep] (soft) -- (laplace);
\draw[dep] (laplace) -- (mfconv);
\draw[dep] (lyap) -- (mfconv);

% --------------------------------------------------------------------
% Fixed-N / cutoff line
% --------------------------------------------------------------------
\node[lembox] (fixedreg) at (8.8,-2.2)
{Lemma~\ref{lem:fixed-N-consensus-regularity}\\
fixed-\(N\) consensus\\
regularity};

\node[propbox] (fixedwp) at (12.5,-2.2)
{Proposition~\ref{prop:fixed-N-well-posedness}\\
fixed-\(N\)\\
well-posedness};

\node[lembox] (fourth) at (17.2,-2.2)
{Lemma~\ref{lem:fourth-moment-bound}\\
fourth-moment\\
and cutoff control};

\node[propbox] (mass) at (22.0,-2.2)
{Proposition~\ref{prop:mass-lower-bound}\\
mass near\\
\(\theta^\star\)};

\draw[dep] (soft) -- (fixedreg);
\draw[dep] (ab) -- (fixedreg);
\draw[dep] (fixedreg) -- (fixedwp);
\draw[dep] (fixedwp) -- (fourth);
\draw[dep] (fourth) -- (mass);
\draw[dep] (mass) to[bend left=10] (mfconv);

% --------------------------------------------------------------------
% Lower-right: MFA / numerical line
% --------------------------------------------------------------------
\node[propbox] (mfa) at (17.2,-5.5)
{Proposition~\ref{prop:mfa-cutoff}\\
mean-field approximation\\
on \(\Omega_M\)};

\node[auxbox] (emloc) at (17.2,-8.7)
{Lemma~\ref{lem:localized-EM-approximation}\\
localized Euler--Maruyama\\
error};

\node[thmbox] (numconv) at (22.5,-6.0)
{Theorem~\ref{thm:numerical-convergence}\\
finite-particle numerical\\
convergence in probability};

\draw[dep] (fourth) -- (mfa);

% routed from the left to avoid crossing the mfa box
\draw[dep]
    (fourth.south) -- ++(0,-0.55)
    -| ([xshift=-12pt]emloc.west)
    -- (emloc.west);

\draw[dep] (mfconv) -- (numconv);
\draw[dep] (mfa) -- (numconv);
\draw[dep] (emloc) -- (numconv);

\end{tikzpicture}%
}
\caption{
Logical dependency structure of the proof. For readability, only the main
proof dependencies are displayed; several routine technical lemmas are omitted.
}
\label{fig:proof-dependency}
\end{figure}

\section{Supporting Results}\label{app:proofs}

For notational simplicity, we use the following throughout the remainder of this appendix,
$$
m(\rho)=m^{G,L}_{\alpha,\beta,\tau}(\rho).
$$
Figure~\ref{fig:proof-dependency} depicts the logical structure of the technical proof section and helps navigate the dependency order of the results.

\subsection{Existence and uniqueness of the soft quantile }
We  show that the soft quantile is well-defined by presenting a proof for Lemma \ref{lem:soft-quantile-existence}. 

\begin{proof}
For fixed $\rho$, define
\begin{equation*}
    H_\rho(q):=\int_{\R^d}\psi\left(\frac{q-L(x)}{\tau}\right)\rho(\dd x).
\end{equation*}
Since $0<\psi<1$ and $\psi$ is continuous, dominated convergence implies that $H_\rho$ is continuous. The limits of $\psi$ imply
\begin{equation*}
    \lim_{q\to-\infty}H_\rho(q)=0,
    \qquad
    \lim_{q\to+\infty}H_\rho(q)=1.
\end{equation*}
Because $\beta\in(0,1)$, the intermediate value theorem gives existence. If $q_2>q_1$, then strict monotonicity of $\psi$ gives
\begin{equation}
    \psi\left(\frac{q_2-L(x)}{\tau}\right)
    >\psi\left(\frac{q_1-L(x)}{\tau}\right)
    \qquad\forall x\in\R^d.
\end{equation}
Integrating yields $H_\rho(q_2)>H_\rho(q_1)$, hence the solution is unique.
\end{proof}

 \subsection{Consensus-moment stability from the upper bound on $G$}

\begin{corollary}
\label{cor:consensus-moment-stability}
Under Assumption~\ref{ass:primitive}, for every $\rho\in\cP_4(\R^d)$,
\begin{equation}
    \norm{m(\rho)}_2^4
    \le C_m^4\int_{\R^d}\norm{x}_2^4\rho(\dd x),
    \label{eq:consensus-moment}
\end{equation}
where
\begin{equation}
    C_m:=\beta^{-1/4}e^{\alpha(G_{\max}-G_{\min})}.
    \label{eq:Cm-def}
\end{equation}
For compatibility with the finite-horizon notation used below, set $C_m(T):=C_m$ for every $T>0$.
\end{corollary}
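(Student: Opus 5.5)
We bound $\norm{m(\rho)}_2$ by writing the consensus point as an average with respect to a normalized probability measure and then comparing that measure's density against $\rho$. Set $\mu(\dd x):=w_\alpha(x)\,J^L_{\beta,\tau}[\rho](\dd x)/B(\rho)$, where $w_\alpha(x)=e^{-\alpha G(x)}$. This is a probability measure, and $m(\rho)=\int x\,\mu(\dd x)$. Since $\norm{\cdot}_2^4$ is convex, Jensen's inequality gives
\[
\norm{m(\rho)}_2^4\le \int_{\R^d}\norm{x}_2^4\,\mu(\dd x)=\frac{\int \norm{x}_2^4 e^{-\alpha G(x)}\eta_{\rho,\tau}(x)\,\rho(\dd x)}{\int e^{-\alpha G(x)}\eta_{\rho,\tau}(x)\,\rho(\dd x)}.
\]
Finiteness of the numerator follows from $\rho\in\cP_4$ together with $\eta_{\rho,\tau}<1$ and $w_\alpha\le e^{-\alpha G_{\min}}$.

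Next we bound the density $\dd\mu/\dd\rho$ pointwise. For the numerator, use $\eta_{\rho,\tau}\le 1$ (Assumption~\ref{ass:selector} gives $\psi<1$) and $e^{-\alpha G}\le e^{-\alpha G_{\min}}$. For the denominator, use $e^{-\alpha G}\ge e^{-\alpha G_{\max}}$. Combined with the defining identity of the soft quantile from Lemma~\ref{lem:soft-quantile-existence}, $\int\eta_{\rho,\tau}\,\dd\rho=\beta$, this gives $B(\rho)\ge \beta e^{-\alpha G_{\max}}$. Hence
\[
\norm{m(\rho)}_2^4\le \frac{e^{\alpha(G_{\max}-G_{\min})}}{\beta}\int\norm{x}_2^4\,\rho(\dd x).
\]

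Finally, compare with the stated constant. We have $C_m^4=\beta^{-1}e^{4\alpha(G_{\max}-G_{\min})}$, which is at least $\beta^{-1}e^{\alpha(G_{\max}-G_{\min})}$ because $G_{\max}\ge G_{\min}$ and $\alpha>0$. The claimed bound \eqref{eq:consensus-moment} therefore follows, with room to spare.

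The argument has no real obstacle. The only points needing care are:
\begin{itemize}
\item using the normalization $\int \dd J^L_{\beta,\tau}[\rho]=\beta$ for the lower bound on $B(\rho)$, rather than attempting any localization;
\item checking integrability via $\rho\in\cP_4$, so that Jensen's inequality applies to a well-defined finite integral.
\end{itemize}
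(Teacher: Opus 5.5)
Your proof is correct. It differs from the paper's only in where the normalization by $B(\rho)$ enters, and that difference improves the constant.

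The paper bounds numerator and denominator separately at the level of the first moment. It uses $B(\rho)\ge\beta e^{-\alpha G_{\max}}$ and $\|A(\rho)\|_2\le e^{-\alpha G_{\min}}\int\|x\|_2\,J(\dd x)$. It then applies H\"older with exponents $4$ and $4/3$ against the finite measure $J$ of mass $\beta$, followed by $J\le\rho$. This gives $\|m(\rho)\|_2\le\beta^{-1/4}e^{\alpha(G_{\max}-G_{\min})}\bigl(\int\|x\|_2^4\rho(\dd x)\bigr)^{1/4}$, so the weight ratio $e^{\alpha(G_{\max}-G_{\min})}$ sits outside and gets raised to the fourth power.

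You apply Jensen to the normalized Gibbs measure $\mu$ first, and only then bound $\dd\mu/\dd\rho\le\beta^{-1}e^{\alpha(G_{\max}-G_{\min})}$. The ratio therefore appears only once, inside the fourth-moment integral. You get $\|m(\rho)\|_2^4\le\beta^{-1}e^{\alpha(G_{\max}-G_{\min})}\int\|x\|_2^4\rho(\dd x)$, so one could take $C_m=\beta^{-1/4}e^{\alpha(G_{\max}-G_{\min})/4}$. The $\beta$-dependence is identical in both arguments; only the exponential factor changes, from $e^{4\alpha(G_{\max}-G_{\min})}$ to $e^{\alpha(G_{\max}-G_{\min})}$.

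The paper's route reuses exactly the separate bounds on $A$ and $B$ that reappear in Lemma~\ref{lem:deterministic-AB-bounds} and in the coupling estimates. Your route buys a noticeably better constant. That matters downstream because $C_m(T)^4$ enters $K_{\mathrm{bd}}(T)$, which itself sits inside an exponential in $C_{\mathrm{bd}}(T)$, and $\alpha_\tau\to\infty$ as $\tau\downarrow0$.
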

\begin{proof}
Let $J:=\softJ[\rho]$. Since $G(x)\le G_{\max}$ and $J(\R^d)=\beta$,
\begin{equation}
    B(\rho):=\int_{\R^d}e^{-\alpha G(x)}J(\dd x)
    \ge e^{-\alpha G_{\max}}J(\R^d)
    =\beta e^{-\alpha G_{\max}}.
    \label{eq:B-lower-basic}
\end{equation}
Since $G(x)\ge G_{\min}$,
\begin{align}\notag
    \norm{A(\rho)}_2
    &=\norm{\int_{\R^d}xe^{-\alpha G(x)}J(\dd x)}_2                                      \le \int_{\R^d}\norm{x}_2e^{-\alpha G(x)}J(\dd x)                                      \\
    &\le e^{-\alpha G_{\min}}\int_{\R^d}\norm{x}_2J(\dd x).
    \label{eq:A-upper-basic}
\end{align}
Holder's inequality with exponents $4$ and $4/3$ gives
\begin{align}\notag
    \int_{\R^d}\norm{x}_2J(\dd x)
    &\le \br{\int_{\R^d}\norm{x}_2^4J(\dd x)}^{1/4}J(\R^d)^{3/4}                         \\
    &=\beta^{3/4}\br{\int_{\R^d}\norm{x}_2^4J(\dd x)}^{1/4}.
    \label{eq:J-holder}
\end{align}
Because $J(\dd x)=\eta_{\rho,\tau}(x)\rho(\dd x)$ and $0<\eta_{\rho,\tau}(x)<1$, one has $J\le\rho$ as finite measures. Hence
\begin{equation}
    \int_{\R^d}\norm{x}_2^4J(\dd x)
    \le \int_{\R^d}\norm{x}_2^4\rho(\dd x).
    \label{eq:J-less-rho}
\end{equation}
Combining~\eqref{eq:B-lower-basic}, \eqref{eq:A-upper-basic}, \eqref{eq:J-holder}, and~\eqref{eq:J-less-rho},
\begin{align*}
    \norm{m(\rho)}_2
    &=\frac{\norm{A(\rho)}_2}{B(\rho)}                                                       \le \frac{e^{-\alpha G_{\min}}\beta^{3/4}}{\beta e^{-\alpha G_{\max}}}
    \br{\int_{\R^d}\norm{x}_2^4\rho(\dd x)}^{1/4}                                            \\
    &=\beta^{-1/4}e^{\alpha(G_{\max}-G_{\min})}
    \br{\int_{\R^d}\norm{x}_2^4\rho(\dd x)}^{1/4}.
\end{align*}
Taking fourth powers proves~\eqref{eq:consensus-moment}.
\end{proof}

\begin{remark}
The proofs below use the upper bound $G\le G_{\max}$ only through Corollary~\ref{cor:consensus-moment-stability}. Thus the same fourth-moment and convergence arguments remain valid if boundedness of $G$ is replaced by any other assumption implying~\eqref{eq:consensus-moment} with a finite constant $C_m$ fixed before the fourth-moment estimate is proved.
\end{remark}

\subsection{Quantile inverse stability from the upper bound on $L$}

For $\nu\in\cP(\R^d)$ define
\begin{equation}
    F_\nu(q):=\int_{\R^d}\psi\br{\frac{q-L(x)}{\tau}}\nu(\dd x)-\beta.
    \label{eq:Fnu-def}
\end{equation}
Since $\psi\in C^1$ and $\psi'$ is bounded by the Lipschitz constant $L_\psi$, dominated convergence gives
\begin{equation}
    \dot F_\nu(q):=\frac{\dd}{\dd q}F_\nu(q)
    =\frac1\tau\int_{\R^d}\psi'\br{\frac{q-L(x)}{\tau}}\nu(\dd x).
    \label{eq:Fnu-deriv}
\end{equation}

\begin{lemma}
\label{lem:quantile-localization}
Let $z_\beta:=\psi^{-1}(\beta)$. Under Assumption~\ref{ass:primitive}, for every probability measure $\nu\in\cP(\R^d)$,
\begin{equation}
    L_{\min}+\tau z_\beta\le q_\nu\le L_{\max}+\tau z_\beta.
    \label{eq:q-localization}
\end{equation}
\end{lemma}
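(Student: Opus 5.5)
The argument is a direct monotonicity (sandwich) argument using the map $F_\nu$ from \eqref{eq:Fnu-def}. Lemma~\ref{lem:soft-quantile-existence} shows that $q\mapsto F_\nu(q)$ is continuous and strictly increasing, and that $q_\nu$ is its unique zero. It therefore suffices to show that $F_\nu(L_{\min}+\tau z_\beta)\le 0\le F_\nu(L_{\max}+\tau z_\beta)$. Strict monotonicity then places the zero $q_\nu$ in the closed interval $[L_{\min}+\tau z_\beta,\,L_{\max}+\tau z_\beta]$. Note that $z_\beta=\psi^{-1}(\beta)$ is well defined: $\psi$ is a continuous, strictly increasing bijection of $\R$ onto $(0,1)$ by Assumption~\ref{ass:selector}, and $\beta\in(0,1)$.

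For the lower endpoint, set $q_-:=L_{\min}+\tau z_\beta$. For every $x$ we have $L(x)\ge L_{\min}$ by Assumption~\ref{ass:primitive}~I, hence
\[
\frac{q_--L(x)}{\tau}\le z_\beta .
\]
Since $\psi$ is increasing, the integrand satisfies $\psi\big((q_--L(x))/\tau\big)\le\psi(z_\beta)=\beta$ pointwise. Integrating against the probability measure $\nu$ gives $F_\nu(q_-)\le\beta-\beta=0$.

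For the upper endpoint, set $q_+:=L_{\max}+\tau z_\beta$. The bound $L(x)\le L_{\max}$ gives $(q_+-L(x))/\tau\ge z_\beta$, so the integrand is at least $\beta$ pointwise, and therefore $F_\nu(q_+)\ge 0$.

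To conclude, suppose $q_\nu<q_-$. Strict monotonicity would give $0=F_\nu(q_\nu)<F_\nu(q_-)\le0$, a contradiction. The symmetric argument rules out $q_\nu>q_+$, which yields \eqref{eq:q-localization}.

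There is no real obstacle here. The only points needing care are that the bound $L\le L_{\max}$ from Assumption~\ref{ass:primitive} is exactly what the upper endpoint requires, and that monotonicity is used in its strict form, which was already established in the proof of Lemma~\ref{lem:soft-quantile-existence}.
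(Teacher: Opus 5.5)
Your proof is correct and is essentially the paper's monotonicity sandwich based on $L_{\min}\le L\le L_{\max}$. The only difference is where the comparison is made: the paper bounds the integrand at $q_\nu$ itself by $\psi\big((q_\nu-L_{\min})/\tau\big)$ and $\psi\big((q_\nu-L_{\max})/\tau\big)$ and then applies $\psi^{-1}$, whereas you evaluate $F_\nu$ at the two candidate endpoints and use strict monotonicity of $F_\nu$, which is equally valid.
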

\begin{proof}
The number $z_\beta$ is well-defined because $\psi$ is continuous, strictly increasing, and maps $\R$ onto $(0,1)$. The soft quantile $q_\nu$ satisfies
\begin{equation}
    \int_{\R^d}\psi\br{\frac{q_\nu-L(x)}{\tau}}\nu(\dd x)=\beta.
    \label{eq:q-nu-def}
\end{equation}
Because $L(x)\ge L_{\min}$, one has $q_\nu-L(x)\le q_\nu-L_{\min}$ for every $x$. Since $\psi$ is increasing,
\begin{equation*}
    \psi\br{\frac{q_\nu-L(x)}{\tau}}
    \le \psi\br{\frac{q_\nu-L_{\min}}{\tau}}.
\end{equation*}
Integrating and using~\eqref{eq:q-nu-def} gives
\begin{equation*}
    \beta\le \psi\br{\frac{q_\nu-L_{\min}}{\tau}}.
\end{equation*}
Applying the increasing inverse $\psi^{-1}$ yields
\begin{equation*}
    z_\beta\le\frac{q_\nu-L_{\min}}{\tau},
\end{equation*}
which proves the lower bound in~\eqref{eq:q-localization}. Similarly, because $L(x)\le L_{\max}$,
\begin{equation*}
    q_\nu-L(x)\ge q_\nu-L_{\max}.
\end{equation*}
Thus
\begin{equation*}
    \psi\br{\frac{q_\nu-L(x)}{\tau}}
    \ge \psi\br{\frac{q_\nu-L_{\max}}{\tau}}.
\end{equation*}
Integrating gives
\begin{equation*}
    \beta\ge \psi\br{\frac{q_\nu-L_{\max}}{\tau}}.
\end{equation*}
Applying $\psi^{-1}$ gives
\begin{equation*}
    z_\beta\ge\frac{q_\nu-L_{\max}}{\tau},
\end{equation*}
which proves $q_\nu\le L_{\max}+\tau z_\beta$.
\end{proof}

\begin{corollary}
\label{cor:quantile-strong-monotonicity}
Let
\begin{equation}
    c_\psi:=\inf_{z\in\left[z_\beta-\frac{L_{\max}-L_{\min}}{\tau},\ z_\beta+\frac{L_{\max}-L_{\min}}{\tau}\right]}\psi'(z),
    \qquad
    \kappa:=\frac{c_\psi}{\tau}.
    \label{eq:kappa-def}
\end{equation}
Then $c_\psi>0$, $\kappa>0$, and, for every $\mu,\nu\in\cP(\R^d)$ and every $q$ lying between $q_\nu$ and $q_\mu$,
\begin{equation}
    \br{F_\nu(q)-F_\nu(q_\nu)}\br{q-q_\nu}
    \ge \kappa\abs{q-q_\nu}^2.
    \label{eq:F-strong-monotone}
\end{equation}
\end{corollary}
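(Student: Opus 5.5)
The plan is to combine the mean value theorem for $F_\nu$ with the localization of Lemma~\ref{lem:quantile-localization}, which shows that every argument at which $\psi'$ is evaluated lies in the compact window defining $c_\psi$.

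\textbf{Positivity of $c_\psi$ and $\kappa$.} By Assumption~\ref{ass:selector}, $\psi'$ is continuous and strictly positive on $\R$. The interval
$I:=[z_\beta-(L_{\max}-L_{\min})/\tau,\ z_\beta+(L_{\max}-L_{\min})/\tau]$
is compact, so $\psi'$ attains a positive minimum on $I$. Hence $c_\psi>0$, and therefore $\kappa=c_\psi/\tau>0$.

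\textbf{Localizing the arguments.} Fix $\mu,\nu\in\cP(\R^d)$ and let $q$ lie between $q_\nu$ and $q_\mu$. By Lemma~\ref{lem:quantile-localization}, both $q_\nu$ and $q_\mu$ lie in $[L_{\min}+\tau z_\beta,\ L_{\max}+\tau z_\beta]$. This interval is convex, so every $s$ between $q_\nu$ and $q$ also lies in it. For any $x\in\R^d$ we have $L(x)\in[L_{\min},L_{\max}]$, so
\[
\frac{s-L(x)}{\tau}\in\Big[z_\beta-\frac{L_{\max}-L_{\min}}{\tau},\ z_\beta+\frac{L_{\max}-L_{\min}}{\tau}\Big]=I .
\]
It follows from \eqref{eq:Fnu-deriv} that for every such $s$,
\[
\dot F_\nu(s)=\frac1\tau\int_{\R^d}\psi'\Big(\frac{s-L(x)}{\tau}\Big)\nu(\dd x)\ge \frac{c_\psi}{\tau}\,\nu(\R^d)=\kappa .
\]

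\textbf{Conclusion.} $F_\nu$ is $C^1$: this follows from \eqref{eq:Fnu-deriv}, using continuity of $\psi'$, the bound $\psi'\le L_\psi$, and dominated convergence. The mean value theorem therefore gives some $s$ between $q_\nu$ and $q$ with
\[
F_\nu(q)-F_\nu(q_\nu)=\dot F_\nu(s)(q-q_\nu).
\]
Multiplying by $q-q_\nu$ and using $\dot F_\nu(s)\ge\kappa$ yields
\[
\big(F_\nu(q)-F_\nu(q_\nu)\big)(q-q_\nu)=\dot F_\nu(s)\,|q-q_\nu|^2\ge\kappa|q-q_\nu|^2 ,
\]
which is \eqref{eq:F-strong-monotone}. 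If $q=q_\nu$, both sides are zero and the inequality holds trivially.

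The only delicate point is the localization step: $c_\psi$ is an infimum over a bounded window, not over all of $\R$, and there $\inf\psi'$ would be $0$ because of the tails. The argument therefore depends on the two-sided bound $L_{\min}\le L\le L_{\max}$ and on Lemma~\ref{lem:quantile-localization} applied to both $q_\nu$ and $q_\mu$.
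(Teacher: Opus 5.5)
Your proof is correct and follows the paper's argument. Positivity of $c_\psi$ comes from compactness of the window. Lemma~\ref{lem:quantile-localization} together with $L_{\min}\le L\le L_{\max}$ then gives $\dot F_\nu\ge\kappa$ at every point between $q_\nu$ and $q$, which closes the argument by one-dimensional calculus. The only cosmetic difference is that you use the mean value theorem to treat both orderings of $q$ and $q_\nu$ at once, whereas the paper integrates $\dot F_\nu$ from $q_\nu$ to $q$ and handles $q\ge q_\nu$ and $q\le q_\nu$ as separate cases.
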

\begin{proof}
The interval in~\eqref{eq:kappa-def} is compact. Since $\psi'(z)>0$ for every $z\in\R$ and $\psi'$ is continuous, its infimum over this compact interval is strictly positive. Hence $c_s>0$ and $\kappa>0$.

Let $q$ lie between $q_\nu$ and $q_\mu$. By Lemma~\ref{lem:quantile-localization}, both $q_\nu$ and $q_\mu$ belong to $[L_{\min}+\tau z_\beta,L_{\max}+\tau z_\beta]$. Therefore $q$ also belongs to this interval. Since $L_{\min}\le L(x)\le L_{\max}$, for every $x\in\R^d$,
\begin{equation*}
    z_\beta-\frac{L_{\max}-L_{\min}}{\tau}
    \le \frac{q-L(x)}{\tau}
    \le z_\beta+\frac{L_{\max}-L_{\min}}{\tau}.
\end{equation*}
By the definition of $c_\psi$,
\begin{equation*}
    \psi'\br{\frac{q-L(x)}{\tau}}\ge c_\psi,
    \qquad x\in\R^d.
\end{equation*}
Using~\eqref{eq:Fnu-deriv},
\begin{equation*}
    \dot F_\nu(q)
    =\frac1\tau\int_{\R^d}\psi'\br{\frac{q-L(x)}{\tau}}\nu(\dd x)
    \ge\frac{c_\psi}{\tau}=\kappa.
\end{equation*}
If $q\ge q_\nu$, then
\begin{equation*}
    F_\nu(q)-F_\nu(q_\nu)
    =\int_{q_\nu}^q\dot F_\nu(r)\dd r
    \ge \kappa(q-q_\nu).
\end{equation*}
Multiplying by $q-q_\nu\ge0$ gives~\eqref{eq:F-strong-monotone}. If $q\le q_\nu$, then
\begin{equation*}
    F_\nu(q_\nu)-F_\nu(q)
    =\int_q^{q_\nu}\dot F_\nu(r)\dd r
    \ge\kappa(q_\nu-q).
\end{equation*}
Equivalently,
\begin{equation*}
    F_\nu(q)-F_\nu(q_\nu)
    \le -\kappa(q_\nu-q).
\end{equation*}
Because $q-q_\nu\le0$, multiplying both sides by $q-q_\nu$ reverses the inequality and gives
\begin{equation*}
    \br{F_\nu(q)-F_\nu(q_\nu)}\br{q-q_\nu}
    \ge \kappa(q_\nu-q)^2
    =\kappa\abs{q-q_\nu}^2.
\end{equation*}
Thus~\eqref{eq:F-strong-monotone} holds in both cases.
\end{proof}

\begin{lemma}
\label{lem:quantile-inverse-stability}
Under Assumption~\ref{ass:primitive}, for every $\mu,\nu\in\cP(\R^d)$,
\begin{equation}
    \abs{q_\mu-q_\nu}
    \le \kappa^{-1}\abs{F_\mu(q_\nu)-F_\nu(q_\nu)}.
    \label{eq:q-inverse-stability}
\end{equation}
\end{lemma}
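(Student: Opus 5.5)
The plan is to apply the strong monotonicity estimate of Corollary~\ref{cor:quantile-strong-monotonicity} with $q:=q_\mu$, which trivially lies between $q_\nu$ and $q_\mu$. Then we rewrite the resulting left-hand side using the defining equations of the two soft quantiles. If $q_\mu=q_\nu$ there is nothing to prove, so we assume $q_\mu\neq q_\nu$ throughout.

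\textbf{Step 1.} By definition~\eqref{eq:Fnu-def} and the uniqueness in Lemma~\ref{lem:soft-quantile-existence}, the two quantiles are characterized by $F_\nu(q_\nu)=0$ and $F_\mu(q_\mu)=0$. Corollary~\ref{cor:quantile-strong-monotonicity} with $q=q_\mu$ then gives
\begin{equation*}
\kappa\abs{q_\mu-q_\nu}^2\le \br{F_\nu(q_\mu)-F_\nu(q_\nu)}\br{q_\mu-q_\nu}=F_\nu(q_\mu)\br{q_\mu-q_\nu}.
\end{equation*}

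\textbf{Step 2.} Here we exchange the roles of the two measures, which is the only subtle point. The quantity $F_\nu(q_\mu)=F_\nu(q_\mu)-F_\mu(q_\mu)$ has the measure difference evaluated at $q_\mu$, whereas the claim evaluates it at $q_\nu$. To fix this, we swap $\mu$ and $\nu$ in the argument. The corollary is stated for arbitrary $\mu,\nu$, so we apply it with the base measure $\mu$ and $q=q_\nu$, which again lies between the two quantiles. This yields
\begin{equation*}
\kappa\abs{q_\nu-q_\mu}^2\le \br{F_\mu(q_\nu)-F_\mu(q_\mu)}\br{q_\nu-q_\mu}=F_\mu(q_\nu)\br{q_\nu-q_\mu},
\end{equation*}
using $F_\mu(q_\mu)=0$. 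Since $F_\nu(q_\nu)=0$, we may write $F_\mu(q_\nu)=F_\mu(q_\nu)-F_\nu(q_\nu)$.

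\textbf{Step 3.} Bound the right-hand side by Cauchy--Schwarz in one dimension, giving $\kappa\abs{q_\mu-q_\nu}^2\le \abs{F_\mu(q_\nu)-F_\nu(q_\nu)}\,\abs{q_\mu-q_\nu}$. Dividing by $\kappa\abs{q_\mu-q_\nu}>0$, where $\kappa>0$ by Corollary~\ref{cor:quantile-strong-monotonicity}, yields~\eqref{eq:q-inverse-stability}.

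The only real obstacle is verifying that the hypotheses of Corollary~\ref{cor:quantile-strong-monotonicity} apply with the roles of the measures swapped. This holds because the localization of Lemma~\ref{lem:quantile-localization} is uniform over all probability measures, so the lower bound $\kappa$ on $\dot F_\mu$ over the relevant interval does not depend on which measure is used.
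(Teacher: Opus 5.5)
Your proof is correct and is essentially the paper's argument: apply Corollary~\ref{cor:quantile-strong-monotonicity} with base measure $\mu$ and $q=q_\nu$, use $F_\mu(q_\mu)=F_\nu(q_\nu)=0$, and divide by $|q_\mu-q_\nu|$. The paper splits into sign cases where you use a one-line absolute-value bound, and your Step~1 (base measure $\nu$, $q=q_\mu$) is never used afterwards, so it can be deleted.
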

\begin{proof}
Because $q_\mu$ and $q_\nu$ solve their soft-quantile equations,
\begin{equation*}
    F_\mu(q_\mu)=0,
    \qquad
    F_\nu(q_\nu)=0.
\end{equation*}
Therefore, we obtain
\begin{equation}
    \abs{F_\mu(q_\nu)-F_\nu(q_\nu)}=\abs{F_\mu(q_\nu)}.
    \label{eq:F-reduction}
\end{equation}
If $q_\nu\le q_\mu$, then Corollary~\ref{cor:quantile-strong-monotonicity}, applied to the measure $\mu$ with $q=q_\nu$, gives
\begin{equation*}
    \br{F_\mu(q_\nu)-F_\mu(q_\mu)}\br{q_\nu-q_\mu}
    \ge \kappa(q_\mu-q_\nu)^2.
\end{equation*}
Since $F_\mu(q_\mu)=0$ and $q_\nu-q_\mu\le0$, this is equivalent to
\begin{equation*}
    -F_\mu(q_\nu)(q_\mu-q_\nu)
    \ge \kappa(q_\mu-q_\nu)^2.
\end{equation*}
If $q_\mu=q_\nu$, the desired estimate is trivial. If $q_\mu>q_\nu$, dividing by $q_\mu-q_\nu>0$ gives
\begin{equation*}
    -F_\mu(q_\nu)\ge \kappa(q_\mu-q_\nu).
\end{equation*}
Hence
$
    \abs{F_\mu(q_\nu)}\ge\kappa\abs{q_\mu-q_\nu}.
$ 
If $q_\nu>q_\mu$, the same corollary applied to the measure $\mu$ with $q=q_\nu$ gives
\begin{equation*}
    \br{F_\mu(q_\nu)-F_\mu(q_\mu)}\br{q_\nu-q_\mu}
    \ge \kappa(q_\nu-q_\mu)^2.
\end{equation*}
Since $F_\mu(q_\mu)=0$ and $q_\nu-q_\mu>0$, division by $q_\nu-q_\mu$ gives
\begin{equation*}
    F_\mu(q_\nu)\ge\kappa(q_\nu-q_\mu).
\end{equation*}
Again, $\abs{F_\mu(q_\nu)}\ge\kappa\abs{q_\mu-q_\nu}.$ 
Combining both cases with~\eqref{eq:F-reduction} proves~\eqref{eq:q-inverse-stability}.
\end{proof}

\begin{remark}
\label{rem:minimal-derived-inputs}
The later coupling proof uses the boundedness of $L$ only through the inverse-stability estimate~\eqref{eq:q-inverse-stability}. Consequently, the same mean-field approximation and numerical convergence results remain valid if the assumption $L\le L_{\max}$ is replaced by any other condition implying~\eqref{eq:q-inverse-stability} with a fixed positive constant $\kappa$. Likewise, the fourth-moment argument uses the upper bound on $G$ only through Corollary~\ref{cor:consensus-moment-stability}. Therefore the truly essential derived inputs for the rest of the proof are~\eqref{eq:consensus-moment} and~\eqref{eq:q-inverse-stability}. The boundedness assumptions on $G$ and $L$ are a simple sufficient route to obtain these estimates without circular reasoning.
\end{remark}

We define the uniform denominator constant and the moment-dependent numerator bound by
\begin{equation}
    b_G:=\beta e^{-\alpha G_{\max}},
    \qquad
    a(R_4):=\beta^{1/2}e^{-\alpha G_{\min}}R_4^{1/4}.
    \label{eq:bg-aR-def}
\end{equation}

\begin{lemma}
\label{lem:deterministic-AB-bounds}
For every $\rho\in\cP(\R^d)$,
\begin{equation}
    B(\rho)\ge b_G,
    \label{eq:B-lower-bG}
\end{equation}
where $B(\rho)$ is given in \eqref{eq:B-lower-basic}. 
In addition, if $\rho\in\cP_4(\R^d)$ satisfies
\begin{equation}
    \int_{\R^d}\norm{x}_2^4\rho(\dd x)\le R_4,
    \label{eq:R4-moment}
\end{equation}
then
\begin{equation}
    \norm{A(\rho)}_2\le a(R_4).
    \label{eq:A-upper-aR}
\end{equation}
Consequently,
\begin{equation}
    \norm{m(\rho)}_2=\frac{\norm{A(\rho)}_2}{B(\rho)}
    \le b_G^{-1}a(R_4)
    \label{eq:m-upper-aR}
\end{equation}
whenever~\eqref{eq:R4-moment} holds.
\end{lemma}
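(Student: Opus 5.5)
The plan is to reuse, almost verbatim, the two elementary estimates from the proof of Corollary~\ref{cor:consensus-moment-stability}. The only new point is to keep the numerator and denominator bounds separate and express them through the constants in~\eqref{eq:bg-aR-def}. Throughout, write $J:=\softJ[\rho]$. By Lemma~\ref{lem:soft-quantile-existence} and the definition of the soft quantile, $J$ is a finite measure with $J(\R^d)=\beta$ and $J(\dd x)=\eta_{\rho,\tau}(x)\rho(\dd x)$ with $0<\eta_{\rho,\tau}<1$, so $J\le\rho$. Note that none of this needs a moment assumption, which is why~\eqref{eq:B-lower-bG} will hold for every $\rho\in\cP(\R^d)$.

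For the denominator, I use $w_\alpha(x)=e^{-\alpha G(x)}\ge e^{-\alpha G_{\max}}$ from Assumption~\ref{ass:primitive}~II. Integrating against $J$ gives $B(\rho)\ge e^{-\alpha G_{\max}}J(\R^d)=\beta e^{-\alpha G_{\max}}=b_G$. This is exactly~\eqref{eq:B-lower-basic} and proves~\eqref{eq:B-lower-bG}.

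For the numerator, I bound the norm of the vector integral by the integral of the norm and use $e^{-\alpha G(x)}\le e^{-\alpha G_{\min}}$. This gives $\norm{A(\rho)}_2\le e^{-\alpha G_{\min}}\int\norm{x}_2J(\dd x)$. To reach the exponent $\beta^{1/2}$ in $a(R_4)$, I apply Cauchy--Schwarz twice. First, against the measure $J$,
\[
\int\norm{x}_2\,J(\dd x)\le\Bigl(\int\norm{x}_2^2\,J(\dd x)\Bigr)^{1/2}\beta^{1/2}.
\]
Then $J\le\rho$ together with Jensen under the probability measure $\rho$ gives
\[
\int\norm{x}_2^2\,J(\dd x)\le\int\norm{x}_2^2\,\rho(\dd x)\le\Bigl(\int\norm{x}_2^4\,\rho(\dd x)\Bigr)^{1/2}\le R_4^{1/2}.
\]
Combining these yields $\norm{A(\rho)}_2\le\beta^{1/2}e^{-\alpha G_{\min}}R_4^{1/4}=a(R_4)$, which is~\eqref{eq:A-upper-aR}.

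Alternatively, the H\"older route~\eqref{eq:J-holder} gives the sharper factor $\beta^{3/4}$, and $\beta^{3/4}\le\beta^{1/2}$ because $\beta\in(0,1)$, so either route works. Finally, the quotient bound~\eqref{eq:m-upper-aR} follows by dividing the numerator bound by the denominator bound, since $B(\rho)\ge b_G>0$.

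There is no real obstacle here. The only care needed is matching the power of $\beta$ in $a(R_4)$, and checking that $A(\rho)$ is well defined: $\int\norm{x}_2J(\dd x)<\infty$ follows from the finite fourth moment, so the Bochner integral makes sense.
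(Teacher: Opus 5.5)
Your proposal is correct and follows the paper's proof essentially step for step. Both arguments use the lower bound $B(\rho)\ge e^{-\alpha G_{\max}}J(\R^d)=b_G$, then Cauchy--Schwarz against $J$ for the factor $\beta^{1/2}$, the comparison $J\le\rho$, and H\"older to pass from the second to the fourth moment; your side remark that the H\"older route of Corollary~\ref{cor:consensus-moment-stability} gives the sharper factor $\beta^{3/4}\le\beta^{1/2}$ is also correct.
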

\begin{proof}
Let $J:=\softJ[\rho]$. Since $G(x)\le G_{\max}$,
\begin{align*}
B(\rho)
&=\int_{\R^d}e^{-\alpha G(x)}J(\dd x)                                            \ge e^{-\alpha G_{\max}}\int_{\R^d}J(\dd x)                =e^{-\alpha G_{\max}}J(\R^d)                                 =\beta e^{-\alpha G_{\max}}=b_G.
\end{align*}
This proves~\eqref{eq:B-lower-bG}.
Assume now~\eqref{eq:R4-moment}. Since $G(x)\ge G_{\min}$,
\begin{align}\notag
    \norm{A(\rho)}_2
    &=\norm{\int_{\R^d}xe^{-\alpha G(x)}J(\dd x)}_2                                  \le\int_{\R^d}\norm{x}_2e^{-\alpha G(x)}J(\dd x)                                 \\
    &\le e^{-\alpha G_{\min}}\int_{\R^d}\norm{x}_2J(\dd x).
    \label{eq:A-bound-lemma3}
\end{align}
By Cauchy--Schwarz with respect to the finite measure $J$,
\begin{align}\notag
    \int_{\R^d}\norm{x}_2J(\dd x)
    &\le\br{\int_{\R^d}\norm{x}_2^2J(\dd x)}^{1/2}J(\R^d)^{1/2}                       \\
    &=\beta^{1/2}\br{\int_{\R^d}\norm{x}_2^2J(\dd x)}^{1/2}.
    \label{eq:CS-J-second}
\end{align}
Because $0<\eta_{\rho,\tau}<1$, one has $J\le\rho$, hence
\begin{equation*}
    \int_{\R^d}\norm{x}_2^2J(\dd x)
    \le \int_{\R^d}\norm{x}_2^2\rho(\dd x).
\end{equation*}
Holder's inequality gives
\begin{equation*}
    \int_{\R^d}\norm{x}_2^2\rho(\dd x)
    \le \br{\int_{\R^d}\norm{x}_2^4\rho(\dd x)}^{1/2}
    \le R_4^{1/2}.
\end{equation*}
Combining the last three displays yields
\begin{equation*}
    \norm{A(\rho)}_2\le\beta^{1/2}e^{-\alpha G_{\min}}R_4^{1/4}=a(R_4),
\end{equation*}
which proves~\eqref{eq:A-upper-aR}. Finally,~\eqref{eq:m-upper-aR} follows from $m=A/B$,~\eqref{eq:B-lower-bG}, and~\eqref{eq:A-upper-aR}.
\end{proof}

\subsection{Cutoff event and stopped-process inequality}
For $M>0$, set
\begin{equation}
X_t^N:=\frac1N\sum_{i=1}^N\max\set{\norm{\Theta_{t,\tau}^{i,N}}_2^4,\norm{\bar\Theta_{t,\tau}^{i}}_2^4},
\quad
M_T:=\sup_{t\in[0,T]}X_t^N,
\quad
\Omega_M:=\set{M_T\le M}.
\label{eq:MT-Omega-def}
\end{equation}
Define the cutoff time
\begin{equation}
    \tau_M:=\inf\set{t\in[0,T]:X_t^N>M},
    \label{eq:tauM-def}
\end{equation}
with the convention $\tau_M=+\infty$ if the set is empty. Define the cutoff indicator
\begin{equation}
    I_M(t):=\one_{\{t<\tau_M\}}.
    \label{eq:IM-def}
\end{equation}
Since the particle paths and the synchronously coupled mean-field paths are continuous, $X_t^N$ is continuous. Therefore, on $\{I_M(t)=1\}$, one has
\begin{equation}
    X_t^N\le M.
    \label{eq:cutoff-pointwise-bound}
\end{equation}
Moreover, for every $t\in[0,T]$,
\begin{equation}
    \one_{\Omega_M}\le I_M(t).
    \label{eq:Omega-below-I}
\end{equation}
Indeed, if $\Omega_M$ holds, then $X_s^N\le M$ for every $s\in[0,T]$, so the set in~\eqref{eq:tauM-def} is empty and $\tau_M=+\infty$.

\begin{lemma}
\label{lem:stopped-integral}
Let \(\tau\) be a stopping time and define
$I_\tau(t):=\mathbf 1_{\{t<\tau\}},
$ for $t\in[0,T].$
Let \(Y=(Y_t)_{t\in[0,T]}\) be a nonnegative continuous adapted process with
\(Y_0=0\). Suppose that there exist a progressively measurable process
\(f=(f_t)_{t\in[0,T]}\) satisfying
\[
    \int_0^T \mathbb E\bigl[|f_t|I_\tau(t)\bigr]\,dt<\infty,
\]
and a true martingale \((\mathcal N_{t\wedge\tau})_{t\in[0,T]}\) with
\(\mathbb E[\mathcal N_{t\wedge\tau}]=0\), such that, for every \(t\in[0,T]\),
\[
Y_{t\wedge\tau}
\le
\int_0^t I_\tau(s) f_s\,ds
+
\mathcal N_{t\wedge\tau}
\qquad\text{a.s.}
\label{eq:stopped-pathwise-ineq}
\]
Then, for every \(t\in[0,T]\),
\[
\mathbb E\bigl[Y_t I_\tau(t)\bigr]
\le
\int_0^t
\mathbb E\bigl[f_s I_\tau(s)\bigr]\,ds .
\label{eq:stopped-expectation-ineq}
\]
\end{lemma}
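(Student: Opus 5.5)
The plan is to take expectations in the pathwise inequality evaluated at time \(t\) and then pass from the stopped quantity \(Y_{t\wedge\tau}\) to \(Y_tI_\tau(t)\) using nonnegativity. Concretely, I will first observe the pointwise relation
\[
Y_t I_\tau(t)=Y_{t\wedge\tau}\mathbf 1_{\{t<\tau\}}\le Y_{t\wedge\tau},
\]
which holds because on \(\{t<\tau\}\) we have \(t\wedge\tau=t\), and off that event the left side is zero while \(Y_{t\wedge\tau}\ge0\) since \(Y\) is nonnegative. The continuity and adaptedness of \(Y\) ensure that \(Y_{t\wedge\tau}\) is a well-defined random variable, measurable with respect to \(\mathcal F_{t\wedge\tau}\). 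The condition \(Y_0=0\) is not really needed beyond consistency of the inequality at \(t=0\).

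Next I will take expectations in the assumed pathwise inequality. The martingale term vanishes because \(\mathbb E[\mathcal N_{t\wedge\tau}]=0\) by hypothesis. For the drift term, the integrability assumption \(\int_0^T\mathbb E[|f_s|I_\tau(s)]\,ds<\infty\) together with progressive measurability of \(f\) (so that \((s,\omega)\mapsto f_s(\omega)I_\tau(s)(\omega)\) is jointly measurable) lets me apply Fubini's theorem. This gives
\[
\mathbb E\Bigl[\int_0^t I_\tau(s)f_s\,ds\Bigr]=\int_0^t\mathbb E[f_sI_\tau(s)]\,ds,
\]
and the integrand is absolutely integrable, so the right-hand side is finite. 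Combining these yields
\[
\mathbb E[Y_tI_\tau(t)]\le\mathbb E[Y_{t\wedge\tau}]\le\int_0^t\mathbb E[f_sI_\tau(s)]\,ds.
\]

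The only delicate point is making sure the expectation of the right-hand side is meaningful, since \(f\) may take either sign; the hypothesis controls exactly \(|f|I_\tau\), which makes the Fubini exchange legitimate. There is one further subtlety. \(\mathbb E[Y_{t\wedge\tau}]\) could a priori be \(+\infty\), but the inequality bounds it by a finite quantity: \(Y_{t\wedge\tau}\) is nonnegative and dominated by an integrable random variable, so its expectation is finite and the monotonicity of expectation applies. I therefore expect no real obstacle, only careful bookkeeping of measurability and integrability.
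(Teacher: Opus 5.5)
Your proposal is correct and follows the paper's proof essentially step for step. You use the pathwise bound $Y_tI_\tau(t)\le Y_{t\wedge\tau}$ from nonnegativity, then take expectations in the stopped inequality, with the martingale term vanishing and Fubini justified by the integrability of $|f|I_\tau$; your extra remark that $\mathbb E[Y_{t\wedge\tau}]$ is finite because the right-hand side is integrable is a slightly more careful piece of bookkeeping than the paper's.
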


\begin{proof}
Since \(Y_t\ge 0\), we have the pathwise inequality:
$
Y_t I_\tau(t)\le Y_{t\wedge\tau}.
$
Indeed, if \(t<\tau\), then \(t\wedge\tau=t\), so
$
    Y_t I_\tau(t)=Y_t=Y_{t\wedge\tau}.
$
If \(t\ge\tau\), then \(I_\tau(t)=0\), and hence
$
    Y_t I_\tau(t)=0\le Y_{t\wedge\tau},
$
because \(Y\) is nonnegative. 
Taking expectations in \eqref{eq:stopped-pathwise-ineq} gives
\[
    \mathbb E[Y_{t\wedge\tau}]
    \le
    \mathbb E\left[
        \int_0^t I_\tau(s)f_s\,ds
    \right]
    +
    \mathbb E[\mathcal N_{t\wedge\tau}].
\]
By the true martingale assumption,
$
    \mathbb E[\mathcal N_{t\wedge\tau}]=0.
$
Moreover, the integrability assumption and Fubini's theorem imply
\[
    \mathbb E\left[
        \int_0^t I_\tau(s)f_s\,ds
    \right]
    =
    \int_0^t
    \mathbb E\bigl[f_s I_\tau(s)\bigr]\,ds.
\]
Therefore
\[
    \mathbb E[Y_{t\wedge\tau}]
    \le
    \int_0^t
    \mathbb E\bigl[f_s I_\tau(s)\bigr]\,ds.
\]
Combining this estimate with
$
    \mathbb E[Y_t I_\tau(t)]\le \mathbb E[Y_{t\wedge\tau}]
$ 
proves \eqref{eq:stopped-expectation-ineq}.
\end{proof}

\begin{lemma}
\label{lem:fixed-N-consensus-regularity}
Assume Assumption~\ref{ass:primitive}. Fix \(N\in\mathbb N\). For
$
    \mathbf x=(x^1,\ldots,x^N)\in\mathbb R^{dN},
$
define
\[
    \|\mathbf x\|_{N,2}
    :=
    \left(\sum_{i=1}^N\|x^i\|_2^2\right)^{1/2},
    \qquad
    \mu_{\mathbf x}^N
    :=
    \frac1N\sum_{i=1}^N\delta_{x^i}.
\]
Set
$
    Q_N(\mathbf x):=\qsoft[\mu_{\mathbf x}^N],
$ 
and 
$
    \mathfrak m_N(\mathbf x):=m(\mu_{\mathbf x}^N).
$
Then the following two properties hold. First, for every \(R>0\), there exists a finite constant
\[
    L_{\mathfrak m,R,N}
    =
    L_{\mathfrak m,R,N}(\alpha,\beta,\tau,L_L,L_G,L_\psi,\kappa,G_{\min},G_{\max},N)
    >0
\]
such that
\[
    \|\mathfrak m_N(\mathbf x)-\mathfrak m_N(\mathbf y)\|_2
    \le
    L_{\mathfrak m,R,N}\|\mathbf x-\mathbf y\|_{N,2},
    \qquad
    \mathbf x,\mathbf y\in B_R^N,
    \label{eq:fixed-N-consensus-local-Lipschitz}
\]
where
$
    B_R^N:=\{\mathbf x\in(\mathbb R^d)^N:\|\mathbf x\|_{N,2}\le R\}.
$

Second, for every \(\mathbf x\in\mathbb R^{dN}\),
\[
    \|\mathfrak m_N(\mathbf x)\|_2
    \le
    C_m N^{-1/4}\|\mathbf x\|_{N,2}.
    \label{eq:fixed-N-consensus-linear-growth}
\]
\end{lemma}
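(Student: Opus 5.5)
The linear growth bound is immediate, so I would dispose of it first. Apply Corollary~\ref{cor:consensus-moment-stability} to $\rho=\mu^N_{\mathbf x}$, which gives $\|\mathfrak m_N(\mathbf x)\|_2\le C_m\big(\frac1N\sum_i\|x^i\|_2^4\big)^{1/4}$. Then use $\sum_i\|x^i\|_2^4\le\big(\sum_i\|x^i\|_2^2\big)^2=\|\mathbf x\|_{N,2}^4$. Taking fourth roots yields $C_mN^{-1/4}\|\mathbf x\|_{N,2}$.

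For the local Lipschitz bound, write $\mathfrak m_N=A/B$ with
\[
A_N(\mathbf x)=\tfrac1N\textstyle\sum_i x^i e^{-\alpha G(x^i)}\psi\big((Q_N(\mathbf x)-L(x^i))/\tau\big),
\]
and $B_N(\mathbf x)$ the same expression without the factor $x^i$. I would proceed in three steps.

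\emph{Step 1: the quantile.} Apply Lemma~\ref{lem:quantile-inverse-stability} with $\mu=\mu^N_{\mathbf x}$ and $\nu=\mu^N_{\mathbf y}$. This gives
\[
|Q_N(\mathbf x)-Q_N(\mathbf y)|\le\kappa^{-1}\Big|\tfrac1N\textstyle\sum_i\big[\psi\big(\tfrac{q-L(x^i)}{\tau}\big)-\psi\big(\tfrac{q-L(y^i)}{\tau}\big)\big]\Big|
\]
with $q=Q_N(\mathbf y)$. By the Lipschitz bounds on $\psi$ and $L$, the right-hand side is at most $\frac{L_\psi L_L}{\kappa\tau N}\sum_i\|x^i-y^i\|_2\le\frac{L_\psi L_L}{\kappa\tau\sqrt N}\|\mathbf x-\mathbf y\|_{N,2}$. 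Note this holds globally, not just on $B_R^N$.

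\emph{Step 2: the weights.} Next I control the weights $\eta_i(\mathbf x)=\psi\big((Q_N(\mathbf x)-L(x^i))/\tau\big)$. I have $|\eta_i(\mathbf x)-\eta_i(\mathbf y)|\le\frac{L_\psi}{\tau}\big(|Q_N(\mathbf x)-Q_N(\mathbf y)|+L_L\|x^i-y^i\|_2\big)$. For the Gibbs factor, $|e^{-\alpha G(x^i)}-e^{-\alpha G(y^i)}|\le\alpha L_Ge^{-\alpha G_{\min}}\|x^i-y^i\|_2$ by the mean value theorem. Hence $B_N$ is globally Lipschitz, with a constant depending on $\alpha,\tau,L_\psi,L_L,L_G,\kappa,G_{\min}$ and $N$.

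\emph{Step 3: the numerator and the quotient.} For $A_N$, each summand $x^i w_i\eta_i$ is a product of $x^i$ with a bounded Lipschitz factor. On $B_R^N$ each $\|x^i\|_2\le R$, so the product rule gives a Lipschitz constant of order $R$ times the previous constants, plus $e^{-\alpha G_{\min}}$ from the $x^i$ factor itself. Finally, the quotient estimate
\[
\|A/B-A'/B'\|\le\frac{\|A-A'\|}{B}+\frac{\|A'\|\,|B-B'|}{BB'}
\]
combines with $B\ge b_G$ from Lemma~\ref{lem:deterministic-AB-bounds} and with $\|A'\|\le e^{-\alpha G_{\min}}R$ on the ball. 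All sums $\frac1N\sum_i\|x^i-y^i\|$ are converted to $\|\cdot\|_{N,2}$ by Cauchy--Schwarz.

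The main obstacle is Step 1: it needs a Lipschitz dependence of the implicitly defined quantile. That is exactly what Lemma~\ref{lem:quantile-inverse-stability} provides, and it relies on the uniform positivity $\kappa$ from Corollary~\ref{cor:quantile-strong-monotonicity}, which in turn uses the bounds $L_{\min}\le L\le L_{\max}$. The remainder is bookkeeping. The resulting constant is polynomial in $R$ and in $N$, which is acceptable since $N$ is fixed.
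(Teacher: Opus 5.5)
Your proposal is correct and follows the paper's proof essentially step by step. The common steps are: the global Lipschitz bound on $Q_N$ via Lemma~\ref{lem:quantile-inverse-stability}, Lipschitz control of the weights, the product rule for $A_N$ on $B_R^N$, the quotient estimate with $B_N\ge b_G$, and the growth bound from Corollary~\ref{cor:consensus-moment-stability} together with $\sum_i\|x^i\|_2^4\le\|\mathbf x\|_{N,2}^4$. The only cosmetic difference is that you bound $\|A_N(\mathbf y)\|_2$ by $e^{-\alpha G_{\min}}R$, whereas the paper uses Cauchy--Schwarz to get the slightly sharper $e^{-\alpha G_{\min}}R/\sqrt N$; this is immaterial for fixed $N$.
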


\begin{proof}
We first prove the local Lipschitz property. By
Lemma~\ref{lem:quantile-inverse-stability}, for any
\(\mathbf x,\mathbf y\in(\mathbb R^d)^N\),
\begin{align}\notag
    |Q_N(\mathbf x)-Q_N(\mathbf y)|
    &\le
    \kappa^{-1}
    \left|
    F_{\mu_{\mathbf x}^N}(Q_N(\mathbf y))
    -
    F_{\mu_{\mathbf y}^N}(Q_N(\mathbf y))
    \right|                                                     \\ \notag
    &=
    \kappa^{-1}
    \left|
    \frac1N\sum_{i=1}^N
    \left[
    \psi\left(\frac{Q_N(\mathbf y)-L(x^i)}{\tau}\right)
    -
    \psi\left(\frac{Q_N(\mathbf y)-L(y^i)}{\tau}\right)
    \right]
    \right|                                                     \\ \notag
    &\le
    \frac{L_\psi L_L}{\kappa\tau N}
    \sum_{i=1}^N\|x^i-y^i\|_2                                  \\
    &\le
    \frac{L_\psi L_L}{\kappa\tau\sqrt N}
    \|\mathbf x-\mathbf y\|_{N,2}.
    \label{eq:fixed-N-QN-Lipschitz}
\end{align}
Define
\(
    C_{Q,N}:=\frac{L_\psi L_L}{\kappa\tau\sqrt N}.
\)
For \(i\in [N]\), set
\[
    a_i(\mathbf x)
    :=
    e^{-\alpha G(x^i)}
    \psi\left(\frac{Q_N(\mathbf x)-L(x^i)}{\tau}\right).
\]
Since \(G\ge G_{\min}\), \(G\) is globally Lipschitz, and
\(r\mapsto e^{-\alpha r}\) has derivative bounded by
\(\alpha e^{-\alpha G_{\min}}\) on \([G_{\min},\infty)\), we have
\[
    \left|
    e^{-\alpha G(x^i)}
    -
    e^{-\alpha G(y^i)}
    \right|
    \le
    \alpha e^{-\alpha G_{\min}}L_G\|x^i-y^i\|_2 .
\]
Using \(0<s<1\), the Lipschitz continuity of \(\psi\), the Lipschitz continuity
of \(L\), and~\eqref{eq:fixed-N-QN-Lipschitz}, we get
\[
    |a_i(\mathbf x)-a_i(\mathbf y)|
    \le
    C_{a,1}\|x^i-y^i\|_2
    +
    C_{a,2}\|\mathbf x-\mathbf y\|_{N,2},
    \label{eq:fixed-N-ai-Lipschitz}
\]
where
\[
    C_{a,1}
    :=
    \alpha e^{-\alpha G_{\min}}L_G
    +
    e^{-\alpha G_{\min}}\frac{L_\psi L_L}{\tau},
    \qquad
    C_{a,2}
    :=
    e^{-\alpha G_{\min}}\frac{L_\psi C_{Q,N}}{\tau}.
\]
Consequently,
\[
    \frac1N\sum_{i=1}^N
    |a_i(\mathbf x)-a_i(\mathbf y)|
    \le
    \left(\frac{C_{a,1}}{\sqrt N}+C_{a,2}\right)
    \|\mathbf x-\mathbf y\|_{N,2}.
    \label{eq:fixed-N-average-ai-Lipschitz}
\]
Let
$
    L_{B,N}:=\frac{C_{a,1}}{\sqrt N}+C_{a,2}.
$
Then, we can write
\[
    A_N(\mathbf x)
    :=
    A(\mu_{\mathbf x}^N)
    =
    \frac1N\sum_{i=1}^N x^i a_i(\mathbf x),
    \qquad
    B_N(\mathbf x)
    :=
    B(\mu_{\mathbf x}^N)
    =
    \frac1N\sum_{i=1}^N a_i(\mathbf x).
\]
By~\eqref{eq:fixed-N-average-ai-Lipschitz}, we have
\[
    |B_N(\mathbf x)-B_N(\mathbf y)|
    \le
    L_{B,N}\|\mathbf x-\mathbf y\|_{N,2}.
    \label{eq:fixed-N-BN-Lipschitz}
\]
Moreover, by Lemma~\ref{lem:deterministic-AB-bounds}, we have
$
    B_N(\mathbf x)\ge b_G:=\beta e^{-\alpha G_{\max}}>0.
$

Let \(\mathbf x,\mathbf y\in B_R^N\). Since \(a_i(\mathbf x)\le e^{-\alpha G_{\min}}\), we have
\begin{align*}
    \|A_N(\mathbf x)-A_N(\mathbf y)\|_2
    &\le
    \frac1N\sum_{i=1}^N
    \|x^i-y^i\|_2 a_i(\mathbf x)
    +
    \frac1N\sum_{i=1}^N
    \|y^i\|_2 |a_i(\mathbf x)-a_i(\mathbf y)|                 \\
    &\le
    \frac{e^{-\alpha G_{\min}}}{\sqrt N}
    \|\mathbf x-\mathbf y\|_{N,2}
    +
    R L_{B,N}\|\mathbf x-\mathbf y\|_{N,2}.
\end{align*}
Hence, 
$
    \|A_N(\mathbf x)-A_N(\mathbf y)\|_2
    \le
    L_{A,R,N}\|\mathbf x-\mathbf y\|_{N,2},
$
where
$
    L_{A,R,N}
    :=
    \frac{e^{-\alpha G_{\min}}}{\sqrt N}
    +
    R L_{B,N}.
$
Also, for \(\mathbf y\in B_R^N\),
\[
    \|A_N(\mathbf y)\|_2
    \le
    \frac{e^{-\alpha G_{\min}}}{N}
    \sum_{i=1}^N\|y^i\|_2
    \le
    \frac{e^{-\alpha G_{\min}}R}{\sqrt N}.
    \label{eq:fixed-N-AN-bound}
\]

Since
$
    \mathfrak m_N(\mathbf x)=\frac{A_N(\mathbf x)}{B_N(\mathbf x)},
$
we obtain, for \(\mathbf x,\mathbf y\in B_R^N\),
\begin{align*}
    \|\mathfrak m_N(\mathbf x)-\mathfrak m_N(\mathbf y)\|_2
    &\le
    \frac{\|A_N(\mathbf x)-A_N(\mathbf y)\|_2}{B_N(\mathbf x)}
    +
    \|A_N(\mathbf y)\|_2
    \left|
    \frac1{B_N(\mathbf x)}-\frac1{B_N(\mathbf y)}
    \right|                                                     \\
    &\le
    b_G^{-1}L_{A,R,N}\|\mathbf x-\mathbf y\|_{N,2}
    +
    \frac{e^{-\alpha G_{\min}}R}{\sqrt N}
    b_G^{-2}L_{B,N}\|\mathbf x-\mathbf y\|_{N,2}.
\end{align*}
Thus~\eqref{eq:fixed-N-consensus-local-Lipschitz} holds with
\[
    L_{\mathfrak m,R,N}
    :=
    b_G^{-1}L_{A,R,N}
    +
    \frac{e^{-\alpha G_{\min}}R}{\sqrt N}b_G^{-2}L_{B,N}.
\]

It remains to prove the growth estimate. Applying
Corollary~\ref{cor:consensus-moment-stability} to
\(\rho=\mu_{\mathbf x}^N\), we get
\[
    \|\mathfrak m_N(\mathbf x)\|_2^4
    \le
    C_m^4\int_{\mathbb R^d}\|\theta\|_2^4\,\mu_{\mathbf x}^N(d\theta)
    =
    C_m^4\frac1N\sum_{i=1}^N\|x^i\|_2^4.
\]
Since
\[
    \sum_{i=1}^N\|x^i\|_2^4
    \le
    \left(\sum_{i=1}^N\|x^i\|_2^2\right)^2
    =
    \|\mathbf x\|_{N,2}^4,
\]
we obtain
$
    \|\mathfrak m_N(\mathbf x)\|_2^4
    \le
    C_m^4N^{-1}\|\mathbf x\|_{N,2}^4.
$
Taking fourth roots gives~\eqref{eq:fixed-N-consensus-linear-growth}.
\end{proof}

\begin{proposition}
\label{prop:fixed-N-well-posedness}
Under Assumption~\ref{ass:primitive}, fix \(N\in\mathbb N\). Suppose that
$
    \mathbf\Theta_0^N
    :=
    (\Theta_{0,\tau}^{1,N},\ldots,\Theta_{0,\tau}^{N,N})
$
is an \((\mathbb R^d)^N\)-valued random variable with
$
    \mathbb E\|\mathbf\Theta_0^N\|_{N,2}^2<\infty.
$
Then the interacting particle system~\eqref{eq:interacting_sde} admits a unique
nonexplosive strong solution
\[
    \mathbf\Theta_t^N
    =
    (\Theta_{t,\tau}^{1,N},\ldots,\Theta_{t,\tau}^{N,N})
\]
with continuous paths on every finite time interval.
\end{proposition}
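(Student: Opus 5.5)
The plan is to treat \eqref{eq:interacting_sde} as an SDE on \((\mathbb R^d)^N\) with state \(\mathbf x\) and coefficients
\[
b_i(\mathbf x):=-\lambda\bigl(x^i-\mathfrak m_N(\mathbf x)\bigr),\qquad
s_i(\mathbf x):=\sigma D\bigl(x^i-\mathfrak m_N(\mathbf x)\bigr),
\]
where \(\mathfrak m_N\) is as in Lemma~\ref{lem:fixed-N-consensus-regularity}. I would apply the classical existence and uniqueness theorem for SDEs with locally Lipschitz coefficients of linear growth (e.g.\ Khasminskii/Mao). Local Lipschitz continuity comes from the first part of Lemma~\ref{lem:fixed-N-consensus-regularity}. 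For \(\mathbf x,\mathbf y\in B_R^N\), this gives \(\|b(\mathbf x)-b(\mathbf y)\|_{N,2}\le\lambda(1+\sqrt N L_{\mathfrak m,R,N})\|\mathbf x-\mathbf y\|_{N,2}\). Combining \eqref{eq:D-basic} with the same Lemma gives \(\|s(\mathbf x)-s(\mathbf y)\|_F\le\sigma\sqrt d(1+\sqrt N L_{\mathfrak m,R,N})\|\mathbf x-\mathbf y\|_{N,2}\). Continuity and measurability of the coefficients follow from the same estimates.

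The first step is to construct a local solution. For each \(R\), I would replace the coefficients by globally Lipschitz truncations \(b^R(\mathbf x):=b(\pi_R\mathbf x)\) and \(s^R(\mathbf x):=s(\pi_R\mathbf x)\), where \(\pi_R\) is the radial projection onto \(B_R^N\); this projection is 1-Lipschitz. The standard Picard iteration then gives a unique strong solution \(\mathbf\Theta^{R}\) with continuous paths for \(\mathbf\Theta_0^N\in L^2\). Define \(\zeta_R:=\inf\{t:\|\mathbf\Theta^R_t\|_{N,2}\ge R\}\). Pathwise uniqueness for the truncated equations shows that \(\mathbf\Theta^R=\mathbf\Theta^{R'}\) on \([0,\zeta_R]\) for \(R<R'\). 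This yields a consistent maximal solution up to \(\zeta_\infty:=\lim_R\zeta_R\), unique among continuous solutions up to each \(\zeta_R\). Uniqueness on the whole interval then follows from a localized Gronwall argument.

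The main step, and the only real obstacle, is nonexplosion: I need \(\zeta_\infty=\infty\) a.s. The second part of Lemma~\ref{lem:fixed-N-consensus-regularity} gives \(\|\mathfrak m_N(\mathbf x)\|_2\le C_mN^{-1/4}\|\mathbf x\|_{N,2}\). Hence
\[
\|b(\mathbf x)\|_{N,2}^2+\|s(\mathbf x)\|_F^2\le C_N(1+\|\mathbf x\|_{N,2}^2),\qquad C_N=2(\lambda^2+d\sigma^2)(1+C_m^2N^{1/2}).
\]
Applying Itô's formula to \(\|\mathbf\Theta_{t\wedge\zeta_R}\|_{N,2}^2\) gives a drift bounded by \(C_N'(1+\|\mathbf\Theta\|_{N,2}^2)\), while the stopped stochastic integral is a true martingale because its integrand is bounded before \(\zeta_R\). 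Using Lemma~\ref{lem:stopped-integral}, or directly taking expectations and applying Gronwall, I obtain
\[
\mathbb E\|\mathbf\Theta_{t\wedge\zeta_R}\|_{N,2}^2\le(1+\mathbb E\|\mathbf\Theta_0^N\|_{N,2}^2)e^{C_N't}.
\]
This bound is uniform in \(R\). Since \(\|\mathbf\Theta_{\zeta_R}\|_{N,2}=R\) on \(\{\zeta_R\le t\}\), it follows that \(R^2\,\mathbb P(\zeta_R\le t)\) is bounded uniformly in \(R\). Therefore \(\mathbb P(\zeta_\infty\le t)=0\) for every \(t\), so the solution is global and nonexplosive, with continuous paths on each \([0,T]\).
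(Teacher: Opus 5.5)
Your proposal is correct and follows the paper's proof. You use the same local Lipschitz bounds from Lemma~\ref{lem:fixed-N-consensus-regularity} and the same linear-growth constant $2(\lambda^2+d\sigma^2)(1+C_m^2N^{1/2})$; the paper then simply cites the standard existence--uniqueness theorem for locally Lipschitz coefficients with linear growth, whose truncation and nonexplosion argument you write out. One cosmetic slip: on $\{\zeta_R\le t\}$ you only have $\|\mathbf\Theta_{t\wedge\zeta_R}\|_{N,2}\ge R$, not $=R$ (equality fails if $\|\mathbf\Theta_0^N\|_{N,2}>R$), and that inequality is all your Markov step needs.
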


\begin{proof}
Write the particle system in vector form. For
\(\mathbf x=(x^1,\ldots,x^N)\in(\mathbb R^d)^N\), define
\[
    b_N^i(\mathbf x)
    :=
    -\lambda\left(x^i-\mathfrak m_N(\mathbf x)\right),
    \qquad i\in[N],
\]
and
\[
    \Sigma_N^i(\mathbf x)
    :=
    \sigma D\left(x^i-\mathfrak m_N(\mathbf x)\right),
    \qquad i\in[N].
\]
Let
\[
    b_N(\mathbf x)
    :=
    (b_N^1(\mathbf x),\ldots,b_N^N(\mathbf x))
    \in(\mathbb R^d)^N,
\]
and let \(\Sigma_N(\mathbf x)\) be the block-diagonal \(dN\times dN\) matrix
with diagonal blocks \(\Sigma_N^1(\mathbf x),\ldots,\Sigma_N^N(\mathbf x)\).
Then~\eqref{eq:interacting_sde} is equivalent to
\[
    d\mathbf\Theta_t^N
    =
    b_N(\mathbf\Theta_t^N)\,dt
    +
    \Sigma_N(\mathbf\Theta_t^N)\,d\mathbf W_t^N,
    \label{eq:fixed-N-vector-SDE}
\]
where
$
    \mathbf W_t^N:=(W_t^1,\ldots,W_t^N)
$
is a \(dN\)-dimensional Brownian motion. We first prove local Lipschitz continuity of \(b_N\) and \(\Sigma_N\). Fix
\(R>0\) and let \(\mathbf x,\mathbf y\in B_R^N\). By
Lemma~\ref{lem:fixed-N-consensus-regularity},
\[
    \|\mathfrak m_N(\mathbf x)-\mathfrak m_N(\mathbf y)\|_2
    \le
    L_{\mathfrak m,R,N}\|\mathbf x-\mathbf y\|_{N,2}.
\]
Therefore,
\begin{align*}
    \|b_N(\mathbf x)-b_N(\mathbf y)\|_{N,2}^2
    &=
    \lambda^2
    \sum_{i=1}^N
    \left\|
    x^i-y^i
    -
    \left(\mathfrak m_N(\mathbf x)-\mathfrak m_N(\mathbf y)\right)
    \right\|_2^2                                      \\
    &\le
    2\lambda^2\|\mathbf x-\mathbf y\|_{N,2}^2
    +
    2\lambda^2N
    \|\mathfrak m_N(\mathbf x)-\mathfrak m_N(\mathbf y)\|_2^2 \\
    &\le
    2\lambda^2
    \left(1+N L_{\mathfrak m,R,N}^2\right)
    \|\mathbf x-\mathbf y\|_{N,2}^2.
    \label{eq:fixed-N-drift-local-Lipschitz-new}
\end{align*}
Using \(\|D(u)-D(v)\|_F\le\sqrt d\,\|u-v\|_2\), we similarly obtain
\begin{align*}
    \|\Sigma_N(\mathbf x)-\Sigma_N(\mathbf y)\|_F^2
    &=
    \sum_{i=1}^N
    \left\|
    \Sigma_N^i(\mathbf x)-\Sigma_N^i(\mathbf y)
    \right\|_F^2                                      \\
    &\le
    \sigma^2 d
    \sum_{i=1}^N
    \left\|
    x^i-y^i
    -
    \left(\mathfrak m_N(\mathbf x)-\mathfrak m_N(\mathbf y)\right)
    \right\|_2^2                                      \\
    &\le
    2\sigma^2d
    \left(1+N L_{\mathfrak m,R,N}^2\right)
    \|\mathbf x-\mathbf y\|_{N,2}^2.
    %\label{eq:fixed-N-diffusion-local-Lipschitz-new}
\end{align*}
Hence \(b_N\) and \(\Sigma_N\) are locally Lipschitz on
\((\mathbb R^d)^N\).

We next prove linear growth. By
Lemma~\ref{lem:fixed-N-consensus-regularity},
$
    \|\mathfrak m_N(\mathbf x)\|_2
    \le
    C_mN^{-1/4}\|\mathbf x\|_{N,2}.
$
Thus
\begin{align}\notag
    \sum_{i=1}^N
    \|x^i-\mathfrak m_N(\mathbf x)\|_2^2
    &\le
    2\sum_{i=1}^N\|x^i\|_2^2
    +
2N\|\mathfrak m_N(\mathbf x)\|_2^2  \\ 
    &\le
    2\|\mathbf x\|_{N,2}^2
    +
    2N C_m^2N^{-1/2}\|\mathbf x\|_{N,2}^2                    \\
    &=
    2\left(1+C_m^2N^{1/2}\right)
    \|\mathbf x\|_{N,2}^2.
    \label{eq:fixed-N-x-minus-m-linear-growth}
\end{align}
Since \(D(z)=\|z\|_2 I_d\), we have
$
    \|\Sigma_N(\mathbf x)\|_F^2
    =
    \sigma^2d
    \sum_{i=1}^N
    \|x^i-\mathfrak m_N(\mathbf x)\|_2^2,
$
and
$
    \|b_N(\mathbf x)\|_{N,2}^2
    =
    \lambda^2
    \sum_{i=1}^N
    \|x^i-\mathfrak m_N(\mathbf x)\|_2^2.
$
Combining these identities with~\eqref{eq:fixed-N-x-minus-m-linear-growth}
gives
\[
    \|b_N(\mathbf x)\|_{N,2}^2
    +
    \|\Sigma_N(\mathbf x)\|_F^2
    \le
    2(\lambda^2+\sigma^2d)
    \left(1+C_m^2N^{1/2}\right)
    \|\mathbf x\|_{N,2}^2.
    \label{eq:fixed-N-coefficients-linear-growth}
\]
Therefore the coefficients are locally Lipschitz and satisfy a linear-growth
bound.

By the standard finite-dimensional strong existence and uniqueness theorem
for SDEs with locally Lipschitz coefficients and linear growth, the vector
SDE~\eqref{eq:fixed-N-vector-SDE} has a unique nonexplosive strong solution on
every finite time interval. Since solutions of finite-dimensional Itô SDEs
with continuous coefficients have continuous paths, the particle paths
\[
    t\mapsto \Theta_{t,\tau}^{i,N},
    \qquad i\in[N],
\]
are continuous. This proves the claim.
\end{proof}

\subsection{Fourth-moment boundedness with explicit constants}
Let $B_{4,d}$ be any fixed BDG constant such that, for every continuous $d$-dimensional stochastic integral $\mathcal M_t=\int_0^tH_s\dd W_s$,
\begin{equation}
    \E\sup_{t\le T}\norm{\mathcal M_t}_2^4
    \le B_{4,d}\E\br{\int_0^T\norm{H_s}_F^2\dd s}^2.
    \label{eq:BDG-constant}
\end{equation}
In the application below, the martingale is
\[
    \mathcal M_t
    =
    \int_0^t
    D(\Theta_{s,\tau}^{i,N}-m_s^N)\,dW_s^i,
\]
so the BDG integrand is
$
    H_s=D(\Theta_{s,\tau}^{i,N}-m_s^N).
$
Since \(D(z)=\|z\|_2I_d\), we have
$
    \|H_s\|_F^2=d\|\Theta_{s,\tau}^{i,N}-m_s^N\|_2^2.
$
Moreover, let
\begin{equation}
    K_{\mathrm{bd}}(T)
    :=216\br{1+C_m(T)^4}\br{\lambda^4T^3+\sigma^4B_{4,d}d^2T},
    \label{eq:Kbd-def}
\end{equation}
\begin{equation}
    C_{\mathrm{bd}}(T):=54\mu_4\exp\br{K_{\mathrm{bd}}(T)T}.
    \label{eq:Cbd-def}
\end{equation}

\begin{lemma}
\label{lem:fourth-moment-bound}
Under Assumption~\ref{ass:primitive}, for every $T>0$,
\begin{equation}
    \E[M_T]\le C_{\mathrm{bd}}(T),
    \qquad
    \Prob(\Omega_M^c)\le\frac{C_{\mathrm{bd}}(T)}{M}.
    \label{eq:moment-prob-bound}
\end{equation}
\end{lemma}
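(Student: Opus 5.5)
The plan is to bound $\E\sup_{t\le T}\|\Theta^{i,N}_{t,\tau}\|_2^4$ and $\E\sup_{t\le T}\|\bar\Theta^i_{t,\tau}\|_2^4$ separately for each $i$, average over $i$, and use the elementary inequality
\[
M_T\le \frac1N\sum_i\Big(\sup_{t\le T}\|\Theta^{i,N}_{t,\tau}\|_2^4+\sup_{t\le T}\|\bar\Theta^i_{t,\tau}\|_2^4\Big).
\]
The probability bound then follows from Markov's inequality, $\Prob(\Omega_M^c)=\Prob(M_T>M)\le \E[M_T]/M$. Here $\mu_4:=\int\|x\|_2^4\rho_0(\dd x)$, which is finite by Assumption~\ref{ass:primitive} IV. The numerical constants $54$ and $216$ in \eqref{eq:Kbd-def}--\eqref{eq:Cbd-def} suggest splitting each particle's path into three pieces and applying $(a+b+c)^4\le 27(a^4+b^4+c^4)$ twice.

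For the interacting particles, write $m_s^N:=m(\rho^N_{s,\tau})$ and
\[
\Theta^{i,N}_{t,\tau}=\Theta^{i,N}_{0,\tau}-\lambda\int_0^t(\Theta^{i,N}_{s,\tau}-m_s^N)\dd s+\sigma\int_0^t D(\Theta^{i,N}_{s,\tau}-m_s^N)\dd W^i_s .
\]
Take $\sup_{t\le T'}$ for $T'\le T$, raise to the fourth power, and split into three terms. Hölder's inequality bounds the drift term by $\lambda^4T^3\int_0^{T'}\|\Theta^{i,N}_s-m^N_s\|^4\dd s$. The BDG inequality \eqref{eq:BDG-constant}, with $\|H_s\|_F^2=d\|\Theta^{i,N}_s-m_s^N\|^2$, followed by Cauchy--Schwarz in time, bounds the noise term by $\sigma^4B_{4,d}d^2T\int_0^{T'}\E\|\Theta^{i,N}_s-m^N_s\|^4\dd s$. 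Next use $\|\Theta-m\|^4\le 8(\|\Theta\|^4+\|m\|^4)$ together with Corollary~\ref{cor:consensus-moment-stability} applied to the empirical measure, which gives $\|m_s^N\|^4\le C_m^4\frac1N\sum_j\|\Theta^{j,N}_s\|^4$. Averaging over $i$ closes the estimate in the quantity
\[
u(T'):=\E\frac1N\sum_i\sup_{t\le T'}\|\Theta^{i,N}_t\|^4 ,
\]
yielding $u(T')\le 27\mu_4+K\int_0^{T'}u(s)\dd s$ with $K\lesssim 27\cdot 8(1+C_m^4)(\lambda^4T^3+\sigma^4B_{4,d}d^2T)$, which matches $K_{\mathrm{bd}}(T)$. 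Grönwall's lemma then gives $u(T)\le 27\mu_4e^{K_{\mathrm{bd}}T}$.

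For the mean-field copies, the same computation works with $m(\rho_{s,\tau})$ in place of $m_s^N$. Corollary~\ref{cor:consensus-moment-stability} gives $\|m(\rho_s)\|^4\le C_m^4\E\|\bar\Theta_s\|^4$, and since the copies are identically distributed the same Grönwall bound follows, with finiteness guaranteed by $\rho\in C([0,T],\mathcal P_4)$. Adding the two contributions gives $54\mu_4e^{K_{\mathrm{bd}}T}=C_{\mathrm{bd}}(T)$.

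The main obstacle is rigor in applying Grönwall: a priori $u$ might be infinite, and the stochastic integral might be only a local martingale. I would handle this by localizing with $\varsigma_R:=\inf\{t:\|\mathbf\Theta^N_t\|_{N,2}>R\}$, which is available by Proposition~\ref{prop:fixed-N-well-posedness}. I would run the argument for the stopped processes, where all quantities are bounded, bounding $\|m^N_{s\wedge\varsigma_R}\|^4$ by the stopped fourth moments so that the closure still holds, and then let $R\to\infty$ using Fatou's lemma and nonexplosion.
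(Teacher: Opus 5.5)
Your proposal is correct and follows the paper's proof essentially step for step. Both use the three-term split with $(a+b+c)^4\le 27(a^4+b^4+c^4)$, Jensen in time for the drift, BDG plus Cauchy--Schwarz for the noise, and Corollary~\ref{cor:consensus-moment-stability} to absorb the consensus term; then Gr\"onwall for the averaged sup-fourth-moment of both the interacting particles and the mean-field copies, then $\max\{a,b\}\le a+b$ and Markov. Your one addition, localizing with $\varsigma_R$ and passing to the limit via nonexplosion and Fatou, guarantees finiteness before Gr\"onwall is applied; the paper skips this point, so the step makes the same argument more rigorous rather than changing it.
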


\begin{proof}
By Proposition~\ref{prop:fixed-N-well-posedness}, the interacting particle
system~\eqref{eq:interacting_sde} has a unique nonexplosive strong solution with
continuous paths. Hence the integral representation below is valid and Itô
estimates may be applied to each particle.

Set $m_t^N:=m(\rho_{t,\tau}^{N,\mathrm{int}})$. For fixed $i$,
\begin{equation*}
    \Theta_{t,\tau}^{i,N}
    =\Theta_{0,\tau}^{i,N}
    -\lambda\int_0^t\br{\Theta_{s,\tau}^{i,N}-m_s^N}\dd s
    +\sigma\int_0^tD\br{\Theta_{s,\tau}^{i,N}-m_s^N}\dd W_s^i.
\end{equation*}
Using $(a+b+c)^4\le27(a^4+b^4+c^4)$ gives
\begin{align}\notag
    \E\sup_{t\le T}\norm{\Theta_{t,\tau}^{i,N}}_2^4
    &\le 27\E\norm{\Theta_{0,\tau}^{i,N}}_2^4                                      \\ \notag
    &\quad+27\lambda^4\E\sup_{t\le T}\norm{\int_0^t\br{\Theta_{s,\tau}^{i,N}-m_s^N}\dd s}_2^4 \\
    &\quad+27\sigma^4\E\sup_{t\le T}\norm{\int_0^tD\br{\Theta_{s,\tau}^{i,N}-m_s^N}\dd W_s^i}_2^4.
    \label{eq:particle-fourth-start}
\end{align}
Jensen's inequality in time gives
\begin{equation}
    \sup_{t\le T}\norm{\int_0^t\br{\Theta_{s,\tau}^{i,N}-m_s^N}\dd s}_2^4
    \le T^3\int_0^T\norm{\Theta_{s,\tau}^{i,N}-m_s^N}_2^4\dd s.
    \label{eq:jensen-time}
\end{equation}
By $(a+b)^4\le8(a^4+b^4)$ and Corollary~\ref{cor:consensus-moment-stability},
\begin{equation}
    \norm{\Theta_{s,\tau}^{i,N}-m_s^N}_2^4
    \le 8\norm{\Theta_{s,\tau}^{i,N}}_2^4
    +8C_m(T)^4\frac1N\sum_{j=1}^N\norm{\Theta_{s,\tau}^{j,N}}_2^4.
    \label{eq:particle-minus-m}
\end{equation}
For the stochastic integral, equations~\eqref{eq:BDG-constant} and~\eqref{eq:D-basic}, followed by Cauchy's inequality in time, imply
\begin{align}\notag
    \E\sup_{t\le T}\norm{\int_0^tD\br{\Theta_{s,\tau}^{i,N}-m_s^N}\dd W_s^i}_2^4
    &\le B_{4,d}\E\br{\int_0^T\norm{D\br{\Theta_{s,\tau}^{i,N}-m_s^N}}_F^2\dd s}^2 \\ \notag
    &=B_{4,d}\E\br{\int_0^Td\norm{\Theta_{s,\tau}^{i,N}-m_s^N}_2^2\dd s}^2             \\
    &\le B_{4,d}d^2T\int_0^T\E\norm{\Theta_{s,\tau}^{i,N}-m_s^N}_2^4\dd s.
    \label{eq:stochastic-fourth}
\end{align}
Let
$
    U_N(t):=\frac1N\sum_{i=1}^N\E\sup_{u\le t}\norm{\Theta_{u,\tau}^{i,N}}_2^4.
$ 
Averaging~\eqref{eq:particle-fourth-start} over $i$, using~\eqref{eq:jensen-time}, \eqref{eq:particle-minus-m}, \eqref{eq:stochastic-fourth}, and
\begin{equation*}
    \E\norm{\Theta_{s,\tau}^{i,N}}_2^4
    \le \E\sup_{u\le s}\norm{\Theta_{u,\tau}^{i,N}}_2^4,
\end{equation*}
one obtains
\begin{equation*}
    U_N(T)\le27\mu_4+K_{\mathrm{bd}}(T)\int_0^TU_N(s)\dd s.
\end{equation*}
Gronwall's inequality gives
\begin{equation}
    U_N(T)\le27\mu_4\exp\br{K_{\mathrm{bd}}(T)T}.
    \label{eq:UN-bound}
\end{equation}
For the i.i.d. mean-field copies, the same proof applies. Indeed, Corollary~\ref{cor:consensus-moment-stability} gives
\begin{equation*}
    \norm{m(\rho_{s,\tau})}_2^4
    \le C_m(T)^4\E\norm{\bar\Theta_{s,\tau}^1}_2^4,
\end{equation*}
and hence
\begin{equation}
    \bar U_N(T):=\frac1N\sum_{i=1}^N\E\sup_{u\le T}\norm{\bar\Theta_{u,\tau}^{i}}_2^4
    \le27\mu_4\exp\br{K_{\mathrm{bd}}(T)T}.
    \label{eq:barUN-bound}
\end{equation}
Since $\max\{a,b\}\le a+b$, equations~\eqref{eq:UN-bound} and~\eqref{eq:barUN-bound} imply
\begin{equation*}
    \E[M_T]
    \le54\mu_4\exp\br{K_{\mathrm{bd}}(T)T}
    =C_{\mathrm{bd}}(T).
\end{equation*}
Finally, Markov's inequality gives
\begin{equation*}
    \Prob(\Omega_M^c)=\Prob(M_T>M)
    \le\frac{\E[M_T]}{M}
    \le\frac{C_{\mathrm{bd}}(T)}{M}.
\end{equation*}
\end{proof}

\section{Proof of Theorem \ref{thm:direct-mf-convergence}}\label{pp:thm:direct-mf-convergence}

\subsection{Moment-based soft Laplace principle}

\begin{proposition}
\label{prop:soft-laplace}
Under Assumption~\ref{ass:primitive}, let $\varrho\in\cP_2(\R^d)$, $q=q_{\beta,\tau}^L[\varrho]$, and $J=\softJ[\varrho]$. Furthermore, assume that, for some $r_G\in(0,R_G]$, $r\in(0,r_G]$, $u>0$, and $\delta_{\mathrm{lev}}>0$,
\begin{equation}
    q+\delta_{\mathrm{lev}}
    \le L_{\min}+\min\set{L_\infty,(\eta_Lr_G)^{1/\nu_L}},
    \label{eq:laplace-level-assump}
\end{equation}
\begin{equation}
    u+\Gtilde_r\le G_\infty,
    \label{eq:laplace-mass-assump}
\end{equation}
where
$
    \thetae_G:=\thetae_{r_G},
$
$
    \Gtilde_r:=\sup_{\theta\in B_r(\theta^\star)}\br{G(\theta)-G(\thetae_G)},
$ $
    \Delta_r:=\sup_{\theta\in B_r(\theta^\star)}G(\theta)-G_{\min}.
    %\label{eq:laplace-local-constants}
$
Assume additionally that
\begin{equation}
    \varrho(B_r(\theta^\star))>0.
    \label{eq:soft-laplace-positive-local-mass}
\end{equation}
Set
$
    c_{\mathrm{in}}(q,r):=\psi\br{\frac{q-L_{\min}-L_Lr}{\tau}},
$ and $
    c_{\mathrm{out}}(\delta_{\mathrm{lev}}):=\psi\br{-\frac{\delta_{\mathrm{lev}}}{\tau}}.
    %\label{eq:cin-cout-def}
$
Then,
\begin{align}
    \norm{m(\varrho)-\theta^\star}_2
    &\le r_G+\frac{(u+\Gtilde_r)^{\nu_G}}{\eta_G}                                      \nonumber\\
    &\quad+\frac{e^{-\alpha u}}{c_{\mathrm{in}}(q,r)\varrho(B_r(\theta^\star))}
    \int_{\R^d}\norm{\theta-\thetae_G}_2J(\dd\theta)                                     \nonumber\\
    &\quad+\frac{e^{\alpha\Delta_r}c_{\mathrm{out}}(\delta_{\mathrm{lev}})}{c_{\mathrm{in}}(q,r)\varrho(B_r(\theta^\star))}
    \int_{\R^d}\norm{\theta-\thetae_G}_2\varrho(\dd\theta).
    \label{eq:soft-laplace-raw}
\end{align}
Let
$
    V_\varrho:=\frac12\int_{\R^d}\norm{\theta-\theta^\star}_2^2\varrho(\dd\theta),
    %\label{eq:V-varrho}
$
then
\begin{align}
    \norm{m(\varrho)-\theta^\star}_2
    &\le r_G+\frac{(u+\Gtilde_r)^{\nu_G}}{\eta_G}                                      \nonumber\\
    &\quad+\frac{e^{-\alpha u}}{c_{\mathrm{in}}(q,r)\varrho(B_r(\theta^\star))}
    \br{\sqrt{2V_\varrho}+\beta r_G}                                                     \nonumber\\
    &\quad+\frac{e^{\alpha\Delta_r}c_{\mathrm{out}}(\delta_{\mathrm{lev}})}{c_{\mathrm{in}}(q,r)\varrho(B_r(\theta^\star))}
    \br{\sqrt{2V_\varrho}+r_G}.
    \label{eq:soft-laplace-V}
\end{align}
\end{proposition}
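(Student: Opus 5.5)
Start from the triangle inequality $\norm{m(\varrho)-\theta^\star}_2\le \norm{\thetae_G-\theta^\star}_2+\norm{m(\varrho)-\thetae_G}_2\le r_G+\norm{m(\varrho)-\thetae_G}_2$, using $\thetae_G\in B_{r_G}(\theta^\star)$ from Assumption~\ref{ass:primitive} III. Since $m(\varrho)-\thetae_G=\int(\theta-\thetae_G)w_\alpha\,\dd J/\int w_\alpha\,\dd J$, Jensen gives
\[
\norm{m(\varrho)-\thetae_G}_2\le \frac{\int\norm{\theta-\thetae_G}_2 e^{-\alpha G(\theta)}J(\dd\theta)}{\int e^{-\alpha G(\theta)}J(\dd\theta)}.
\]
I split $\R^d$ into three regions: $\mathcal A:=\{L(\theta)\le q+\delta_{\mathrm{lev}},\ G(\theta)-G(\thetae_G)\le u+\Gtilde_r\}$, $\mathcal B:=\{L(\theta)\le q+\delta_{\mathrm{lev}},\ G(\theta)-G(\thetae_G)> u+\Gtilde_r\}$, and $\mathcal C:=\{L(\theta)>q+\delta_{\mathrm{lev}}\}$.

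\textbf{Denominator.} Restrict to $B_r(\theta^\star)$. There $L(\theta)\le L(\theta^\star)+L_Lr=L_{\min}+L_Lr$ because $\theta^\star\in\Theta$. Monotonicity of $\psi$ then gives $\eta_{\varrho,\tau}\ge c_{\mathrm{in}}(q,r)$, and $G\le G(\thetae_G)+\Gtilde_r$. Hence
\[
\int e^{-\alpha G}\dd J\ge c_{\mathrm{in}}\,e^{-\alpha(G(\thetae_G)+\Gtilde_r)}\varrho(B_r(\theta^\star)),
\]
which is positive by \eqref{eq:soft-laplace-positive-local-mass}. I will also use the cruder form $e^{-\alpha\sup_{B_r}G}=e^{-\alpha(G_{\min}+\Delta_r)}$.

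\textbf{Region $\mathcal A$.} By \eqref{eq:laplace-level-assump}, $L(\theta)-L_{\min}\le\min\{L_\infty,(\eta_Lr_G)^{1/\nu_L}\}$, so \eqref{eq:L-growth} gives $\dist(\theta,\Theta)\le r_G$, i.e.\ $\theta\in N_{r_G}(\Theta)$. Then \eqref{eq:laplace-mass-assump} lets me apply \eqref{eq:G-growth}, giving $\norm{\theta-\thetae_G}_2\le (u+\Gtilde_r)^{\nu_G}/\eta_G$. This contribution to the ratio is therefore at most that constant, since its numerator integral is bounded by the full denominator.

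\textbf{Region $\mathcal B$.} Here $e^{-\alpha G}\le e^{-\alpha(G(\thetae_G)+\Gtilde_r+u)}$. Dividing by the first denominator bound leaves the factor $e^{-\alpha u}/(c_{\mathrm{in}}\varrho(B_r))$ times $\int\norm{\theta-\thetae_G}_2\,\dd J$.

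\textbf{Region $\mathcal C$.} Here $\eta_{\varrho,\tau}(\theta)\le\psi(-\delta_{\mathrm{lev}}/\tau)=c_{\mathrm{out}}$, so $J\le c_{\mathrm{out}}\varrho$ on $\mathcal C$, and $e^{-\alpha G}\le e^{-\alpha G_{\min}}$. Dividing by the crude denominator $c_{\mathrm{in}}e^{-\alpha(G_{\min}+\Delta_r)}\varrho(B_r)$ gives the factor $e^{\alpha\Delta_r}c_{\mathrm{out}}/(c_{\mathrm{in}}\varrho(B_r))$ times $\int\norm{\theta-\thetae_G}_2\,\dd\varrho$.

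Summing the three region contributions yields \eqref{eq:soft-laplace-raw}. For \eqref{eq:soft-laplace-V}, use $\norm{\theta-\thetae_G}_2\le\norm{\theta-\theta^\star}_2+r_G$. Cauchy--Schwarz gives $\int\norm{\theta-\theta^\star}_2\,\dd\varrho\le\sqrt{2V_\varrho}$, and since $J\le\varrho$ also $\int\norm{\theta-\theta^\star}_2\,\dd J\le\sqrt{2V_\varrho}$; together with $J(\R^d)=\beta$ this produces the $\beta r_G$ and $r_G$ terms.

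The main obstacle is keeping the two denominator normalizations consistent, namely relative to $G(\thetae_G)$ for region $\mathcal B$ and relative to $G_{\min}$ for region $\mathcal C$. Each has to be checked carefully so that the exponential factors come out exactly as $e^{-\alpha u}$ and $e^{\alpha\Delta_r}$. A second point needing care is verifying that the level assumption really places $\mathcal A\cup\mathcal B$ inside $N_{r_G}(\Theta)$.
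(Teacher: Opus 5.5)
Your proof is correct and matches the paper's: the same triangle inequality through $\thetae_G$, the same lower bound on the normalizer from $B_r(\theta^\star)$, and the same three resulting terms. Your two denominator forms are in fact identical, since $G(\thetae_G)+\Gtilde_r=\sup_{B_r(\theta^\star)}G=G_{\min}+\Delta_r$.

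The only cosmetic difference is in how space is split. The paper splits geometrically, into the ball of radius $\eta_G^{-1}(u+\Gtilde_r)^{\nu_G}$ about $\thetae_G$, the rest of $N_{r_G}(\Theta)$, and $N_{r_G}(\Theta)^c$, and uses the two error bounds in contrapositive form. You split by sublevel sets of $L$ and $G$ and use the same two implications directly on $\mathcal A$.
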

\begin{proof}
Write $m=m(\varrho)$, $\eta=\eta_{\varrho,\tau}$, and $\thetae=\thetae_G$.

First, if $\theta\in B_r(\theta^\star)$, then global Lipschitz continuity of $L$ and $L(\theta^\star)=L_{\min}$ give
\begin{equation*}
    L(\theta)-L_{\min}\le L_L\norm{\theta-\theta^\star}_2\le L_Lr.
\end{equation*}
Thus,
\begin{equation*}
    \eta(\theta)=\psi\br{\frac{q-L(\theta)}{\tau}}
    \ge \psi\br{\frac{q-L_{\min}-L_Lr}{\tau}}
    =c_{\mathrm{in}}(q,r),
\end{equation*}
and therefore
\begin{equation}
    J(B_r(\theta^\star))
    =\int_{B_r(\theta^\star)}\eta(\theta)\varrho(\dd\theta)
    \ge c_{\mathrm{in}}(q,r)\varrho(B_r(\theta^\star)).
    \label{eq:J-ball-lower}
\end{equation}

By~\eqref{eq:soft-laplace-positive-local-mass} and
\(c_{\mathrm{in}}(q,r)>0\), the lower bound in
\eqref{eq:J-ball-lower} is strictly positive. Hence the denominators appearing
below are well defined.

Second, take $\theta\notin N_{r_G}(\Theta)$. Then $\dist(\theta,\Theta)>r_G$. We claim that
\begin{equation}
    L(\theta)-L_{\min}>\min\set{L_\infty,(\eta_Lr_G)^{1/\nu_L}}.
    \label{eq:outside-level-claim}
\end{equation}
Indeed, if $L(\theta)-L_{\min}>L_\infty$, the claim is immediate. If $L(\theta)-L_{\min}\le L_\infty$, then~\eqref{eq:L-growth} applies. If in addition $L(\theta)-L_{\min}\le(\eta_Lr_G)^{1/\nu_L}$, then
\begin{equation*}
    \dist(\theta,\Theta)
    \le\eta_L^{-1}\br{L(\theta)-L_{\min}}^{\nu_L}
    \le\eta_L^{-1}\eta_Lr_G=r_G,
\end{equation*}
contradicting $\theta\notin N_{r_G}(\Theta)$. Hence~\eqref{eq:outside-level-claim} holds. Combining~\eqref{eq:outside-level-claim} with~\eqref{eq:laplace-level-assump} gives
$
q-L(\theta)\le -\delta_{\mathrm{lev}},
$ and 
$ \theta\in N_{r_G}(\Theta)^c.
$ 
Since $\psi$ is increasing,
\begin{equation}
    \eta(\theta)\le \psi\br{-\frac{\delta_{\mathrm{lev}}}{\tau}}
    =c_{\mathrm{out}}(\delta_{\mathrm{lev}}),
    \qquad \theta\in N_{r_G}(\Theta)^c.
    \label{eq:eta-outside-upper}
\end{equation}

Let
\begin{equation*}
    Z:=\int_{\R^d}e^{-\alpha G(\theta)}J(\dd\theta).
\end{equation*}
By~\eqref{eq:J-ball-lower}, we get
\begin{equation}
    Z
    \ge\int_{B_r(\theta^\star)}e^{-\alpha G(\theta)}J(\dd\theta)
    \ge e^{-\alpha\sup_{B_r(\theta^\star)}G}c_{\mathrm{in}}(q,r)\varrho(B_r(\theta^\star)).
    \label{eq:Z-lower-ball}
\end{equation}
Since $\thetae\in B_{r_G}(\theta^\star)$, one has $\|\thetae-\theta^\star\|_2<r_G$, and hence
\begin{equation}
    \norm{m-\theta^\star}_2
    \le r_G+\norm{m-\thetae}_2.
    \label{eq:m-theta-star-split}
\end{equation}
Define
$
    \tilde r:=\eta_G^{-1}(u+\Gtilde_r)^{\nu_G}.
$
Then
\begin{equation}
    \norm{m-\thetae}_2\le T_1+T_2+T_3,
    \label{eq:T123-split}
\end{equation}
where
\begin{align*}
    T_1&:=Z^{-1}\int_{B_{\tilde r}(\thetae)}\norm{\theta-\thetae}_2e^{-\alpha G(\theta)}J(\dd\theta),\nonumber\\
    T_2&:=Z^{-1}\int_{N_{r_G}(\Theta)\setminus B_{\tilde r}(\thetae)}\norm{\theta-\thetae}_2e^{-\alpha G(\theta)}J(\dd\theta),\nonumber\\
    T_3&:=Z^{-1}\int_{N_{r_G}(\Theta)^c}\norm{\theta-\thetae}_2e^{-\alpha G(\theta)}J(\dd\theta).
\end{align*}
The term $T_1$ satisfies $T_1\le\tilde r$ because $Z$ is the full normalizing denominator and $\norm{\theta-\thetae}_2\le\tilde r$ on $B_{\tilde r}(\thetae)$. 
For $T_2$, take $\theta\in N_{r_G}(\Theta)\setminus B_{\tilde r}(\thetae)$. If $G(\theta)-G(\thetae)\le u+\Gtilde_r$, then by~\eqref{eq:laplace-mass-assump} we have $G(\theta)-G(\thetae)\le G_\infty$, so~\eqref{eq:G-growth} gives
\begin{equation*}
    \norm{\theta-\thetae}_2
    \le\eta_G^{-1}\br{G(\theta)-G(\thetae)}^{\nu_G}
    \le\eta_G^{-1}(u+\Gtilde_r)^{\nu_G}
    =\tilde r,
\end{equation*}
contradicting $\theta\notin B_{\tilde r}(\thetae)$. Hence
\begin{equation}
    G(\theta)-G(\thetae)>u+\Gtilde_r.
    \label{eq:G-large-T2}
\end{equation}
For every $\xi\in B_r(\theta^\star)$, by the definition of $\Gtilde_r$, we get
$
    G(\xi)-G(\thetae)\le\Gtilde_r.
$
Combining this with~\eqref{eq:G-large-T2} gives $G(\theta)-G(\xi)>u$, hence
\begin{equation*}
    e^{-\alpha G(\theta)}\le e^{-\alpha u}e^{-\alpha G(\xi)},
    \qquad \xi\in B_r(\theta^\star).
\end{equation*}
Integrating in $\xi$ over $B_r(\theta^\star)$ with respect to $J$ gives
\begin{equation*}
    e^{-\alpha G(\theta)}J(B_r(\theta^\star))
    \le e^{-\alpha u}\int_{B_r(\theta^\star)}e^{-\alpha G(\xi)}J(\dd\xi)
    \le e^{-\alpha u}Z.
\end{equation*}
Using~\eqref{eq:J-ball-lower}, we obtain
\begin{equation*}
    e^{-\alpha G(\theta)}
    \le \frac{e^{-\alpha u}Z}{c_{\mathrm{in}}(q,r)\varrho(B_r(\theta^\star))}.
\end{equation*}
Consequently,
\begin{equation}
    T_2
    \le \frac{e^{-\alpha u}}{c_{\mathrm{in}}(q,r)\varrho(B_r(\theta^\star))}
    \int_{\R^d}\norm{\theta-\thetae}_2J(\dd\theta).
    \label{eq:T2-bound}
\end{equation}

For $T_3$, use~\eqref{eq:eta-outside-upper}, $G\ge G_{\min}$, and~\eqref{eq:Z-lower-ball}:
\begin{align}\notag
    T_3
    &\le Z^{-1}e^{-\alpha G_{\min}}c_{\mathrm{out}}(\delta_{\mathrm{lev}})
    \int_{\R^d}\norm{\theta-\thetae}_2\varrho(\dd\theta)                              \\ \notag
    &\le \frac{e^{\alpha(\sup_{B_r(\theta^\star)}G-G_{\min})}c_{\mathrm{out}}(\delta_{\mathrm{lev}})}{c_{\mathrm{in}}(q,r)\varrho(B_r(\theta^\star))}
    \int_{\R^d}\norm{\theta-\thetae}_2\varrho(\dd\theta)                              \\
    &=\frac{e^{\alpha\Delta_r}c_{\mathrm{out}}(\delta_{\mathrm{lev}})}{c_{\mathrm{in}}(q,r)\varrho(B_r(\theta^\star))}
    \int_{\R^d}\norm{\theta-\thetae}_2\varrho(\dd\theta).
    \label{eq:T3-bound}
\end{align}
Combining~\eqref{eq:m-theta-star-split}, \eqref{eq:T123-split}, $T_1\le\tilde r$,~\eqref{eq:T2-bound}, and~\eqref{eq:T3-bound} proves~\eqref{eq:soft-laplace-raw}.

It remains to prove~\eqref{eq:soft-laplace-V}. Since $0<\eta<1$, $J\le\varrho$, and $J(\R^d)=\beta$,
\begin{align}\notag
    \int_{\R^d}\norm{\theta-\thetae_G}_2J(\dd\theta)
    &\le \int_{\R^d}\norm{\theta-\theta^\star}_2J(\dd\theta)
    +\norm{\theta^\star-\thetae_G}_2J(\R^d)                                         \\ \notag
    &\le \int_{\R^d}\norm{\theta-\theta^\star}_2\varrho(\dd\theta)+\beta r_G          \\
    &\le \sqrt{2V_\varrho}+\beta r_G.
    \label{eq:J-distance-bound}
\end{align}
The last step uses Cauchy--Schwarz under the probability measure $\varrho$. Similarly,
\begin{equation}
    \int_{\R^d}\norm{\theta-\thetae_G}_2\varrho(\dd\theta)
    \le \sqrt{2V_\varrho}+r_G.
    \label{eq:rho-distance-bound}
\end{equation}
Substituting~\eqref{eq:J-distance-bound} and~\eqref{eq:rho-distance-bound} into~\eqref{eq:soft-laplace-raw} proves~\eqref{eq:soft-laplace-V}.
\end{proof}

\subsection{Lower bound on the mass near the target}

\begin{proposition}
\label{prop:mass-lower-bound}
Let $T>0$ and $r>0$. Assume that $\rho_{t,\tau}$ solves the Fokker--Planck equation associated with~\eqref{eq:mf-sde} and that
\begin{equation}
    \sup_{t\in[0,T]}\int_{\R^d}\norm{x}_2^4\rho_{t,\tau}(\dd x)\le C_{\mathrm{mf},4}.
    \label{eq:Cmf4-assump}
\end{equation}
Define
$
    B_0:=C_m(T)C_{\mathrm{mf},4}^{1/4}+\norm{\theta^\star}_2,
   % \label{eq:B0-def}
$
and let
\begin{equation*}
    \varphi_r(\theta):=
    \begin{cases}
    \displaystyle \exp\br{1-\frac{r^2}{r^2-\norm{\theta-\theta^\star}_2^2}},
    &\norm{\theta-\theta^\star}_2<r,\\[0.7em]
    0,&\norm{\theta-\theta^\star}_2\ge r.
    \end{cases}
  %  \label{eq:bump-def}
\end{equation*}
Choose $c\in(1/2,1)$ satisfying
\begin{equation}
    d(1-c)^2\le c(2c-1).
    \label{eq:c-condition}
\end{equation}
Define
\begin{equation*}
    K_1:=\frac{2\lambda(cr+B_0\sqrt c)}{(1-c)^2r}
    +\frac{2\sigma^2(cr^2+B_0^2)(2c+d)}{(1-c)^4r^2},
    \qquad
    K_2:=\frac{\lambda^2}{\sigma^2c(2c-1)},
    %\label{eq:K1K2-def}
\end{equation*}
and
$
    p:=2\max\set{K_1,K_2}.
    %\label{eq:p-def}
$
Then, for every $t\in[0,T]$,
\begin{equation}
    \rho_{t,\tau}(B_r(\theta^\star))
    \ge \br{\int_{\R^d}\varphi_r(\theta)\rho_0(\dd\theta)}e^{-pt}.
    \label{eq:mass-lower-bound}
\end{equation}
\end{proposition}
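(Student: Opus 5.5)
This is the standard mass lower bound argument from Fornasier et al.\ (Proposition on mass near the minimizer), adapted to our consensus point. The plan is to test the weak Fokker--Planck formulation (Definition~\ref{def:weak-solution}) against the bump $\varphi_r$. A mollification or direct smoothness argument shows $\varphi_r\in C_c^\infty(\R^d)$ with $0\le\varphi_r\le \one_{B_r(\theta^\star)}$. It therefore suffices to prove
\[
\frac{\dd}{\dd t}\int\varphi_r\,\dd\rho_{t,\tau}\ge -p\int\varphi_r\,\dd\rho_{t,\tau}.
\]
Grönwall then gives $\int\varphi_r\dd\rho_{t,\tau}\ge e^{-pt}\int\varphi_r\dd\rho_0$, and \eqref{eq:mass-lower-bound} follows from $\rho_{t,\tau}(B_r(\theta^\star))\ge\int\varphi_r\dd\rho_{t,\tau}$.

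The only input on the consensus point is a uniform bound. By Corollary~\ref{cor:consensus-moment-stability} and \eqref{eq:Cmf4-assump}, we have $\norm{m(\rho_{t,\tau})}_2\le C_m(T)C_{\mathrm{mf},4}^{1/4}$. Hence $\norm{m(\rho_{t,\tau})-\theta^\star}_2\le B_0$ for all $t\in[0,T]$. This is the only place where the soft-quantile structure enters; everything else is insensitive to how $m$ is built.

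Next, compute $\nabla\varphi_r$ and $\partial^2_{kk}\varphi_r$ explicitly on $B_r(\theta^\star)$. Write $y:=\theta-\theta^\star$ and $v:=m-\theta^\star$, with $\norm v_2\le B_0$. The drift term becomes
\[
-2\lambda r^2\frac{\langle y-v,y\rangle}{(r^2-\norm y^2)^2}\varphi_r,
\]
and the isotropic diffusion term becomes
\[
\sigma^2\norm{y-v}^2 r^2\,\frac{2(2\norm y^2-r^2)\norm y^2-d(r^2-\norm y^2)^2}{(r^2-\norm y^2)^4}\,\varphi_r .
\]
Both vanish outside the ball. Split $B_r(\theta^\star)$ into $\Omega_1:=\{\norm y_2^2>cr^2\}$ and $\Omega_1^c\cap B_r(\theta^\star)$, and estimate each term from below.

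On $\{\norm y^2\le cr^2\}$, the factor $r^2-\norm y^2\ge(1-c)r^2$ keeps denominators bounded. Together with $\norm{y-v}\le\sqrt c\,r+B_0$ and $\norm{y-v}^2\le 2(cr^2+B_0^2)$, this gives both terms $\ge -K_1\varphi_r$.

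On $\Omega_1$, the bounded-denominator argument fails. There are two subcases.
\begin{itemize}
\item If $\langle y-v,y\rangle\le0$, the drift term is nonnegative. Condition \eqref{eq:c-condition} gives $2(2\norm y^2-r^2)\norm y^2\ge d(r^2-\norm y^2)^2$, so the diffusion term is also nonnegative.
\item If $\langle y-v,y\rangle>0$, the drift may be negative. Here the diffusion term is a positive multiple of $\sigma^2\norm{y-v}^2 c(2c-1)r^4/(r^2-\norm y^2)^4$, which dominates the negative drift part via Young's inequality. The residual is at least $-K_2\varphi_r$ with $K_2=\lambda^2/(\sigma^2c(2c-1))$.
\end{itemize}

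The main obstacle is this last Young's-inequality balancing. The order-$(r^2-\norm y^2)^{-2}$ drift must be absorbed by the order-$(r^2-\norm y^2)^{-4}$ diffusion near the boundary of the ball, while keeping constants matching $K_1$ and $K_2$. I would follow the computation of Fornasier--Klock--Riedl line by line. Summing the regions gives the bound with $\max\{K_1,K_2\}$; the factor $2$ in $p$ gives slack for the region splitting. A minor technical point is justifying differentiation of $t\mapsto\int\varphi_r\dd\rho_{t,\tau}$, which Definition~\ref{def:weak-solution} supplies for $C_c^\infty$ test functions.
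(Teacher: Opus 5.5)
Your proposal is correct and follows essentially the same route as the paper: you test the weak formulation with the bump $\varphi_r$, use Corollary~\ref{cor:consensus-moment-stability} to get $\|m(\rho_{t,\tau})-\theta^\star\|_2\le B_0$, obtain $-K_1\varphi_r$ on $\{\|y\|_2^2\le cr^2\}$ from $r^2-\|y\|_2^2\ge(1-c)r^2$, obtain $-K_2\varphi_r$ on the outer shell by minimizing the quadratic in $\|y-v\|_2$, and close with Gr\"onwall. One small correction: the true drift term is $+\frac{2\lambda r^2}{(r^2-\|y\|_2^2)^2}\langle y-v,y\rangle\varphi_r$ (your sign reproduces a sign typo in Definition~\ref{def:weak-solution}, whereas \eqref{eq:fokker} and the paper's proof use $-\lambda\langle\theta-m_t,\nabla\varphi_r\rangle$), so your sign-based subcases on the outer shell are reversed; this is harmless because, as the paper does, the Cauchy--Schwarz/Young bound with $|\langle y-v,y\rangle|\le\|y-v\|_2\|y\|_2$ covers the whole shell regardless of sign.
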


\begin{proof}
Since $0\le\varphi_r\le1$ and $\mathrm{supp}(\varphi_r)\subset B_r(\theta^\star)$,
\begin{equation}
    \rho_{t,\tau}(B_r(\theta^\star))
    \ge \int_{\R^d}\varphi_r(\theta)\rho_{t,\tau}(\dd\theta).
    \label{eq:mass-bump}
\end{equation}
By Corollary~\ref{cor:consensus-moment-stability} and~\eqref{eq:Cmf4-assump},
\begin{equation*}
    \norm{m(\rho_{t,\tau})-\theta^\star}_2
    \le \norm{m(\rho_{t,\tau})}_2+\norm{\theta^\star}_2
    \le C_m(T)C_{\mathrm{mf},4}^{1/4}+\norm{\theta^\star}_2
    =B_0.
   % \label{eq:mt-theta-bound}
\end{equation*}
Set
$
    y:=\theta-\theta^\star, 
    s_\theta:=\norm{y}_2^2, 
    h:=r^2-s_\theta, 
    m_t:=m(\rho_{t,\tau}),
$ and $
    w_t:=m_t-\theta^\star.
$
Inside $B_r(\theta^\star)$, direct differentiation gives
\begin{equation}
    \nabla\varphi_r(\theta)
    =-\frac{2r^2y}{h^2}\varphi_r(\theta),
    \label{eq:bump-gradient}
\end{equation}
\begin{equation}
    \Delta\varphi_r(\theta)
    =\frac{2r^2}{h^4}\br{2(2s_\theta-r^2)s_\theta-dh^2}\varphi_r(\theta).
    \label{eq:bump-laplacian}
\end{equation}
Outside $B_r(\theta^\star)$, the function and its first two derivatives vanish. The weak formulation gives
\begin{equation}
    \frac{\dd}{\dd t}\int\varphi_r\dd\rho_{t,\tau}
    =\int_{\R^d}\br{T_1(\theta,t)+T_2(\theta,t)}\rho_{t,\tau}(\dd\theta),
    \label{eq:bump-weak}
\end{equation}
where, it uses ~\eqref{eq:D-basic}, and 
$
    T_1:=-\lambda\inner{\theta-m_t}{\nabla\varphi_r(\theta)},
 $ and $
    T_2:=\frac{\sigma^2}{2}\norm{\theta-m_t}_2^2\Delta\varphi_r(\theta).
$
We prove pointwise that
\begin{equation}
    T_1(\theta,t)+T_2(\theta,t)\ge -p\varphi_r(\theta).
    \label{eq:pointwise-bump}
\end{equation}
The claim is trivial outside $B_r(\theta^\star)$. For the inside the ball, we consider two cases. 
Write $z:=\theta-m_t=y-w_t$. 

\emph{Case 1: $s_\theta\le cr^2$.}
Then $h\ge(1-c)r^2$ and $\norm{y}_2\le\sqrt c\,r$. By~\eqref{eq:bump-gradient},
\begin{align}\notag
    T_1
    &=\frac{2\lambda r^2}{h^2}\inner{z}{y}\varphi_r                                      \\ \notag
    &\ge -\frac{2\lambda r^2}{h^2}\br{\norm{y}_2^2+\norm{w_t}_2\norm{y}_2}\varphi_r          \\
    &\ge -\frac{2\lambda(cr+B_0\sqrt c)}{(1-c)^2r}\varphi_r.
    \label{eq:T1-case1}
\end{align}
Also,
\begin{equation}
    \norm{z}_2^2\le2\br{\norm{y}_2^2+\norm{w_t}_2^2}
    \le2(cr^2+B_0^2).
    \label{eq:z-bound-case1}
\end{equation}
Since $s_\theta\le cr^2$ and $h\le r^2$,
\begin{equation}
    2(2s_\theta-r^2)s_\theta-dh^2
    \ge -(2c+d)r^4.
    \label{eq:laplace-lower-case1}
\end{equation}
Using~\eqref{eq:bump-laplacian}, \eqref{eq:z-bound-case1}, \eqref{eq:laplace-lower-case1}, and $h\ge(1-c)r^2$ gives
\begin{equation}
    T_2\ge -\frac{2\sigma^2(cr^2+B_0^2)(2c+d)}{(1-c)^4r^2}\varphi_r.
    \label{eq:T2-case1}
\end{equation}
Equations~\eqref{eq:T1-case1} and~\eqref{eq:T2-case1} imply $T_1+T_2\ge-K_1\varphi_r\ge-p\varphi_r$.

\emph{Case 2: $cr^2<s_\theta<r^2$.}
Since $s_\theta>cr^2$ and $h^2\le(1-c)^2r^4$,
\begin{align} \notag
    2(2s_\theta-r^2)s_\theta-dh^2
    &\ge 2(2c-1)cr^4-d(1-c)^2r^4                                      \\
    &\ge c(2c-1)r^4,
    \label{eq:case2-positive-laplace}
\end{align}
where the last step uses~\eqref{eq:c-condition}. Hence
\begin{equation}
    T_2\ge \sigma^2\frac{r^2}{h^4}c(2c-1)r^4\norm{z}_2^2\varphi_r.
    \label{eq:T2-case2}
\end{equation}
Moreover,
\begin{equation}
    T_1
    =\frac{2\lambda r^2}{h^2}\inner{z}{y}\varphi_r
    \ge -\frac{2\lambda r^2}{h^2}\norm{z}_2\norm{y}_2\varphi_r.
    \label{eq:T1-case2}
\end{equation}
For $X:=\norm{z}_2$, the expression $aX^2-bX$ is bounded below by $-b^2/(4a)$ with
$
    a:=\sigma^2\frac{r^2c(2c-1)r^4}{h^4},
 $ and $
    b:=\frac{2\lambda r^2\norm{y}_2}{h^2}.
$
Combining~\eqref{eq:T2-case2} and~\eqref{eq:T1-case2} yields
\begin{equation*}
    T_1+T_2
    \ge -\frac{\lambda^2\norm{y}_2^2}{\sigma^2c(2c-1)r^2}\varphi_r
    \ge -\frac{\lambda^2}{\sigma^2c(2c-1)}\varphi_r
    =-K_2\varphi_r
    \ge-p\varphi_r.
\end{equation*}
Thus~\eqref{eq:pointwise-bump} holds in all cases. Substituting it into~\eqref{eq:bump-weak} gives
\begin{equation*}
    \frac{\dd}{\dd t}\int\varphi_r\dd\rho_{t,\tau}
    \ge -p\int\varphi_r\dd\rho_{t,\tau}.
\end{equation*}
Gronwall's inequality gives
\begin{equation*}
    \int\varphi_r\dd\rho_{t,\tau}
    \ge \br{\int\varphi_r\dd\rho_0}e^{-pt}.
\end{equation*}
Combining this with~\eqref{eq:mass-bump} proves~\eqref{eq:mass-lower-bound}.
\end{proof}

\subsection{Direct mean-field convergence}
Recall 
 \begin{equation*}
     V_\tau(t):=\frac12\int_{\R^d}\norm{\theta-\theta^\star}_2^2\rho_{t,\tau}(\dd\theta),
     \qquad
     M_\tau(t):=\norm{m(\rho_{t,\tau})-\theta^\star}_2.
 \end{equation*}

\begin{lemma}
\label{lem:Lyapunov-DI}
For almost every $t$,
\begin{equation}
    -aV_\tau(t)-b\sqrt{V_\tau(t)}M_\tau(t)
    \le \dot V_\tau(t)
    \le -aV_\tau(t)+b\sqrt{V_\tau(t)}M_\tau(t)+c_dM_\tau(t)^2,
    \label{eq:Lyap-DI}
\end{equation}
where
$
    a:=2\lambda-d\sigma^2,
$$
    b:=\sqrt2(\lambda+d\sigma^2),
$$
    c_d:=\frac{d\sigma^2}{2}.
 %   \label{eq:abc-def}
$
\end{lemma}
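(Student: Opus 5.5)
We plan to differentiate $V_\tau(t)=\frac12\int\norm{\theta-\theta^\star}_2^2\rho_{t,\tau}(\dd\theta)$ through the weak Fokker--Planck formulation of Definition~\ref{def:weak-solution}, using the test function $\phi(\theta)=\frac12\norm{\theta-\theta^\star}_2^2$. Since $\phi$ is not compactly supported, we first apply the weak form to a cutoff $\phi_R=\phi\,\chi(\cdot/R)$ with a smooth bump $\chi$. We then let $R\to\infty$, using $\rho_{t,\tau}\in C([0,T],\mathcal P_4)$ (Assumption~\ref{ass:primitive}~V) together with the moment bound on $m(\rho_{t,\tau})$ from Corollary~\ref{cor:consensus-moment-stability}. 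These control all error terms uniformly in $t$ by dominated convergence. The conclusion is that $V_\tau$ is absolutely continuous, with, for a.e.\ $t$,
\[
\dot V_\tau(t)=-\lambda\int\inner{\theta-m_t}{\theta-\theta^\star}\dd\rho_{t,\tau}+\frac{d\sigma^2}{2}\int\norm{\theta-m_t}_2^2\dd\rho_{t,\tau},
\]
where we used $\nabla\phi=\theta-\theta^\star$, $\partial_{kk}\phi=1$, and $\Tr(DD^\top)=d\norm{\cdot}_2^2$ from \eqref{eq:D-basic}. This justification step is the only delicate point. We expect it to be the main obstacle, because we only assume a weak solution and not an SDE-level It\^o computation.

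The rest is algebra. We write $\theta-m_t=(\theta-\theta^\star)-w_t$ with $w_t=m_t-\theta^\star$, so that $\norm{w_t}_2=M_\tau(t)$. The drift term becomes $-2\lambda V_\tau+\lambda\inner{w_t}{\int(\theta-\theta^\star)\dd\rho_{t,\tau}}$. The diffusion term expands to
\[
\frac{d\sigma^2}{2}\Bigl(2V_\tau-2\inner{w_t}{\textstyle\int(\theta-\theta^\star)\dd\rho_{t,\tau}}+M_\tau^2\Bigr).
\]
Summing the two gives
\[
\dot V_\tau=-(2\lambda-d\sigma^2)V_\tau+(\lambda-d\sigma^2)\inner{w_t}{\textstyle\int(\theta-\theta^\star)\dd\rho_{t,\tau}}+\tfrac{d\sigma^2}{2}M_\tau^2 .
\]

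By Cauchy--Schwarz/Jensen, $\norm{\int(\theta-\theta^\star)\dd\rho_{t,\tau}}_2\le\sqrt{2V_\tau}$. The cross term is therefore bounded in absolute value by $\sqrt2\,\abs{\lambda-d\sigma^2}\sqrt{V_\tau}M_\tau\le b\sqrt{V_\tau}M_\tau$, with $b=\sqrt2(\lambda+d\sigma^2)$. This yields the upper bound in \eqref{eq:Lyap-DI} directly. For the lower bound we drop the nonnegative term $c_dM_\tau^2$, which gives $\dot V_\tau\ge -aV_\tau-b\sqrt{V_\tau}M_\tau$.
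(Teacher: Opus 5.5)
Your proposal is correct and follows the paper's route: test the weak Fokker--Planck equation with $\frac12\norm{\theta-\theta^\star}_2^2$, expand $\theta-m_t$ around $\theta^\star$, and bound the cross term by Cauchy--Schwarz using $\norm{\bar\theta_t-\theta^\star}_2\le\sqrt{2V_\tau(t)}$. Your cutoff argument for the non-compactly-supported test function fills in a step the paper takes for granted. Your cross-term coefficient $(\lambda-d\sigma^2)$ is the correct one; the paper's exact identity for $\dot V_\tau$ writes $(\lambda+d\sigma^2)$, which is a harmless slip because only $\abs{\lambda-d\sigma^2}\le\lambda+d\sigma^2$ is used.
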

\begin{proof}
Use the test function $\varphi(\theta)=\frac12\norm{\theta-\theta^\star}_2^2$. Since $\nabla\varphi=\theta-\theta^\star$ and $\nabla^2\varphi=\Id$, the weak formulation and~\eqref{eq:D-basic} give
\begin{equation}
    \dot V_\tau(t)
    =-\lambda\int\inner{\theta-\theta^\star}{\theta-m_t}\rho_{t,\tau}(\dd\theta)
    +\frac{d\sigma^2}{2}\int\norm{\theta-m_t}_2^2\rho_{t,\tau}(\dd\theta),
    \label{eq:Vdot-start}
\end{equation}
where $m_t=m(\rho_{t,\tau})$. Let $\bar\theta_t:=\int\theta\rho_{t,\tau}(\dd\theta)$. Expanding around $\theta^\star$ gives
\begin{equation}
    -\lambda\int\inner{\theta-\theta^\star}{\theta-m_t}\rho_{t,\tau}(\dd\theta)
    =-2\lambda V_\tau(t)+\lambda\inner{\bar\theta_t-\theta^\star}{m_t-\theta^\star}.
    \label{eq:drift-expand}
\end{equation}
Similarly,
\begin{equation}
    \frac{d\sigma^2}{2}\int\norm{\theta-m_t}_2^2\rho_{t,\tau}(\dd\theta)
    =d\sigma^2V_\tau(t)-d\sigma^2\inner{\bar\theta_t-\theta^\star}{m_t-\theta^\star}
    +\frac{d\sigma^2}{2}M_\tau(t)^2.
    \label{eq:diffusion-expand}
\end{equation}
Combining~\eqref{eq:Vdot-start}, \eqref{eq:drift-expand}, and~\eqref{eq:diffusion-expand},
\begin{equation}
    \dot V_\tau(t)
    =-aV_\tau(t)+(\lambda+d\sigma^2)\inner{\bar\theta_t-\theta^\star}{m_t-\theta^\star}+c_dM_\tau(t)^2.
    \label{eq:Vdot-exact}
\end{equation}
Finally,
\begin{equation*}
    \norm{\bar\theta_t-\theta^\star}_2
    \le\br{\int\norm{\theta-\theta^\star}_2^2\rho_{t,\tau}(\dd\theta)}^{1/2}
    =\sqrt{2V_\tau(t)}.
\end{equation*}
Using this bound in~\eqref{eq:Vdot-exact} gives the upper inequality in~\eqref{eq:Lyap-DI}. For the lower inequality, use
\begin{equation}
    (\lambda+d\sigma^2)\inner{\bar\theta_t-\theta^\star}{m_t-\theta^\star}
    \ge -b\sqrt{V_\tau(t)}M_\tau(t)
\end{equation}
and then keep the nonnegative term $c_dM_\tau(t)^2\ge0$ or drop it from below. This proves~\eqref{eq:Lyap-DI}.
\end{proof}

\textbf{Theorem \ref{thm:direct-mf-convergence}.}
%\begin{theorem}
\textit{Under Assumptions~\ref{ass:primitive} and \ref{ass:selector},  assume that
\(\theta^\star\in\operatorname{supp}(\rho_0)\). 
% Assume in addition that the
% soft selector has a two-sided exponential left tail in the following sense:
% there exist constants \(c_->0\) and \(z_->0\) such that
% \begin{equation}
%     \psi(-z)\ge c_-e^{-z},
%     \qquad
%     z\ge z_-.
%     \label{eq:selector-left-tail-lower}
% \end{equation}
Fix
\begin{equation}
    \varepsilon
    \in
    \left(
        0,
        \frac12 W_2^2(\rho_0,\delta_{\theta^\star})
    \right),
    \qquad
    \vartheta\in(0,1).
    \label{eq:direct-mf-eps-vartheta}
\end{equation}
Choose \(\lambda,\sigma>0\) such that
\begin{equation}
    2\lambda>d\sigma^2.
    \label{eq:direct-mf-dissipativity-condition}
\end{equation}
Define
\begin{equation}
    T^\star
    :=
    \frac{1}{(1-\vartheta)(2\lambda-d\sigma^2)}
    \log\left(
        \frac{W_2^2(\rho_0,\delta_{\theta^\star})/2}
        {\varepsilon}
    \right).
    \label{eq:direct-mf-Tstar}
\end{equation}
Then there exist \(\tau_0>0\) and functions
\(
    \tau\mapsto \beta_\tau\in(0,1),
\) and \(
    \tau\mapsto \alpha_\tau>0,
\)
defined for \(\tau\in(0,\tau_0]\), such that the following holds. For every \(\tau\in(0,\tau_0]\), set
$
    \alpha:=\alpha_\tau,
$ and $
    \beta:=\beta_\tau.
 %   \label{eq:direct-mf-alpha-beta-choice}
$
then there exists
\begin{equation}
    T
    \in
    \left[
        \frac{1-\vartheta}{1+\vartheta/2}T^\star,
        T^\star
    \right]
    \label{eq:direct-mf-hitting-time-range}
\end{equation}
such that
$
    \frac12
    W_2^2(\rho_{T,\tau},\delta_{\theta^\star})
    =
    \varepsilon.
    %\label{eq:direct-mf-hitting-identity}
$
Furthermore, on the time interval \([0,T]\),
\begin{equation}
    W_2^2(\rho_{t,\tau},\delta_{\theta^\star})
    \le
    W_2^2(\rho_0,\delta_{\theta^\star})
    \exp\left(
        -(1-\vartheta)(2\lambda-d\sigma^2)t
    \right),
    \qquad
    t\in[0,T].
    \label{eq:direct-mf-W2-exponential-decay}
\end{equation}}
%\end{theorem}

\begin{proof}
Set
$
    V_0
    :=
    \frac12 W_2^2(\rho_0,\delta_{\theta^\star}),
$ $
    a:=2\lambda-d\sigma^2,
$ $
    b:=\sqrt2(\lambda+d\sigma^2),
$ and $
    c_d:=\frac{d\sigma^2}{2}.
 %   \label{eq:proof-abc-def}
$
By~\eqref{eq:direct-mf-eps-vartheta}, one has
$
    V_0>\varepsilon.
   % \label{eq:proof-initial-above-eps}
$
By~\eqref{eq:direct-mf-dissipativity-condition}, one has \(a>0\). Define
\begin{equation*}
    c_\vartheta
    :=
    \min\left\{
        1,
        \frac{\vartheta a}{4b},
        \sqrt{\frac{\vartheta a}{4c_d}}
    \right\}.
    %\label{eq:proof-cvartheta-def}
\end{equation*}
The constants and parameters will be chosen in the following order. First we
choose the geometric constants \(r_G,u,\delta_{\rm lev},\bar r\). Then we
construct \(\tau_0>0\), and for each \(\tau\in(0,\tau_0]\) we define
\(\beta_\tau\) and \(\alpha_\tau\). Only after this construction is complete
will we fix an arbitrary \(\tau\in(0,\tau_0]\) and prove the convergence
statement for the corresponding mean-field solution.

We first choose the auxiliary geometric parameters. Choose
\(r_G\in(0,R_G]\) and \(u>0\) so small that
\begin{equation}
    u+2L_G r_G\le G_\infty,
    \label{eq:proof-upper-range}
\end{equation}
and
\begin{equation}
    r_G+
    \frac{(u+2L_G r_G)^{\nu_G}}{\eta_G}
    \le
    \frac{c_\vartheta\sqrt{\varepsilon}}{4}.
    \label{eq:proof-geometric-smallness}
\end{equation}
This is possible because the left-hand side tends to \(0\) as
\(r_G\downarrow0\) and \(u\downarrow0\). Define
$
    h_L(r_G)
    :=
    \min\{L_\infty,(\eta_Lr_G)^{1/\nu_L}\}.
   % \label{eq:proof-hL-def}
$
Choose
$
    \delta_{\rm lev}\in(0,h_L(r_G)).
    %\label{eq:proof-deltalev-choice}
$
Let
$
    \Delta_{\max}:=G_{\max}-G_{\min},
$ $
    \gamma:=1+\frac{\Delta_{\max}}{u}.
    %\label{eq:proof-Delta-gamma-def}
$
Choose a fixed radius \(\bar r>0\), independent of \(\tau,\beta,\alpha\), such that
\begin{equation*}
    \bar r
    <
    \min\left\{
        r_G,
        \frac{h_L(r_G)-\delta_{\rm lev}}{2L_L},
        \frac{\delta_{\rm lev}}{2\gamma L_L}
    \right\}.
    %\label{eq:proof-fixed-r-choice}
\end{equation*}
Then
$
    h_L(r_G)-\delta_{\rm lev}-L_L\bar r>0,
    %\label{eq:proof-positive-level-margin}
$
and
\begin{equation}
    \delta_{\rm lev}-\gamma L_L\bar r>0.
    \label{eq:proof-leakage-margin}
\end{equation}

We now construct a lower mass bound on \(B_{\bar r}(\theta^\star)\) using
the same stopped-bootstrap idea as in the proof of Theorem~2.7 of CB\(^2\)O. Set
$
    \bar B:=c_\vartheta\sqrt{V_0}.
    %\label{eq:proof-Bbar-def}
$
Fix any \(c\in(1/2,1)\) satisfying
$
    d(1-c)^2\le c(2c-1).
    %\label{eq:proof-c-choice}
$
Let
\begin{equation*}
    \bar K_1
    :=
    \frac{2\lambda(c\bar r+\bar B\sqrt c)}
    {(1-c)^2\bar r}
    +
    \frac{2\sigma^2(c\bar r^2+\bar B^2)(2c+d)}
    {(1-c)^4\bar r^2},
    \qquad
    \bar K_2
    :=
    \frac{\lambda^2}{\sigma^2c(2c-1)},
 %   \label{eq:proof-Kbar-def}
\end{equation*}
and
$
    \bar p:=2\max\{\bar K_1,\bar K_2\}.
    %\label{eq:proof-pbar-def}
$
Since \(\theta^\star\in\operatorname{supp}(\rho_0)\), the mollifier
\(\phi_{\bar r}\) from Proposition~\ref{prop:mass-lower-bound} satisfies
\begin{equation*}
    \int_{\mathbb R^d}\phi_{\bar r}(\theta)\rho_0(d\theta)>0.
  %  \label{eq:proof-positive-mollified-mass}
\end{equation*}
Let
$
    \bar m
    :=
    \left(
        \int_{\mathbb R^d}
        \phi_{\bar r}(\theta)\rho_0(d\theta)
    \right)
    e^{-\bar pT^\star}.
    %\label{eq:proof-mbar-def}
$
Then \(\bar m>0\), and \(\bar m\le 1\).

We next choose \(\tau_0>0\) small enough so that for all
\(\tau\in(0,\tau_0]\),
\begin{equation*}
    S_\tau
    :=
    \psi\left(
        \frac{
            h_L(r_G)-\delta_{\rm lev}-L_L\bar r
        }{\tau}
    \right)
    \ge \frac12.
    %\label{eq:proof-Stau-half}
\end{equation*}
For such \(\tau\), define
$
    \beta_\tau
    :=
    \frac12\bar m S_\tau.
    %\label{eq:proof-beta-tau-def}
$
Then
$
    0<\beta_\tau<1,
$ 
$    \beta_\tau\le \bar m S_\tau.
    %\label{eq:proof-beta-tau-basic}
$

Set
\begin{equation}
    z_{\beta_\tau}:=\psi^{-1}(\beta_\tau),
    \qquad
    \underline c_{\rm in}(\tau)
    :=
    \psi\left(
        z_{\beta_\tau}
        -
        \frac{L_L\bar r}{\tau}
    \right),
    %\label{eq:proof-cinbar-def}
\end{equation}
and
$
    \Gamma_\tau
    :=
    \bar m\,\underline c_{\rm in}(\tau).
    %\label{eq:proof-Gamma-tau-def}
$
Since \(S_\tau\to1\) as \(\tau\downarrow0\), we have
$
    \beta_\tau\to \frac12\bar m.
    %\label{eq:proof-beta-limit}
$
Therefore
\(
    z_{\beta_\tau}\to z_\star:=\psi^{-1}(\bar m/2).
\)
By reducing \(\tau_0\) if necessary, we may assume that
\begin{equation*}
    z_{\beta_\tau}\ge z_\star-1,
    \qquad
    \frac{L_L\bar r}{\tau}-z_{\beta_\tau}\ge z_-,
    \qquad
    \tau\in(0,\tau_0].
    %\label{eq:proof-zbeta-lower}
\end{equation*}
Using the lower tail bound, i.e., Assumption \ref{ass:selector}, we obtain
\begin{align}
    \underline c_{\rm in}(\tau)
    &=
    \psi\left(
        -\left(
            \frac{L_L\bar r}{\tau}
            -
            z_{\beta_\tau}
        \right)
    \right)\ge
    c_-
    \exp\left(
        -\frac{L_L\bar r}{\tau}
        +
        z_{\beta_\tau}
    \right)\ge
    c_-e^{z_\star-1}
    \exp\left(
        -\frac{L_L\bar r}{\tau}
    \right).
    \label{eq:proof-cinbar-lower}
\end{align}
Consequently,
\begin{equation}
    \Gamma_\tau
    \ge
    C_\Gamma
    \exp\left(
        -\frac{L_L\bar r}{\tau}
    \right),
    \qquad
    C_\Gamma:=\bar m c_-e^{z_\star-1}>0.
    \label{eq:proof-Gamma-lower}
\end{equation}

Moreover, define
$
    \Delta_{\bar r}
    :=
    \sup_{\theta\in B_{\bar r}(\theta^\star)}
    G(\theta)-G_{\min},
 $ $
    c_{\rm out}(\tau)
    :=
    \psi\left(-\frac{\delta_{\rm lev}}{\tau}\right).
    %\label{eq:proof-Delta-cout-def}
$
Clearly,
\begin{equation}
    0\le \Delta_{\bar r}\le \Delta_{\max}.
    \label{eq:proof-Delta-bar-bound}
\end{equation}

By Assumption~\ref{ass:selector},
\begin{equation}
    c_{\rm out}(\tau)
    \le
    C_-
    \exp\left(
        -\frac{\delta_{\rm lev}}{\tau}
    \right).
    \label{eq:proof-cout-upper}
\end{equation}
Choose a constant \(K>1\) large enough such that
\begin{equation}
    \frac{\sqrt2}{K}\le \frac{c_\vartheta}{4},
    \qquad
    \frac{r_G}{K}\le \frac{c_\vartheta\sqrt\varepsilon}{8}.
    \label{eq:proof-K-choice}
\end{equation}
For each \(\tau\in(0,\tau_0]\), define
\begin{equation}
    \alpha_\tau
    :=
    \frac1u
    \log\left(
        \frac{K}{\Gamma_\tau}
    \right).
    \label{eq:proof-alpha-tau-def}
\end{equation}
By further reducing \(\tau_0\) if necessary, \(\Gamma_\tau<K\), and hence
\(\alpha_\tau>0\).

We now verify that the leakage term is small for sufficiently small
\(\tau\). By \eqref{eq:proof-alpha-tau-def},
\begin{equation}
    e^{-\alpha_\tau u}
    =
    \frac{\Gamma_\tau}{K}.
    \label{eq:proof-alpha-first-identity}
\end{equation}
Furthermore, by \eqref{eq:proof-Delta-bar-bound},
\eqref{eq:proof-cout-upper}, and \eqref{eq:proof-Gamma-lower},
\begin{align}
    \frac{
        e^{\alpha_\tau\Delta_{\bar r}}c_{\rm out}(\tau)
    }{
        \Gamma_\tau
    }
    &=
    K^{\Delta_{\bar r}/u}
    c_{\rm out}(\tau)
    \Gamma_\tau^{-1-\Delta_{\bar r}/u}
    \nonumber\\
    &\le
    C_-K^{\Delta_{\max}/u}
    C_\Gamma^{-1-\Delta_{\max}/u}
    \exp\left(
        -\frac{
            \delta_{\rm lev}
            -
            (1+\Delta_{\max}/u)L_L\bar r
        }{\tau}
    \right)
    \nonumber\\
    &=
    C_-K^{\Delta_{\max}/u}
    C_\Gamma^{-1-\Delta_{\max}/u}
    \exp\left(
        -\frac{
            \delta_{\rm lev}
            -
            \gamma L_L\bar r
        }{\tau}
    \right).
    \label{eq:proof-leakage-ratio-small}
\end{align}
By \eqref{eq:proof-leakage-margin}, the exponent in
\eqref{eq:proof-leakage-ratio-small} is strictly negative as
\(\tau\downarrow0\). Hence, after reducing \(\tau_0\) once more, we have
\begin{equation}
    \frac{
        e^{\alpha_\tau\Delta_{\bar r}}c_{\rm out}(\tau)
    }{
        \Gamma_\tau
    }
    \le
    \min\left\{
        \frac{c_\vartheta}{4\sqrt2},
        \frac{c_\vartheta\sqrt\varepsilon}{8r_G}
    \right\},
    \qquad
    \tau\in(0,\tau_0].
    \label{eq:proof-leakage-small-final}
\end{equation}

Combining \eqref{eq:proof-alpha-first-identity},
\eqref{eq:proof-K-choice}, and \eqref{eq:proof-leakage-small-final}, we get
\begin{equation}
    \frac{
        r_G\left(
            \beta_\tau e^{-\alpha_\tau u}
            +
            e^{\alpha_\tau\Delta_{\bar r}}c_{\rm out}(\tau)
        \right)
    }{
        \Gamma_\tau
    }
    \le
    \frac{c_\vartheta\sqrt\varepsilon}{4},
    \label{eq:proof-admissibility-A-verified}
\end{equation}
and
\begin{equation}
    \frac{
        \sqrt2\left(
            e^{-\alpha_\tau u}
            +
            e^{\alpha_\tau\Delta_{\bar r}}c_{\rm out}(\tau)
        \right)
    }{
        \Gamma_\tau
    }
    \le
    \frac{c_\vartheta}{2}.
    \label{eq:proof-admissibility-B-verified}
\end{equation}
Thus the functions
$
    \tau\mapsto \beta_\tau,
$ $
    \tau\mapsto \alpha_\tau
$
are well defined on \((0,\tau_0]\), satisfy
$
    \beta_\tau\in(0,1),
$ and $
    \alpha_\tau>0,
$
and obey the smallness estimates
\eqref{eq:proof-admissibility-A-verified} and
\eqref{eq:proof-admissibility-B-verified} for every
\(\tau\in(0,\tau_0]\).

We now fix an arbitrary \(\tau\in(0,\tau_0]\). From this point on, set
$
    \alpha:=\alpha_\tau,
$ and $
    \beta:=\beta_\tau.
   % \label{eq:proof-fixed-alpha-beta-after-construction}
$
All quantities
$
    q_{\beta,\tau}^L[\cdot],
$ $
    J_{\beta,\tau}^L[\cdot],
    $ $
    A(\cdot),
    $ $
    B(\cdot),
    $ and $
    m(\cdot)
$
below are computed with this fixed parameters, 
$
    (\alpha_\tau,\beta_\tau,\tau,\lambda,\sigma).
$
Let
$
    \rho:=(\rho_{t,\tau})_{t\in[0,T^\star]}
$
be the mean-field solution specified in the theorem statement for this fixed
choice of parameters. Define
\begin{equation}
    V_\tau(t)
    :=
    \frac12
    \int_{\mathbb R^d}
    \|\theta-\theta^\star\|_2^2\,
    \rho_{t,\tau}(d\theta)
    =
    \frac12
    W_2^2(\rho_{t,\tau},\delta_{\theta^\star}).
    \label{eq:proof-Vtau-def}
\end{equation}
Then
$
    V_\tau(0)=V_0>\varepsilon.
    %\label{eq:proof-Vtau-initial}
$
Let 
$
    M_\tau(t):=\|m(\rho_{t,\tau})-\theta^\star\|_2.
   % \label{eq:proof-Mtau-def}
$
 and
\begin{equation}
    T_{\alpha_\tau,\beta_\tau,\tau}
    :=
    \sup
    \left\{
        t\in[0,T^\star]:
        V_\tau(s)>\varepsilon
        \ \text{and}\
        M_\tau(s)<c_\vartheta\sqrt{V_\tau(s)}
        \ \text{for all }s\in[0,t]
    \right\}.
    \label{eq:proof-stopping-time-def}
\end{equation}

We first show that \(T_{\alpha_\tau,\beta_\tau,\tau}>0\). To this end, we
establish the consensus estimate at \(t=0\). Since \(M_\tau(0)\) is finite,
the argument below applies on the degenerate interval \([0,0]\), and the
mass estimate \(\rho_0(B_{\bar r}(\theta^\star))\ge \bar m\) is immediate
from the definition of \(\bar m\). The same computation as below yields
\[
    M_\tau(0)
    \le
    \frac{c_\vartheta\sqrt\varepsilon}{2}
    +
    \frac{c_\vartheta}{2}\sqrt{V_0}
    <
    c_\vartheta\sqrt{V_0},
\]
because \(V_0>\varepsilon\). By continuity of \(V_\tau\) and \(M_\tau\),
this proves
$
    T_{\alpha_\tau,\beta_\tau,\tau}>0.
    %\label{eq:proof-stopping-time-positive}
$

Let \(t<T_{\alpha_\tau,\beta_\tau,\tau}\). On \([0,t]\), by their definitions,
\(V_\tau(s)>\varepsilon\) and
\(
    M_\tau(s)<c_\vartheta\sqrt{V_\tau(s)}.
\)
Using Lemma~\ref{lem:Lyapunov-DI}, for a.e. \(s\in[0,t]\),
\begin{align}
    \dot V_\tau(s)
    &\le
    -aV_\tau(s)
    +
    b\sqrt{V_\tau(s)}M_\tau(s)
    +
    c_dM_\tau(s)^2
    \nonumber\\
    &\le
    -
    \left(
        a-bc_\vartheta-c_dc_\vartheta^2
    \right)V_\tau(s).
    \label{eq:proof-bootstrap-upper-DI}
\end{align}
By the definition of \(c_\vartheta\), we have 
$
    bc_\vartheta\le \frac{\vartheta a}{4},
 $ and $
    c_dc_\vartheta^2\le \frac{\vartheta a}{4}.
$
Hence
\begin{equation*}
    \dot V_\tau(s)\le -(1-\vartheta)aV_\tau(s),
    \qquad
    \text{for a.e. }s\in[0,t].
    %\label{eq:proof-bootstrap-decay}
\end{equation*}
In particular,
\begin{equation*}
    V_\tau(s)\le V_0,
    \qquad
    M_\tau(s)\le \bar B,
    \qquad
    s\in[0,t].
   % \label{eq:proof-bootstrap-Bbar-bound}
\end{equation*}

Repeating the proof of Proposition~\ref{prop:mass-lower-bound} on the
interval \([0,t]\), with the a priori bound
\(
    \sup_{s\in[0,t]}M_\tau(s)\le \bar B
\)
in place of the moment-based bound \(B_0\), gives
\begin{equation}
    \rho_{s,\tau}(B_{\bar r}(\theta^\star))
    \ge
    \bar m,
    \qquad
    s\in[0,t].
    \label{eq:proof-local-mass-bootstrap}
\end{equation}
Indeed, the constants \(\bar K_1,\bar K_2,\bar p\) were chosen exactly as
in Proposition~\ref{prop:mass-lower-bound}, with \(r=\bar r\) and
\(B_0=\bar B\).

We now verify the quantile localization condition required by
Proposition~\ref{prop:soft-laplace}. Let
$
    q_0:=L_{\min}+h_L(r_G)-\delta_{\rm lev}.
   % \label{eq:proof-q0-def}
$
For every \(\theta\in B_{\bar r}(\theta^\star)\), the Lipschitz continuity
of \(L\) and \(L(\theta^\star)=L_{\min}\) imply
$
    L(\theta)\le L_{\min}+L_L\bar r.
    %\label{eq:proof-L-local-upper}
$
Therefore, by monotonicity of \(\psi\), by
\eqref{eq:proof-local-mass-bootstrap}, and by the definition of
\(\beta_\tau\),
\begin{align}
    \int_{\mathbb R^d}
    \psi\left(
        \frac{q_0-L(\theta)}{\tau}
    \right)
    \rho_{s,\tau}(d\theta)
    &\ge
    \rho_{s,\tau}(B_{\bar r}(\theta^\star))
    \psi\left(
        \frac{
            h_L(r_G)-\delta_{\rm lev}-L_L\bar r
        }{\tau}
    \right)
    \nonumber\\
    &\ge
    \bar m S_\tau
    =
    2\beta_\tau
    \ge
    \beta_\tau.
    \label{eq:proof-soft-quantile-comparison}
\end{align}
Since the soft-quantile defining map is strictly increasing in the quantile
variable,
$
    q_{\rho_{s,\tau}}\le q_0,
$ for $
    s\in[0,t].
   % \label{eq:proof-q-upper}
$
Consequently,
\begin{equation*}
    q_{\rho_{s,\tau}}+\delta_{\rm lev}
    \le
    L_{\min}+h_L(r_G),
    \qquad
    s\in[0,t].
   % \label{eq:proof-quantile-localization}
\end{equation*}

Next, by Lemma~\ref{lem:quantile-localization}, we obtain
\begin{equation*}
    q_{\rho_{s,\tau}}
    \ge
    L_{\min}+\tau z_{\beta_\tau}.
    %\label{eq:proof-q-lower}
\end{equation*}
This results in 
\begin{equation}
    \psi\left(
        \frac{
            q_{\rho_{s,\tau}}-L_{\min}-L_L\bar r
        }{\tau}
    \right)
    \ge
    \psi\left(
        z_{\beta_\tau}
        -
        \frac{L_L\bar r}{\tau}
    \right)
    =
    \underline c_{\rm in}(\tau).
    \label{eq:proof-cin-lower}
\end{equation}
Combining \eqref{eq:proof-local-mass-bootstrap} and
\eqref{eq:proof-cin-lower} gives
\begin{equation}
    \psi\left(
        \frac{
            q_{\rho_{s,\tau}}-L_{\min}-L_L\bar r
        }{\tau}
    \right)
    \rho_{s,\tau}(B_{\bar r}(\theta^\star))
    \ge
    \Gamma_\tau,
    \qquad
    s\in[0,t].
    \label{eq:proof-cin-mass-lower}
\end{equation}

Let
$
    \widetilde\theta_G:=\widetilde\theta_{r_G},
 $ and $
    G_{\rm er}
    :=
    \sup_{\theta\in B_{\bar r}(\theta^\star)}
    \left(
        G(\theta)-G(\widetilde\theta_G)
    \right).
    %\label{eq:proof-Ger-def}
$
Since \(\bar r\le r_G\), \(\widetilde\theta_G\in B_{r_G}(\theta^\star)\),
and \(G\) is \(L_G\)-Lipschitz, then
\begin{equation}
    G_{\rm er}\le L_G(\bar r+r_G)\le 2L_G r_G.
    \label{eq:proof-Ger-bound}
\end{equation}
Together with~\eqref{eq:proof-upper-range}, this implies
$
    u+G_{\rm er}\le G_\infty.
   % \label{eq:proof-u-Ger-range}
$

We may now apply Proposition~\ref{prop:soft-laplace} with
\(\varrho=\rho_{s,\tau}\) and \(r=\bar r\). For every \(s\in[0,t]\), it
yields
\begin{align}
    M_\tau(s)
    &\le
    r_G
    +
    \frac{(u+G_{\rm er})^{\nu_G}}{\eta_G}
    \nonumber\\
    &\quad
    +
    \frac{e^{-\alpha_\tau u}}
    {
        c_{\rm in}(q_{\rho_{s,\tau}},\bar r)
        \rho_{s,\tau}(B_{\bar r}(\theta^\star))
    }
    \left(
        \sqrt{2V_\tau(s)}+\beta_\tau r_G
    \right)
    \nonumber\\
    &\quad
    +
    \frac{
        e^{\alpha_\tau\Delta_{\bar r}}c_{\rm out}(\tau)
    }
    {
        c_{\rm in}(q_{\rho_{s,\tau}},\bar r)
        \rho_{s,\tau}(B_{\bar r}(\theta^\star))
    }
    \left(
        \sqrt{2V_\tau(s)}+r_G
    \right).
    \label{eq:proof-soft-laplace-applied}
\end{align}
Using \eqref{eq:proof-cin-mass-lower} and
\eqref{eq:proof-Ger-bound}, we get
$
    M_\tau(s)
    \le
    A_\tau+B_\tau\sqrt{V_\tau(s)},
    $ for $
    s\in[0,t],
   % \label{eq:proof-consensus-affine-bound}
$
where
\begin{equation*}
    A_\tau
    :=
    r_G
    +
    \frac{(u+2L_G r_G)^{\nu_G}}{\eta_G}
    +
    \frac{
        r_G\left(
            \beta_\tau e^{-\alpha_\tau u}
            +
            e^{\alpha_\tau\Delta_{\bar r}}c_{\rm out}(\tau)
        \right)
    }{
        \Gamma_\tau
    },
  %  \label{eq:proof-A-def}
\end{equation*}
and
\begin{equation*}
    B_\tau
    :=
    \frac{
        \sqrt2\left(
            e^{-\alpha_\tau u}
            +
            e^{\alpha_\tau\Delta_{\bar r}}c_{\rm out}(\tau)
        \right)
    }{
        \Gamma_\tau
    }.
    %\label{eq:proof-B-def}
\end{equation*}
By \eqref{eq:proof-geometric-smallness},
\eqref{eq:proof-admissibility-A-verified}, and
\eqref{eq:proof-admissibility-B-verified}, we get
$
    A_\tau\le \frac{c_\vartheta\sqrt\varepsilon}{2},
 $ and $
    B_\tau\le \frac{c_\vartheta}{2}.
    %\label{eq:proof-A-B-small}
$
Therefore, whenever \(V_\tau(s)>\varepsilon\),
\begin{equation}
    M_\tau(s)
    \le
    \frac{c_\vartheta\sqrt\varepsilon}{2}
    +
    \frac{c_\vartheta}{2}\sqrt{V_\tau(s)}
    <
    c_\vartheta\sqrt{V_\tau(s)}.
    \label{eq:proof-consensus-control-when-above-eps}
\end{equation}
This is a strict improvement of the bootstrap condition.

We now show that the process cannot stop before \(T^\star\) because the
consensus-control condition fails. Suppose, to the contrary, that
$
    T_{\alpha_\tau,\beta_\tau,\tau}<T^\star
$ and $
    V_\tau(T_{\alpha_\tau,\beta_\tau,\tau})>\varepsilon.
   % \label{eq:proof-contradiction-assumption}
$
Applying the previous argument with \(t<T_{\alpha_\tau,\beta_\tau,\tau}\)
and passing to the limit \(t\uparrow T_{\alpha_\tau,\beta_\tau,\tau}\) by
continuity, we get
\begin{equation*}
    M_\tau(T_{\alpha_\tau,\beta_\tau,\tau})
    <
    c_\vartheta
    \sqrt{V_\tau(T_{\alpha_\tau,\beta_\tau,\tau})}.
  %  \label{eq:proof-consensus-still-strict}
\end{equation*}
By continuity, the defining conditions in
\eqref{eq:proof-stopping-time-def} remain true on a slightly larger
interval, contradicting the definition of
\(T_{\alpha_\tau,\beta_\tau,\tau}\). Therefore, either
$
    T_{\alpha_\tau,\beta_\tau,\tau}=T^\star,
  %  \label{eq:proof-stop-at-Tstar}
$
or
\begin{equation}
    V_\tau(T_{\alpha_\tau,\beta_\tau,\tau})\le\varepsilon.
    \label{eq:proof-stop-by-hitting}
\end{equation}

On every interval on which \(V_\tau>\varepsilon\), the estimate
\eqref{eq:proof-consensus-control-when-above-eps} holds. Therefore, by
Lemma~\ref{lem:Lyapunov-DI}, for a.e. such \(t\), we can write
\begin{equation}
    \dot V_\tau(t)
    \le
    -(1-\vartheta)aV_\tau(t),
    \label{eq:proof-upper-differential-final}
\end{equation}
and also
\begin{equation}
    \dot V_\tau(t)
    \ge
    -(1+\vartheta/2)aV_\tau(t).
    \label{eq:proof-lower-differential-final}
\end{equation}
Indeed, the first inequality follows from
\[
    \dot V_\tau(t)
    \le
    -aV_\tau(t)
    +
    b\sqrt{V_\tau(t)}M_\tau(t)
    +
    c_dM_\tau(t)^2
\]
and the bounds
$
    bc_\vartheta\le\frac{\vartheta a}{4},
 $ and $
    c_dc_\vartheta^2\le\frac{\vartheta a}{4}.
$
The second inequality follows from
\[
    \dot V_\tau(t)
    \ge
    -aV_\tau(t)
    -
    b\sqrt{V_\tau(t)}M_\tau(t)
    \ge
    -(a+bc_\vartheta)V_\tau(t)
    \ge
    -(1+\vartheta/2)aV_\tau(t).
\]

Define the first hitting time
$
    T
    :=
    \inf\{t\in[0,T^\star]:V_\tau(t)\le\varepsilon\}.
   % \label{eq:proof-hitting-time-def}
$
The hitting set is nonempty. Indeed, if the stopping time is strictly smaller than \(T^\star\), then \eqref{eq:proof-stop-by-hitting} gives nonemptiness. If the stopping time equals \(T^\star\), then Gronwall's inequality applied to \eqref{eq:proof-upper-differential-final} gives
\begin{equation*}
    V_\tau(T^\star)
    \le
    V_0 e^{-(1-\vartheta)aT^\star}
    =
    \varepsilon.
    %\label{eq:proof-hitting-at-Tstar-case}
\end{equation*}
Thus
\begin{equation}
    T\le T^\star.
    \label{eq:proof-T-upper-bound}
\end{equation}
By continuity of \(V_\tau\) and \(V_0>\varepsilon\),
$
    V_\tau(T)=\varepsilon.
   % \label{eq:proof-hitting-equality}
$

For every \(t<T\), one has \(V_\tau(t)>\varepsilon\), and hence
\eqref{eq:proof-lower-differential-final} applies. Letting \(t\uparrow T\)
after applying Gronwall's inequality gives
$
    \varepsilon
    =
    V_\tau(T)
    \ge
    V_0e^{-(1+\vartheta/2)aT}.
   % \label{eq:proof-lower-comparison-at-hit}
$
Consequently,
\begin{equation}
    T
    \ge
    \frac{1}{(1+\vartheta/2)a}
    \log\left(\frac{V_0}{\varepsilon}\right)
    =
    \frac{1-\vartheta}{1+\vartheta/2}T^\star.
    \label{eq:proof-T-lower-bound}
\end{equation}
Combining \eqref{eq:proof-T-upper-bound} and
\eqref{eq:proof-T-lower-bound} proves
\eqref{eq:direct-mf-hitting-time-range}.

Finally, by \eqref{eq:proof-Vtau-def},
$
    W_2^2(\rho_{t,\tau},\delta_{\theta^\star})
    =
    2V_\tau(t).
   % \label{eq:proof-W2-V-identity}
$
Using \eqref{eq:proof-upper-differential-final} on \([0,T]\), we obtain
\begin{align*}
    W_2^2(\rho_{t,\tau},\delta_{\theta^\star})
    &\le
    2V_0e^{-(1-\vartheta)at}
    \nonumber\\
    &=
    W_2^2(\rho_0,\delta_{\theta^\star})
    \exp\left(
        -(1-\vartheta)(2\lambda-d\sigma^2)t
    \right),
    \qquad
    t\in[0,T].
    %\label{eq:proof-final-W2-decay}
\end{align*}
This proves \eqref{eq:direct-mf-W2-exponential-decay} and completes the
proof.
\end{proof}

\begin{remark}
\label{rem:auxiliary-mass-alpha-beta}
The constants \(\beta_\tau\) and \(\alpha_\tau\) constructed in the proof should
be understood as one coupled admissible choice. In fact, the construction has
an additional degree of freedom. Suppose that the proof provides a positive
mass lower bound
\[
    \rho_{t,\tau}(B_{\bar r}(\theta^\star))\ge \bar m>0,
    \qquad t\in[0,T].
\]
Then the same lower bound remains valid if \(\bar m\) is replaced by any
auxiliary constant \(m\in(0,\bar m]\).

For such an \(m\), define
\[
    S_\tau
    :=
    \psi\!\left(
    \frac{h_L(r_G)-\delta_{\rm lev}-L_L\bar r}{\tau}
    \right),
    \qquad
    \beta_{\tau,m}:=\frac12 mS_\tau .
\]
Since \(0<S_\tau<1\), we have
\(
    0<\beta_{\tau,m}<\frac12 m.
\)
Hence \(\beta_{\tau,m}\) can be made arbitrarily small by choosing the auxiliary mass parameter \(m\) sufficiently small. 
Next, set
\(
    z_{\tau,m}:=\psi^{-1}(\beta_{\tau,m}),
\)
and 
\[
    c_{\rm in}^{m}(\tau)
    :=
    \psi\!\left(
    z_{\tau,m}-\frac{L_L\bar r}{\tau}
    \right),
    \qquad
    \Gamma_{\tau,m}:=m\,c_{\rm in}^{m}(\tau),
\]
and choose
\[
    \alpha_{\tau,m}
    :=
    \frac1u
    \log\!\left(
    \frac{K}{\Gamma_{\tau,m}}
    \right).
\]
Since \(0<c_{\rm in}^{m}(\tau)<1\), one has
\(
    0<\Gamma_{\tau,m}<m,
\)
and therefore
\[
    \alpha_{\tau,m}
    >
    \frac1u
    \log\!\left(
    \frac{K}{m}
    \right).
\]
Consequently, by taking \(m\downarrow0\), the admissible coupled construction
yields \(\beta_{\tau,m}\downarrow0\) and
\(\alpha_{\tau,m}\to+\infty\).
Thus the proof allows admissible choices with arbitrarily small \(\beta\) and
arbitrarily large \(\alpha\), provided these parameters are chosen through the
same coupled construction.
\end{remark}

\begin{remark}\label{rem:selection-rule-alpha-beta} 
The construction in Remark \ref{rem:auxiliary-mass-alpha-beta} can be made explicit by choosing the auxiliary
mass parameter as a function of \(\tau\). For instance, fix \(p>0\) and set
\(
    m_\tau := \min\{\bar m/2,\tau^p\}.
\)
Then define
\[
    S_\tau
    :=
    \psi\!\left(
    \frac{h_L(r_G)-\delta_{\rm lev}-L_L\bar r}{\tau}
    \right),
    \qquad
    \beta_\tau
    :=
    \frac12 m_\tau S_\tau,
\]
and
\[
    z_\tau:=\psi^{-1}(\beta_\tau),\qquad
    c_{\rm in}(\tau)
    :=
    \psi\!\left(
    z_\tau-\frac{L_L\bar r}{\tau}
    \right),
    \qquad
    \Gamma_\tau:=m_\tau c_{\rm in}(\tau),
\]
\[
    \alpha_\tau
    :=
    \frac1u\log\!\left(\frac{K}{\Gamma_\tau}\right).
\]
This gives one coupled admissible choice of
\((\alpha_\tau,\beta_\tau)\). In particular, if
\(h_L(r_G)-\delta_{\rm lev}-L_L\bar r>0\), then \(S_\tau\to1\) as
\(\tau\downarrow0\), and therefore
\(
    \beta_\tau \asymp \tau^p.
\)
For sigmoid-type selectors, one further obtains
\[
    \alpha_\tau
    =
    O(\tau^{-1}+|\log\tau|).
\]
Thus the construction makes the lower-level selection sharper as
\(\tau\downarrow0\), while increasing the Gibbs parameter only at an explicit
controlled rate.
\end{remark}

\section{Proof of Proposition \ref{prop:mfa-cutoff}}\label{pp:prop:mfa-cutoff}

Throughout this section, fix $T,M>0$. The cutoff time $\tau_M$ and the indicator $I_M(t)=\one_{\{t<\tau_M\}}$ are defined in~\eqref{eq:tauM-def} and~\eqref{eq:IM-def}. Define
$
    C_q:=\frac{L_\psi L_L}{\tau\kappa}
   % \label{eq:Cq-def}
$, 
$a_M:=a(M),$ 
$
    a_{\mathrm{mf}}:=a(C_{\mathrm{bd}}(T)),
$ and $
    a_{M,T}:=\max\set{a_M,a_{\mathrm{mf}}}.
$
Furthermore, let
\begin{align*}
    &C_{\phi,x}:=e^{-\alpha G_{\min}}\br{1+\alpha L_GM^{1/4}+\frac{L_\psi L_L}{\tau}M^{1/4}},
    \qquad
    C_{\phi,q}:=e^{-\alpha G_{\min}}\frac{L_\psi}{\tau}M^{1/4},
    %\label{eq:Cphi-def}
    \\
    &\widetilde C_\phi:=C_{\phi,x}+C_{\phi,q}C_q,\nonumber\\
    &C_{\psi,x}:=e^{-\alpha G_{\min}}\br{\alpha L_G+\frac{L_\psi L_L}{\tau}},
    \qquad
    C_{\psi,q}:=e^{-\alpha G_{\min}}\frac{L_\psi}{\tau},
    %\label{eq:Cpsi-def}
    \\
    &\widetilde C_\psi:=C_{\psi,x}+C_{\psi,q}C_q.\nonumber
\end{align*}
And define
\begin{align*}
    &C_{\mathrm{coup}}
    :=2b_G^{-2}\widetilde C_\phi^2+2a_{M,T}^2b_G^{-4}\widetilde C_\psi^2,\\
%    \label{eq:Ccoup-def}
&C_{q,\mathrm{emp}}:=\frac1{4\kappa^2},\\
&    C_{A,\mathrm{emp}}
    :=2C_{\phi,q}^2C_{q,\mathrm{emp}}+2e^{-2\alpha G_{\min}}C_{\mathrm{bd}}(T)^{1/2},
    \\
    &C_{B,\mathrm{emp}}
    :=2C_{\psi,q}^2C_{q,\mathrm{emp}}+2e^{-2\alpha G_{\min}},\\
    &C_{\mathrm{emp}}    :=2b_G^{-2}C_{A,\mathrm{emp}}+2a_{M,T}^2b_G^{-4}C_{B,\mathrm{emp}}.
\end{align*}
With
$
    C_0:=-\lambda+2\sigma^2d,
 $ $
    C_1:=\lambda+2\sigma^2d,
  $ $
    K_{\mathrm{MFA}}:=C_0+2C_1C_{\mathrm{coup}},
$
and let
\begin{equation*}
    C_{\mathrm{MFA}}
    :=4C_1C_{\mathrm{emp}}T\exp\br{\max\set{K_{\mathrm{MFA}},0}T}.
    %\label{eq:CMFA-def}
\end{equation*}

\begin{lemma}
\label{lem:empirical-second-moment}
For
$
    \phi_q(x):=xe^{-\alpha G(x)}\psi\br{\frac{q-L(x)}{\tau}},
$ and $
    \chi_q(x):=e^{-\alpha G(x)}\psi\br{\frac{q-L(x)}{\tau}},
    %\label{eq:phi-psi-def}
$ 
one has
\begin{equation*}
    \sup_{t\in[0,T]}
    \E\sqbr{\norm{\phi_{q_{\rho_{t,\tau}}}(\bar\Theta_{t,\tau}^1)}_2^2+
    \abs{\chi_{q_{\rho_{t,\tau}}}(\bar\Theta_{t,\tau}^1)}^2}
    \le C_{\mathrm{mom}}(T),
\end{equation*}
where
$
    C_{\mathrm{mom}}(T):=e^{-2\alpha G_{\min}}\br{C_{\mathrm{bd}}(T)^{1/2}+1}.
$
\end{lemma}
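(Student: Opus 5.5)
The plan is a pointwise bound on the two functions, followed by the fourth-moment estimate of Lemma~\ref{lem:fourth-moment-bound} for the mean-field copies.

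\emph{Pointwise step.} For every $q\in\R$ and $x\in\R^d$ we have $0<\psi<1$ and $G\ge G_{\min}$, so $e^{-\alpha G(x)}\le e^{-\alpha G_{\min}}$. Hence
\[
|\chi_q(x)|\le e^{-\alpha G_{\min}},\qquad \|\phi_q(x)\|_2\le e^{-\alpha G_{\min}}\|x\|_2 ,
\]
uniformly in $q$. In particular the bound does not depend on the time-dependent threshold $q=q_{\rho_{t,\tau}}$, so no measurability or regularity of $t\mapsto q_{\rho_{t,\tau}}$ is needed.

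\emph{Expectation step.} Squaring and taking expectations gives
\[
\E\bigl[\|\phi_{q}(\bar\Theta^1_{t,\tau})\|_2^2+|\chi_q(\bar\Theta^1_{t,\tau})|^2\bigr]\le e^{-2\alpha G_{\min}}\bigl(\E\|\bar\Theta^1_{t,\tau}\|_2^2+1\bigr).
\]
By Jensen (Cauchy--Schwarz), $\E\|\bar\Theta^1_{t,\tau}\|_2^2\le(\E\|\bar\Theta^1_{t,\tau}\|_2^4)^{1/2}$.

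\emph{Fourth-moment step.} It remains to show $\sup_{t\le T}\E\|\bar\Theta^1_{t,\tau}\|_2^4\le C_{\rm bd}(T)$, and here I would reuse the proof of Lemma~\ref{lem:fourth-moment-bound}. The copies $\bar\Theta^i_{t,\tau}$ are i.i.d., so
\[
\E\sup_{u\le T}\|\bar\Theta^1_{u,\tau}\|_2^4=\bar U_N(T)\le 27\mu_4 e^{K_{\rm bd}(T)T}\le C_{\rm bd}(T),
\]
by \eqref{eq:barUN-bound}, since $C_{\rm bd}(T)=54\mu_4e^{K_{\rm bd}(T)T}$. Taking the supremum over $t\in[0,T]$ then gives the claim with $C_{\rm mom}(T)=e^{-2\alpha G_{\min}}(C_{\rm bd}(T)^{1/2}+1)$.

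\emph{Main obstacle.} The only delicate point is justifying the Gronwall argument behind \eqref{eq:barUN-bound} for the mean-field process. There the consensus bound $\|m(\rho_{s,\tau})\|_2^4\le C_m^4\E\|\bar\Theta^1_{s,\tau}\|_2^4$ from Corollary~\ref{cor:consensus-moment-stability} plays the role of the empirical-average bound. Running Gronwall also requires a priori finiteness of the fourth moment of the running supremum. I would obtain this from $\rho_{t,\tau}\in C([0,T],\mathcal P_4)$ in Assumption~\ref{ass:primitive}~V (the finite supremum of fourth moments implies a bounded consensus point), or alternatively by localizing with stopping times and letting them tend to infinity.
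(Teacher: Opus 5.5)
Your proposal is correct and follows the paper's argument. You use the same pointwise bounds $\|\phi_q(x)\|_2\le e^{-\alpha G_{\min}}\|x\|_2$ and $|\chi_q(x)|\le e^{-\alpha G_{\min}}$ (uniform in $q$), then Cauchy--Schwarz and the fourth-moment bound of Lemma~\ref{lem:fourth-moment-bound} for the mean-field copies; the paper cites the lemma's statement, $\E\|\bar\Theta^1_{t,\tau}\|_2^4\le\E[M_T]\le C_{\mathrm{bd}}(T)$, where you cite the intermediate bound \eqref{eq:barUN-bound}, and your remark on needing a priori finiteness (or localization) to run Gronwall for the mean-field copies fills in a point the paper covers only by ``the same proof applies.''
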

\begin{proof}
Since $0<\psi<1$ and $G\ge G_{\min}$,
\begin{equation*}
    \norm{\phi_{q_{\rho_{t,\tau}}}(x)}_2^2\le e^{-2\alpha G_{\min}}\norm{x}_2^2,
    \qquad
    \abs{\chi_{q_{\rho_{t,\tau}}}(x)}^2\le e^{-2\alpha G_{\min}}.
\end{equation*}
By Lemma~\ref{lem:fourth-moment-bound},
\begin{equation*}
    \E\norm{\bar\Theta_{t,\tau}^1}_2^2
    \le\br{\E\norm{\bar\Theta_{t,\tau}^1}_2^4}^{1/2}
    \le C_{\mathrm{bd}}(T)^{1/2}.
\end{equation*}
Combining the two estimates proves the claim.
\end{proof}

%\begin{proposition}
\textbf{Proposition \ref{prop:mfa-cutoff}.}
%\label{prop:mfa-cutoff}
\textit{Under Assumption~\ref{ass:primitive}, if
\begin{equation}
    \Prob(\Omega_M)\ge\frac12,
    \label{eq:Omega-half}
\end{equation}
then
\begin{equation}
    \max_{1\le i\le N}\sup_{t\in[0,T]}
    \E\sqbr{\norm{\Theta_{t,\tau}^{i,N}-\bar\Theta_{t,\tau}^i}_2^2\mid\Omega_M}
    \le\frac{C_{\mathrm{MFA}}}{N}.
    \label{eq:MFA-result}
\end{equation}}

\begin{proof}
Set
$
    e_t^i:=\Theta_{t,\tau}^{i,N}-\bar\Theta_{t,\tau}^i,
$ $
    q_t^{N,\mathrm{int}}:=q_{\rho_{t,\tau}^{N,\mathrm{int}}},
$ $
    \bar q_t^N:=q_{\bar\rho_{t,\tau}^{N}},
$ and $
    q_t:=q_{\rho_{t,\tau}}.
$
Apply Ito's formula to the stopped process $\norm{e_{t\wedge\tau_M}^i}_2^2$, localize the stochastic integral, and then use Lemma~\ref{lem:stopped-integral} with $Y_t=\norm{e_t^i}_2^2$. The stochastic integral has zero expectation after localization, and localization is removed by monotone convergence because the terms are nonnegative. Thus, in the distributional sense,
\begin{align}
    \frac{\dd}{\dd t}\E\sqbr{\norm{e_t^i}_2^2I_M(t)}
    &\le 2\E\sqbr{\inner{e_t^i}{-\lambda\br{e_t^i-\br{m(\rho_{t,\tau}^{N,\mathrm{int}})-m(\rho_{t,\tau})}}}I_M(t)}\nonumber\\
    &\quad+\sigma^2\E\sqbr{\norm{D(\Theta_{t,\tau}^{i,N}-m(\rho_{t,\tau}^{N,\mathrm{int}}))-D(\bar\Theta_{t,\tau}^i-m(\rho_{t,\tau}))}_F^2I_M(t)}.
    \label{eq:error-ito-stopped}
\end{align}
Using $2\inner{x}{y}\le\norm{x}_2^2+\norm{y}_2^2$ and~\eqref{eq:D-basic}, we obtain
\begin{equation}
    \frac{\dd}{\dd t}\E\sqbr{\norm{e_t^i}_2^2I_M(t)}
    \le C_0\E\sqbr{\norm{e_t^i}_2^2I_M(t)}
    +C_1\E\sqbr{\norm{m(\rho_{t,\tau}^{N,\mathrm{int}})-m(\rho_{t,\tau})}_2^2I_M(t)}.
    \label{eq:error-base}
\end{equation}

\emph{Step 1: quantile difference between the two empirical measures.}
On $\{I_M(t)=1\}$, Lemma~\ref{lem:quantile-inverse-stability} gives
\begin{equation*}
    \abs{q_t^{N,\mathrm{int}}-\bar q_t^N}I_M(t)
    \le \kappa^{-1}\abs{F_{\rho_{t,\tau}^{N,\mathrm{int}}}(\bar q_t^N)-F_{\bar\rho_{t,\tau}^{N}}(\bar q_t^N)}I_M(t).
\end{equation*}
Since $\psi$ and $L$ are Lipschitz,
\begin{align*}
    \abs{F_{\rho_{t,\tau}^{N,\mathrm{int}}}(\bar q_t^N)-F_{\bar\rho_{t,\tau}^{N}}(\bar q_t^N)}
    &\le\frac{L_\psi}{\tau N}\sum_{j=1}^N\abs{L(\Theta_{t,\tau}^{j,N})-L(\bar\Theta_{t,\tau}^{j})} \le\frac{L_\psi L_L}{\tau N}\sum_{j=1}^N\norm{e_t^j}_2.
\end{align*}
Combining the preceding two displays and using Cauchy's inequality for the empirical average,
\begin{equation}
    \abs{q_t^{N,\mathrm{int}}-\bar q_t^N}^2I_M(t)
    \le C_q^2\frac1N\sum_{j=1}^N\norm{e_t^j}_2^2I_M(t).
    \label{eq:q-N-bar-bound}
\end{equation}

\emph{Step 2: consensus difference between the two empirical measures.}
For arbitrary $x,y\in\R^d$ and $q_1,q_2\in\R$,
\begin{align}\notag
    \norm{\phi_{q_1}(x)-\phi_{q_2}(y)}_2
    &\le e^{-\alpha G_{\min}}\norm{x-y}_2
    +\alpha e^{-\alpha G_{\min}}L_G\norm{y}_2\norm{x-y}_2\nonumber\\
    &\quad+e^{-\alpha G_{\min}}\norm{x}_2\frac{L_\psi}{\tau}\br{\abs{q_1-q_2}+L_L\norm{x-y}_2}.
    \label{eq:phi-lipschitz}
\end{align}
Indeed,
\begin{align*}
    \norm{xe^{-\alpha G(x)}-ye^{-\alpha G(y)}}_2
    &\le e^{-\alpha G(x)}\norm{x-y}_2+\norm{y}_2\abs{e^{-\alpha G(x)}-e^{-\alpha G(y)}}              \\
    &\le e^{-\alpha G_{\min}}\norm{x-y}_2
    +\alpha e^{-\alpha G_{\min}}L_G\norm{y}_2\norm{x-y}_2,
\end{align*}
and the remaining term follows from the Lipschitz property of $\psi$ and $L$.

On $\{I_M(t)=1\}$, the cutoff bound~\eqref{eq:cutoff-pointwise-bound} gives
\begin{equation*}
    \frac1N\sum_{j=1}^N\norm{\Theta_{t,\tau}^{j,N}}_2\le M^{1/4},
    \qquad
    \frac1N\sum_{j=1}^N\norm{\bar\Theta_{t,\tau}^{j}}_2\norm{e_t^j}_2
    \le M^{1/4}\br{\frac1N\sum_{j=1}^N\norm{e_t^j}_2^2}^{1/2}.
    %\label{eq:cutoff-Cauchy}
\end{equation*}
The second inequality follows from Cauchy--Schwarz and
$$
N^{-1}\sum_j\norm{\bar\Theta_{t,\tau}^{j}}_2^2\le(N^{-1}\sum_j\norm{\bar\Theta_{t,\tau}^{j}}_2^4)^{1/2}\le M^{1/2}.
$$
Averaging~\eqref{eq:phi-lipschitz}, using~\eqref{eq:q-N-bar-bound}, and then squaring yields
\begin{equation}
    \norm{A(\rho_{t,\tau}^{N,\mathrm{int}})-A(\bar\rho_{t,\tau}^{N})}_2^2I_M(t)
    \le \widetilde C_\phi^2\frac1N\sum_{j=1}^N\norm{e_t^j}_2^2I_M(t).
    \label{eq:A-N-bar-bound}
\end{equation}
Similarly,
\begin{equation*}
    \abs{\psi_{q_1}(x)-\psi_{q_2}(y)}
    \le e^{-\alpha G_{\min}}\br{\alpha L_G+\frac{L_\psi L_L}{\tau}}\norm{x-y}_2
    +e^{-\alpha G_{\min}}\frac{L_\psi}{\tau}\abs{q_1-q_2},
\end{equation*}
which gives
\begin{equation}
    \abs{B(\rho_{t,\tau}^{N,\mathrm{int}})-B(\bar\rho_{t,\tau}^{N})}^2I_M(t)
    \le \widetilde C_\psi^2\frac1N\sum_{j=1}^N\norm{e_t^j}_2^2I_M(t).
    \label{eq:B-N-bar-bound}
\end{equation}
On $\{I_M(t)=1\}$, Lemma~\ref{lem:deterministic-AB-bounds} gives
$B(\rho_{t,\tau}^{N,\mathrm{int}})\ge b_G$ and $B(\bar\rho_{t,\tau}^{N})\ge b_G$. Moreover,
\begin{equation}
    \norm{A(\bar\rho_{t,\tau}^{N})}_2
    \le e^{-\alpha G_{\min}}\br{\int\norm{x}_2^2\softJ[\bar\rho_{t,\tau}^{N}](\dd x)}^{1/2}\beta^{1/2}
    \le a_M.
    \label{eq:Abar-aM}
\end{equation}
The quotient identity
\begin{equation}
    m(\rho)-m(\nu)
    =\frac{A(\rho)-A(\nu)}{B(\rho)}
    +A(\nu)\frac{B(\nu)-B(\rho)}{B(\rho)B(\nu)}
    \label{eq:quotient-identity}
\end{equation}
combined with~\eqref{eq:A-N-bar-bound}, \eqref{eq:B-N-bar-bound}, \eqref{eq:Abar-aM}, and $(a+b)^2\le2(a^2+b^2)$ gives
\begin{equation}
    \norm{m(\rho_{t,\tau}^{N,\mathrm{int}})-m(\bar\rho_{t,\tau}^{N})}_2^2I_M(t)
    \le C_{\mathrm{coup}}\frac1N\sum_{j=1}^N\norm{e_t^j}_2^2I_M(t).
    \label{eq:m-N-bar-bound}
\end{equation}

\emph{Step 3: empirical fluctuation of the mean-field sample.}
On $\{I_M(t)=1\}$, Lemma~\ref{lem:quantile-inverse-stability} applied with $\mu=\bar\rho_{t,\tau}^{N}$ and $\nu=\rho_{t,\tau}$ gives
\begin{equation*}
    \abs{\bar q_t^N-q_t}I_M(t)
    \le\kappa^{-1}\abs{F_t^N(q_t)-F_t(q_t)}I_M(t),
    %\label{eq:qbar-qt-inv}
\end{equation*}
where
\begin{equation*}
    F_t^N(q):=\frac1N\sum_{j=1}^N\psi\br{\frac{q-L(\bar\Theta_{t,\tau}^j)}{\tau}}-\beta,
    \qquad
    F_t(q):=\int \psi\br{\frac{q-L(x)}{\tau}}\rho_{t,\tau}(\dd x)-\beta.
\end{equation*}
The random variables
\begin{equation*}
    Z_j:=\psi\br{\frac{q_t-L(\bar\Theta_{t,\tau}^j)}{\tau}}
\end{equation*}
are i.i.d., lie in $[0,1]$, and have mean $\beta$. Hence $\mathrm{Var}(Z_j)\le1/4$ and
\begin{align}\notag
    \E\sqbr{\abs{\bar q_t^N-q_t}^2I_M(t)}
    &\le\kappa^{-2}\E\sqbr{\abs{F_t^N(q_t)-F_t(q_t)}^2I_M(t)}                 \\ \notag
    &\le\kappa^{-2}\E\abs{F_t^N(q_t)-F_t(q_t)}^2                              \\
    &\le\frac1{4\kappa^2N}
    =\frac{C_{q,\mathrm{emp}}}{N}.
    \label{eq:qbar-emp-bound}
\end{align}
Next,
\begin{align}
    A(\bar\rho_{t,\tau}^{N})\!-\!A(\rho_{t,\tau})
    \!=\!\frac1N\sum_{j=1}^N\br{\phi_{\bar q_t^N}(\bar\Theta_{t,\tau}^j)\!-\phi_{q_t}(\bar\Theta_{t,\tau}^j)}       \!+\!\Big(\frac1N\sum_{j=1}^N\phi_{q_t}(\bar\Theta_{t,\tau}^j)\!-\!\E\phi_{q_t}(\bar\Theta_{t,\tau}^1)\Big).
    \label{eq:A-emp-decomp}
\end{align}
The first term in~\eqref{eq:A-emp-decomp} has norm at most $C_{\phi,q}\abs{\bar q_t^N-q_t}$ on $\{I_M(t)=1\}$. The second term has second moment bounded by
\begin{equation*}
    \frac1N\E\norm{\phi_{q_t}(\bar\Theta_{t,\tau}^1)}_2^2
    \le\frac{e^{-2\alpha G_{\min}}C_{\mathrm{bd}}(T)^{1/2}}{N}.
    %\label{eq:A-variance-bound}
\end{equation*}
Using $(a+b)^2\le2(a^2+b^2)$ and~\eqref{eq:qbar-emp-bound},
\begin{equation}
    \E\sqbr{\norm{A(\bar\rho_{t,\tau}^{N})-A(\rho_{t,\tau})}_2^2I_M(t)}
    \le\frac{C_{A,\mathrm{emp}}}{N}.
    \label{eq:Abar-rho-bound}
\end{equation}
The same argument for $B$ gives
\begin{equation}
    \E\sqbr{\abs{B(\bar\rho_{t,\tau}^{N})-B(\rho_{t,\tau})}^2I_M(t)}
    \le\frac{C_{B,\mathrm{emp}}}{N}.
    \label{eq:Bbar-rho-bound}
\end{equation}
Furthermore, Lemmas~\ref{lem:deterministic-AB-bounds} and~\ref{lem:fourth-moment-bound} imply
\begin{equation}
    \norm{A(\rho_{t,\tau})}_2
    \le\beta^{1/2}e^{-\alpha G_{\min}}C_{\mathrm{bd}}(T)^{1/4}=a_{\mathrm{mf}}.
    \label{eq:Arho-amf}
\end{equation}
Using~\eqref{eq:quotient-identity}, the denominator bound $B\ge b_G$, equations~\eqref{eq:Abar-rho-bound}, \eqref{eq:Bbar-rho-bound}, \eqref{eq:Arho-amf}, and $(a+b)^2\le2(a^2+b^2)$, we obtain
\begin{equation}
    \E\sqbr{\norm{m(\bar\rho_{t,\tau}^{N})-m(\rho_{t,\tau})}_2^2I_M(t)}
    \le\frac{C_{\mathrm{emp}}}{N}.
    \label{eq:mbar-rho-bound}
\end{equation}

\emph{Step 4: Gronwall closure.}
By
\begin{equation*}
    m(\rho_{t,\tau}^{N,\mathrm{int}})-m(\rho_{t,\tau})
    =\br{m(\rho_{t,\tau}^{N,\mathrm{int}})-m(\bar\rho_{t,\tau}^{N})}
    +\br{m(\bar\rho_{t,\tau}^{N})-m(\rho_{t,\tau})},
\end{equation*}
then~\eqref{eq:m-N-bar-bound}, \eqref{eq:mbar-rho-bound}, and $(a+b)^2\le2(a^2+b^2)$ give
\begin{equation*}
    \E\sqbr{\norm{m(\rho_{t,\tau}^{N,\mathrm{int}})-m(\rho_{t,\tau})}_2^2I_M(t)}
    \le2C_{\mathrm{coup}}\frac1N\sum_{j=1}^N\E\sqbr{\norm{e_t^j}_2^2I_M(t)}+\frac{2C_{\mathrm{emp}}}{N}.
  %  \label{eq:total-m-bound}
\end{equation*}
Averaging~\eqref{eq:error-base} over $i$ and setting
$
    E_t:=\frac1N\sum_{i=1}^N\E\sqbr{\norm{e_t^i}_2^2I_M(t)},
$
we obtain
\begin{equation*}
    \dot E_t\le K_{\mathrm{MFA}}E_t+\frac{2C_1C_{\mathrm{emp}}}{N},
    \qquad E_0=0.
\end{equation*}
Therefore, for $t\in[0,T]$,
\begin{equation*}
E_t\le\frac{2C_1C_{\mathrm{emp}}T\exp\br{\max\set{K_{\mathrm{MFA}},0}T}}{N}.
    %\label{eq:Et-bound}
\end{equation*}
By the i.i.d. initialization and synchronous coupling in Assumption~\ref{ass:primitive}(A5), the pairs $(\Theta^{i,N},\bar\Theta^i)$ are exchangeable. Hence the same upper bound holds for each $\E\sqbr{\norm{e_t^i}_2^2I_M(t)}$. Since $\one_{\Omega_M}\le I_M(t)$ by~\eqref{eq:Omega-below-I} and~\eqref{eq:Omega-half} holds,
\begin{align*}
    \E\sqbr{\norm{e_t^i}_2^2\mid\Omega_M}
    &=\frac{\E\sqbr{\norm{e_t^i}_2^2\one_{\Omega_M}}}{\Prob(\Omega_M)}\le2\E\sqbr{\norm{e_t^i}_2^2I_M(t)}                                                         \\
    &\le\frac{4C_1C_{\mathrm{emp}}T\exp\br{\max\set{K_{\mathrm{MFA}},0}T}}{N}
    =\frac{C_{\mathrm{MFA}}}{N}.
\end{align*}
Taking the supremum over $t\in[0,T]$ and maximum over $i$ proves~\eqref{eq:MFA-result}.
\end{proof}

\section{Proof of Theorem~\ref{thm:numerical-convergence}}\label{pp:thm:numerical-convergence}

Define the empirical averages, as follows,
\begin{equation*}
    \widehat\theta_{K,\tau}^N:=\frac1N\sum_{i=1}^N\theta_{K,\tau}^i,
    \qquad
    \widehat\Theta_{T,\tau}^{N,\mathrm{int}}:=\frac1N\sum_{i=1}^N\Theta_{T,\tau}^{i,N},
    \qquad
    \widehat{\bar\Theta}_{T,\tau}^{N}:=\frac1N\sum_{i=1}^N\bar\Theta_{T,\tau}^{i}.
\end{equation*}
Young's inequality gives
\begin{align*}
    \norm{\widehat\theta_{K,\tau}^N-\theta^\star}_2^2
    &\le3\norm{\widehat\theta_{K,\tau}^N-\widehat\Theta_{T,\tau}^{N,\mathrm{int}}}_2^2
    +3\norm{\widehat\Theta_{T,\tau}^{N,\mathrm{int}}-\widehat{\bar\Theta}_{T,\tau}^{N}}_2^2+3\norm{\widehat{\bar\Theta}_{T,\tau}^{N}-\theta^\star}_2^2.
    %\label{eq:numerical-young}
\end{align*}

\subsection{Localized Euler--Maruyama approximation}

\begin{lemma}
\label{lem:Euler-fourth-moment-bound}
Under Assumption~\ref{ass:primitive}, fix \(T>0\), \(N\in\mathbb N\), and
let \(K\Delta t=T\) with \(0<\Delta t\le 1\). Suppose that the Euler--Maruyama
scheme~\eqref{eq:em_scheme} is driven by the same Brownian motions as the interacting
particle system, namely
\begin{equation}
    B_k^i
    :=
    \frac{W_{(k+1)\Delta t}^i-W_{k\Delta t}^i}{\sqrt{\Delta t}},
    \qquad
   k=0,\ldots,K-1,
    \quad i\in[N],
    \label{eq:Euler-Brownian-increments}
\end{equation}
and starts from the same initial data
\begin{equation}
    \theta_{0,\tau}^i=\Theta_{0,\tau}^{i,N},
    \qquad i\in[N].
    \label{eq:Euler-same-initial-data}
\end{equation}
Then there exists a finite constant
$
    C_{\mathrm{bd}}^\theta(T)
    =
    C_{\mathrm{bd}}^\theta
    \br{T,d,\lambda,\sigma,C_m(T),\mu_4}
    >0
    %\label{eq:Euler-Cbdtheta-dependence}
$
such that
\begin{equation}
    \E\sqbr{
    \max_{0\le k\le K}
    \frac1N\sum_{i=1}^N
    \norm{\theta_{k,\tau}^{i}}_2^4
    }
    \le
    C_{\mathrm{bd}}^\theta(T).
    \label{eq:Euler-fourth-moment-bound}
\end{equation}
\end{lemma}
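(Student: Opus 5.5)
We mirror the continuous-time argument of Lemma~\ref{lem:fourth-moment-bound}, with the Euler--Maruyama scheme written as a time-discrete integral. Set $m_k^N:=m(\rho^N_{k,\tau})$ and let $\bar s:=\lfloor s/\Delta t\rfloor\Delta t$. Define the piecewise-constant interpolant $\bar\theta^i_s:=\theta^i_{\bar s/\Delta t,\tau}$ and $\bar m_s:=m^N_{\bar s/\Delta t}$. Using the Brownian increments~\eqref{eq:Euler-Brownian-increments}, the scheme~\eqref{eq:em_scheme} is the evaluation at grid times $t=k\Delta t$ of the continuous process
\[
\theta^i_t=\theta^i_{0,\tau}-\lambda\int_0^t(\bar\theta^i_s-\bar m_s)\dd s+\sigma\int_0^tD(\bar\theta^i_s-\bar m_s)\dd W^i_s .
\]
The integrands are adapted, because $\bar\theta^i_s$ is $\mathcal F_{\bar s}$-measurable. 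Hence BDG~\eqref{eq:BDG-constant} applies to the stochastic integral, and the maximum over grid points is bounded by the supremum over $t\le T$.

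\textbf{Step 1.} Apply $(a+b+c)^4\le27(a^4+b^4+c^4)$, Jensen in time as in~\eqref{eq:jensen-time}, and BDG with Cauchy--Schwarz in time as in~\eqref{eq:stochastic-fourth}. For $t\le T$ this gives
\[
\E\sup_{u\le t}\norm{\theta^i_u}_2^4\le27\mu_4+27(\lambda^4T^3+\sigma^4B_{4,d}d^2T)\int_0^t\E\norm{\bar\theta^i_s-\bar m_s}_2^4\dd s .
\]

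\textbf{Step 2.} Corollary~\ref{cor:consensus-moment-stability} applies to the empirical measure $\rho^N_{k,\tau}$, which lies in $\cP_4$ since each $\theta^i_k$ has finite fourth moment; this finiteness follows inductively from the linear-growth bound of Lemma~\ref{lem:fixed-N-consensus-regularity} and Gaussian moments of $B_k^i$. It yields
\[
\norm{\bar\theta^i_s-\bar m_s}_2^4\le8\norm{\bar\theta^i_s}_2^4+8C_m^4\frac1N\sum_j\norm{\bar\theta^j_s}_2^4 .
\]
Define $U^\theta(t):=\frac1N\sum_i\E\sup_{u\le t}\norm{\theta^i_u}_2^4$. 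Since $\norm{\bar\theta^i_s}_2^4\le\sup_{u\le s}\norm{\theta^i_u}_2^4$, averaging over $i$ gives
\[
U^\theta(t)\le27\mu_4+K_{\mathrm{bd}}(T)\int_0^tU^\theta(s)\dd s .
\]

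\textbf{Step 3.} Gronwall gives $U^\theta(T)\le27\mu_4e^{K_{\mathrm{bd}}(T)T}$. Then
\[
\E\max_k\frac1N\sum_i\norm{\theta^i_k}_2^4\le\frac1N\sum_i\E\max_k\norm{\theta^i_k}_2^4\le U^\theta(T),
\]
which yields~\eqref{eq:Euler-fourth-moment-bound} with $C^\theta_{\mathrm{bd}}(T)=27\mu_4e^{K_{\mathrm{bd}}(T)T}$. This constant depends only on $T,d,\lambda,\sigma,C_m(T),\mu_4$ and is independent of $N$ and $\Delta t$.

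The main obstacle is applying Gronwall, which requires $U^\theta(T)<\infty$ a priori. We obtain this by finite induction over the $K$ steps: $\theta_{k+1}$ is a polynomial-growth function of $\theta_k$ and an independent Gaussian, so all its fourth moments are finite. This also keeps the BDG step free of localization, in contrast to the continuous case. A second point to check is adaptedness of the interpolated integrand, which holds because $D(\bar\theta^i_s-\bar m_s)$ is frozen at the left endpoint of each step. The restriction $\Delta t\le1$ is not needed in this argument.
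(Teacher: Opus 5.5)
Your proposal is correct and follows essentially the same route as the paper: the $27$-splitting, Jensen in time for the drift, BDG via the continuous-time representation of the noise sum followed by Cauchy--Schwarz in time, the consensus bound of Corollary~\ref{cor:consensus-moment-stability}, and Gronwall, giving the same constant $27\mu_4 e^{K_{\mathrm{bd}}(T)T}$. The paper stays on the grid and uses a discrete Gronwall inequality, which closes by induction with no a priori finiteness. You instead work with the continuous interpolant and continuous Gronwall, so you do need the bound $U^\theta(T)<\infty$, and you correctly supply it (the empirical measure lies in $\mathcal P_4$ pathwise anyway, so no moment argument is needed for that step).
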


\begin{proof}
For \(k=0,\ldots,K\), define
$
    Y_k^\theta
    :=
    \frac1N\sum_{i=1}^N
    \norm{\theta_{k,\tau}^{i}}_2^4.
    %\label{eq:Euler-Yk-def}
$
Also define
$
    m_k^N
    :=
    m(\rho_{k,\tau}^N),
 $ and $
    u_k^i
    :=
    \theta_{k,\tau}^i-m_k^N.
   % \label{eq:Euler-mk-uk-def}
$
By Corollary~\ref{cor:consensus-moment-stability},
\begin{equation*}
    \norm{m_k^N}_2^4
    \le
    C_m(T)^4
    \frac1N\sum_{j=1}^N
    \norm{\theta_{k,\tau}^{j}}_2^4.
    %\label{eq:Euler-mk-fourth-bound}
\end{equation*}
Therefore, using \(\norm{x-y}_2^4\le 8\norm{x}_2^4+8\norm{y}_2^4\), we get
\begin{align}
    \frac1N\sum_{i=1}^N
    \norm{u_k^i}_2^4
    &=
    \frac1N\sum_{i=1}^N
    \norm{\theta_{k,\tau}^i-m_k^N}_2^4
    \nonumber\\
    &\le
    8\frac1N\sum_{i=1}^N
    \norm{\theta_{k,\tau}^i}_2^4
    +
    8\norm{m_k^N}_2^4
    \nonumber\\
    &\le
    8\br{1+C_m(T)^4}Y_k^\theta.
    \label{eq:Euler-uk-fourth-bound}
\end{align}

From the Euler--Maruyama update~\eqref{eq:em_scheme},
\begin{equation*}
    \theta_{k+1,\tau}^i
    =
    \theta_{0,\tau}^i
    -
    \lambda\Delta t
    \sum_{\ell=0}^{k}u_\ell^i
    +
    \sigma\sqrt{\Delta t}
    \sum_{\ell=0}^{k}
    D(u_\ell^i)B_\ell^i.
    %\label{eq:Euler-expanded-update}
\end{equation*}
Hence, for every \(0\le r\le K\),
\begin{align}
    \norm{\theta_{r,\tau}^i}_2^4
    &\le
    27\norm{\theta_{0,\tau}^i}_2^4
    +
    27\lambda^4
    \norm{
    \Delta t\sum_{\ell=0}^{r-1}u_\ell^i
    }_2^4+
    27\sigma^4
    \norm{
    \sqrt{\Delta t}
    \sum_{\ell=0}^{r-1}
    D(u_\ell^i)B_\ell^i
    }_2^4.
    \label{eq:Euler-pathwise-fourth-split}
\end{align}
Taking the maximum over \(r=0,\ldots,k\), then expectation, gives
\begin{align}
    \E\sqbr{
    \max_{0\le r\le k}
    \norm{\theta_{r,\tau}^i}_2^4
    }
    &\le
    27\E\norm{\theta_{0,\tau}^i}_2^4
    +
    27\lambda^4
    \E\sqbr{
    \max_{0\le r\le k}
    \norm{
    \Delta t\sum_{\ell=0}^{r-1}u_\ell^i
    }_2^4
    }
    \nonumber\\
    &\quad+
    27\sigma^4
    \E\sqbr{
    \max_{0\le r\le k}
    \norm{
    \sqrt{\Delta t}
    \sum_{\ell=0}^{r-1}
    D(u_\ell^i)B_\ell^i
    }_2^4
    }.
    \label{eq:Euler-fourth-max-start}
\end{align}

We estimate the deterministic sum first. By the discrete Jensen inequality,
\begin{align}
    \max_{0\le r\le k}
    \norm{
    \Delta t\sum_{\ell=0}^{r-1}u_\ell^i
    }_2^4
    &\le
    T^3
    \Delta t
    \sum_{\ell=0}^{k-1}
    \norm{u_\ell^i}_2^4.
    \label{eq:Euler-drift-sum-bound}
\end{align}
For the stochastic sum, observe that
\[
    \sqrt{\Delta t}\sum_{\ell=0}^{r-1}D(u_\ell^i)B_\ell^i
    =
    \int_0^{r\Delta t}
    D\br{u_{\lfloor s/\Delta t\rfloor}^i}
    \dd W_s^i.
\]
Therefore, by the BDG inequality~\eqref{eq:BDG-constant},
\begin{align}
    &\E\sqbr{
    \max_{0\le r\le k}
    \norm{
    \sqrt{\Delta t}
    \sum_{\ell=0}^{r-1}
    D(u_\ell^i)B_\ell^i
    }_2^4
    }
    \nonumber\\
    &\qquad\le
    B_{4,d}
    \E\sqbr{
    \br{
    \int_0^{k\Delta t}
    \norm{
    D\br{u_{\lfloor s/\Delta t\rfloor}^i}
    }_F^2
    \dd s
    }^2
    }
    \nonumber\\
    &\qquad=
    B_{4,d}
    \E\sqbr{
    \br{
    \Delta t
    \sum_{\ell=0}^{k-1}
    \norm{D(u_\ell^i)}_F^2
    }^2
    }.
    \label{eq:Euler-martingale-BDG-start}
\end{align}
Since \(D(z)=\norm{z}_2\Id\), we have
\begin{equation}
    \norm{D(u_\ell^i)}_F^2
    =
    d\norm{u_\ell^i}_2^2.
    \label{eq:Euler-D-Frobenius}
\end{equation}
Thus, by Cauchy's inequality in discrete time,
\begin{align}
    \br{
    \Delta t
    \sum_{\ell=0}^{k-1}
    \norm{D(u_\ell^i)}_F^2
    }^2
    &=
    d^2
    \br{
    \Delta t
    \sum_{\ell=0}^{k-1}
    \norm{u_\ell^i}_2^2
    }^2
    \le
    d^2 T
    \Delta t
    \sum_{\ell=0}^{k-1}
    \norm{u_\ell^i}_2^4.
    \label{eq:Euler-discrete-Cauchy}
\end{align}
Combining~\eqref{eq:Euler-martingale-BDG-start},
\eqref{eq:Euler-D-Frobenius}, and~\eqref{eq:Euler-discrete-Cauchy}, we get
\begin{equation}
    \E\sqbr{
    \max_{0\le r\le k}
    \norm{
    \sqrt{\Delta t}
    \sum_{\ell=0}^{r-1}
    D(u_\ell^i)B_\ell^i
    }_2^4
    }
    \le
    B_{4,d}d^2T
    \Delta t
    \sum_{\ell=0}^{k-1}
    \E\norm{u_\ell^i}_2^4.
    \label{eq:Euler-martingale-bound}
\end{equation}

Substituting~\eqref{eq:Euler-drift-sum-bound} and
\eqref{eq:Euler-martingale-bound} into~\eqref{eq:Euler-fourth-max-start}
yields
\begin{align}
    \E\sqbr{
    \max_{0\le r\le k}
    \norm{\theta_{r,\tau}^i}_2^4
    }
    &\le
    27\E\norm{\theta_{0,\tau}^i}_2^4
    \nonumber\\
    &\quad+
    27\br{\lambda^4T^3+\sigma^4B_{4,d}d^2T}
    \Delta t
    \sum_{\ell=0}^{k-1}
    \E\norm{u_\ell^i}_2^4.
    \label{eq:Euler-single-particle-max-bound}
\end{align}
Averaging over \(i\in[N]\) and using~\eqref{eq:Euler-uk-fourth-bound},
we obtain
\begin{align*}
    \E\sqbr{
    \max_{0\le r\le k}
    Y_r^\theta
    }
    &\le
    27\mu_4+
    K_{\mathrm{bd}}^\theta(T)
    \Delta t
    \sum_{\ell=0}^{k-1}
    \E\sqbr{Y_\ell^\theta},
    %\label{eq:Euler-averaged-max-before-Gronwall}
\end{align*}
where
$
    K_{\mathrm{bd}}^\theta(T)
    :=
    216\br{1+C_m(T)^4}
    \br{\lambda^4T^3+\sigma^4B_{4,d}d^2T}.
   % \label{eq:Euler-Kbdtheta-def}
$
Since \(Y_\ell^\theta\le\max_{0\le r\le \ell}Y_r^\theta\), we have
\begin{equation*}
    \E\sqbr{Y_\ell^\theta}
    \le
    \E\sqbr{
    \max_{0\le r\le \ell}Y_r^\theta
    }.
    %\label{eq:Euler-Yl-max-comparison}
\end{equation*}
Therefore,
\begin{equation*}
    \E\sqbr{
    \max_{0\le r\le k}
    Y_r^\theta
    }
    \le
    27\mu_4
    +
    K_{\mathrm{bd}}^\theta(T)
    \Delta t
    \sum_{\ell=0}^{k-1}
    \E\sqbr{
    \max_{0\le r\le \ell}
    Y_r^\theta
    }.
   % \label{eq:Euler-discrete-Gronwall-input}
\end{equation*}
The discrete Gronwall inequality gives
\begin{equation*}
    \E\sqbr{
    \max_{0\le k\le K}
    Y_k^\theta
    }
    \le
    27\mu_4
    \exp\br{K_{\mathrm{bd}}^\theta(T)T}.
   % \label{eq:Euler-discrete-Gronwall-output}
\end{equation*}
Thus~\eqref{eq:Euler-fourth-moment-bound} holds with
$
    C_{\mathrm{bd}}^\theta(T)
    :=
    27\mu_4
    \exp\br{K_{\mathrm{bd}}^\theta(T)T}.
    %\label{eq:Euler-Cbdtheta-def}
$
This completes the proof.
\end{proof}

\begin{lemma}
\label{lem:localized-EM-approximation}
Under Assumption~\ref{ass:primitive}, fix \(T>0\), \(N\in\mathbb N\), and 
let \(K\Delta t=T\) with \(0<\Delta t\le 1\). Define
\begin{equation}
    \Omega_M^\theta
    :=
    \left\{
    \max_{0\le k\le K}
    \frac1N\sum_{i=1}^N
    \norm{\theta_{k,\tau}^{i}}_2^4
    \le M
    \right\},
    \label{eq:Omega-theta-def}
\end{equation}
and
\begin{equation}
    \Omega_M^{\mathrm{num}}
    :=
    \Omega_M\cap\Omega_M^\theta.
    \label{eq:Omega-num-def}
\end{equation}
If
\begin{equation}
    \mathbb P\br{\Omega_M^{\mathrm{num}}}\ge \frac12,
    \label{eq:Omega-num-half-assumption}
\end{equation}
then there exists a finite constant
$
    C_{\mathrm{NA}}
    =
    C_{\mathrm{NA}}
    \br{
    M,N,d,\alpha,\beta,\tau,\lambda,\sigma,T,\rho_0
    }
    >0,
 %   \label{eq:localized-CNA-dependence}
$
not asserted to be uniform in \(N\), such that
\begin{equation}
    \E\sqbr{
    \frac1N\sum_{i=1}^N
    \norm{\theta_{K,\tau}^{i}-\Theta_{T,\tau}^{i,N}}_2^2
    \,\middle|\,
    \Omega_M^{\mathrm{num}}
    }
    \le
    C_{\mathrm{NA}}\Delta t.
    \label{eq:localized-EM-conditional-error}
\end{equation}
Equivalently, this is the numerical strong error estimate with order
\(m=1/2\):
\begin{equation}
    \E\sqbr{
    \frac1N\sum_{i=1}^N
    \norm{\theta_{K,\tau}^{i}-\Theta_{T,\tau}^{i,N}}_2^2
    \,\middle|\,
    \Omega_M^{\mathrm{num}}
    }
    \le
    C_{\mathrm{NA}}(\Delta t)^{2m},
    \qquad m=\frac12.
    \label{eq:localized-EM-conditional-error-m}
\end{equation}
\end{lemma}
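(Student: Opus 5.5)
We compare the Euler--Maruyama scheme \eqref{eq:em_scheme} with the interacting system \eqref{eq:interacting_sde} through the continuous-time interpolation driven by the same Brownian motions \eqref{eq:Euler-Brownian-increments}, and we use a cutoff stopping time, exactly as in the proof of Proposition~\ref{prop:mfa-cutoff}. Set $\kappa(s):=\lfloor s/\Delta t\rfloor\Delta t$. The interpolated scheme is
\[
\tilde\theta_t^i=\theta_{0,\tau}^i-\lambda\int_0^t\big(\tilde\theta^i_{\kappa(s)}-m(\tilde\rho^N_{\kappa(s)})\big)\dd s+\sigma\int_0^tD\big(\tilde\theta^i_{\kappa(s)}-m(\tilde\rho^N_{\kappa(s)})\big)\dd W^i_s ,
\]
with $\tilde\rho^N_{\kappa(s)}=\rho^N_{k,\tau}$ for $s\in[k\Delta t,(k+1)\Delta t)$. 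It agrees with $\theta^i_{k,\tau}$ on the grid. The error is $\varepsilon_t^i:=\tilde\theta_t^i-\Theta_{t,\tau}^{i,N}$, which satisfies $\varepsilon_0^i=0$.

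\textbf{Cutoff.} We introduce a stopping time $\tau_M^{\mathrm{num}}$: the first time either $X^N_t>M$ or the interpolated fourth moment $\frac1N\sum_i\|\tilde\theta^i_{t}\|_2^4$ exceeds some larger level $M'$ (say $2M$). On the grid this second moment coincides with the quantity defining $\Omega_M^\theta$.

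If one prefers to avoid controlling the interpolant between grid points, the alternative is to run the whole argument as a discrete recursion for $e_k:=\frac1N\sum_i\E[\|\theta^i_{k,\tau}-\Theta^{i,N}_{k\Delta t}\|^2 I_k]$, with an indicator $I_k$ that is nonincreasing in $k$. On the cutoff event, every particle configuration entering the consensus lies in the ball $B_R^N$ with $R=(NM')^{1/4}\cdot N^{1/4}$, since $\|\mathbf x\|_{N,2}^2\le\sqrt N(\sum\|x^i\|^4)^{1/2}$. Lemma~\ref{lem:fixed-N-consensus-regularity} therefore gives a Lipschitz bound
\[
\|\mathfrak m_N(\mathbf x)-\mathfrak m_N(\mathbf y)\|\le L_{\mathfrak m,R,N}\|\mathbf x-\mathbf y\|_{N,2}.
\]
This is the reason $C_{\mathrm{NA}}$ depends on $N$ and is not uniform in $N$.

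\textbf{Steps.} First, apply It\^o to $\|\varepsilon^i_{t\wedge\tau}\|^2$ and invoke Lemma~\ref{lem:stopped-integral}. The drift and diffusion differences split into two pieces:
\begin{itemize}
\item[(a)] $\tilde\theta^i_{\kappa(s)}-\Theta^{i,N}_{\kappa(s)}$ together with the consensus differences, controlled by $\|\varepsilon_{\kappa(s)}\|$ via the local Lipschitz bound and \eqref{eq:D-basic};
\item[(b)] the time-regularity term $\Theta^{i,N}_{\kappa(s)}-\Theta^{i,N}_s$ and its consensus analogue.
\end{itemize}

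Second, bound (b). Using the SDE, the linear growth from Lemma~\ref{lem:fixed-N-consensus-regularity}, Itô isometry, and the fourth moments of Lemma~\ref{lem:fourth-moment-bound}, we get $\E\|\Theta^{i,N}_s-\Theta^{i,N}_{\kappa(s)}\|^2\le C(T,N)\Delta t$. The consensus analogue is also $O(\Delta t)$ on the cutoff event, by the Lipschitz bound.

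Third, summing over $i$ gives
\[
\frac{\dd}{\dd t}E_t\le C\big(E_t+\sup_{s\le t}E_{\kappa(s)}\big)+C\Delta t .
\]
Working with $\sup_{u\le t}E_u$ handles the lagged argument, and Gronwall then yields $E_T\le C\Delta t$.

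Finally, since $\one_{\Omega_M^{\mathrm{num}}}\le I(T)$ once grid control is transferred, and $\Prob(\Omega^{\mathrm{num}}_M)\ge1/2$ by \eqref{eq:Omega-num-half-assumption}, dividing by $\Prob(\Omega^{\mathrm{num}}_M)$ costs at most a factor $2$. This gives \eqref{eq:localized-EM-conditional-error}, and \eqref{eq:localized-EM-conditional-error-m} is the same statement written with $m=1/2$.

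\textbf{Main obstacle.} The delicate point is consistency of the localization. The event $\Omega_M^\theta$ only constrains grid values, while the continuous comparison needs pointwise control of the frozen arguments. Freezing consensus arguments at grid times is exactly what the scheme does, so the frozen configuration $\theta_{k,\tau}$ is controlled by $\Omega_M^\theta$, and $\Theta^{i,N}_{\kappa(s)}$ and $\Theta^{i,N}_s$ are both controlled by $\Omega_M$. So I would define the indicator using grid values of $\theta$ and continuous values of $\Theta$. This indicator is adapted and nonincreasing, so $\one_{\Omega^{\mathrm{num}}_M}$ lies below it, and the only remaining work is bookkeeping of constants.
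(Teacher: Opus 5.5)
Your plan is correct and is essentially the paper's proof. The shared ingredients are:
\begin{itemize}
\item a continuous Euler interpolation driven by the same Brownian motions;
\item a stopping time that controls $\Theta^{N}$ (and $\bar\Theta$) in continuous time but $\theta$ only at grid times, so that $\Omega_M^{\mathrm{num}}=\{\tau_M^{\mathrm{num}}>T\}$;
\item the local Lipschitz bound of Lemma~\ref{lem:fixed-N-consensus-regularity} on $B_R^N$ with $R=N^{1/2}M^{1/4}$;
\item the split of $\Theta_s-\theta_{\kappa(s)}$ into an $O(\Delta t)$ time increment plus the lagged grid error;
\item Gronwall on the running supremum, followed by division by $\Prob(\Omega_M^{\mathrm{num}})\ge 1/2$.
\end{itemize}

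The differences are only cosmetic. The paper bounds $\E\|e_{t\wedge\tau_M^{\mathrm{num}}}\|_{N,2}^2$ directly via Cauchy--Schwarz and It\^o's isometry, rather than through It\^o's formula and Lemma~\ref{lem:stopped-integral}. It also bounds the time increment using the coefficient bound on the ball, rather than unconditional moments.

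Make sure to discard your first-draft cutoff on the interpolated fourth moment at level $2M$. Since $\Omega_M^\theta$ only constrains grid values, $\one_{\Omega_M^{\mathrm{num}}}$ would not lie below that indicator. Your final choice, which uses grid values of $\theta$, is the right one.
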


\begin{proof}
We prove the estimate in several steps.

\emph{Step 1: Product notation and local Lipschitz regularity of the consensus map.}
For
\(
    \mathbf x=(x^1,\ldots,x^N)\in(\mathbb R^d)^N,
\)
define
\begin{equation*}
    \norm{\mathbf x}_{N,2}
    :=
    \br{
    \sum_{i=1}^N\norm{x^i}_2^2
    }^{1/2},
    \qquad
    \mu_{\mathbf x}^N
    :=
    \frac1N\sum_{i=1}^N\delta_{x^i}.
    %\label{eq:product-norm-and-empirical-measure}
\end{equation*}
Set
$
    Q_N(\mathbf x)
    :=
    \qsoft[\mu_{\mathbf x}^N],
$ and $
    \mathfrak m_N(\mathbf x)
    :=
    m(\mu_{\mathbf x}^N).
   % \label{eq:QN-mN-def}
$
For \(R>0\), define the product ball
\begin{equation*}
    B_R^N
    :=
    \set{
    \mathbf x\in(\mathbb R^d)^N:
    \norm{\mathbf x}_{N,2}\le R
    }.
   % \label{eq:product-ball-def}
\end{equation*}

We first show that \(Q_N\) is globally Lipschitz. By
Lemma~\ref{lem:quantile-inverse-stability}, for any
\(\mathbf x,\mathbf y\in(\mathbb R^d)^N\),
\begin{align}
    \abs{Q_N(\mathbf x)-Q_N(\mathbf y)}
    &\le
    \kappa^{-1}
    \abs{
    F_{\mu_{\mathbf x}^N}(Q_N(\mathbf y))
    -
    F_{\mu_{\mathbf y}^N}(Q_N(\mathbf y))
    }
    \nonumber\\
    &=
    \kappa^{-1}
    \abs{
    \frac1N
    \sum_{i=1}^N
    \left[
    \psi\br{\frac{Q_N(\mathbf y)-L(x^i)}{\tau}}
    -
    \psi\br{\frac{Q_N(\mathbf y)-L(y^i)}{\tau}}
    \right]
    }
    \nonumber\\
    &\le
    \frac{L_\psi}{\kappa\tau N}
    \sum_{i=1}^N
    \abs{L(x^i)-L(y^i)}
    \nonumber\\
    &\le
    \frac{L_\psi L_L}{\kappa\tau N}
    \sum_{i=1}^N
    \norm{x^i-y^i}_2
    \le
    \frac{L_\psi L_L}{\kappa\tau\sqrt N}
    \norm{\mathbf x-\mathbf y}_{N,2}.
    \label{eq:QN-global-Lipschitz}
\end{align}
Define
$
    C_{Q,N}
    :=
    \frac{L_\psi L_L}{\kappa\tau\sqrt N}.
   % \label{eq:CQN-def}
$
Then~\eqref{eq:QN-global-Lipschitz} becomes
\begin{equation}
    \abs{Q_N(\mathbf x)-Q_N(\mathbf y)}
    \le
    C_{Q,N}\norm{\mathbf x-\mathbf y}_{N,2}.
    \label{eq:QN-global-Lipschitz-short}
\end{equation}

For each \(i\in [N]\), define the empirical weight
$
    a_i(\mathbf x)
    :=
    e^{-\alpha G(x^i)}
    \psi\big(\frac{Q_N(\mathbf x)-L(x^i)}{\tau}\big).
   % \label{eq:empirical-weight-ai-def}
$
Since \(G\ge G_{\min}\), \(G\) is globally Lipschitz, and
\(r\mapsto e^{-\alpha r}\) has derivative bounded by
\(\alpha e^{-\alpha G_{\min}}\) on \([G_{\min},\infty)\), we have
\begin{equation*}
    \abs{
    e^{-\alpha G(x^i)}
    -
    e^{-\alpha G(y^i)}
    }
    \le
    \alpha e^{-\alpha G_{\min}}L_G
    \norm{x^i-y^i}_2.
    %\label{eq:expG-Lipschitz}
\end{equation*}
Using \(0<\psi<1\), the Lipschitz continuity of \(\psi\), and
\eqref{eq:QN-global-Lipschitz-short}, we obtain
\begin{align*}
    \abs{a_i(\mathbf x)-a_i(\mathbf y)}
    &\le
    \alpha e^{-\alpha G_{\min}}L_G\norm{x^i-y^i}_2
    \nonumber\\
    &\quad+
    e^{-\alpha G_{\min}}
    \frac{L_\psi}{\tau}
    \br{
    \abs{Q_N(\mathbf x)-Q_N(\mathbf y)}
    +
    \abs{L(x^i)-L(y^i)}
    }
    \nonumber\\
    &\le
    C_{a,1}\norm{x^i-y^i}_2
    +
    C_{a,2}\norm{\mathbf x-\mathbf y}_{N,2},
    %\label{eq:ai-Lipschitz}
\end{align*}
where
$
    C_{a,1}
    :=
    \alpha e^{-\alpha G_{\min}}L_G
    +
    e^{-\alpha G_{\min}}\frac{L_\psi L_L}{\tau},
 $ and $
    C_{a,2}
    :=
    e^{-\alpha G_{\min}}\frac{L_\psi C_{Q,N}}{\tau}.
    %\label{eq:Ca1-Ca2-def}
$
Consequently,
\begin{align}
    \frac1N\sum_{i=1}^N
    \abs{a_i(\mathbf x)-a_i(\mathbf y)}
    &\le
    \br{\frac{C_{a,1}}{\sqrt N}+C_{a,2}}
    \norm{\mathbf x-\mathbf y}_{N,2}.
    \label{eq:average-ai-Lipschitz}
\end{align}
Define
$
    L_{B,N}
    :=
    \frac{C_{a,1}}{\sqrt N}+C_{a,2},
    %\label{eq:LBN-def}
$
$
    A_N(\mathbf x)
    :=
    A(\mu_{\mathbf x}^N)
    =
    \frac1N\sum_{i=1}^N x^i a_i(\mathbf x),
 $ and $
    B_N(\mathbf x)
    :=
    B(\mu_{\mathbf x}^N)
    =
    \frac1N\sum_{i=1}^N a_i(\mathbf x).
    %\label{eq:AN-BN-def}
$
By~\eqref{eq:average-ai-Lipschitz}, we get
\begin{equation*}
    \abs{B_N(\mathbf x)-B_N(\mathbf y)}
    \le
    L_{B,N}\norm{\mathbf x-\mathbf y}_{N,2}.
    %\label{eq:BN-Lipschitz}
\end{equation*}
Moreover, by Lemma~\ref{lem:deterministic-AB-bounds},
\begin{equation}
    B_N(\mathbf x)
    \ge
    b_G
    :=
    \beta e^{-\alpha G_{\max}}
    >0.
    \label{eq:BN-lower-bound}
\end{equation}

Let \(\mathbf x,\mathbf y\in B_R^N\). Since \(a_i(\mathbf x)\le e^{-\alpha G_{\min}}\),
\begin{align}
    \norm{A_N(\mathbf x)-A_N(\mathbf y)}_2
    &\le
    \frac1N\sum_{i=1}^N
    \norm{x^i-y^i}_2 a_i(\mathbf x)
    +
    \frac1N\sum_{i=1}^N
    \norm{y^i}_2\abs{a_i(\mathbf x)-a_i(\mathbf y)}
    \nonumber\\
    &\le
    \frac{e^{-\alpha G_{\min}}}{\sqrt N}
    \norm{\mathbf x-\mathbf y}_{N,2}
    +
    R L_{B,N}
    \norm{\mathbf x-\mathbf y}_{N,2}.
    \label{eq:AN-Lipschitz}
\end{align}
Define
$
    L_{A,R,N}
    :=
    \frac{e^{-\alpha G_{\min}}}{\sqrt N}
    +
    R L_{B,N}.
    %\label{eq:LARN-def}
$
Then
\begin{equation*}
    \norm{A_N(\mathbf x)-A_N(\mathbf y)}_2
    \le
    L_{A,R,N}\norm{\mathbf x-\mathbf y}_{N,2}.
 %   \label{eq:AN-Lipschitz-short}
\end{equation*}
Also, for every \(\mathbf y\in B_R^N\),
\begin{equation}
    \norm{A_N(\mathbf y)}_2
    \le
    \frac{e^{-\alpha G_{\min}}}{N}
    \sum_{i=1}^N\norm{y^i}_2
    \le
    \frac{e^{-\alpha G_{\min}}R}{\sqrt N}.
    \label{eq:AN-bound-on-ball}
\end{equation}

Since
$
    \mathfrak m_N(\mathbf x)
    =
    \frac{A_N(\mathbf x)}{B_N(\mathbf x)},
    %\label{eq:mN-quotient}
$
we have, for \(\mathbf x,\mathbf y\in B_R^N\),
\begin{align*}
    \norm{
    \mathfrak m_N(\mathbf x)-\mathfrak m_N(\mathbf y)
    }_2
    &\le
    \frac{
    \norm{A_N(\mathbf x)-A_N(\mathbf y)}_2
    }{
    B_N(\mathbf x)
    }
    +
    \norm{A_N(\mathbf y)}_2
    \abs{
    \frac1{B_N(\mathbf x)}
    -
    \frac1{B_N(\mathbf y)}
    }
    \nonumber\\
    &\le
    b_G^{-1}L_{A,R,N}\norm{\mathbf x-\mathbf y}_{N,2}
    +
    \frac{e^{-\alpha G_{\min}}R}{\sqrt N}
    b_G^{-2}
    L_{B,N}
    \norm{\mathbf x-\mathbf y}_{N,2}.
   % \label{eq:mN-Lipschitz-proof}
\end{align*}
Define
\begin{equation*}
    L_{\mathfrak m,R,N}
    :=
    b_G^{-1}L_{A,R,N}
    +
    \frac{e^{-\alpha G_{\min}}R}{\sqrt N}
    b_G^{-2}L_{B,N}.
   % \label{eq:LmRN-def}
\end{equation*}
Then
\begin{equation}
    \norm{
    \mathfrak m_N(\mathbf x)-\mathfrak m_N(\mathbf y)
    }_2
    \le
    L_{\mathfrak m,R,N}
    \norm{\mathbf x-\mathbf y}_{N,2},
    \qquad
    \mathbf x,\mathbf y\in B_R^N.
    \label{eq:mN-local-Lipschitz}
\end{equation}

\emph{Step 2: Local Lipschitz and boundedness of the fixed-\(N\) coefficients.}
Define the fixed-\(N\) drift and diffusion coefficients by
\begin{equation*}
    b_N^i(\mathbf x)
    :=
    -\lambda\br{x^i-\mathfrak m_N(\mathbf x)},
    \qquad
    i\in[N],
    %\label{eq:fixedN-drift-block-def}
\end{equation*}
and
\begin{equation*}
    \Sigma_N^i(\mathbf x)
    :=
    \sigma D\br{x^i-\mathfrak m_N(\mathbf x)},
    \qquad
    i\in[N].
    %\label{eq:fixedN-diffusion-block-def}
\end{equation*}
Let
\begin{equation*}
    b_N(\mathbf x)
    :=
    \br{b_N^1(\mathbf x),\ldots,b_N^N(\mathbf x)}
    \in(\mathbb R^d)^N,
    %\label{eq:fixedN-drift-vector-def}
\end{equation*}
and let \(\Sigma_N(\mathbf x)\) be the block-diagonal matrix with blocks
\(\Sigma_N^i(\mathbf x)\).

For \(\mathbf x,\mathbf y\in B_R^N\), using~\eqref{eq:mN-local-Lipschitz},
\begin{align*}
    \norm{b_N(\mathbf x)-b_N(\mathbf y)}_{N,2}^2
    &=
    \lambda^2
    \sum_{i=1}^N
    \norm{
    x^i-y^i
    -
    \br{
    \mathfrak m_N(\mathbf x)-\mathfrak m_N(\mathbf y)
    }
    }_2^2
    \nonumber\\
    &\le
    2\lambda^2
    \norm{\mathbf x-\mathbf y}_{N,2}^2
    +
    2\lambda^2N
    \norm{
    \mathfrak m_N(\mathbf x)-\mathfrak m_N(\mathbf y)
    }_2^2
    \nonumber\\
    &\le
    2\lambda^2
    \br{
    1+N L_{\mathfrak m,R,N}^2
    }
    \norm{\mathbf x-\mathbf y}_{N,2}^2.
   % \label{eq:fixedN-drift-local-Lipschitz}
\end{align*}
Moreover, by~\eqref{eq:D-basic},
\begin{align}
    \norm{\Sigma_N(\mathbf x)-\Sigma_N(\mathbf y)}_F^2
    &=
    \sum_{i=1}^N
    \norm{
    \Sigma_N^i(\mathbf x)-\Sigma_N^i(\mathbf y)
    }_F^2
    \nonumber\\
    &\le
    \sigma^2 d
    \sum_{i=1}^N
    \norm{
    x^i-y^i
    -
    \br{
    \mathfrak m_N(\mathbf x)-\mathfrak m_N(\mathbf y)
    }
    }_2^2
    \nonumber\\
    &\le
    2\sigma^2d
    \br{
    1+N L_{\mathfrak m,R,N}^2
    }
    \norm{\mathbf x-\mathbf y}_{N,2}^2.
    \label{eq:fixedN-diffusion-local-Lipschitz}
\end{align}
Thus there exists a finite constant \(L_{R,N}>0\) such that
\begin{equation}
    \norm{b_N(\mathbf x)-b_N(\mathbf y)}_{N,2}
    +
    \norm{\Sigma_N(\mathbf x)-\Sigma_N(\mathbf y)}_F
    \le
    L_{R,N}\norm{\mathbf x-\mathbf y}_{N,2},
    \qquad
    \mathbf x,\mathbf y\in B_R^N.
    \label{eq:fixedN-coefficients-local-Lipschitz}
\end{equation}

Next, from~\eqref{eq:AN-bound-on-ball} and~\eqref{eq:BN-lower-bound},
\begin{equation*}
    \norm{\mathfrak m_N(\mathbf x)}_2
    \le
    \frac{e^{-\alpha G_{\min}}R}{b_G\sqrt N},
    \qquad
    \mathbf x\in B_R^N.
    %\label{eq:mN-bound-on-ball}
\end{equation*}
Therefore, for \(\mathbf x\in B_R^N\),
\begin{align*}
    \sum_{i=1}^N
    \norm{x^i-\mathfrak m_N(\mathbf x)}_2^2
    &\le
    2\sum_{i=1}^N\norm{x^i}_2^2
    +
    2N\norm{\mathfrak m_N(\mathbf x)}_2^2
    \le
    2R^2
    +
    2N
    \br{
    \frac{e^{-\alpha G_{\min}}R}{b_G\sqrt N}
    }^2.
    %\label{eq:x-minus-m-bound-on-ball}
\end{align*}
Define
$
    G_{R,N}
    :=
    2\br{\lambda^2+\sigma^2d}
    R^2
    \br{
    1+
    b_G^{-2}e^{-2\alpha G_{\min}}
    }.
    %\label{eq:GRN-def}
$
Then, using \(D(z)=\norm{z}_2\Id\),
\begin{equation}
    \norm{b_N(\mathbf x)}_{N,2}^2
    +
    \norm{\Sigma_N(\mathbf x)}_F^2
    \le
    G_{R,N},
    \qquad
    \mathbf x\in B_R^N.
    \label{eq:fixedN-coefficients-bound-on-ball}
\end{equation}

\emph{Step 3: The strengthened stopping time.}
Define
\begin{equation*}
    \mathbf\Theta_{t,\tau}^N
    :=
    \br{
    \Theta_{t,\tau}^{1,N},\ldots,\Theta_{t,\tau}^{N,N}
    },
    \qquad
    \mathbf\theta_{k,\tau}^N
    :=
    \br{
    \theta_{k,\tau}^{1},\ldots,\theta_{k,\tau}^{N}
    }.
    %\label{eq:vector-processes-EM-proof}
\end{equation*}
Define
$
    \eta(t):=\lfloor t/\Delta t\rfloor\Delta t.
    %\label{eq:eta-time-projection}
$
and the stopping time
\begin{equation*}
    \tau_M^{\mathrm{num}}
    :=
    \inf\left\{
    t\in[0,T]:
    \frac1N\sum_{i=1}^N
    \max\left\{
    \norm{\Theta_{t,\tau}^{i,N}}_2^4,
    \norm{\bar\Theta_{t,\tau}^{i}}_2^4
    \right\}
    >M
    \ \text{or}\
    \frac1N\sum_{i=1}^N
    \norm{\theta_{\lfloor t/\Delta t\rfloor,\tau}^{i}}_2^4
    >M
    \right\},
   % \label{eq:tauM-num-def}
\end{equation*}
with the convention \(\tau_M^{\mathrm{num}}=+\infty\) if the set is empty.
Then
$
    \Omega_M^{\mathrm{num}}
    =
    \set{
    \tau_M^{\mathrm{num}}>T
    }.
    %\label{eq:Omega-num-stopping-equivalence}
$
Let
$
    I_M^{\mathrm{num}}(t)
    :=
    \one_{\{t<\tau_M^{\mathrm{num}}\}}.
    %\label{eq:IM-num-def}
$
On the event \(\{I_M^{\mathrm{num}}(t)=1\}\), both
\(\mathbf\Theta_{t,\tau}^N\) and
\(\mathbf\theta_{\lfloor t/\Delta t\rfloor,\tau}^N\) are bounded in the
product norm. Indeed, if
\[
    \frac1N\sum_{i=1}^N\norm{x^i}_2^4\le M,
\]
then
\begin{align*}
    \norm{\mathbf x}_{N,2}^2
    =
    \sum_{i=1}^N\norm{x^i}_2^2
    &\le
    \br{\sum_{i=1}^N1^2}^{1/2}
    \br{\sum_{i=1}^N\norm{x^i}_2^4}^{1/2}
    \le
    N^{1/2}(NM)^{1/2}
    =
    N M^{1/2}.
    %\label{eq:fourth-to-second-product-bound}
\end{align*}
Thus we may take
$
    R_{M,N}:=N^{1/2}M^{1/4},
    %\label{eq:RMN-def}
$
so that, on \(\{I_M^{\mathrm{num}}(t)=1\}\),
\begin{equation*}
    \mathbf\Theta_{t,\tau}^N\in B_{R_{M,N}}^N,
    \qquad
    \mathbf\theta_{\lfloor t/\Delta t\rfloor,\tau}^N
    \in B_{R_{M,N}}^N.
    %\label{eq:processes-in-product-ball}
\end{equation*}

\emph{Step 4: Continuous Euler interpolation.}
Define the continuous-time Euler interpolation
\begin{equation}
    \mathbf\theta_{t,\tau}^{N,\Delta}
    :=
    \mathbf\theta_{0,\tau}^N
    +
    \int_0^t
    b_N\br{\mathbf\theta_{\eta(s)/\Delta t,\tau}^N}
    \dd s
    +
    \int_0^t
    \Sigma_N\br{\mathbf\theta_{\eta(s)/\Delta t,\tau}^N}
    \dd\mathbf W_s,
    \label{eq:continuous-Euler-interpolation}
\end{equation}
where
$
    \mathbf W_t
    :=
    \br{W_t^1,\ldots,W_t^N}.
    %\label{eq:vector-Brownian-motion}
$
Since \(\eta(s)/\Delta t=\lfloor s/\Delta t\rfloor\), the notation in
\eqref{eq:continuous-Euler-interpolation} is consistent with the grid value
\(\mathbf\theta_{\lfloor s/\Delta t\rfloor,\tau}^N\). In particular,
$
    \mathbf\theta_{K\Delta t,\tau}^{N,\Delta}
    =
    \mathbf\theta_{K,\tau}^N.
    %\label{eq:Euler-interpolation-grid-identity}
$

Define the error process
$
    e_t
    :=
    \mathbf\Theta_{t,\tau}^N
    -
    \mathbf\theta_{t,\tau}^{N,\Delta}.
    %\label{eq:EM-error-process-def}
$
Because of~\eqref{eq:Euler-same-initial-data}, we have
$
    e_0=0.
    %\label{eq:EM-error-initial-zero}
$

\emph{Step 5: Stopped error estimate.}
For \(t\in[0,T]\), the stopped error satisfies
\begin{align*}
    e_{t\wedge\tau_M^{\mathrm{num}}}
    &=
    \int_0^t
    I_M^{\mathrm{num}}(s)
    \left[
    b_N\br{\mathbf\Theta_{s,\tau}^N}
    -
    b_N\br{\mathbf\theta_{\eta(s)/\Delta t,\tau}^N}
    \right]\dd s
    \nonumber\\
    &\quad+
    \int_0^t
    I_M^{\mathrm{num}}(s)
    \left[
    \Sigma_N\br{\mathbf\Theta_{s,\tau}^N}
    -
    \Sigma_N\br{\mathbf\theta_{\eta(s)/\Delta t,\tau}^N}
    \right]\dd\mathbf W_s.
    %\label{eq:stopped-EM-error-equation}
\end{align*}
Using Cauchy's inequality for the drift integral and Itô's isometry for the
martingale integral, there exists a numerical constant \(C_T>0\), depending
only on \(T\), such that
\begin{align*}
    \E\norm{e_{t\wedge\tau_M^{\mathrm{num}}}}_{N,2}^2
    &\le
    C_T
    \int_0^t
    \E\sqbr{
    I_M^{\mathrm{num}}(s)
    \norm{
    b_N\br{\mathbf\Theta_{s,\tau}^N}
    -
    b_N\br{\mathbf\theta_{\eta(s)/\Delta t,\tau}^N}
    }_{N,2}^2
    }
    \dd s
    \nonumber\\
    &\quad+
    C_T
    \int_0^t
    \E\sqbr{
    I_M^{\mathrm{num}}(s)
    \norm{
    \Sigma_N\br{\mathbf\Theta_{s,\tau}^N}
    -
    \Sigma_N\br{\mathbf\theta_{\eta(s)/\Delta t,\tau}^N}
    }_F^2
    }
    \dd s.
    %\label{eq:stopped-EM-error-before-Lipschitz}
\end{align*}
On \(\{I_M^{\mathrm{num}}(s)=1\}\), both arguments of \(b_N\) and
\(\Sigma_N\) lie in \(B_{R_{M,N}}^N\). Hence, by
\eqref{eq:fixedN-coefficients-local-Lipschitz},
\begin{equation}
    \E\norm{e_{t\wedge\tau_M^{\mathrm{num}}}}_{N,2}^2
    \le
    C_{M,N}
    \int_0^t
    \E\sqbr{
    I_M^{\mathrm{num}}(s)
    \norm{
    \mathbf\Theta_{s,\tau}^N
    -
    \mathbf\theta_{\eta(s)/\Delta t,\tau}^N
    }_{N,2}^2
    }
    \dd s,
    \label{eq:stopped-EM-error-after-Lipschitz}
\end{equation}
where \(C_{M,N}>0\) is finite and depends on
\(
    M,N,d,\alpha,\beta,\tau,\lambda,\sigma,T.
\) 
We now split
\begin{align}
    \mathbf\Theta_{s,\tau}^N
    -
    \mathbf\theta_{\eta(s)/\Delta t,\tau}^N
    &=
    \br{
    \mathbf\Theta_{s,\tau}^N
    -
    \mathbf\Theta_{\eta(s),\tau}^N
    }
    +
    \br{
    \mathbf\Theta_{\eta(s),\tau}^N
    -
    \mathbf\theta_{\eta(s)/\Delta t,\tau}^N
    }.
    \label{eq:EM-error-splitting}
\end{align}
Therefore,
\begin{align}
    I_M^{\mathrm{num}}(s)
    \norm{
    \mathbf\Theta_{s,\tau}^N
    -
    \mathbf\theta_{\eta(s)/\Delta t,\tau}^N
    }_{N,2}^2
    &\le
    2I_M^{\mathrm{num}}(s)
    \norm{
    \mathbf\Theta_{s,\tau}^N
    -
    \mathbf\Theta_{\eta(s),\tau}^N
    }_{N,2}^2
    \nonumber\\
    &\quad+
    2I_M^{\mathrm{num}}(s)
    \norm{
    \mathbf\Theta_{\eta(s),\tau}^N
    -
    \mathbf\theta_{\eta(s)/\Delta t,\tau}^N
    }_{N,2}^2.
    \label{eq:EM-error-splitting-square}
\end{align}

For the first term, since
\[
    I_M^{\mathrm{num}}(s)=1
    \quad\Longrightarrow\quad
    \mathbf\Theta_{r,\tau}^N\in B_{R_{M,N}}^N
    \quad\text{for all }r\in[\eta(s),s],
\]
the coefficient bound~\eqref{eq:fixedN-coefficients-bound-on-ball} implies
\begin{equation}
    \E\sqbr{
    I_M^{\mathrm{num}}(s)
    \norm{
    \mathbf\Theta_{s,\tau}^N
    -
    \mathbf\Theta_{\eta(s),\tau}^N
    }_{N,2}^2
    }
    \le
    C_{M,N}\Delta t.
    \label{eq:continuous-particle-increment-bound}
\end{equation}
Indeed, this follows by writing the increment of
\(\mathbf\Theta_{t,\tau}^N\) over \([\eta(s),s]\), applying
Cauchy's inequality to the drift part, Itô's isometry to the stochastic part,
and using \(s-\eta(s)\le\Delta t\).

For the second term, since \(s\ge\eta(s)\), we have
\(
    I_M^{\mathrm{num}}(s)
    \le
    I_M^{\mathrm{num}}(\eta(s)).
\)
Moreover,
\(
    \mathbf\Theta_{\eta(s),\tau}^N
    -
    \mathbf\theta_{\eta(s)/\Delta t,\tau}^N
    =
    e_{\eta(s)}.
\)
Therefore
\begin{equation}
    \E\sqbr{
    I_M^{\mathrm{num}}(s)
    \norm{
    \mathbf\Theta_{\eta(s),\tau}^N
    -
    \mathbf\theta_{\eta(s)/\Delta t,\tau}^N
    }_{N,2}^2
    }
    \le
    \E\norm{
    e_{\eta(s)\wedge\tau_M^{\mathrm{num}}}
    }_{N,2}^2.
    \label{eq:grid-error-stopped-bound}
\end{equation}

Combining~\eqref{eq:stopped-EM-error-after-Lipschitz},
\eqref{eq:EM-error-splitting-square},
\eqref{eq:continuous-particle-increment-bound}, and
\eqref{eq:grid-error-stopped-bound}, we obtain
\begin{equation}
    \E\norm{e_{t\wedge\tau_M^{\mathrm{num}}}}_{N,2}^2
    \le
    C_{M,N}
    \int_0^t
    \E\norm{
    e_{\eta(s)\wedge\tau_M^{\mathrm{num}}}
    }_{N,2}^2
    \dd s
    +
    C_{M,N}\Delta t.
    \label{eq:EM-stopped-Gronwall-input}
\end{equation}
Define
$
    Z(t)
    :=
    \sup_{0\le r\le t}
    \E\norm{e_{r\wedge\tau_M^{\mathrm{num}}}}_{N,2}^2.
   % \label{eq:Zt-def}
$
Since \(\eta(s)\le s\), equation~\eqref{eq:EM-stopped-Gronwall-input} gives
$$
Z(t)
    \le
    C_{M,N}
    \int_0^t Z(s)\dd s
    +
    C_{M,N}\Delta t.
    %\label{eq:Zt-Gronwall-input}
$$
By Gronwall's inequality,
$
    Z(T)
    \le
    C_{M,N}\Delta t.
    %\label{eq:Zt-Gronwall-output}
$
In particular,
$
    \E\norm{
    e_{T\wedge\tau_M^{\mathrm{num}}}
    }_{N,2}^2
    \le
    C_{M,N}\Delta t.
    %\label{eq:stopped-terminal-error-bound}
$

On \(\Omega_M^{\mathrm{num}}=\{\tau_M^{\mathrm{num}}>T\}\), we have
\(T\wedge\tau_M^{\mathrm{num}}=T\). Therefore,
\begin{equation*}
    \E\sqbr{
    \norm{
    \mathbf\theta_{K,\tau}^N
    -
    \mathbf\Theta_{T,\tau}^N
    }_{N,2}^2
    \one_{\Omega_M^{\mathrm{num}}}
    }
    \le
    C_{M,N}\Delta t.
    %\label{eq:localized-vector-EM-error-indicator}
\end{equation*}
Dividing by \(N\), and absorbing the factor \(N^{-1}\) into the constant, gives
\begin{equation*}
    \E\sqbr{
    \frac1N
    \sum_{i=1}^N
    \norm{
    \theta_{K,\tau}^i
    -
    \Theta_{T,\tau}^{i,N}
    }_2^2
    \one_{\Omega_M^{\mathrm{num}}}
    }
    \le
    C_{M,N}\Delta t.
    %\label{eq:localized-averaged-EM-error-indicator}
\end{equation*}
Using~\eqref{eq:Omega-num-half-assumption},
\begin{align*}
    \E\sqbr{
    \frac1N
    \sum_{i=1}^N
    \norm{
    \theta_{K,\tau}^i
    -
    \Theta_{T,\tau}^{i,N}
    }_2^2
    \,\middle|\,
    \Omega_M^{\mathrm{num}}
    }
    =
    \frac{
    \E\sqbr{
    \frac1N
    \sum_{i=1}^N
    \norm{
    \theta_{K,\tau}^i
    -
    \Theta_{T,\tau}^{i,N}
    }_2^2
    \one_{\Omega_M^{\mathrm{num}}}
    }
    }{
    \mathbb P(\Omega_M^{\mathrm{num}})
    }\le
    2C_{M,N}\Delta t.
   % \label{eq:localized-conditional-EM-error-final}
\end{align*}
Renaming \(2C_{M,N}\) as \(C_{\mathrm{NA}}\) proves
\eqref{eq:localized-EM-conditional-error}. Since Euler--Maruyama has strong
order \(m=1/2\), this is exactly~\eqref{eq:localized-EM-conditional-error-m}.
\end{proof}

\textbf{Theorem \ref{thm:numerical-convergence}.}
% \label{thm:numerical-convergence}
\textit{Under Assumption~\ref{ass:primitive}, let \(T=T_\varepsilon\) be the hitting time supplied by Theorem~\ref{thm:direct-mf-convergence}, and let
$
    K\Delta t=T,
 $ for $
    0<\Delta t\le1.
   % \label{eq:numerical-time-grid}
$
Define \(\Omega_M^\theta\) and \(\Omega_M^{\mathrm{num}}\) as in
\eqref{eq:Omega-theta-def} and~\eqref{eq:Omega-num-def}. Let
\begin{equation*}
    C_{\mathrm{bd}}^{\mathrm{num}}(T)
    :=
    C_{\mathrm{bd}}(T)+C_{\mathrm{bd}}^\theta(T),
    %\label{eq:Cbd-num-def}
\end{equation*}
and let \(\delta_M\in(0,1/2]\). Choose
$
    M
    :=
    \frac{
    C_{\mathrm{bd}}^{\mathrm{num}}(T)
    }{
    \delta_M
    }.
    %\label{eq:numerical-M-choice}
$
If the hypotheses of Theorem~\ref{thm:direct-mf-convergence} hold and
$
    V_\tau(T)=\varepsilon,
    %\label{eq:numerical-mf-hitting-condition}
$
then there exists a finite constant
\begin{equation*}
    C_{\mathrm{NA}}
    =
    C_{\mathrm{NA}}
    \br{
    M,N,d,\alpha,\beta,\tau,\lambda,\sigma,T,\rho_0
    }
    >0,
    %\label{eq:numerical-CNA-dependence}
\end{equation*}
not asserted to be uniform in \(N\), such that, for every
\(\varepsilon_{\mathrm{tot}}>0\),
\begin{equation}
    \mathbb P\br{
    \norm{\widehat\theta_{K,\tau}^{N}-\theta^\star}_2^2
    \le
    \varepsilon_{\mathrm{tot}}
    }
    \ge
    1-\delta_M
    -
    \frac{
    3C_{\mathrm{NA}}\Delta t
    +
    6C_{\mathrm{MFA}}N^{-1}
    +
    12\varepsilon
    }{
    \varepsilon_{\mathrm{tot}}
    }.
    \label{eq:numerical-convergence-probability}
\end{equation}}

\begin{proof}
We first prove that the strengthened cutoff event has high probability. By
Lemma~\ref{lem:fourth-moment-bound},
\begin{equation*}
    \mathbb P(\Omega_M^c)
    \le
    \frac{C_{\mathrm{bd}}(T)}{M}.
    %\label{eq:numerical-OmegaM-probability}
\end{equation*}
By Lemma~\ref{lem:Euler-fourth-moment-bound} and Markov's inequality,
\begin{equation*}
    \mathbb P\br{(\Omega_M^\theta)^c}
    \le
    \frac{C_{\mathrm{bd}}^\theta(T)}{M}.
    %\label{eq:numerical-Omega-theta-probability}
\end{equation*}
Therefore, by the union bound,
\begin{align*}
    \mathbb P\br{(\Omega_M^{\mathrm{num}})^c}
    &=
    \mathbb P\br{
    \Omega_M^c\cup(\Omega_M^\theta)^c
    }
    \nonumber\\
    &\le
    \frac{C_{\mathrm{bd}}(T)}{M}
    +
    \frac{C_{\mathrm{bd}}^\theta(T)}{M}
    =
    \frac{C_{\mathrm{bd}}^{\mathrm{num}}(T)}{M}
    =
    \delta_M.
   % \label{eq:numerical-Omega-num-probability}
\end{align*}
Hence
\begin{equation}
    \mathbb P\br{\Omega_M^{\mathrm{num}}}
    \ge
    1-\delta_M
    \ge
    \frac12.
    \label{eq:numerical-Omega-num-lower-bound}
\end{equation}

By Lemma~\ref{lem:localized-EM-approximation}, we have
\begin{equation}
    \E\sqbr{
    \frac1N\sum_{i=1}^N
    \norm{\theta_{K,\tau}^{i}-\Theta_{T,\tau}^{i,N}}_2^2
    \,\middle|\,
    \Omega_M^{\mathrm{num}}
    }
    \le
    C_{\mathrm{NA}}\Delta t.
    \label{eq:numerical-EM-input-from-lemma}
\end{equation}

Recall the empirical averages
\begin{equation*}
    \widehat\theta_{K,\tau}^{N}
    :=
    \frac1N\sum_{i=1}^N \theta_{K,\tau}^i,
    \qquad
    \widehat\Theta_{T,\tau}^{N,\mathrm{int}}
    :=
    \frac1N\sum_{i=1}^N \Theta_{T,\tau}^{i,N},
    \qquad
    \widehat{\overline\Theta}_{T,\tau}^{N}
    :=
    \frac1N\sum_{i=1}^N \overline\Theta_{T,\tau}^{i}.
   % \label{eq:numerical-empirical-averages-proof}
\end{equation*}
By Young's inequality,
\begin{align}
    \norm{\widehat\theta_{K,\tau}^{N}-\theta^\star}_2^2
    &\le
    3\norm{
        \widehat\theta_{K,\tau}^{N}
        -
        \widehat\Theta_{T,\tau}^{N,\mathrm{int}}
    }_2^2+
    3\norm{
        \widehat\Theta_{T,\tau}^{N,\mathrm{int}}
        -
        \widehat{\overline\Theta}_{T,\tau}^{N}
    }_2^2+
    3\norm{
        \widehat{\overline\Theta}_{T,\tau}^{N}
        -
        \theta^\star
    }_2^2.
    \label{eq:numerical-three-term-decomposition-proof}
\end{align}

We estimate the three terms under the conditional probability
\(\mathbb P(\,\cdot\,|\,\Omega_M^{\mathrm{num}})\).

First, by Jensen's inequality and~\eqref{eq:numerical-EM-input-from-lemma},
\begin{align}
    \E\sqbr{
    \norm{
        \widehat\theta_{K,\tau}^{N}
        -
        \widehat\Theta_{T,\tau}^{N,\mathrm{int}}
    }_2^2
    \,\middle|\,
    \Omega_M^{\mathrm{num}}
    }
    &\le
    \E\sqbr{
    \frac1N\sum_{i=1}^N
    \norm{
        \theta_{K,\tau}^i
        -
        \Theta_{T,\tau}^{i,N}
    }_2^2
    \,\middle|\,
    \Omega_M^{\mathrm{num}}
    }\le
    C_{\mathrm{NA}}\Delta t.
    \label{eq:numerical-first-term-bound}
\end{align}

Second, define
\begin{equation*}
    Y_{\mathrm{MFA}}
    :=
    \frac1N\sum_{i=1}^N
    \norm{
        \Theta_{T,\tau}^{i,N}
        -
        \overline\Theta_{T,\tau}^{i}
    }_2^2.
    %\label{eq:YMFA-def}
\end{equation*}
By Jensen's inequality,
\begin{equation}
    \norm{
        \widehat\Theta_{T,\tau}^{N,\mathrm{int}}
        -
        \widehat{\overline\Theta}_{T,\tau}^{N}
    }_2^2
    \le
    Y_{\mathrm{MFA}}.
    \label{eq:MFA-average-Jensen}
\end{equation}
Let
$
    A:=\Omega_M^{\mathrm{num}},
 $ and $
    B:=\Omega_M.
    %\label{eq:A-B-events-def}
$
Then \(A\subseteq B\). Proposition~\ref{prop:mfa-cutoff} gives
\begin{equation*}
    \E\sqbr{
    Y_{\mathrm{MFA}}
    \,\middle|\,
    B
    }
    \le
    \frac{C_{\mathrm{MFA}}}{N}.
    %\label{eq:YMFA-conditional-on-B}
\end{equation*}
Equivalently,
$
    \E\sqbr{
    Y_{\mathrm{MFA}}\one_B
    }
    =
    \mathbb P(B)
    \E\sqbr{
    Y_{\mathrm{MFA}}
    \,\middle|\,
    B
    }
    \le
    \frac{C_{\mathrm{MFA}}}{N}.
    %\label{eq:YMFA-indicator-B}
$
Since \(A\subseteq B\), we have \(\one_A\le\one_B\). Therefore
\begin{equation}
    \E\sqbr{
    Y_{\mathrm{MFA}}\one_A
    }
    \le
    \E\sqbr{
    Y_{\mathrm{MFA}}\one_B
    }
    \le
    \frac{C_{\mathrm{MFA}}}{N}.
    \label{eq:YMFA-indicator-A}
\end{equation}
Using~\eqref{eq:numerical-Omega-num-lower-bound}, we get
\begin{align}
    \E\sqbr{
    Y_{\mathrm{MFA}}
    \,\middle|\,
    \Omega_M^{\mathrm{num}}
    }
    &=
    \frac{
    \E\sqbr{
    Y_{\mathrm{MFA}}\one_{\Omega_M^{\mathrm{num}}}
    }
    }{
    \mathbb P(\Omega_M^{\mathrm{num}})
    }\le
    \frac{2C_{\mathrm{MFA}}}{N}.
    \label{eq:YMFA-conditional-on-A}
\end{align}
Combining~\eqref{eq:MFA-average-Jensen} and~\eqref{eq:YMFA-conditional-on-A},
we obtain
\begin{equation}
    \E\sqbr{
    \norm{
        \widehat\Theta_{T,\tau}^{N,\mathrm{int}}
        -
        \widehat{\overline\Theta}_{T,\tau}^{N}
    }_2^2
    \,\middle|\,
    \Omega_M^{\mathrm{num}}
    }
    \le
    \frac{2C_{\mathrm{MFA}}}{N}.
    \label{eq:numerical-second-term-bound}
\end{equation}

Third, by Jensen's inequality and~\eqref{eq:numerical-Omega-num-lower-bound},
\begin{align}
    \E\sqbr{
    \norm{
        \widehat{\overline\Theta}_{T,\tau}^{N}
        -
        \theta^\star
    }_2^2
    \,\middle|\,
    \Omega_M^{\mathrm{num}}
    }
    &=
    \frac{
    \E\sqbr{
    \norm{
        \widehat{\overline\Theta}_{T,\tau}^{N}
        -
        \theta^\star
    }_2^2
    \one_{\Omega_M^{\mathrm{num}}}
    }
    }{
    \mathbb P(\Omega_M^{\mathrm{num}})
    }
    \nonumber\\
    &\le
    2
    \E\sqbr{
    \norm{
        \widehat{\overline\Theta}_{T,\tau}^{N}
        -
        \theta^\star
    }_2^2
    }
    \nonumber\\
    &\le
    \frac{2}{N}
    \sum_{i=1}^N
    \E\sqbr{
    \norm{
        \overline\Theta_{T,\tau}^{i}
        -
        \theta^\star
    }_2^2
    }
    \nonumber\\
    &=
    2
    \int_{\mathbb R^d}
    \norm{x-\theta^\star}_2^2
    \rho_{T,\tau}(\dd x)
    \nonumber\\
    &=
    4V_\tau(T)
    =
    4\varepsilon.
    \label{eq:numerical-third-term-bound}
\end{align}

Combining
\eqref{eq:numerical-three-term-decomposition-proof},
\eqref{eq:numerical-first-term-bound},
\eqref{eq:numerical-second-term-bound}, and
\eqref{eq:numerical-third-term-bound}, we obtain
\begin{equation*}
    \E\sqbr{
    \norm{\widehat\theta_{K,\tau}^{N}-\theta^\star}_2^2
    \,\middle|\,
    \Omega_M^{\mathrm{num}}
    }
    \le
    3C_{\mathrm{NA}}\Delta t
    +
    6C_{\mathrm{MFA}}N^{-1}
    +
    12\varepsilon.
    %\label{eq:numerical-conditional-mean-square-bound}
\end{equation*}

By Markov's inequality under the conditional probability
\(\mathbb P(\,\cdot\,|\,\Omega_M^{\mathrm{num}})\),
\begin{equation*}
    \mathbb P\br{
    \norm{\widehat\theta_{K,\tau}^{N}-\theta^\star}_2^2
    >
    \varepsilon_{\mathrm{tot}}
    \,\middle|\,
    \Omega_M^{\mathrm{num}}
    }
    \le
    \frac{
    3C_{\mathrm{NA}}\Delta t
    +
    6C_{\mathrm{MFA}}N^{-1}
    +
    12\varepsilon
    }{
    \varepsilon_{\mathrm{tot}}
    }.
    %\label{eq:numerical-conditional-markov}
\end{equation*}
Therefore,
\begin{align*}
    \mathbb P\br{
    \norm{\widehat\theta_{K,\tau}^{N}-\theta^\star}_2^2
    >
    \varepsilon_{\mathrm{tot}}
    }
    &\le
    \mathbb P\br{(\Omega_M^{\mathrm{num}})^c}
    +
    \mathbb P\br{
    \norm{\widehat\theta_{K,\tau}^{N}-\theta^\star}_2^2
    >
    \varepsilon_{\mathrm{tot}},
    \Omega_M^{\mathrm{num}}
    }
    \nonumber\\
    &\le
    \delta_M
    +
    \mathbb P(\Omega_M^{\mathrm{num}})
    \mathbb P\br{
    \norm{\widehat\theta_{K,\tau}^{N}-\theta^\star}_2^2
    >
    \varepsilon_{\mathrm{tot}}
    \,\middle|\,
    \Omega_M^{\mathrm{num}}
    }
    \nonumber\\
    &\le
    \delta_M
    +
    \frac{
    3C_{\mathrm{NA}}\Delta t
    +
    6C_{\mathrm{MFA}}N^{-1}
    +
    12\varepsilon
    }{
    \varepsilon_{\mathrm{tot}}
    }.
    %\label{eq:numerical-unconditional-probability}
\end{align*}
Taking complements proves~\eqref{eq:numerical-convergence-probability}.
\end{proof}

\section{Extension to parameterized lower-level solution sets}\label{app:extension}
The present analysis focuses on the case where the lower-level solution set
\(\Theta\) is fixed. A natural extension is the parameterized setting
\[
    (\phi^\star,\theta^\star)
    \in
    \arg\min_{\phi\in\mathbb R^p,\ \theta\in\Theta_\phi}
    G(\phi,\theta),
    \qquad
    \Theta_\phi
    :=
    \arg\min_{\theta\in\mathbb R^d} L(\phi,\theta).
\]
In this case, one may view \(\theta\) as a lower-level response variable for a
fixed value of \(\phi\). For each \(\phi\), define the best-response upper-level
value
\[
    G_{\mathrm{br}}(\phi)
    :=
    \min_{\theta\in\Theta_\phi} G(\phi,\theta),
\]
provided the minimum is attained. Then the original problem can be formally
rewritten as the single-level problem
\[
    \phi^\star \in \arg\min_{\phi\in\mathbb R^p} G_{\mathrm{br}}(\phi),
    \qquad
    \theta^\star \in
    \arg\min_{\theta\in\Theta_{\phi^\star}}
    G(\phi^\star,\theta).
\]
This suggests a nested consensus-based strategy. For each fixed \(\phi\), one
can apply the proposed quantile-selection and Laplace-weighting mechanism in
the \(\theta\)-variable to approximate a best response
\[
    \theta_{\mathrm{br}}(\phi)
    \in
    \arg\min_{\theta\in\Theta_\phi}G(\phi,\theta),
\]
and hence obtain an approximation of \(G_{\mathrm{br}}(\phi)\). A standard CBO
scheme can then be applied in the outer variable \(\phi\) to minimize the
resulting best-response objective \(G_{\mathrm{br}}\). In this way, the method
can be interpreted as an inner CB$^2$O procedure over \(\theta\), coupled with an
outer CBO procedure over \(\phi\).

A rigorous convergence theory for this nested extension would require additional
uniform assumptions in \(\phi\), such as uniform lower-level error bounds for
\(L(\phi,\cdot)\), uniform local growth conditions for
\(G(\phi,\cdot)\) on \(\Theta_\phi\), stability of the best-response value
\(G_{\mathrm{br}}\), and quantitative control of the error produced by the inner
solver. These issues are beyond the scope of the present work, but the
best-response reformulation provides a natural route for extending the proposed
framework to parameterized bi-level problems.

\section{Limitations}\label{app:limits}
This work focuses on establishing a first finite-particle approximation theory for soft consensus-based bilevel optimization. Several directions remain open.
Future work could strengthen the finite-particle analysis by deriving constants uniform in \(N\), relaxing the cutoff-event localization, and proving mean-field well-posedness directly from primitive assumptions. It would also be valuable to weaken the inverse-continuity condition and extend the theory to broader classes of overparameterized models. On the empirical side, larger-scale deep-learning and reinforcement-learning benchmarks could further clarify the practical advantages of soft selection.

\section{Extended Experimental Details}\label{app:experiment}

%\subsection{Comparison with other Benchmarks}\label{app:compare}

\subsection{Comparison against gradient based methods in nonconvex objective function}\label{app:comparison}

Here, we compare our derivative-free method (S)CB$^2$O with the state-of-the-art gradient-based method, called VPBGD of \cite{shen2023penalty} and a gradient-based heuristic called SBGD which is given in Algorithm \ref{alg:sbgd_single}.
To this end, we used the following setting in~\Cref{tab:gd3-hyperparameters}. 
Note that lower-level function is not convex and consequently $\Theta$ is non-convex.

\begin{algorithm}[t]
\caption{Sequential Bi-level Gradient Descent (SBGD)}
\label{alg:sbgd_single}
\begin{algorithmic}
\REQUIRE initial point $x_0 \in \mathbb{R}^d$,
         base step size $\alpha > 0$,
         lower-to-upper step ratio $\xi > 0$,
         number of iterations $K$
\STATE Set $\alpha_L \leftarrow \gamma \alpha$,\quad $\alpha_G \leftarrow \alpha$
\FOR{$k = 0, 1, \ldots, K-1$}
    \STATE $x_{k+\frac{1}{2}} \leftarrow \Pi_C\!\left(x_k - \alpha_L\,\nabla L(x_k)\right)$
           \hfill \COMMENT{lower-level gradient step}
    \STATE $x_{k+1} \leftarrow \Pi_C\!\left(x_{k+\frac{1}{2}} - \alpha_G\,\nabla G(x_{k+\frac{1}{2}})\right)$
           \hfill \COMMENT{upper-level gradient step at intermediate point}
\ENDFOR
\ENSURE $x_K$
\end{algorithmic}
\end{algorithm}

\textbf{lower-level objective.}
The lower-level objective is
\begin{equation}
    \bar{g}(x,y)
    =
    h(x) + h(y) + 0.1\sin^2(2x+3y),
    \qquad
    h(t)=\sin^2(t)+\sin^2(3t)+0.25\sin^2(5t).
    \label{eq:gd3-lower-nonpl}
\end{equation}
All terms are nonnegative, the global lower-level minimizers are
\(\pi\mathbb{Z}\times\pi\mathbb{Z}\), with minimum value \(0\).  

\textbf{Quadratic upper-level objective.}
The upper-level objective is the centered quadratic
\begin{equation}
    q(x,y) = \frac{1}{2}(x^2+y^2),
    \qquad
    \nabla q(x,y) =
    \begin{bmatrix}
        x\\y
    \end{bmatrix}.
    \label{eq:gd3-upper-quadratic}
\end{equation}
The bi-level optimum is attained at \((0,0)\), which lies in the global
lower-level minimizer grid, and the optimal upper-level value is \(q^\star=0\). Next table summarizes the setting of this experiment. 

\begin{small}
\begin{table}[h]
\centering
\caption{Experiment setting for the comparison between gradient descent-based methods and (S)CB\(^2\)O}
\label{tab:gd3-hyperparameters}
\begin{tabular}{lc}
\toprule
Hyperparameter & Value \\
\midrule
Seeds & \(\{0,1,2\}\) \\
Iterations & \(300\) \\
Number of particles for (S)CB\(^2\)O & \(1000\) \\
\(\alpha\) & \(40\) \\
Learning rate & \(10^{-2}\) \\
\(\beta\) & \(0.01\) \\
\(\lambda\) & \(1\) \\
Penalty for gradient-method \(\xi\) & \(50\) \\
SCB\(^2\)O \(\xi\) & \(10\) \\
Initialization mean & \(0\) \\
Initialization scale & \(50\) \\
\bottomrule
\end{tabular}
\end{table}
\end{small}

\textit{Results.} Figure \ref{fig:gd3-lower} shows the resulting curves. Both CB$^2$O and SCB$^2$O runs reach the best upper-level values and remain near the lower-level minimizer grid.  The gradient baselines more aggressively reduce the lower-level objective, but their final upper-level values remain far from the quadratic optimum which indicates that it gets trapped in some local optimum.

\begin{figure}[h]
  \centering
  \includegraphics[width=0.48\linewidth]{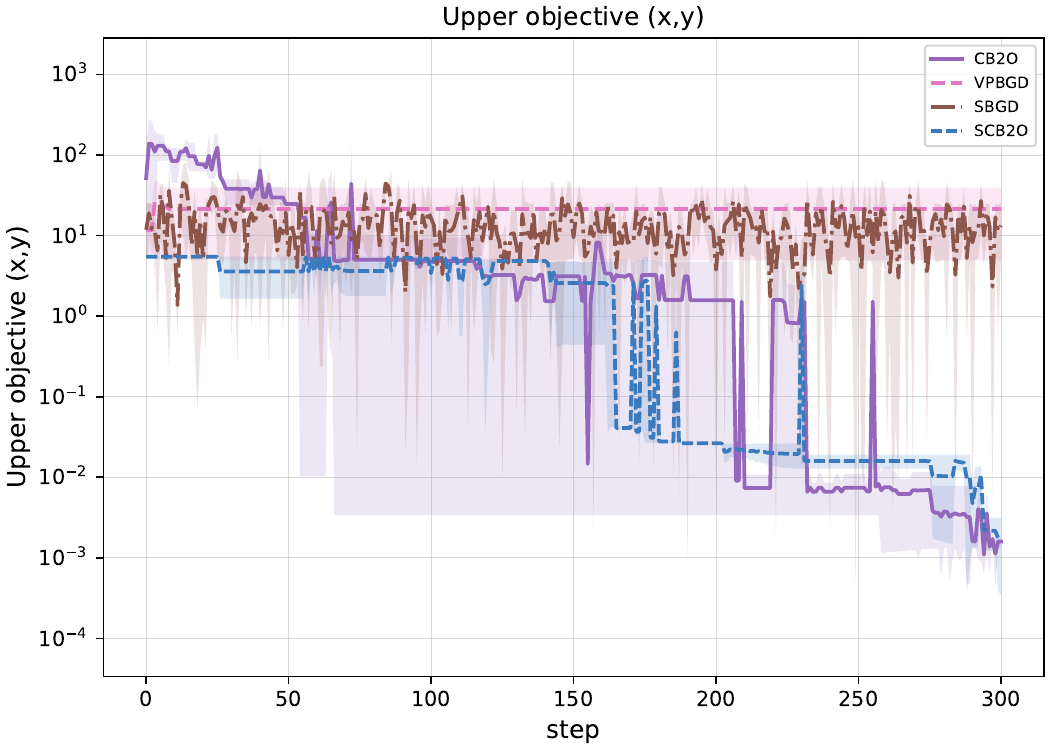}
  \includegraphics[width=0.48\linewidth]{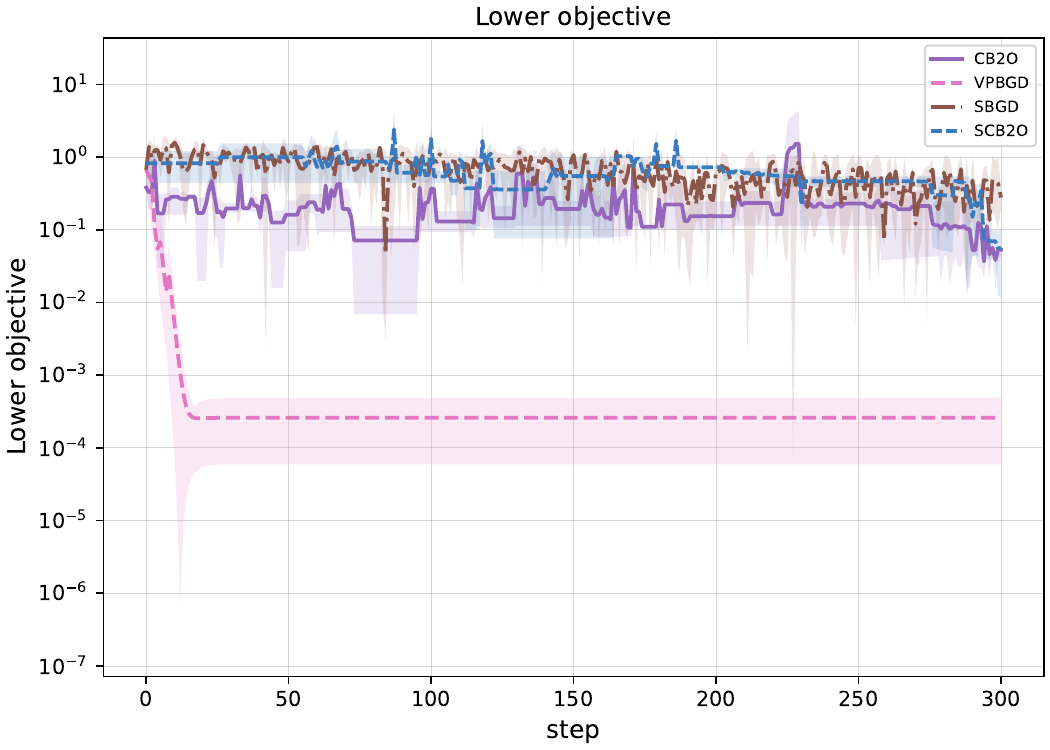}
  \caption{Left: Upper-level objective value. Right: Lower-level objective value. }\label{fig:gd3-lower}
\end{figure}

\begin{small}
\begin{table}[h!]
\centering
\caption{Final metrics after 300 iterations. Values are mean \(\pm\) standard deviation
over seeds \(\{0,1,2\}\); for SCB$^2$O, one non-finite seed is omitted.}
\label{tab:gd3-nonpl-quadratic-results}
\begin{tabular}{lcc}
\toprule
Method
  & upper
  & lower \\
\midrule
CB$^2$O  & \(0.0016\pm0.0002\) & \(0.0532\pm0.0081\) \\
VPBGD  & \(21.3581\pm14.1328\) & \(0.0003\pm0.0002\) \\
SBGD   & \(13.3005\pm8.6923\) & \(0.2783\pm0.0741\)\\
SCB$^2$O & \(0.0017\pm0.0014\) & \(0.0556\pm0.0442\) \\
\bottomrule
\end{tabular}
\end{table}
\end{small}

\subsection{2D Constrained Problem}
\begin{small}
\begin{table}[h]
\centering
\caption{2D constrained benchmark settings from \texttt{configs/benchmark\_2d.json}.}\label{tab:2d-settings}
\begin{tabular}{ll}
\toprule
Parameter & Value \\
\midrule
Problems & Circ, Star \\
Seeds & 0--4 (5 runs) \\
Steps & 600 \\
Particles $N$ & 100 \\
$\alpha$ & 30 \\
$\beta$ & 0.05 \\
$\lambda$ & 1.0 \\
$\sigma$ & 1.0 \\
SCB$^2$O $\xi$ sweep & 10, 100, 1000, 10000 \\
\bottomrule
\end{tabular}
\end{table}
\end{small}

\begin{figure}[h]
  \centering
  \includegraphics[width=0.45\textwidth, trim=0 0 0 40, clip]{figures/benchmark2d_Circ_lower_val.pdf}%
  \
  \includegraphics[width=0.45\linewidth, trim=0 0 0 40, clip]{figures/benchmark2d_Circ_upper_val.pdf}\\[4pt]
  \includegraphics[width=0.45\linewidth, trim=0 0 0 40, clip]{figures/benchmark2d_Circ_consensus_dist.pdf}\
  \includegraphics[width=0.45\linewidth, trim=0 0 0 40, clip]{figures/benchmark2d_Circ_particle_std.pdf}
  \caption{CB$^2$O vs SCB$^2$O on the circle constraint. All metrics use log scale. 
           }
\end{figure}

\begin{figure}[h]
  \centering
  \includegraphics[width=0.44\linewidth, trim=0 0 0 40, clip]{figures/benchmark2d_Star_lower_val.pdf}%
  \includegraphics[width=0.44\linewidth, trim=0 0 0 40, clip]{figures/benchmark2d_Star_upper_val.pdf}\\[4pt]
  \includegraphics[width=0.44\linewidth, trim=0 0 0 40, clip]{figures/benchmark2d_Star_consensus_dist.pdf}\
  \includegraphics[width=0.44\linewidth]{figures/benchmark2d_Star_particle_std.pdf}
  \caption{CB$^2$O vs SCB$^2$O on the star-shaped constraint. All metrics use log scale. 
           }
\end{figure}

Next table shows the quantitative summary of the results after 600 iterations.
\begin{small}
\begin{table}[h]
\centering
\caption{Mean metrics at step 600 (averaged over 5 seeds).
         $\dagger$: upper gap clamped at $10^{-12}$ indicates consensus not yet on the constraint.}
\label{tab:2d-results}
\begin{tabular}{llcccc}
\toprule
Problem & Algorithm & $L(c_\star)$ & $G(c_\star)-G^\star$ & $\|c_\star-\theta^\star\|_2$ & $\sigma(x)$ \\
\midrule
\multirow{5}{*}{Circ}
 & CB$^2$O            & $2.9\times10^{-9}$  & $2.5\times10^{-3}$ & $3.2\times10^{-3}$ & $9.6\times10^{-2}$ \\
 & SCB$^2$O $\xi=10$    & $3.9\times10^{-2}$  & $\dagger$          & $2.9\times10^{-1}$ & $1.2\times10^{0}$  \\
 & SCB$^2$O $\xi=100$   & $8.3\times10^{-4}$  & $\dagger$          & $1.4\times10^{-2}$ & $1.7\times10^{-1}$ \\
 & SCB$^2$O $\xi=1000$  & $1.7\times10^{-5}$  & $\dagger$          & $2.8\times10^{-3}$ & $1.6\times10^{-1}$ \\
 & SCB$^2$O $\xi=10000$ & $1.3\times10^{-7}$  & $3.5\times10^{-5}$ & $2.6\times10^{-3}$ & $8.7\times10^{-2}$ \\
\midrule
\multirow{5}{*}{Star}
 & CB$^2$O            & $1.5\times10^{-8}$  & $2.9\times10^{-3}$ & $5.8\times10^{-3}$ & $2.2\times10^{-1}$ \\
 & SCB$^2$O $\xi=10$    & $4.7\times10^{-2}$  & $\dagger$          & $5.7\times10^{-2}$ & $1.6\times10^{-1}$ \\
 & SCB$^2$O $\xi=100$   & $6.3\times10^{-4}$  & $\dagger$          & $1.2\times10^{-2}$ & $1.8\times10^{-1}$ \\
 & SCB$^2$O $\xi=1000$  & $9.6\times10^{-6}$  & $\dagger$          & $3.9\times10^{-3}$ & $1.7\times10^{-1}$ \\
 & SCB$^2$O $\xi=10000$ & $2.4\times10^{-7}$  & $4.1\times10^{-4}$ & $3.6\times10^{-3}$ & $3.0\times10^{-1}$ \\
\bottomrule
\end{tabular}
\end{table}
\end{small}

\subsection{MNIST Experiment}

\begin{table}[h]
\centering
\caption{MNIST benchmark settings from \texttt{configs/benchmark\_mnist.json}.}
\label{tab:mnist-settings}
\begin{tabular}{ll}
\toprule
Parameter & Value \\
\midrule
Model & CNN with bias \\
Dataset & MNIST (10\,000 training samples) \\
Batch size & 60 \\
Particles $N$ & 50 \\
Mini-batch evaluations $M$ & 100 \\
Epochs $T$ & 100 \\
$\alpha$ & 50 \\
$\lambda_1$ & 1.0 \\
$\sigma_1$ & 0.632 \\
$\beta$ minimum & 0.02 \\
$\beta$ decay factor & 1.0 \\
$\gamma$ & 0.1 \\
Noise type & anisotropic \\
$p$-norm & 0.5 \\
Seeds & 0--4 (5 runs) \\
\midrule
$\beta$ sweep & 0.04, 0.06, 0.08, 0.10 \\
SCB$^2$O $\xi$ sweep & 1, 5, 10, 100, 1000 \\
\bottomrule
\end{tabular}
\end{table}

The rest of the resulting plots are presented here. 
\begin{figure}[h]
  \centering
  \includegraphics[width=\linewidth, trim=0 0 0 40, clip]{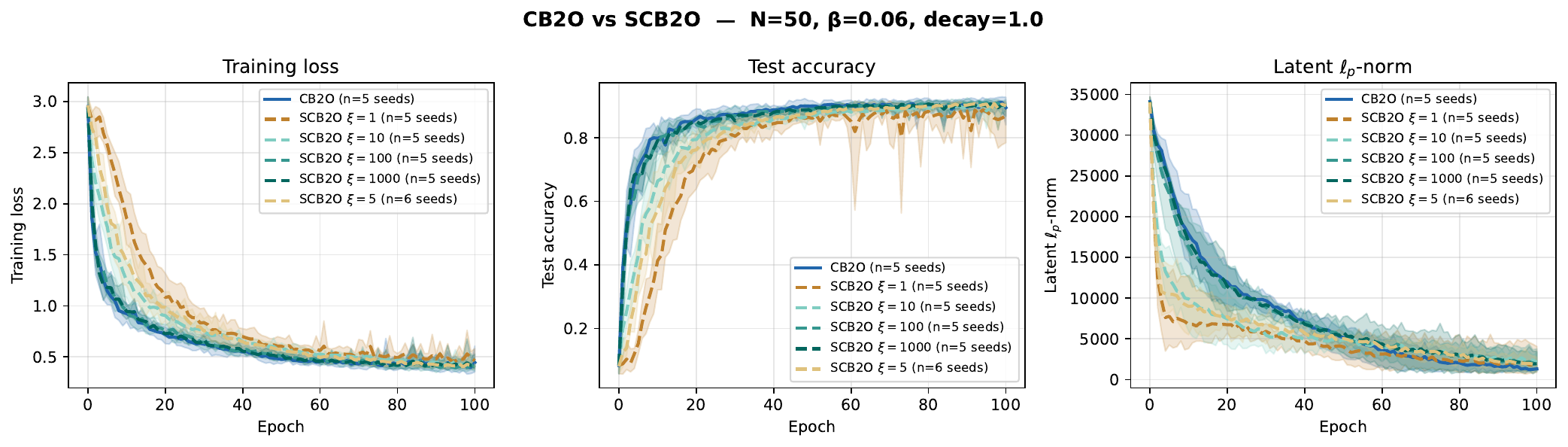}
  \caption{MNIST training curves: $N=50$, $\beta=0.06$.}
  \label{fig:mnist-b006}
\end{figure}

\begin{figure}[h]
  \centering
  \includegraphics[width=\linewidth, trim=0 0 0 40, clip]{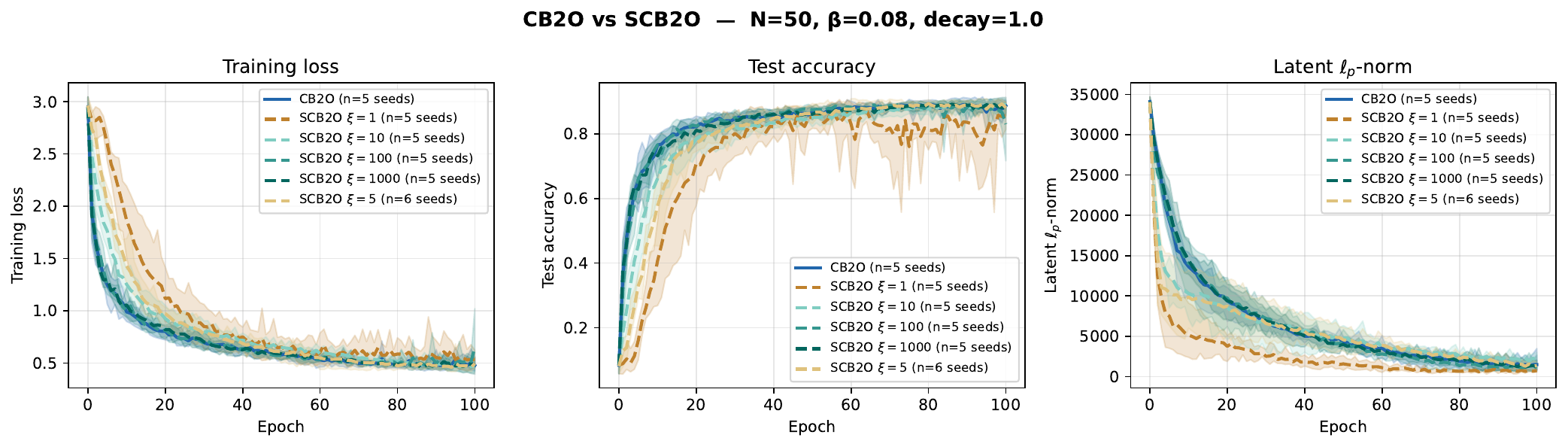}
  \caption{MNIST training curves: $N=50$, $\beta=0.08$.}
  \label{fig:mnist-b008}
\end{figure}

\begin{figure}[h]
  \centering
  \includegraphics[width=\linewidth, trim=0 0 0 40, clip]{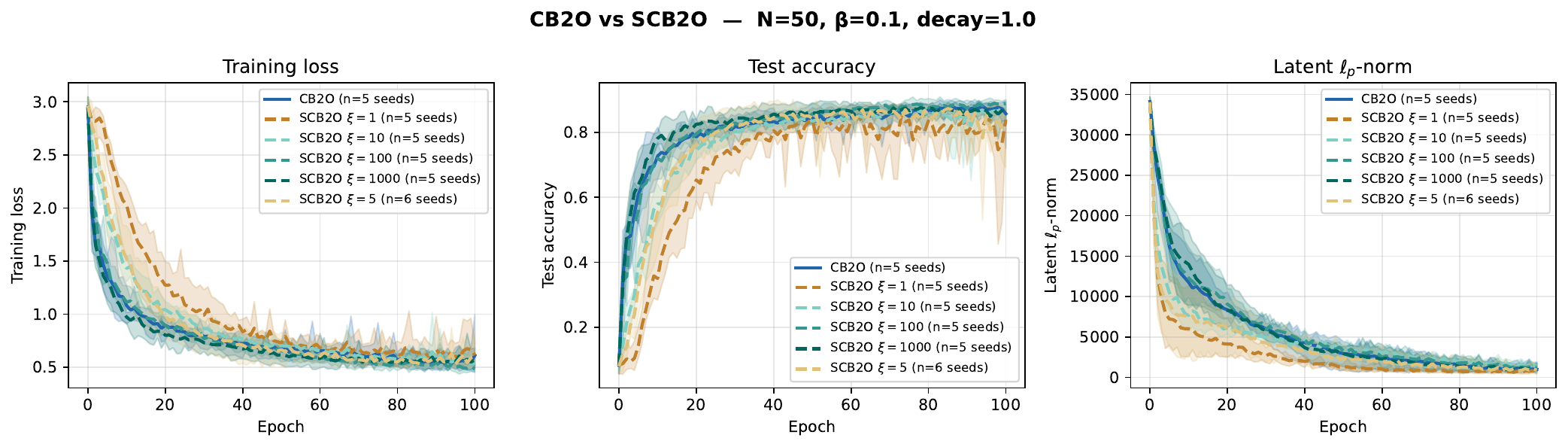}
  \caption{MNIST training curves: $N=50$, $\beta=0.10$.}
  \label{fig:mnist-b010}
\end{figure}

\textit{Varying-$\beta$ Comparison.} 
Figures~\ref{fig:vbeta-1}--\ref{fig:vbeta-1000} show how final-epoch
performance varies with $\beta$ for all five $\xi$ values.

\begin{figure}
  \centering
  \begin{subfigure}[b]{0.98\linewidth}
    \includegraphics[width=\linewidth, trim=0 0 0 30, clip]{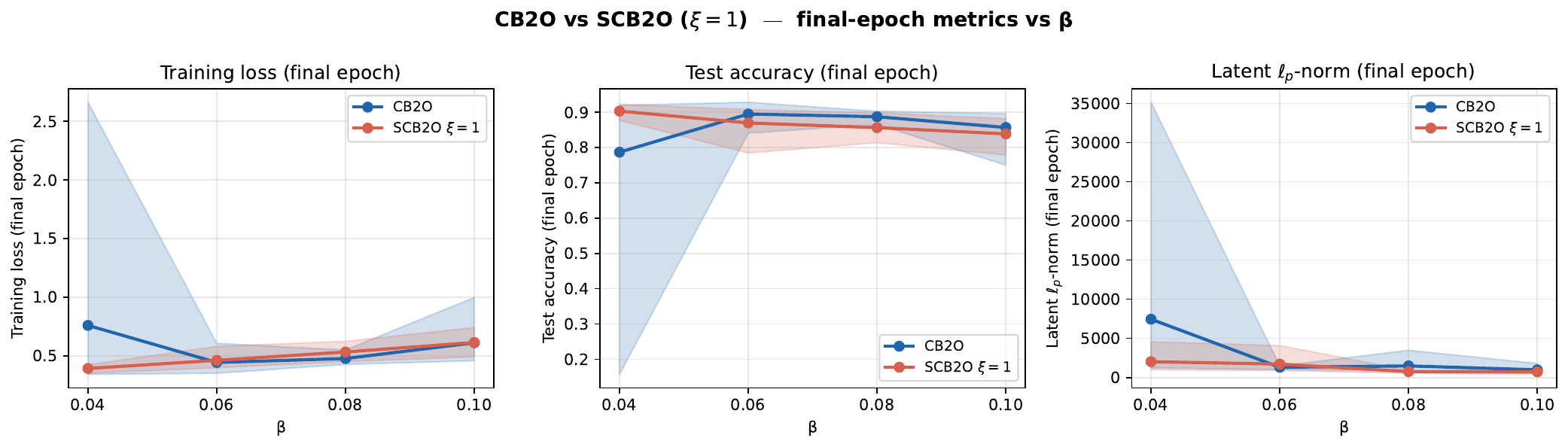}
    \caption{$\xi=1$}
    \label{fig:vbeta-1}
  \end{subfigure}\hfill
  \begin{subfigure}[b]{0.98\linewidth}
    \includegraphics[width=\linewidth, trim=0 0 0 30, clip]{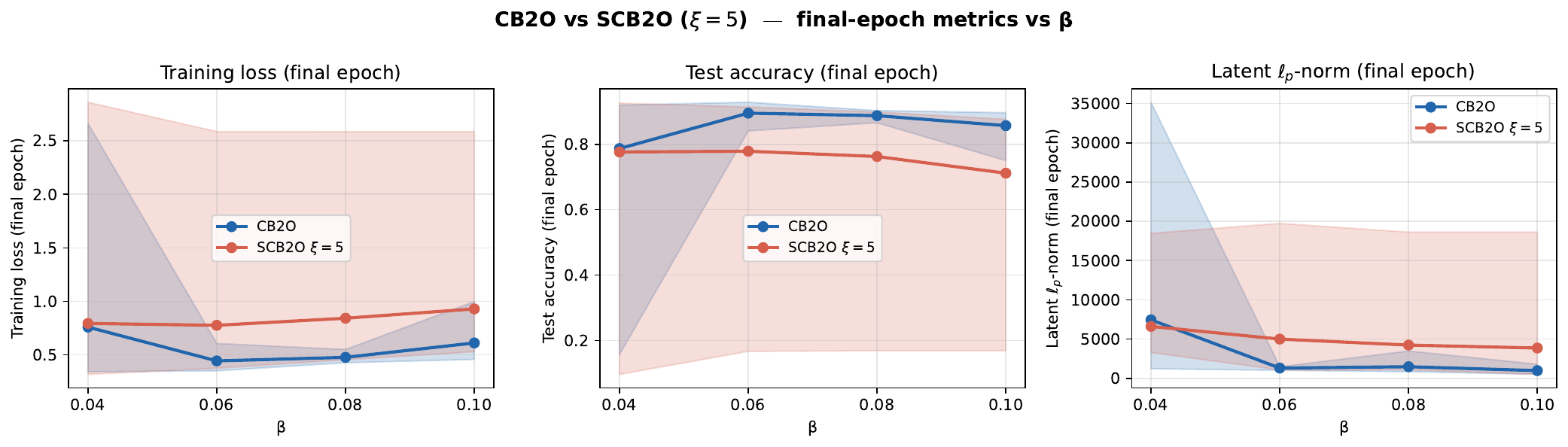}
    \caption{$\xi=5$}
    \label{fig:vbeta-5}
  \end{subfigure}

  \begin{subfigure}[b]{0.98\linewidth}
    \includegraphics[width=\linewidth, trim=0 0 0 30, clip]{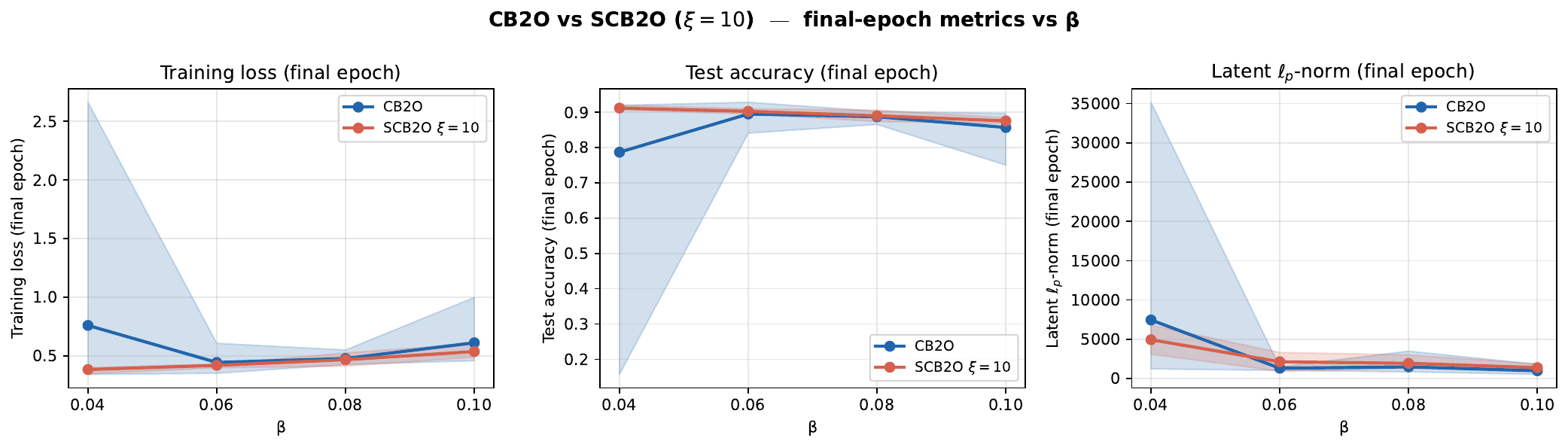}
    \caption{$\xi=10$}
    \label{fig:vbeta-10}
  \end{subfigure}\hfill
  \begin{subfigure}[b]{0.98\linewidth}
    \includegraphics[width=\linewidth, trim=0 0 0 30, clip]{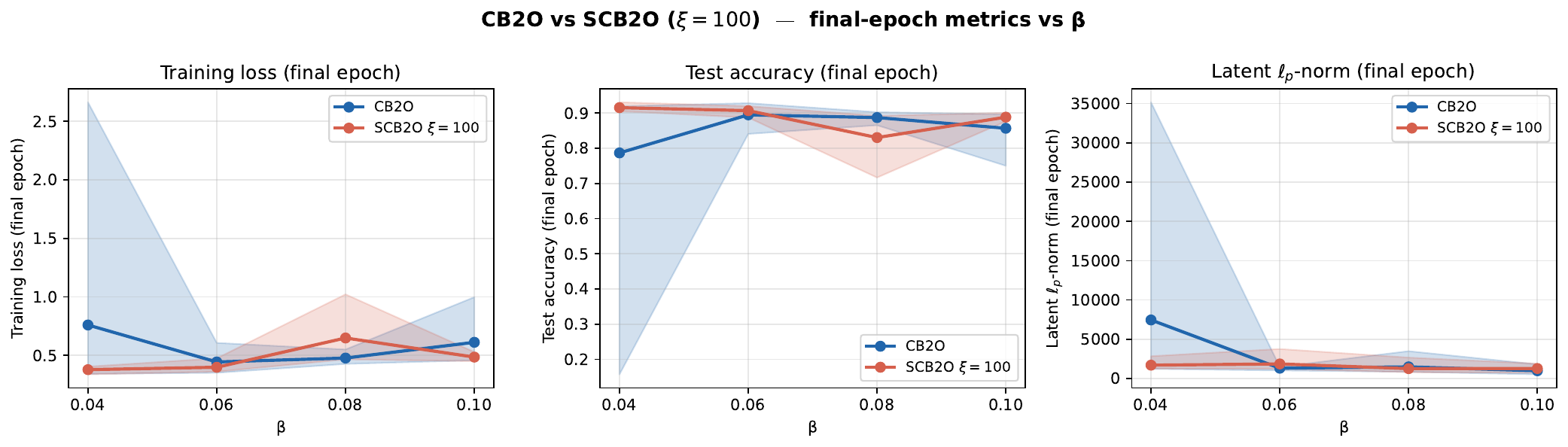}
    \caption{$\xi=100$}
    \label{fig:vbeta-100}
  \end{subfigure}

  \begin{subfigure}[b]{0.98\linewidth}
    \includegraphics[width=\linewidth, trim=0 0 0 30, clip]{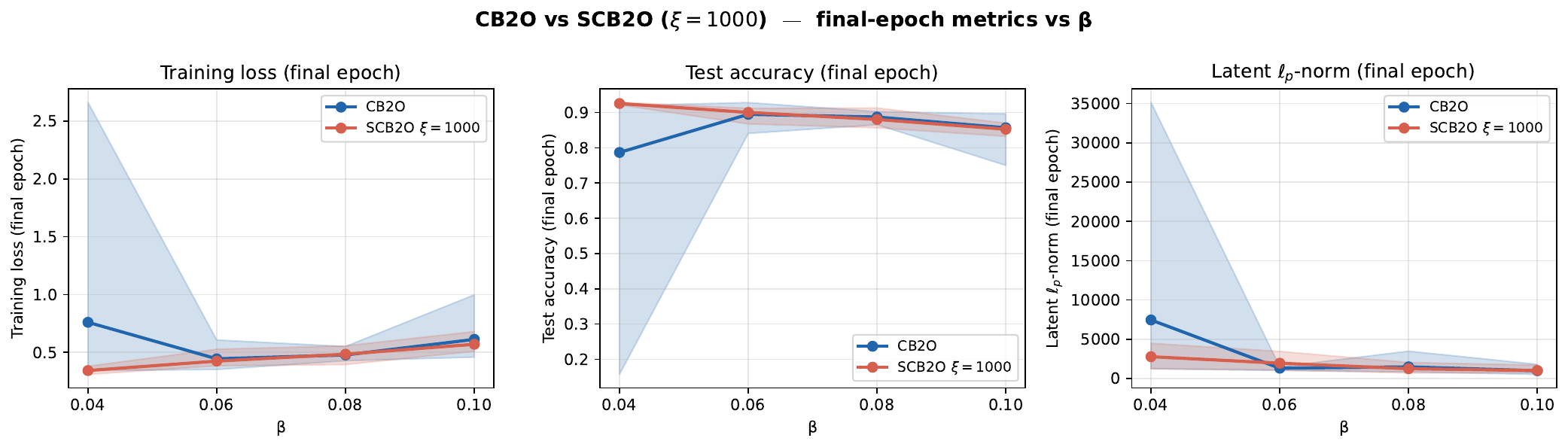}
    \caption{$\xi=1000$}
    \label{fig:vbeta-1000}
  \end{subfigure}
  \caption{Final-epoch metrics vs.\ $\beta$ for CB$^2$O and SCB$^2$O across all
           $\xi$ values.  Shaded band = min--max across seeds.}
  \label{fig:mnist-varying-beta}
\end{figure}

%%%%%%%%%%%%%%%%%%%%%%%%%%%%%%%%%%%%%%%%%%%%%%%%%%%%%%%%%%%%

\clearpage
%\subimport{./}{checklist}

\end{document}